\documentclass[11pt]{amsart}

\usepackage[margin=1in]{geometry}

\usepackage{amssymb,amsfonts,amsthm,mathtools}
\usepackage{enumitem}
\usepackage{hyperref}
\usepackage[T1]{fontenc}
\usepackage{verbatim}
\usepackage{xcolor}

\theoremstyle{plain}
\newtheorem{main}{Theorem}

\newtheorem{thm}{Theorem}[section]
\newtheorem{cor}[thm]{Corollary}
\newtheorem{prop}[thm]{Proposition}
\newtheorem{lem}[thm]{Lemma}

\newtheorem{claim}[thm]{Claim}

\theoremstyle{definition}
\newtheorem{defn}[thm]{Definition}
\newtheorem{remark}[thm]{Remark}

\newtheorem{exmp}[thm]{Example}

\theoremstyle{remark}

\newcommand{\C}{\mathbb{C}}
\newcommand{\cD}{\mathcal{D}}

\newcommand{\bF}{\mathbb{F}}
\newcommand{\cG}{\mathcal{G}}
\newcommand{\cH}{\mathcal{H}}

\newcommand{\N}{\mathbb{N}}\newcommand{\cN}{\mathcal{N}}
\newcommand{\cO}{\mathcal{O}}

\newcommand{\R}{\mathbb{R}}\newcommand{\cR}{\mathcal{R}}

\newcommand{\cT}{\mathcal{T}}

\newcommand{\Z}{\mathbb{Z}}

\newcommand{\act}{\curvearrowright}
\newcommand{\rest}{\restriction}
\newcommand{\sub}{\subseteq}

\newcommand{\eps}{\varepsilon}
\newcommand{\G}{\mathcal{G}}
\newcommand{\av}{\text{av}}
\newcommand{\ap}{\text{ap}}
\newcommand{\abs}[1]{\vert #1 \vert}
\newcommand{\norm}[1]{\vert \vert #1 \vert \vert}
\newcommand{\floor}[1]{\lfloor #1 \rfloor}
\newcommand{\ceil}[1]{\lceil #1 \rceil}
\newcommand{\ang}[1]{\langle #1 \rangle}
\newcommand{\bigang}[1]{\Big\langle #1 \Big\rangle}

\newcommand{\out}{\mathrm{out}}
\newcommand{\ins}{\mathrm{in}}
\renewcommand{\d}{\operatorname{d}}
\newcommand{\dmu}{\d\!\mu}

\makeatletter
\let\c@equation\c@thm
\makeatother
\numberwithin{equation}{section}

\begin{document}

\title{Spectral theory for Borel pmp graphs}
\author{Cecelia Higgins}
\address{Department of Mathematics, Rutgers University, 110 Frelinghuysen Road, Piscataway, NJ 08854-8019, USA}
\email{ch799@math.rutgers.edu}
\author{Pieter Spaas}\thanks{PS is supported by the European Union via an ERC grant (QInteract, Grant No 101078107).}
\address{Centre for the Mathematics of Quantum Theory, Department of Mathematical Sciences, University of Copenhagen, Universitetsparken 5, DK-2100 Copenhagen \O, Denmark}
\email{pisp@math.ku.dk}
\author{Alexander Tenenbaum}
\address{Department of Mathematics, University of California, Berkeley, 970 Evans Hall, Berkeley, CA 94720-3840, USA}
\email{tenenbaum@berkeley.edu}

\date{}

\begin{abstract}
    We initiate a systematic study of spectral theory for bounded-degree Borel pmp graphs. Specifically, we study spectral properties of the associated adjacency and Laplacian operators. We start with proving a spectral characterization of approximate measurable bipartiteness. Next, we adapt classical theorems of Wilf and Hoffman to give novel upper and lower bounds on the approximate measurable chromatic number. Using similar techniques, we then show that the approximate measurable chromatic number of a pmp graph generated by $n$ bounded-to-one functions is at most $2n + 1$. Next, concerning matchings, we introduce a measurable version of Tutte's condition and show that a spectral assumption analogous to the one from a classical theorem of Brouwer and Haemers implies this measurable Tutte condition. Finally, we show that, under a uniform integrability assumption, the spectrum is continuous under local-global convergence.
\end{abstract}

\maketitle

\vspace{-1cm}

\section{Introduction}

A major goal of research in descriptive combinatorics is the development of techniques for constructively solving classical graph theoretic problems -- for instance, vertex coloring, edge coloring, and perfect matching -- on infinite graphs. While many classical techniques go through for infinite graphs, the solutions that result are typically non-constructive, dependent on tools such as the axiom of choice. Thus, in descriptive combinatorics, additional definability or measurability constraints are imposed on the solutions, resulting in a deep theory with striking departures from classical combinatorics. For comprehensive surveys, we refer to \cite{km2020,pikhurko2021}.

In this paper, we study the descriptive combinatorics of a certain class of definable graphs using tools from spectral theory. More precisely, given a standard probability space $(X, \mu)$, we are interested in bounded-degree Borel graphs on $X$ that are \emph{probability-measure-preserving} (or \emph{pmp} for short); see Section~\ref{sec:prelim} for the formal definitions. Borel pmp graphs, sometimes referred to as \emph{graphings} in the literature, arise naturally in various situations. For instance, the Schreier graph of a Borel measure-preserving action of a finitely generated group on $(X, \mu)$ is a (bounded-degree) Borel pmp graph. Borel pmp graphs have received considerable attention in recent descriptive combinatorics research; see, for instance, the survey \cite[Chapters~6--9 and~13]{km2020}, as well as the more recent papers \cite{timar2019,thornton2022,bks2022,cmgt2025}. We remark that the more general class of Borel \emph{quasi-pmp} (or \emph{mcp}) graphs has also played an important role in modern research; see, for instance, \cite{tserunyan2022,ctt2022,bpz2024, bcr2024,poulin2024,bell2025,bgjt2025,tt2025}.

The main inspiration for our work derives from the rich area of (finitary) \emph{spectral graph theory}, in which combinatorial problems on finite graphs are solved using linear algebraic techniques. Concretely, given a finite graph, one may associate to it matrices, such as the adjacency matrix and the Laplacian matrix, that encode information about the graph. The spectra of these matrices then provide information about the combinatorial properties of the associated graph. For an overview of spectral graph theory, we refer to the surveys \cite{bh2012,spielman2025}. 

In the infinitary setting, Mohar initiated the study of the spectra of countably infinite graphs in the 1980s \cite{mohar1982}; see also the survey \cite{mw1989}. Uncountable graphs were first considered much later by Kechris and Tsankov, who studied the spectra of a certain class of Borel graphs \cite{kt2008}. Specifically, they used spectral properties of Schreier graphs of generalized Bernoulli actions of countable discrete groups to characterize properties such as amenability of the associated group actions. Later, Conley and Kechris studied the more general class of Schreier graphs of free measure-preserving actions of countable discrete groups on standard probability spaces. \cite{ck2013}. In particular, they used spectral tools to derive several new results in descriptive combinatorics. These included an approximate measurable version of Hoffman's spectral lower bound on the chromatic number as well as spectral bounds on the approximate measurable chromatic numbers of the graphs $\G_{\bF_n}$, where $\G_{\bF_n}$ denotes the Schreier graph of the Bernoulli shift of $\bF_n$. Thornton later showed that the approximate measurable chromatic number of $\G_{\bF_2}$ is exactly $3$ \cite{thornton2024}.

Further connections between descriptive combinatorics and spectral graph theory have since been established. Of particular note are the recent applications of the expander mixing lemma (see \cite{gv2025} and \cite[Chapter~6]{lyons2024}) and the role of spectral theory in the study of graph limits (see, for instance, \cite{ckt2012, HLS14, Ku16, KT19}, as well as the references in Section~\ref{sec:examples}).

In this paper, we contribute to the literature on bounded-degree Borel pmp graphs through a systematic study of the spectra of the associated adjacency and Laplacian operators; we then use spectral information to prove novel theorems in measurable combinatorics. Our results include a spectral characterization of approximate measurable bipartiteness, spectral upper and lower bounds on the approximate measurable chromatic number, and a spectral inequality that implies a measurable version of Tutte's classical condition characterizing graphs that admit perfect matchings.

Concretely, given a bounded-degree Borel pmp graph $\cG$ on $(X,\mu)$, we define the \emph{adjacency operator} $T_\G: L^2(X)\to L^2(X)$ by 
$$
(T_\G f)(x) = \sum_{y\sim x} f(y)
$$
for all $f\in L^2(X)$, $x\in X$; here we use the notation $y\sim x$ to indicate that $y$ is adjacent to $x$. The second operator we study is the \emph{Laplacian operator} $L_\G: L^2(X)\to L^2(X)$, given by $L_\G = D_\G - T_\G$, where $D_\G\in L^\infty(X)$ is the \emph{degree function} given by $D_\G(x) = \deg(x)$ for all $x\in X$. That is, the Laplacian operator satisfies
$$
(L_\G f)(x) = \deg(x)f(x) - \sum_{y\sim x} f(y)
$$
for all $f\in L^2(X)$, $x\in X$. Both of these operators have been studied already in the literature. In particular, it is a well-known fact that they are bounded and self-adjoint, thus allowing for the application of various tools from operator theory. We give a self-contained proof of this fact in Section~\ref{sec:TLoperators} using \emph{Borel transports} (see Subsection~\ref{subsec:transport}), which are applied frequently throughout. Subsequently, after establishing some first properties of $T_\G$ and $L_\G$, we extend the classical Perron-Frobenius theory, sometimes under the additional assumption of \emph{spectral gap} (see Subsection~\ref{subsec: spectral gap}). Although many classical results generalize to our setting, the proofs often require new ideas. For example, for finite graphs, all spectral values are eigenvalues, and many arguments in finitary spectral graph theory involve the associated eigenvectors. For infinite graphs, however, spectral values are not always eigenvalues; nevertheless, because our operators are self-adjoint, each spectral value admits a sequence of \emph{approximate} eigenfunctions, allowing for various approximation arguments.

Our first main result concerns bipartiteness, which we study in Section~\ref{sec:bipartite}. It is well-known in the classical setting (see, for instance, \cite[Chapter~4]{spielman2025}) that if a finite graph is bipartite, then the spectrum of its adjacency matrix is symmetric about $0$. If the graph is in addition connected, then a strong version of the converse holds: If the negative of the maximum eigenvalue of the adjacency matrix is also an eigenvalue, then the graph is bipartite. The combination of these results therefore gives a complete spectral characterization of bipartiteness for finite connected graphs. We adapt this characterization to the setting of bounded-degree Borel pmp graphs as follows:

\begin{main}[{Theorems~\ref{thm:bipartiteimpliessymspec} and \ref{thm:-dspecimpliesbipartite}}]
\label{thm:bipartite_intro}
    Let $\G$ be a bounded-degree Borel pmp graph on a standard probability space. 
    \begin{enumerate}
        \item If $\G$ is approximately measurably bipartite, then the spectrum of $T_\G$ is symmetric about $0$.
        \item Assume in addition that $\G$ is $d$-regular and ergodic and that $T_\G$ has spectral gap. If $-d$ belongs to the spectrum of $T_\G$, then $\G$ is approximately measurably bipartite.
    \end{enumerate}
\end{main}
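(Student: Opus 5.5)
For (1), I will use the definition that $\G$ is approximately measurably bipartite if for every $\eps>0$ there is a Borel set $A$ such that, after removing a set of measure $<\eps$, every remaining edge joins $A$ to its complement. The key observation is that for an exactly bipartite Borel pmp graph with Borel partition $X=A\sqcup B$, the unitary $U$ given by multiplication by $\chi_A-\chi_B$ satisfies $UT_\G U^*=-T_\G$. Hence $\sigma(T_\G)=-\sigma(T_\G)$.

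In the approximate case, I will take, for each $\eps$, a Borel set $Y_\eps$ with $\mu(Y_\eps)<\eps$ such that $\G\rest(X\setminus Y_\eps)$ is bipartite with parts $A_\eps,B_\eps$. Set $U_\eps=\chi_{A_\eps}-\chi_{B_\eps}$ (with any sign on $Y_\eps$). Then $T_\G+U_\eps T_\G U_\eps$ is supported on edges touching $Y_\eps$.

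The spectrum is not continuous in operator norm along such perturbations, since a small-measure defect does not give small norm. So I will argue with approximate eigenfunctions. Let $\lambda\in\sigma(T_\G)$, and pick unit $f$ with $\|(T_\G-\lambda)f\|<\delta$. I will show that $f$ can be taken to have small mass near $Y_\eps$. The trick is to fix $f$ first and then choose $\eps$: for a fixed $f\in L^2$, the quantity $\|f\chi_{N(Y_\eps)}\|_2$ tends to $0$ as $\mu(Y_\eps)\to0$, by absolute continuity of the integral. Here $N(Y)$, the $1$-neighborhood of $Y$, has measure at most $(d+1)\mu(Y)$ by the pmp property and bounded degree; this is where the transport lemma of Subsection~\ref{subsec:transport} enters. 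Then
\[
\|(T_\G+\lambda)U_\eps f\| \le \|U_\eps(T_\G-\lambda)f\|+\|(T_\G+U_\eps T_\G U_\eps)f\|,
\]
and the last term is bounded by $2d\,\|f\chi_{N(Y_\eps)}\|_2$, which is small. Therefore $-\lambda$ is an approximate eigenvalue, so $-\lambda\in\sigma(T_\G)$.

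For (2), $\G$ is $d$-regular and $-d\in\sigma(T_\G)$. I will take unit approximate eigenfunctions $f_n$ with $\|(T_\G+d)f_n\|\to0$. The identity
\[
\langle (d+T_\G)f,f\rangle=\tfrac12\int\sum_{y\sim x}|f(x)+f(y)|^2\,d\mu(x),
\]
obtained via a transport/mass-transport argument, shows that $f_n(y)\approx-f_n(x)$ along edges in $L^2$. It follows that $|f_n|$ is an approximate eigenfunction for $d$: indeed $\langle (d-T_\G)|f_n|,|f_n|\rangle\le\langle (d+T_\G)f_n,f_n\rangle\to0$, using $\bigl||a|-|b|\bigr|\le|a+b|$. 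By ergodicity together with spectral gap, $d$ is a simple isolated eigenvalue with constant eigenvectors. Hence $|f_n|\to1$ in $L^2$.

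Now I set $A_n=\{\operatorname{Re} f_n>0\}$, after first replacing $f_n$ by its real or imaginary part, since $T_\G$ is real and one of the two is a nontrivial approximate eigenfunction. Edges $x\sim y$ with both endpoints in $A_n$ or both outside it satisfy $|f_n(x)+f_n(y)|\ge \max(|f_n(x)|,|f_n(y)|)$. Combining this with $|f_n|\approx1$, Chebyshev's inequality bounds the measure of vertices incident to monochromatic edges by a quantity tending to $0$. Deleting those vertices leaves a bipartite graph, which gives approximate measurable bipartiteness.

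The main obstacle is the step in (2) that uses spectral gap to force $|f_n|\to$ a constant. This requires checking that $\||f_n|\|=1$ and that spectral gap means $\sigma(T_\G)\cap(d-\eta,d)=\emptyset$, so that the component of $|f_n|$ orthogonal to constants is controlled by $\eta^{-1}\langle(d-T_\G)|f_n|,|f_n|\rangle$. A secondary subtlety is in (1): the choice of $Y_\eps$ must come after fixing $f$.
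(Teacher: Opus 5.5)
Your proposal is correct. Part (1) is the paper's argument in operator language. The paper's $g=f_1-f_2+f_3$ is your $U_\eps f$ with sign $+1$ on the bad set. Its integral estimates over $B\cup A_3$, using Corollary~\ref{cor:neighbors}, Corollary~\ref{cor:ineq} and Lemma~\ref{lem: dct}, amount to your bound $\|(T_\G+U_\eps T_\G U_\eps)f\|\le 2d\,\|f\chi_{B_1(Y_\eps)}\|$. The quantifiers are also ordered the same way: $f$ first, then the approximate bipartition.

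Part (2) shares the paper's strategy but not its mechanics. The paper proceeds as follows:
\begin{enumerate}
\item It gets $\|T_\G|f_n|-d|f_n|\|\to0$ from Corollary~\ref{cor:minusdinspectrum}.
\item It removes the $L^2_0$-component using the lower bound $\|(T_\G-d)h\|\ge c\|h\|$ from Proposition~\ref{prop:isolated}.
\item It perturbs to $\tilde f_n=f_n-\mathrm{sgn}(f_n)g_n^{(1)}$, so that $f_N$ is exactly $\{\pm1\}$-valued.
\item It finishes pointwise: off $Y=\{|T_\G f_N+df_N|\ge\sqrt\eps\}$, a single same-sign neighbor would force $|T_\G f_N(x)+df_N(x)|\ge 2$.
\end{enumerate}

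You instead work throughout with the quadratic form $\langle (d+T_\G)f,f\rangle=\tfrac12\int\sum_{y\sim x}|f(x)+f(y)|^2\,d\mu$. It bounds $\langle (d-T_\G)|f_n|,|f_n|\rangle$, and the gap then controls the $L^2_0$-part of $|f_n|$ variationally. The same energy, combined with Chebyshev and $\||f_n|-1\|_2\to0$, directly bounds the measure of vertices lying on monochromatic edges. So no modification of $f_n$ is needed.

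What each route buys: yours gives an explicit estimate, roughly $\mu(\text{defect})\le C(1+\eta^{-1})\bigl(d+\langle T_\G f,f\rangle\bigr)$ for a real unit $f$, where $\eta$ is the spectral gap. The paper's route has a cleaner combinatorial endgame once the function is $\pm1$-valued.

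One point of order. Pass to the normalized real or imaginary part before proving $\||f_n|-1\|_2\to0$, or note that this step applies to any unit sequence with $\langle (d+T_\G)f_n,f_n\rangle\to 0$. This matters because your monochromatic-edge inequality $|f(x)+f(y)|\ge\max(|f(x)|,|f(y)|)$ needs real values.
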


While some of the ideas of the classical proof can be used, additional difficulties arise in the measurable setting. For item (1) above, the approximate measurable bipartition only gives a bipartition up to a specified $\eps>0$, and the adjacency operator $T_\G$ mixes the leftover ``bad'' set of measure $\eps$ with the sets of the bipartition. Nevertheless, given $\lambda$ in the spectrum of $T_\G$, the standard idea of swapping the signs of an eigenvector on one part of the bipartition to show that $-\lambda$ is in the spectrum extends to our setting. Indeed, provided that the error is carefully controlled, we can run a similar argument using approximate eigenfunctions and an approximate bipartition instead.  
For item (2), analogously to the classical case, one would like to use the signs of a given approximate eigenfunction of $-d$ to define an approximate bipartition. In order for this to work in the measurable setting, one needs the additional fact that approximate eigenfunctions of $-d$ are approximately constant in absolute value, which is false in general (see Remark~\ref{rmk:nonstronglyergodic}). However, we show that this holds under the assumption of spectral gap, allowing us to complete the argument.

In Section~\ref{sec:upperbounds}, we prove two novel upper bounds on the approximate measurable chromatic number of a bounded-degree Borel pmp graph. Our first bound generalizes a classical theorem of Wilf \cite{wilf1967}, which gives an upper bound on the chromatic number of a finite graph in terms of the maximum eigenvalue of its adjacency matrix.

\begin{main}[{Theorem~\ref{thm: wilf pmp}}]
\label{thm:wilf_intro}
    Let $\G$ be a bounded-degree Borel pmp graph on a standard probability space. Then the approximate measurable chromatic number of $\G$ is bounded above by $\floor{M(T_{\G})} + 1$, where $M(T_{\G})$ is the maximum spectral value of $T_{\G}$.
\end{main}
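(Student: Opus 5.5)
Following Wilf's classical argument, the plan is to show that every Borel pmp graph admits, off a set of arbitrarily small measure, a Borel "degeneracy ordering" in which each vertex has at most $\floor{M(T_\G)}$ earlier neighbours; a greedy coloring along this ordering then gives the bound. Write $k=\floor{M(T_\G)}$ and fix $\eps>0$; we want a Borel set $Y$ with $\mu(X\setminus Y)<\eps$ and a Borel proper $(k+1)$-coloring of $\G\rest Y$.

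\textbf{Step 1 (spectral input).} For any $\G$-invariant-up-to-null Borel set, or more usefully for any induced subgraph, we want $M(T_{\G\rest A})\le M(T_\G)$. The reason is that $M(T)=\sup_{\|f\|=1}\langle Tf,f\rangle$, and for $f\ge 0$ supported on $A$ we have $\langle T_{\G\rest A}f,f\rangle\le\langle T_\G f,f\rangle$. We also need the average-degree bound. Suppose $A$ has positive measure and every $x\in A$ has more than $k$ neighbours in $A$, i.e.\ $\deg_{\G\rest A}\ge k+1$ on $A$. Testing $f=\mathbf 1_A/\sqrt{\mu(A)}$ and using the mass transport principle (Subsection~\ref{subsec:transport}) gives
\[
\langle T_\G f,f\rangle=\frac{1}{\mu(A)}\int_A \deg_{\G\rest A}\,d\mu\ge k+1>M(T_\G),
\]
which is a contradiction. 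Hence every positive-measure Borel $A$ contains a positive-measure set of points with at most $k$ neighbours inside $A$.

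\textbf{Step 2 (Borel peeling).} Define $A_0=X$. Given $A_\alpha$, let $B_\alpha=\{x\in A_\alpha: |N(x)\cap A_\alpha|\le k\}$ (this is Borel by Lusin--Novikov) and set $A_{\alpha+1}=A_\alpha\setminus B_\alpha$. Iterating countably many times, the measures $\mu(A_n)$ decrease. By Step 1 the limit $A_\infty=\bigcap A_n$ has measure zero: otherwise $A_\infty$ would contain points with at most $k$ neighbours in $A_\infty$, but those neighbours lie in every $A_n$... The main obstacle is right here. A point with at most $k$ neighbours in $A_\infty$ could still have more in each $A_n$. Since degrees are bounded, though, its neighbourhood stabilizes: for large $n$, $N(x)\cap A_n=N(x)\cap A_\infty$, so $x\in B_n$, contradicting $x\in A_\infty$. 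Thus $\mu(A_\infty)=0$, and we may choose $N$ with $\mu(A_N)<\eps/2$.

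\textbf{Step 3 (coloring).} The layers $B_0,\dots,B_{N-1}$ cover $X\setminus A_N$, and each $x\in B_n$ has at most $k$ neighbours in $B_n\cup B_{n+1}\cup\cdots$. We color layers in reverse order, $B_{N-1}$ first, then $B_{N-2}$, and so on. When coloring $B_n$, the neighbours of $x$ in later layers are already colored, and there are at most $k$ of them counting within $B_n$ as well. Within $B_n$ itself, the graph $\G\rest B_n$ has degree at most $k$, so by Kechris--Solecki--Todorcevic it has a Borel proper $(k+1)$-coloring. That is not quite enough, because colors must also avoid the colors on later layers.

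To handle this, use a Borel countable coloring $c:X\to\N$ of $\G$ (KST) and refine $B_n$ into the pieces $B_n\cap c^{-1}(j)$, which are independent sets. Color these pieces greedily in sequence: each $x$ receives the least color in $\{0,\dots,k\}$ unused by its already-colored neighbours. This is possible since at most $k$ neighbours of $x$ lie in $B_n\cup\cdots$ and neighbours in earlier layers are not yet colored. Infinitely many color classes $j$ would be needed, so truncate: choose $J$ with $\mu\big(\bigcup_n B_n\cap c^{-1}([J,\infty))\big)<\eps/2$ and discard that set, which only removes constraints. The result is a Borel proper $(k+1)$-coloring off a set of measure less than $\eps$, as required.
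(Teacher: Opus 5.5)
Your proposal is correct and follows the paper's proof. It uses the same peeling of vertices of degree at most $\floor{M(T_\G)}$ in the remaining induced subgraph, and the same spectral input: the Rayleigh quotient of $1_A$ is the average degree of $\G\rest A$, which is Lemma~\ref{lem:avdeg} combined with Corollary~\ref{cor: block decomp M}. It also uses the same backwards list-coloring, and your greedy step over a countable Borel coloring is exactly the proof of Proposition~\ref{prop: deg plus one list coloring}.

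The only real difference is how you show the residual set is null. You use the neighbourhood-stabilization argument of Remark~\ref{rem: anton}, whereas the paper proves geometric decay $\mu(X\setminus B_{n+1})\le r\,\mu(X\setminus B_n)$ in Claim~\ref{claim: ineq1}. Your truncation in $j$ is harmless but unnecessary, since the $\omega$-length greedy recursion already colors all of each layer.
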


In the notation of the above theorem, $M(T_{\G})$ can alternatively be characterized as the spectral radius of $T_\G$, and it is bounded above by the maximum degree of $\G$ (see Proposition~\ref{prop:boundedsa}). Therefore, Theorem~\ref{thm:wilf_intro} provides at least as tight an upper bound on the approximate measurable chromatic number as the standard degree plus one bound (see \cite[Proposition~4.6]{kst1999}). In general, when $\G$ is non-regular, Theorem~\ref{thm:wilf_intro} gives an upper bound much tighter than the degree plus one bound and even tighter than the stronger upper bound from Brooks's theorem (see \cite[Theorem~1.2]{cmt2016}) when it is applicable. For example, it is easy to show that $M(T_{\G})=\sqrt{ab}$ for an $(a,b)$-biregular graph (see Corollary~\ref{cor:bireg}), allowing quadratic improvement over the bound given by Brooks's theorem. We however note that biregular graphs are measurably bipartite, and hence trivially admit measurable 2-colorings. It's also worth pointing out that quadratic improvement is the best possible, since we always have $M(T_\G)\geq \sqrt{d}$ where $d$ is the maximum degree (see Lemma~\ref{lem:avdeg}).

For the proof of Theorem~\ref{thm:wilf_intro}, we recursively construct a sequence of disjoint subsets exhausting the vertex space $X$ by collecting all vertices with sufficiently small degree at each step. This depends on the observation that the average degree of $\G$ is bounded above by $M(T_\G)$ (see Lemma~\ref{lem:avdeg}). The remainder of the proof then involves a list-coloring algorithm, which we use to color a $(1-\eps)$-proportion of the graph by backtracking through the sequence of vertex sets.

Next, we consider pmp graphs generated by finitely many bounded-to-one Borel functions. Questions about the descriptive combinatorics of graphs generated by functions originate in the work \cite{kst1999} of Kechris, Solecki, and Todor\v{c}evi\'{c}, who asked whether the Borel chromatic number of a graph generated by $n$ Borel functions is bounded above by $2n + 1$ in case it is finite.\footnote{It is known that the Borel chromatic number may be infinite; see, for instance, \cite[Example~3.2]{kst1999}.} Kechris and Marks later asked whether the \emph{measurable} chromatic number of such a graph is bounded above by $2n + 1$ \cite{km2020}. Quadratic upper bounds on the measurable chromatic number were discovered independently by Miller \cite{miller2008} and by Palamourdas \cite{palamourdas2012}. In \cite{cm2016}, Conley and Miller use a toast construction to give an upper bound of $4n + 1$ on the measurable chromatic number when the graph is Borel hyperfinite. When the generating functions are pairwise commuting, Meehan and Palamourdas obtain the optimal upper bound of $2n + 1$ on the approximate measurable chromatic number in \cite{mp2021}. 

For our next main result, we prove this optimal upper bound on the approximate measurable chromatic number of a graph generated by finitely many bounded-to-one Borel functions in the case when the graph is pmp. While our bound is not spectral in nature, we include it in the present paper because its proof depends on both the backwards list-coloring trick we use to prove Theorem~\ref{thm:wilf_intro} and the application of Borel transports.

\begin{main}[{Theorem~\ref{thm:nfunctions}}]
\label{thm:functions_intro}
    Let $\G$ be a pmp graph on a standard probability space that is generated by $n$ bounded-to-one Borel functions. Then the approximate measurable chromatic number of $\G$ is bounded above by $2n + 1$.
\end{main}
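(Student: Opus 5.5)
Let $f_1,\dots,f_n\colon X\to X$ be the bounded-to-one Borel generators, so $x\sim y$ iff $x\neq y$ and $f_i(x)=y$ or $f_i(y)=x$ for some $i$. The plan is to imitate the proof of Theorem~\ref{thm:wilf_intro}. First we show that $\G$ has \emph{average degree} at most $2n$: peel off sets of vertices of small degree, then colour backwards with a list-colouring argument. The counting step uses a Borel transport. Send one unit of mass from each $x$ along each edge $\{x,f_i(x)\}$ to the endpoint $f_i(x)$. By the mass transport principle (Subsection~\ref{subsec:transport}), $\int \deg\,d\mu=\int(\text{out-edges})\,d\mu+\int(\text{in-edges})\,d\mu\le 2n$, since every edge is of the form $\{x,f_i(x)\}$ and each vertex sends out at most $n$ such edges. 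The same applies to every Borel $\G$-invariant-free subset. For any Borel $Y\subseteq X$, the induced graph $\G\rest Y$ is generated by the partial functions $f_i\rest(Y\cap f_i^{-1}(Y))$, so its edges counted with respect to $\mu\rest Y$ satisfy $\int_Y \deg_{\G\rest Y}\,d\mu\le 2n\,\mu(Y)$.

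Next fix $\eps>0$ and build a decreasing sequence $X=Y_0\supseteq Y_1\supseteq\cdots$. Let $Z_k=\{x\in Y_k:\deg_{\G\rest Y_k}(x)\le 2n\}$ and $Y_{k+1}=Y_k\setminus Z_k$. By Markov's inequality and the averaging bound, the set of $x\in Y_k$ with degree $\ge 2n+1$ in $\G\rest Y_k$ has measure at most $\frac{2n}{2n+1}\mu(Y_k)$. Hence $\mu(Y_k)\le(\frac{2n}{2n+1})^k\to0$, and we choose $K$ with $\mu(Y_K)<\eps/2$. We then colour $Z_{K-1},Z_{K-2},\dots,Z_0$ in that order with $2n+1$ colours. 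A vertex $x\in Z_k$ has at most $2n$ neighbours in $Y_k=Z_k\cup Y_{k+1}$. Its neighbours in $Y_{k+1}$ are already coloured, so it remains to colour $\G\rest Z_k$ with lists: each $x$ has $2n+1$ colours minus those used by its coloured neighbours, and the list size exceeds the number of neighbours of $x$ inside $Z_k$.

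This list-colouring of $\G\rest Z_k$ is the step I expect to be the main obstacle. A Borel greedy list colouring is available once we have a countable Borel proper colouring of $\G\rest Z_k$ (Kechris--Solecki--Todor\v{c}evi\'{c}). Take such a colouring $\{A_m\}$, process the classes $A_0,A_1,\dots$ in order, and give each point the least available colour in its list. This succeeds because each list exceeds the degree in $Z_k$. We process only finitely many classes, covering all but $\eps/(2K)$ of the measure of $Z_k$, and discard the rest. Discarding loses nothing: a deleted vertex only makes lists of later (lower-indexed) stages larger, and within a stage leaving vertices uncoloured never hurts the neighbours.

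The total discarded measure is then less than $\eps$. This gives a Borel proper $(2n+1)$-colouring of $\G$ restricted to a set of measure greater than $1-\eps$, which proves the bound on the approximate measurable chromatic number.
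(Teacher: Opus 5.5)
Your argument is correct and is essentially the paper's: mass transport to balance out- and in-degrees on every Borel subset, geometric peeling, and backwards list-colouring as in Lemma~\ref{lem: list coloring from r}; the only cosmetic difference is that you peel vertices of total degree at most $2n$ using the average-degree bound (rate $\frac{2n}{2n+1}$), whereas the paper peels vertices of in-degree at most $n$ (rate $\frac{n}{n+1}$), which likewise forces total degree at most $2n$. Two small remarks: one only has $\deg\le\deg^{+}+\deg^{-}$ rather than equality, since a two-cycle $f_i(x)=y$, $f_j(y)=x$ is counted twice; and your truncation inside each $Z_k$ is harmless but unnecessary, because greedy colouring along all countably many Kechris--Solecki--Todor\v{c}evi\'{c} classes already gives a Borel list colouring of all of $Z_k$ (Proposition~\ref{prop: deg plus one list coloring}).
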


It in fact follows that, for any bounded-degree Borel directed graph $\cD$ whose underlying undirected graph $\G$ is pmp, if each vertex of $\cD$ has out-degree at most $n$, then the approximate measurable chromatic number of $\G$ is at most $2n + 1$; see Remark~\ref{rem: digraph vs functions}.

Another fundamental result in finitary spectral graph theory is Hoffman's theorem \cite{hoffman1970}, which gives an upper bound on the \emph{independence number} of a regular graph in terms of the maximum and minimum eigenvalues of its adjacency matrix (or equivalently, by regularity, its Laplacian matrix). Since the monochromatic sets in a coloring are independent, this yields a lower bound on the chromatic number as well, which in fact applies even to non-regular graphs. In Section~\ref{sec:lowerbounds}, we generalize Hoffman's bound to the measurable setting. Our result also generalizes \cite[Proposition~4.16]{ck2013}, which gives a similar bound for the (regular) Schreier graph of a free measure-preserving action of a finitely generated discrete group.

\begin{main}[{Theorem~\ref{thm: hoffman bounded}}]
\label{thm:hoffman_intro}
    Let $\G$ be a bounded-degree Borel pmp graph on a standard probability space. Then the approximate measurable chromatic number of $\G$ is bounded below by $\ceil{1 - \frac{M(T_{\G})}{m(T_{\G})}}$, where $M(T_{\G})$ and $m(T_{\G})$ are the maximum and minimum spectral values of $T_\G$, respectively.
\end{main}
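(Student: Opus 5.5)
Following the classical proof of Hoffman's bound for non-regular graphs (the version due to Hoffman via interlacing/quadratic forms), I will argue with quadratic forms and approximate eigenfunctions. Suppose the approximate measurable chromatic number is $k$. Then for every $\eps>0$ there is a Borel set $B$ with $\mu(B)<\eps$ and a Borel proper $k$-coloring $c$ of $\G\rest(X\setminus B)$, with color classes $A_1,\dots,A_k$. Write $M=M(T_\G)$ and $m=m(T_\G)$. If $M=0$ there are essentially no edges and the bound is $1$, so assume $M>0$, whence $m<0$: indeed $\langle T_\G 1_E,1_E\rangle=0$ on any independent set $E$ of positive measure, and the trace-type identity $\langle T_\G f, f\rangle$ summed over a coloring forces negative values. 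The goal is $k\ge 1-M/m$.

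The key steps are as follows. \textbf{Step 1.} Since $M$ is in the spectrum of the self-adjoint operator $T_\G$, pick a unit approximate eigenfunction $f$ with $\|T_\G f-Mf\|_2<\delta$. By replacing $f$ with $|f|$ and using $\langle T_\G|f|,|f|\rangle\ge\langle T_\G f,f\rangle$ together with $\langle T_\G g,g\rangle\le M\|g\|^2$, I may take $f\ge 0$ with $\langle T_\G f,f\rangle\ge M-\delta$. I use the quadratic form value instead of the eigen-equation, since that is what the argument needs. \textbf{Step 2.} Decompose $f=f1_B+\sum_i f_i$ with $f_i=f1_{A_i}$. Since each $A_i$ is independent, $\langle T_\G f_i,f_i\rangle=0$. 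For real scalars $t_1,\dots,t_k$, consider $g=\sum_i t_if_i$ and mimic Hoffman's interlacing via the $k\times k$ matrix $C_{ij}=\langle T_\G f_i,f_j\rangle$. Then $C$ has zero diagonal, and for any unit vector $t$ in the weighted sense, $m\sum t_i^2\|f_i\|^2\le\langle Ct,t\rangle\le M\sum t_i^2\|f_i\|^2$. Setting $D=\mathrm{diag}(\|f_i\|^2)$, all generalized eigenvalues of $(C,D)$ lie in $[m,M]$, while the top one is at least about $M-O(\delta+\text{error from }B)$, tested at $t=(1,\dots,1)$. \textbf{Step 3.} Because the trace of $D^{-1/2}CD^{-1/2}$ is zero, we get $\lambda_{\max}+(k-1)m\le 0$, which gives $M-\eta+(k-1)m\le0$ and hence $k\ge 1-(M-\eta)/m$. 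Classes with $\|f_i\|=0$ are dropped, which only decreases $k$.

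\textbf{Step 4: error control, the main obstacle.} I need the contribution of $f1_B$ to be small, uniformly in the choice of $f$. The difficulty is that $f$ is chosen after $B$ (or vice versa), and $f$ could concentrate on $B$. I would fix $f$ first and then choose $\eps$ so small that $\int_B f^2<\delta$; this is possible by absolute continuity of the integral, since $f^2\in L^1$ and the coloring exists for every $\eps$. Then boundedness of $T_\G$ (Proposition~\ref{prop:boundedsa}) gives $|\langle T_\G f,f\rangle-\langle T_\G(f1_{X\setminus B}),f1_{X\setminus B}\rangle|\le 3\|T_\G\|\sqrt\delta$. Letting $\delta\to0$ yields $k\ge 1-M/m$, and since $k$ is an integer, $k\ge\ceil{1-M/m}$.
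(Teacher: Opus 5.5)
Your argument is correct, but it takes a different route from the paper at the key step.

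\textbf{The paper's route.} There is no trace on $L^2(X)$, so the paper first proves the operator inequality $(k-1)m(T)+M(T)\le\sum_i M(T_{ii})$ for an arbitrary $k$-block decomposition (Lemma~\ref{lem: block decomp ineq}). This goes by induction on $k$, using a two-block test-vector trick. The paper then shows separately that the compression $S^{\eps}$ of $T_\G$ to $L^2(A_1\sqcup\cdots\sqcup A_k)$ satisfies $M(S^{\eps_n})\to M(T_\G)$. Finally it applies the lemma to $S^{\eps}$, whose diagonal blocks vanish.

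\textbf{Your route.} You compress $T_\G$ to the $k$-dimensional subspace spanned by the pieces $f1_{A_i}$ of a single near-maximizer $f$. On that subspace a trace is available. The matrix of the compression in the orthonormal basis $f1_{A_i}/\norm{f1_{A_i}}$ is $D^{-1/2}CD^{-1/2}$, which has zero diagonal and numerical range in $[m,M]$. Its top eigenvalue is bounded below by the Rayleigh quotient of $f1_{X\setminus B}$. This is the classical quotient-matrix (interlacing) proof of Hoffman's bound, with the Perron eigenvector replaced by an approximate maximizer.

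\textbf{Comparison.} Your approach removes the need for the induction. Your error control (fix $f$ first, then choose the coloring so that $\mu(B)$ is small, by absolute continuity) is the same approximation the paper uses to prove $M(S^{\eps_n})\to M(T)$. The trace idea also recovers the paper's general lemma in essentially one step. When the diagonal blocks are nonzero, the trace equals $\sum_i\ang{Tf_i,f_i}/\norm{f_i}^2\le\sum_i M(T_{ii})$, and dropped classes are absorbed using $M(T_{ii})\ge m(T)$.

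\textbf{Two small remarks.} First, restricting to $f\ge 0$ is unnecessary: for complex $f$ the matrix $C$ is Hermitian and the argument goes through unchanged. Second, your justification of $m<0$ is vague as written. Either cite Lemma~\ref{lem:mnegative}, or note that your own Step 3 forces it: a zero-trace Hermitian matrix whose eigenvalues are all at least $m\ge 0$ must vanish, which contradicts $\lambda_{\max}\ge M-\eta>0$.
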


We note that the lower bound from the theorem is always at least $2$ (unless $\G$ has no edges, in which case $T_\G$ is the zero operator). Indeed, as in the classical case, the minimum and maximum spectral values always satisfy $m(T_\G) < 0 < M(T_\G)$ (see Lemma~\ref{lem:mnegative}).

When $\G$ is $d$-regular, the proof of Theorem~\ref{thm:hoffman_intro} is straightforward and similar to the proof of \cite[Proposition~4.16]{ck2013}. In this case, given an independent set $S$, by applying the Laplacian operator to the function $1_S - \mu(S)1$, an easy computation yields the upper bound
$$
i_\mu(\G) \leq 1 - \frac{d}{M(L_\G)}
$$
on the measurable independence number $i_\mu(\G)$, where $M(L_\G)$ is the maximum spectral value of the Laplacian operator $L_\G$. Since $d$-regularity implies that $L_\G = dI - T_\G$, the desired lower bound on the approximate measurable chromatic number follows. In the non-regular case, we prove the lower bound from Theorem~\ref{thm:hoffman_intro} directly. Given that $\chi_\mu^\ap(\G) = k$, we can for any $\eps>0$ find pairwise disjoint independent sets $A_1, \ldots, A_k\sub X$ such that $B= X\setminus (A_1\cup\ldots\cup A_k)$ satisfies $\mu(B) < \eps$. The proof then proceeds by considering the block decomposition of $T_\G$ with respect to this partition of $X$. Specifically, we prove an inequality on the spectral values of $T_\G$ in terms of the spectral values of the diagonal blocks from this decomposition (see Lemma~\ref{lem: block decomp ineq}); this involves ideas similar to those in the proof of \cite[Theorem~19.5.1]{spielman2025}. We then prove bounds on the spectral values of the blocks corresponding to the ``bad'' set $B$ (compare with the proof of Theorem~\ref{thm:bipartite_intro}).

In Section~\ref{sec:matchings}, we study matchings. In the classical setting, Hall's famous theorem \cite{hall1935} provides a characterization of the existence of a perfect matching in a bipartite graph. For locally finite Borel pmp graphs, Lyons and Nazarov show in \cite{ln2011} that a bipartite (i.e., odd-cycle-free\footnote{We remark that there are odd-cycle-free Borel graphs that fail to be approximately measurably bipartite; see, for instance, \cite{ck2013}, where it is shown that the shift action of the free group $\bF_n$ for $n\geq 2$ has measurable independence number strictly less than $1/2$, thus yielding such examples.}) graph admits a measurable perfect matching under the assumption that it is \emph{strictly expanding} (see Section~\ref{sec:matchings} for the relevant definitions). Further results on measurable perfect matchings have since been obtained for specific classes of Borel pmp graphs (see for instance \cite{bks2022}).

For finite but not necessarily bipartite graphs, a classical theorem of Brouwer and Haemers \cite{BH05} provides a sufficient condition for the existence of a perfect matching in terms of the eigenvalues of the Laplacian matrix $L_\G$. Namely, if $0=\lambda_1 \leq \lambda_2\leq\ldots \leq \lambda_n$ are the eigenvalues of $L_\G$, then $\G$ admits a perfect matching if $n$ is even and $2\lambda_2 \geq \lambda_n$. Note that the inequality $2\lambda_2 \geq \lambda_n$ can be interpreted as a quantitative lower bound on the spectral gap. The proof in \cite{BH05} relies on Tutte's celebrated classical characterization \cite{tutte1947} of the existence of a perfect matching, which is a generalization of Hall's theorem to non-bipartite graphs. Unfortunately, unlike with Hall's theorem, we do not know whether there is a measurable version of Tutte's theorem for strictly expanding graphs.\footnote{In \cite{kun2024}, Kun constructs a regular acyclic measurably bipartite Borel pmp graph having no measurable perfect matching, demonstrating the necessity of the expansion condition.} Nevertheless, in Section~\ref{sec:matchings}, we propose a measurable (expanding) version of Tutte's condition that we call the \emph{strict measurable Tutte condition} (see Definition~\ref{defn: tutte}), and we prove that a spectral inequality analogous to the classical Brouwer--Haemers condition is sufficient to imply this measurable Tutte's condition. We use the notation $L^2_0(X) = L^2(X)\ominus \C 1$.

\begin{main}[{Theorem~\ref{thm: brouwer haemers}}]
\label{thm:matching_intro}
    Let $\G$ be an ergodic regular Borel pmp graph on a standard probability space. Let
    \begin{align*}
        m_L & = \inf_{0 \neq f \in L^2_0(X)} \frac{\ang{L_\G f, f}}{\ang{f, f}}, \quad M_L = \sup_{0 \neq f \in L^2_0(X)} \frac{\ang{L_\G f, f}}{\ang{f, f}}.
    \end{align*}
    If $2m_L \geq M_L$, then the strict measurable Tutte condition holds for $\G$.
\end{main}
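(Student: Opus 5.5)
The plan is to argue by contradiction, adapting the proof of Brouwer and Haemers to the measurable setting with Rayleigh quotients in place of eigenvalues. The Rayleigh quotients of test functions in $L^2_0(X)$ bound $m_L$ from above and $M_L$ from below. Write $\G$ as $d$-regular. I am reading Definition~\ref{defn: tutte} as follows: for every Borel $S\sub X$ of positive measure, the ``odd part'' of $\G\rest(X\setminus S)$ has measure strictly less than $\mu(S)$. Here the odd part is a Borel transversal choosing one vertex from each finite component of odd size. If the definition is phrased slightly differently, the same scheme should adapt.

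So suppose $S$ is Borel with $\mu(S)>0$ and the odd part $O$ satisfies $\mu(O)\geq \mu(S)$. Let $U$ be the union of the finite odd components of $X\setminus S$. All the sets involved (the components, their sizes, the edge counts $e(C,S)$ to $S$) are Borel, since the components are finite and we may use Borel selectors. For each such component $C$ with $|C|=c$ and $e_C=e(C,S)$, the finite identity $dc-e_C=2|E(C)|\leq c(c-1)$ gives $e_C\geq c(d-c+1)$. Parity gives more: $e_C\equiv dc \pmod 2$, so $e_C\geq 1$ when $d$ is odd. In particular $e_C\geq d$ whenever $c\leq d$, and $e_C$ is comparable to $d$ in the remaining cases. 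Integrating over $U$ via the mass transport principle (Subsection~\ref{subsec:transport}) yields two estimates:
\begin{align*}
 \int_U \frac{e_{C(x)}}{|C(x)|}\,\dmu(x)=\mu_E(U,S), \qquad \mu_E(U,S)\leq \mu_E(X\setminus S,S)\leq d\,\mu(S),
\end{align*}
where $\mu_E$ denotes the edge measure.

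For the upper bound on $m_L$, I would split $U$ Borel-ly into two unions of components $A$ and $B$ of comparable measure. These should carry small values of $e_C/|C|$; a Borel halving via the transversal $O$ and a counting/averaging argument produces them. Then take $f=\mu(B)1_A-\mu(A)1_B\in L^2_0(X)$. There are no edges between $A$ and $B$, or inside the union except within components where $f$ is constant. Hence
\begin{align*}
 \ang{L_\G f,f}=\mu(B)^2\mu_E(A,S)+\mu(A)^2\mu_E(B,S),
\end{align*}
which bounds $m_L$ by a weighted average of the quantities $e_C/|C|$. For the lower bound on $M_L$, I would use $g=1_S-\mu(S)1$, or a combination of $1_S$ and $1_U$. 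The value $\ang{L_\G g,g}=\mu_E(S,X\setminus S)$ is at least $\mu_E(S,U)$. Since every odd component sends at least (roughly) $d$ edges into $S$ and $\mu(O)\geq\mu(S)$, this gives $M_L\gtrsim d\mu(O)/(\mu(S)(1-\mu(S)))$. Combining the two estimates with $2m_L\geq M_L$ should yield the contradiction, as in the finite case. Ergodicity is used to rule out degenerate situations such as $\mu(S)=1$ or $U$ being null, and to exclude invariant pieces where $L_\G$ could have $0$ in its spectrum on $L^2_0$.

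The main obstacle is reproducing the finite case analysis: the classical proof distinguishes whether some odd component is small (size $<d$) or all are large, and uses the integrality of the counts. Measurably, the counts become measures that need not be rationally related. The selection of $A$ and $B$ must also be Borel and balanced. I would handle this by partitioning $U$ by component size (countably many Borel pieces) and choosing the pieces on which the pointwise inequality $e_C\geq c(d-c+1)$ is tight enough. A strictness issue arises as well, from ``$\geq$'' versus ``$>$'' in the Tutte condition. I expect it to be absorbed by a small perturbation of the test functions using the spectral gap that $2m_L\geq M_L>0$ implicitly provides.
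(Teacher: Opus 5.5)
Your plan has a genuine gap: your only upper bound on $m_L$ cannot handle odd components that are mostly singletons, and that regime needs the two nontrivial ingredients of the paper's proof.

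\textbf{The singleton regime.} Your bound comes from $f=\mu(B)1_A-\mu(A)1_B$ with $A,B$ unions of odd components. Suppose the finite odd components of $\G\rest(X\setminus S)$ are singletons. Then $U$ is $\G$-independent and each $x\in U$ has all $d$ neighbours in $S$. So $\mu_E(A,S)=d\mu(A)$, $\mu_E(B,S)=d\mu(B)$, and the Rayleigh quotient of $f$ is exactly $d$. Since $M_L\le 2d$ always (because $L_\G=dI-T_\G$ and $\norm{T_\G}\le d$), no lower bound on $M_L$ can then contradict $2m_L\ge M_L$. More generally, with a balanced split your two estimates contradict $2m_L\ge M_L$ only when $\mu(U)>2\mu(S)(1-\mu(S))$. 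That fails once singletons carry most of $U$, and this regime is unavoidable (take $S=\cN(A_0)$ for an independent $A_0$ with $\mu(\cN(A_0))$ close to $\mu(A_0)$). Separately, components of size $>d$ need not send ``roughly $d$'' edges to $S$: there $dc-e_C=2|E(C)|$ only gives $e_C\ge 1$ or $2$.

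\textbf{What is missing.} Two things are needed.
\begin{enumerate}
\item You must use the remainder $R=X\setminus(S\cup U)$, which has no edges to $U$, and use $m_L$ and $M_L$ \emph{jointly}. The paper does this with the Haemers-type inequality, Proposition~\ref{prop:XYZ}. It is proved by interlacing (Lemma~\ref{lem:interlacing}) for the doubled operator with $L_\G-aI$ off the diagonal, and applied to $Y,Z$ splitting $X\setminus A$ with $Y\supseteq X\setminus(A\cup\cO_A)$.
\item You need a separate argument for the near-bipartite case: $R$ negligible and $U$ independent of measure close to $\frac12$. Any argument must use nonatomicity here. For $d\ge2$, $K_{d,d}$ with uniform measure has $m_L=d$ and $M_L=2d$, yet taking $A$ to be one side gives $\nu_A(\cO_A)=\mu(A)$. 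The paper excludes this through Claim~\ref{claim: c}, which rests on $i_\mu(\G)<\frac12$ (Lemma~\ref{lem:2mMindependence}: Hoffman's bound plus an irrationality argument needing $\mu$ nonatomic). Your plan uses ergodicity only for degeneracies and has no step that could exclude this configuration. Incidentally, the same example shows the atomic case cannot simply be delegated to the finite Brouwer--Haemers theorem: it yields a perfect matching, not a uniform $c<1$.
\end{enumerate}

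\textbf{The definition.} You have also misread Definition~\ref{defn: tutte}. It asks for one $c<1$ with $\nu_A(\cO_A)\le c\mu(A)$ for every $A$. Its failure gives, for each $c<1$, some $A$ with $\nu_A(\cO_A)>c\mu(A)$, and the ratio may stay below $1$. So starting from $\mu(O)\ge\mu(S)$ and absorbing strictness by perturbation proves only a weaker, non-uniform statement. You need estimates with an explicit margin in $c$. The paper obtains this margin in two steps:
\begin{enumerate}
\item Claim~\ref{claim: c} lets $c\to1$, which produces $A$ with $\nu_A(\cO_A)>c\mu(A)$, $c>\frac12$ and $\mu(A)<\frac12$.
\item In each of the cases $\mu(A)\ge\frac1{1+2c}$ and $\mu(A)<\frac1{1+2c}$, Proposition~\ref{prop:XYZ} (with $Y,Z$ built via Lemma~\ref{lem:OAmu}) gives $\frac{M_L-m_L}{M_L+m_L}>\frac13$, i.e.\ $2m_L<M_L$.
\end{enumerate}
The small/large-component case analysis you recall is not what drives this Laplacian criterion. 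Apart from the singleton/non-singleton split in Claim~\ref{claim: c}, component sizes play no role; the work is done by Haemers' inequality.
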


While the main structure of the proof of Theorem~\ref{thm:matching_intro} is similar to that of the proof in \cite{BH05}, the computations become considerably more involved and several new ideas are needed. We defer the details to Section~\ref{sec:matchings}.

Finally, in Section~\ref{sec:examples}, we discuss some classes of examples of bounded-degree Borel pmp graphs for which (properties of) their spectra can be computed exactly. These examples arise as \emph{local-global limits} of sequences of finite graphs. Local-global convergence was first introduced and studied in \cite{BR11, HLS14} and has since garnered considerable interest. We will show that in many instances, one can compute the spectrum of a graph that arises as a local-global limit when given the spectra of the graphs that converge to it (see Theorem~\ref{thm: locglob spectrum}). This result yields a variety of graphs with prescribed spectra.

\subsection*{Organization of the paper} Besides the Introduction, this paper consists of seven other sections. In Section~\ref{sec:prelim}, we establish some notation and definitions, and we prove several elementary lemmas to be used throughout the paper. In Section~\ref{sec:TLoperators}, we define the adjacency and Laplacian operators and deduce some initial properties. In Section~\ref{sec:bipartite}, we study bipartiteness and prove Theorem~\ref{thm:bipartite_intro}. In Sections~\ref{sec:upperbounds} and \ref{sec:lowerbounds}, we investigate upper and lower bounds, respectively, on the (approximate) measurable chromatic number, and we prove Theorems~\ref{thm:wilf_intro}, \ref{thm:functions_intro}, and~\ref{thm:hoffman_intro}. In Section~\ref{sec:matchings}, we study matchings and prove Theorem~\ref{thm:matching_intro}. Finally, in Section~\ref{sec:examples}, we consider continuity of the spectrum with respect to local-global convergence and consider some (classes of) examples.

\subsection*{Acknowledgments} We thank Andrew Marks and Riley Thornton for helpful discussions and in particular for helping us to clarify some of the material in Section~\ref{sec:examples}. We also thank Anton Bernshteyn for pointing out an improvement on Lemma~\ref{lem: list coloring from r} (see Remark~\ref{rem: anton}). Furthermore, we thank Filippo Calderoni and Andrew Marks for providing useful comments on a first draft of the~paper.

\section{Preliminaries}\label{sec:prelim}

\subsection{Borel graphs and descriptive combinatorics}\label{subsec:BorelGraphs}

Throughout, let $(X, \mu)$ be a standard probability space, and let $\G = (X, E(\G))$ be a \emph{Borel graph} on $X$, i.e., a (simple, undirected) graph on $X$ such that $E(\G) \sub X^2$ is Borel. We write $x \sim y$ when $x$ and $y$ are adjacent in $\G$. The \emph{degree} of a vertex $x \in X$, denoted $\deg_{\G}(x)$ or simply $\deg(x)$ when $\G$ is clear, is the cardinality of the set $\{y \in X : y \sim x \}$. We say that $\G$ is \emph{locally finite} if $\deg(x)$ is finite for each $x \in X$, that $\G$ has \emph{bounded degree} if there is $d \in \N$ such that $\deg(x) \leq d$ for each $x \in X$, and that $\G$ is \emph{$d$-regular} if there is $d \in \N$ such that $\deg(x) = d$ for each $x \in X$. Note that since we work throughout in the measurable setting, we always implicitly discard sets of measure zero; for instance, we say $\G$ is locally finite if it is locally finite on a conull set. The \emph{minimum degree} of $\G$ is given by $\min\{d : \mu(\{x \in X : \deg(x) = d \}) > 0\}$. If $\G$ has bounded degree, then its \emph{maximum degree} is given by $\max\{d:\mu(\{x\in X: \deg(x) = d\}) >0\}$. 

For a subset $A\sub X$, we write $\cN(A) = \{x\in X : x \text{ has a neighbor in $A$}\}$; note that $\cN(A)$ could include vertices from $A$ if they are adjacent to other vertices in $A$. We also define the \textit{closed neighborhood} (or closed $1$-ball) $B_1(A)$ of $A$ as $B_1(A) = A\cup \cN(A) = \{x\in X: d(x,A)\leq 1\}$. A subset $A\sub X$ is called \emph{$\G$-invariant} if $A=B_1(A)$, i.e., $A$ contains all its neighbors. Finally, we note that, given a subset $A\sub X$, we may consider the \emph{subgraph $\G \rest A$ of $\G$ induced by $A$}, i.e., the graph on $A$ whose edge relation $E(\G \rest A)$ is defined by
$$E(\G \rest A) = E(\G) \cap A^2.$$
It is easy to see that, when $A \sub X$ is Borel, then $\G \rest A$ is Borel. Additionally, we often wish to consider subgraphs induced by measurable sets $A \sub X$ \emph{that are not necessarily Borel}, in which case the set $A$ (endowed with the subspace topology) may fail to be a standard Borel space. However, it is well-known that, if $A$ is measurable, then $A$ is the union of a Borel set $B$ with a null set. Since all our results concern the measurable setting where null sets may be discarded, we write $\cG \rest A$ in this situation for convenience, but in actuality we implicitly consider $\cG \rest B$.

A set $A \sub X$ is \emph{($\G$-)independent} if, whenever $x, y \in A$ are distinct, $x \not \sim y$. Let $Z$ be a set; a map $c : X \to Z$ is a \emph{($Z$-)coloring} of $\G$ if $c^{-1}(\{z\})$ is $\G$-independent for each $z \in Z$. The \emph{chromatic number} of $\G$, denoted $\chi(\G)$, is the least cardinal $\kappa$ for which there is a $Z$-coloring of $\G$ for some set $Z$ of cardinality $\kappa$. If $\vert Z \vert = k \in \N$ and $c$ is a $Z$-coloring of $\G$, we refer to $c$ as a \emph{$k$-coloring} of $\G$.

As discussed in the introduction, in the descriptive combinatorics setting, definability or measurability restrictions are typically placed on colorings. Specifically, the \emph{Borel chromatic number} of $\G$, denoted $\chi_B(\G)$, is the least $k \in \N$ for which there is a Borel $k$-coloring of $\G$ (if such a $k$ exists); furthermore, the \emph{$\mu$-measurable chromatic number} of $\G$, denoted $\chi_{\mu}(\G)$, is the least $k \in \N$ for which there is a $\mu$-measurable $k$-coloring of $\G$ (if such a $k$ exists). Finally, the \emph{approximate $\mu$-measurable chromatic number} of $\G$, denoted $\chi_{\mu}^{\ap}(\G)$, is the least $k \in \N$ for which, for each $\eps > 0$, there is a Borel set $A \sub X$ with $\mu(A) > 1 - \eps$ such that $\G \rest A$ admits a $\mu$-measurable $k$-coloring (if such a $k$ exists). 

Clearly 
$$\chi(\G) \leq \chi_{\mu}(\G) \leq \chi_B(\G) \text{ and } \chi_{\mu}^{\ap}(\G) \leq \chi_{\mu}(\G).$$
We refer to \cite{km2020} for additional results.

We write $E_{\G}$ for the \emph{connectedness relation} of $\G$, i.e., the equivalence relation whose classes are the connected components of $\G$. When $\G$ is locally finite, $E_{\G}$ is a \emph{countable Borel equivalence relation}, i.e., a Borel equivalence relation whose classes are countable. We usually write $x\mathrel{E}_{\G}y$ instead of $(x,y)\in E_{\G}$. We denote by $[x]_{E_{\G}}$ the equivalence class of $x$ and by $[A]_{E_{\G}}$ the saturation of $A\sub X$, i.e., the union of all the classes of points in $A$. 

By considering this associated Borel equivalence relation, we can make sense of various properties for $\G$ from the theory of Borel equivalence relations. For instance, we say $\G$ is \emph{ergodic} if $E_{\G}$ is ergodic. Additionally, the graphs under consideration here are graphs that interact nicely with the probability measure $\mu$ on $X$. We say that the graph $\G$ is \emph{($\mu$-)probability measure preserving}, or \emph{pmp} for short, if $E_{\G}$ is pmp, i.e., whenever $\theta$ is a partial Borel bijection on $X$ with $\text{graph}(\theta) \sub E_{\G}$, $\theta$ preserves $\mu$ in the sense that for any $\mu$-measurable set $A \sub X$, $\mu(A) = \mu(\theta^{-1}(A))$ (equivalently, since $\theta$ is injective, $\mu(A) = \mu(\theta(A))$). Equivalently, $\G$ is pmp if $E(\G)$ can be generated by countably many measure-preserving Borel involutions.
As for subgraphs, we note that, if $\G$ is pmp, then any subgraph of $\G$ induced by a Borel set is also pmp with respect to the renormalized measure.

We finish this subsection by recording the easy observation that \textit{invariance} of sets as defined above for graphs coincides with the usual notion of invariant sets for Borel equivalence relations. We will use this fact without mention.

\begin{lem}
    A measurable set $A\sub X$ is $\G$-invariant if and only if $A$ is $E_\G$-invariant. In particular, $\G$ is ergodic if and only if every $\G$-invariant set $A$ satisfies $\mu(A)\in\{0,1\}$.
\end{lem}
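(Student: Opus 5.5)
The plan is to prove the two inclusions directly from the definitions, using only that $E_\G$-classes are exactly the connected components of $\G$. Recall that $A$ is $\G$-invariant when $A = B_1(A)$, i.e.\ whenever $x \in A$ and $y \sim x$ we have $y \in A$. It is $E_\G$-invariant when $A = [A]_{E_\G}$, i.e.\ whenever $x \in A$ and $x \mathrel{E}_\G y$ we have $y \in A$.

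First, suppose $A$ is $E_\G$-invariant, let $x \in A$, and let $y \sim x$. Adjacent vertices lie in the same connected component, so $x \mathrel{E}_\G y$, hence $y \in A$. Thus $\cN(A) \sub A$, so $B_1(A) = A$.

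Conversely, suppose $B_1(A) = A$, let $x \in A$, and let $y \mathrel{E}_\G x$. By definition of $E_\G$ there is a finite path $x = x_0 \sim x_1 \sim \cdots \sim x_n = y$. Induct on $i$: if $x_i \in A$, then $x_{i+1} \in \cN(A) \sub B_1(A) = A$. Therefore $y \in A$, and $A = [A]_{E_\G}$.

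For the ergodicity statement, recall that $\G$ is ergodic when $E_\G$ is, i.e.\ every measurable $E_\G$-invariant set is null or conull. By the equivalence just shown, this is the same as every measurable $\G$-invariant set having measure $0$ or $1$.

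The only subtlety, and the closest thing to an obstacle, is the convention of discarding null sets. If invariance is meant only up to null sets (e.g.\ $\mu(B_1(A) \triangle A) = 0$), the path argument needs the observation that saturations of null sets are null in the pmp setting. This holds because $E_\G$ is a countable Borel equivalence relation generated by countably many measure-preserving involutions, so $[N]_{E_\G}$ is a countable union of images of a null set $N$. Removing the saturation of the exceptional null set then reduces to the exact argument above.
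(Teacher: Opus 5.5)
Your proof is correct and matches the paper's argument: the forward direction uses that neighbors lie in the same $E_\G$-class (the paper phrases this as $A \sub B_1(A) \sub [A]_{E_\G}$), and your induction along a path is the paper's induction on $d(x,A)$. Your extra remark on invariance modulo null sets, using that $[N]_{E_\G}$ is null in the pmp setting, is correct and makes explicit what the paper leaves to its standing convention of discarding null sets.
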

\begin{proof}
First, we note that if $A\sub X$ is $E_\G$-invariant, i.e., $A=[A]_{E_{\G}}$, then since $A\sub B_1(A)\sub [A]_{E_{\G}}$, we immediately have $A=B_1(A)$. In other words $E_\G$-invariant sets are $\G$-invariant.

Conversely, assume $A\sub X$ is $\G$-invariant, i.e., $A=B_1(A)$. An easy induction argument on the distance to $A$ then shows that, whenever $x \in X$ and $d(x, A) < \infty$, we have $x \in A$. Since $d(x,A)<\infty$ if and only if $x\in [A]_{E_{\G}}$, we conclude that $\G$-invariant sets are $E_\G$-invariant.
\end{proof}

\subsection{Bounded operators on Hilbert spaces}\label{subsec:operators}

In this paper, we study certain operators associated to bounded-degree Borel pmp graphs. In this subsection we collect some basic facts that we use throughout; these can be found in any standard reference on bounded operators on Hilbert spaces.

Let $H$ be a \emph{complex Hilbert space}, i.e., a complex vector space equipped with an inner product $\langle \cdot,\cdot\rangle: H\times H \to \C$ such that $H$ is complete for the norm given by $\norm{\xi} = \sqrt{\langle \xi,\xi\rangle}$ for all $\xi\in H$. We denote by $B(H)$ the algebra of \emph{bounded operators} on $H$, i.e., the linear operators with finite operator norm, where the \textit{operator norm} of $T\in B(H)$ is defined by $\norm{T} = \sup\{\norm{T\xi} : \xi\in H, \norm{\xi} = 1\}$.\footnote{Equivalently, it is well-known that these are exactly the operators that are continuous as functions on $H$ equipped with the norm-topology coming from the inner product.} For $T\in B(H)$, we denote by $T^*\in B(H)$ its \textit{adjoint}, i.e., the operator satisfying $\langle T\xi,\eta\rangle = \langle \xi, T^*\eta\rangle$ for all $\xi, \eta \in H$.

We start with establishing some notation and terminology regarding bounded operators, as well as recalling standard facts. For $T\in B(H)$, the \textit{spectrum} of $T$, denoted $\sigma(T)$, is the set
$$
\sigma(T) = \{\lambda \in \C : T - \lambda I \text{ is not invertible}\}.
$$
It is well-known that the set $\sigma(T)$ is always a non-empty compact subset of $\C$. The \textit{spectral radius} of $T$, denoted $\rho(T)$, is defined by
$$
\rho(T) = \sup\{\abs{\lambda} : \lambda \in \sigma(T) \} = \max\{\abs{\lambda} : \lambda \in \sigma(T) \}.
$$
An operator $T\in B(H)$ is \textit{self-adjoint} if $T = T^*$. Most of the operators we encounter are self-adjoint, and we collect some useful properties about their spectra next.

\begin{lem}
    Let $T\in B(H)$ be a self-adjoint operator. Then the following hold:
    \begin{enumerate}[itemsep=3pt]
        \item $\sigma(T)\sub \R$.
        \item $\lambda\in \sigma(T)$ if and only if there exists a sequence $(\xi_n)_{n \in \N}$ of unit vectors in $H$ such that $$\norm{T\xi_n - \lambda\xi_n}\to 0.\footnote{In other words, $\lambda$ is an approximate eigenvalue. Note that this includes the special case when $\lambda$ is an eigenvalue, in which case we can take the constant sequence $\xi_n=\xi$ where $\xi$ is an eigenvector for $\lambda$. In operator theory terms, this means that the spectrum is the union of the discrete and continuous spectra, i.e., the residual spectrum is empty.}$$
    \end{enumerate}
    Denoting $M(T) = \sup_{f \neq 0} \frac{\ang{Tf, f}}{\ang{f, f}}$ and $m(T) = \inf_{f \neq 0} \frac{\ang{Tf, f}}{\ang{f, f}}$, the following also hold:
    \begin{enumerate}[itemsep=3pt]
        \setcounter{enumi}{2}
        \item $\{m(T), M(T)\}\sub \sigma(T) \sub [m(T), M(T)]$.
        \item $M(T) = \sup_{f, \norm{f} = 1} \langle Tf,f\rangle$ and $m(T) = \inf_{f, \norm{f} = 1} \langle Tf,f\rangle$.
        \item $\rho(T) = \max\{\abs{m(T)}, \abs{M(T)}\} = \sup_{f\neq 0} \frac{\abs{\ang{Tf,f}}}{\ang{f,f}}$.
    \end{enumerate}
\end{lem}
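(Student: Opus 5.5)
These are standard facts about self-adjoint operators, and I would prove them in the order (1)/(2), then (4), then (3), then (5). Write $W(T)=\{\ang{Tf,f}:\norm{f}=1\}$. Since $\ang{Tf,f}=\ang{f,Tf}=\overline{\ang{Tf,f}}$, the set $W(T)$ is contained in $\R$. Item (4) is just the normalization $f\mapsto f/\norm{f}$. So $m(T)$ and $M(T)$ are the infimum and supremum of $W(T)$, and both are finite because $\abs{\ang{Tf,f}}\le\norm{T}$.

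The key step for (1), (2) and the second inclusion of (3) is a lower bound. Suppose $\lambda\notin[m(T),M(T)]$, and let $\delta>0$ be the distance from $\lambda$ to $[m(T),M(T)]$. For a unit vector $f$ we have
$$\norm{(T-\lambda)f}\ge\abs{\ang{(T-\lambda)f,f}}=\abs{\ang{Tf,f}-\lambda}\ge\delta,$$
so $T-\lambda$ is bounded below. Hence $T-\lambda$ is injective with closed range. Its range is also dense, since $(\mathrm{ran}(T-\lambda))^\perp=\ker(T-\bar\lambda)$ and the same estimate applies to $\bar\lambda$. Therefore $T-\lambda$ is invertible, which gives $\sigma(T)\sub[m(T),M(T)]\sub\R$ and so (1).

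For (2), first suppose there is no approximate-eigenvector sequence for $\lambda$. Then $T-\lambda$ is bounded below, and the same argument gives invertibility; here $\lambda$ is real, so the orthogonal complement of the range is $\ker(T-\lambda)=0$. Conversely, if $\norm{(T-\lambda)\xi_n}\to0$ with $\norm{\xi_n}=1$, then $T-\lambda$ cannot have a bounded inverse.

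The main obstacle is the first inclusion of (3), namely showing $M(T)\in\sigma(T)$; the case of $m(T)$ follows by applying this to $-T$. I would set $S=M(T)I-T$, which is positive, and use the Cauchy–Schwarz inequality for the positive semidefinite form $[f,g]=\ang{Sf,g}$. This gives
$$\norm{Sf}^2=[f,Sf]\le[f,f]^{1/2}[Sf,Sf]^{1/2}\le\ang{Sf,f}^{1/2}\norm{S}^{3/2}\norm{f}.$$
Choose unit vectors $f_n$ with $\ang{Tf_n,f_n}\to M(T)$. Then $\ang{Sf_n,f_n}\to0$, so $\norm{Sf_n}\to0$. By (2), $M(T)\in\sigma(T)$.

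For (5), items (1) and (3) combined with $\{m(T),M(T)\}\sub\sigma(T)\sub[m(T),M(T)]$ give $\rho(T)=\max\{\abs{m(T)},\abs{M(T)}\}$. Finally, $\sup_{f\ne0}\abs{\ang{Tf,f}}/\ang{f,f}$ is the supremum of $\abs{W(T)}$. Since $W(T)\sub[m(T),M(T)]$ and its closure contains both endpoints, this supremum also equals $\max\{\abs{m(T)},\abs{M(T)}\}$.
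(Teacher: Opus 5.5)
Your proof is correct and complete. The paper gives no proof of this lemma; it only notes that these facts ``can be found in any standard reference'', so there is no argument of the paper's to compare yours against.

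What you wrote is the standard textbook argument:
\begin{itemize}
\item The estimate $\norm{(T-\lambda)f}\ge\abs{\ang{Tf,f}-\lambda}$ for unit $f$ gives $\sigma(T)\sub[m(T),M(T)]\sub\R$.
\item For (2), you use that $T-\lambda$ is bounded below, hence has closed range, and that its range is dense because $\ker(T-\bar\lambda)=0$.
\item The generalized Cauchy--Schwarz inequality for the positive operator $M(T)I-T$ shows that $M(T)$ is an approximate eigenvalue; applying this to $-T$ gives $m(T)$.
\end{itemize}

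One point deserves a word of justification. In the nontrivial direction of (2), your remark ``here $\lambda$ is real'' holds because a nonreal $\lambda$ already lies outside $\sigma(T)$ by (1). Alternatively, your $\bar\lambda$ argument from the first step applies verbatim.

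A side benefit of your route is that it is entirely elementary. A common alternative proves $M(T)\in\sigma(T)$ and item (5) by using $\rho(S)=\norm{S}=\sup_{\norm{f}=1}\abs{\ang{Sf,f}}$ for self-adjoint $S$, applied to shifts of $T$. That route needs the spectral radius formula, which yours avoids.
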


The following is also well-known:

\begin{prop}\label{prop:isolated}
    Let $\lambda$ be an isolated point in the spectrum of a self-adjoint operator $T$ on a Hilbert space $H$. Then:
    \begin{enumerate}
        \item $\lambda$ is an eigenvalue of $T$.
        \item There exists $c>0$ such that $\norm{(T-\lambda I)f}\geq c\norm{f}$ for all $f\in \ker(T-\lambda I)^\perp$.
    \end{enumerate}
    In particular, it follows from (2) that $\lambda\notin \sigma(T|_{\ker(T-\lambda I)^\perp})$.
\end{prop}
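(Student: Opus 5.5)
The plan is to use the spectral theorem for the self-adjoint operator $T$: there is a projection-valued measure $E$ on $\sigma(T)\sub\R$ with $T=\int \lambda\, dE(\lambda)$. Since $\lambda$ is isolated, we can choose $\delta>0$ such that $(\lambda-\delta,\lambda+\delta)\cap\sigma(T)=\{\lambda\}$. This gives a splitting
$$\sigma(T)=\{\lambda\}\sqcup K,$$
where $K=\sigma(T)\setminus\{\lambda\}$ is closed and satisfies $\operatorname{dist}(\lambda,K)\ge\delta$. Set $P=E(\{\lambda\})$ and $Q=E(K)=I-P$. Both are orthogonal projections commuting with $T$, so $H=PH\oplus QH$ is a reducing decomposition. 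Moreover $T|_{PH}=\lambda I$, because $\int_{\{\lambda\}} s\,dE(s)=\lambda P$.

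For (1), we need $P\neq 0$. If $P=0$, then $E$ is supported on $K$, and the spectrum of $T=\int_K s\,dE(s)$ would lie inside the closed set $K$, which does not contain $\lambda$. That contradicts $\lambda\in\sigma(T)$. So $P\neq0$, and every nonzero vector of $PH$ is an eigenvector for $\lambda$. This is the step needing the most care: it uses the standard fact that the spectrum equals the support of $E$. Alternatively, take an approximate eigenvector sequence $\xi_n$ as in the preceding lemma. Then
$$\norm{(T-\lambda)Q\xi_n}^2=\int_K \abs{s-\lambda}^2\, d\ang{E(s)\xi_n,\xi_n}\ge \delta^2\norm{Q\xi_n}^2,$$
while the left side is at most $\norm{(T-\lambda)\xi_n}^2\to0$. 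Hence $\norm{Q\xi_n}\to0$, so $\norm{P\xi_n}\to1$ and $P\neq 0$.

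For (2), we identify $\ker(T-\lambda I)$ with $PH$. We already have $PH\sub\ker(T-\lambda I)$. Conversely, if $(T-\lambda)\xi=0$, the same integral computation applied to $Q\xi$ gives $0\ge \delta^2\norm{Q\xi}^2$, so $\xi=P\xi$. Therefore $\ker(T-\lambda I)^\perp=QH$. For any $f\in QH$ the computation above gives $\norm{(T-\lambda I)f}\ge\delta\norm{f}$, so we may take $c=\delta$.

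Finally, $T|_{QH}-\lambda I$ is bounded below on $QH$, hence injective with closed range. It is also self-adjoint on $QH$, since $QH$ is reducing, so its range is dense: the orthogonal complement of the range is its kernel, which is zero. It is therefore invertible on $QH$, so $\lambda\notin\sigma(T|_{\ker(T-\lambda I)^\perp})$.

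As an alternative avoiding the spectral measure, one could use continuous functional calculus with the function $g$ equal to $1$ near $\lambda$ and $0$ on $K$, which is continuous on $\sigma(T)$. Then $P=g(T)$ and the estimates follow from $\abs{s-\lambda}\ge\delta$ on $K$.
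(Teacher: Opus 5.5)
The paper does not prove this proposition. It is introduced as ``well-known'' and quoted without argument, so there is no proof of the paper's to compare against. Your spectral-theorem argument stands on its own, and it is a correct and complete proof.

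The steps check out:
\begin{itemize}
\item $K=\sigma(T)\setminus(\lambda-\delta,\lambda+\delta)$ is closed.
\item The identity $(T-\lambda I)Q=\int(s-\lambda)\chi_K(s)\,dE(s)$ gives $\norm{(T-\lambda I)Qf}^2\geq\delta^2\norm{Qf}^2$.
\item Since $Q$ commutes with $T$, you get $\norm{(T-\lambda I)Q\xi_n}=\norm{Q(T-\lambda I)\xi_n}\leq\norm{(T-\lambda I)\xi_n}$.
\item A self-adjoint operator on the reducing subspace $QH$ that is bounded below is invertible.
\end{itemize}

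Your second argument for $P\neq 0$ is the better of the two. It uses only the approximate-eigenvector characterization of the spectrum already recorded in the paper. It does not need the identification of $\sigma(T)$ with the support of $E$.

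Your route also gives more than the bare statement. You get $\ker(T-\lambda I)=E(\{\lambda\})H$, and the constant in (2) can be taken to be $c=\operatorname{dist}(\lambda,\sigma(T)\setminus\{\lambda\})$. The reducing splitting $H=PH\oplus QH$ gives $\sigma(T|_{QH})\sub K$ directly, not just $\lambda\notin\sigma(T|_{QH})$. That stronger inclusion is what the paper actually uses later, in Corollary~\ref{cor:largestisolated} and in the spectral-gap step of Claim~\ref{claim: constant}.

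Your closing continuous-functional-calculus remark is only a sketch. It is easily completed, as follows:
\begin{itemize}
\item $P=g(T)$ is a nonzero projection, because $g=\bar g=g^2$ on $\sigma(T)$ and $\norm{g(T)}=\sup_{\sigma(T)}\abs{g}=1$.
\item $(T-\lambda I)P=0$, because $(s-\lambda)g(s)=0$ on $\sigma(T)$.
\item Set $h(s)=(1-g(s))/(s-\lambda)$. It is continuous on $\sigma(T)$ with $\sup_{\sigma(T)}\abs{h}\leq 1/\delta$, and $h(T)(T-\lambda I)=I-P$. This yields the lower bound on $(I-P)H$, and also $\ker(T-\lambda I)=PH$.
\end{itemize}
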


\begin{cor}\label{cor:largestisolated}
    Let $T\in B(H)$ be a self-adjoint operator. Assume $\sigma(T)\sub (-\infty, \lambda']\cup \{\lambda\}$ for some real numbers $\lambda' < \lambda$. Then 
    $$
    \sup_{0\neq f\in \ker(T-\lambda I)^\perp} \frac{\langle Tf,f\rangle}{\langle f,f\rangle} \leq \lambda'.
    $$
\end{cor}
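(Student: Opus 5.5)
Write $K = \ker(T-\lambda I)$. The plan is to show that the restriction $S = T|_{K^\perp}$ is a self-adjoint operator on $K^\perp$ whose spectrum lies in $(-\infty,\lambda']$. The conclusion then follows by applying part (3) of the lemma on self-adjoint operators to $S$: the supremum in the statement is exactly $M(S)$, and $M(S)\in\sigma(S)$. We may assume $K^\perp\neq\{0\}$, since otherwise the supremum is vacuous.

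First I would check that $K^\perp$ is $T$-invariant. If $f\in K^\perp$ and $k\in K$, then
\[
\ang{Tf,k}=\ang{f,Tk}=\lambda\ang{f,k}=0
\]
by self-adjointness, so $Tf\in K^\perp$. Hence $S$ is a bounded self-adjoint operator on the Hilbert space $K^\perp$. For $f\in K^\perp$ we have $\ang{Sf,f}=\ang{Tf,f}$, so the supremum in the statement equals $M(S)$. Since $M(S)\in\sigma(S)$, it suffices to prove $\sigma(S)\sub(-\infty,\lambda']$.

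Next, I would show $\sigma(S)\sub\sigma(T)$. Take $\nu\in\sigma(S)$. By part (2) of the lemma applied to $S$, there are unit vectors $\xi_n\in K^\perp$ with $\norm{S\xi_n-\nu\xi_n}\to0$. These are also unit vectors of $H$ with $\norm{T\xi_n-\nu\xi_n}\to0$, so $\nu\in\sigma(T)$ by part (2) again. By hypothesis, then, $\sigma(S)\sub(-\infty,\lambda']\cup\{\lambda\}$.

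The main step is to exclude $\lambda$ from $\sigma(S)$. There are two cases.
\begin{enumerate}
\item If $\lambda\notin\sigma(T)$, then $\lambda\notin\sigma(S)$ by the inclusion just shown, and we are done.
\item If $\lambda\in\sigma(T)$, then $\lambda$ is isolated in $\sigma(T)$, because $\lambda'<\lambda$ and the rest of the spectrum lies in $(-\infty,\lambda']$. Proposition~\ref{prop:isolated} then gives exactly $\lambda\notin\sigma(T|_{K^\perp})=\sigma(S)$.
\end{enumerate}
Concretely, in the second case part (2) of that proposition gives $\norm{(S-\lambda I)f}\geq c\norm{f}$ on $K^\perp$, which rules out approximate eigenvectors for $\lambda$. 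Therefore $\sigma(S)\sub(-\infty,\lambda']$, and so $M(S)\leq\lambda'$, which is the desired inequality.
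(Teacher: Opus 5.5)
Your proof is correct and is essentially the argument the paper has in mind. The paper states this corollary without proof as an immediate consequence of Proposition~\ref{prop:isolated}: use $\lambda\notin\sigma(T|_{\ker(T-\lambda I)^\perp})$ together with the fact that the supremum of the Rayleigh quotient of the restriction is its top spectral value. You simply fill in the routine details the paper leaves implicit: invariance of $\ker(T-\lambda I)^\perp$, the inclusion $\sigma(T|_{\ker(T-\lambda I)^\perp})\subseteq\sigma(T)$, and the case $\lambda\notin\sigma(T)$.
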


Although interlacing of eigenvalues as in for instance \cite{Hae95} does not make sense in general in the infinite-dimensional setting, we have the following special case, which does go through and suffices for our purposes (see Proposition~\ref{prop:XYZ}):

\begin{lem}\label{lem:interlacing}
    Let $T\in B(H)$ be a self-adjoint operator, let $K$ be a Hilbert space, and let $S:K\to H$ be an isometry. Assume the following:
    \begin{enumerate}
        \item $\sigma(T)\sub (-\infty,\kappa_2]\cup \{\kappa_1\}$ for some $\kappa_2<\kappa_1$,
        \item $\sigma(S^*TS) \sub (-\infty,\lambda_3]\cup \{\lambda_2\}\cup \{\lambda_1\}$ for some $\lambda_3 < \lambda_2 < \lambda_1$, and
        \item there is a vector $v\in H$ such that the eigenspace associated to $\kappa_1$ for $T$ is $\C v$.
    \end{enumerate}
    Then $\lambda_2 \leq \kappa_2$.
\end{lem}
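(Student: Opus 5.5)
Since $S$ is an isometry, $S^*TS$ is a self-adjoint operator on $K$, and $\sigma(S^*TS)$ is its spectrum. The plan is a min-max style argument. We produce a two-dimensional subspace of $K$ on which the Rayleigh quotient of $S^*TS$ is at least $\lambda_2$. Inside it we find a nonzero vector whose image under $S$ is orthogonal to $v$. Then Corollary~\ref{cor:largestisolated} bounds its Rayleigh quotient for $T$ by $\kappa_2$.

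\emph{Step 1.} By assumption (2), both $\lambda_1$ and $\lambda_2$ are isolated points of $\sigma(S^*TS)$. By Proposition~\ref{prop:isolated}(1), each is an eigenvalue, so we pick unit eigenvectors $u_1, u_2 \in K$ with $S^*TSu_i = \lambda_i u_i$. Since $S^*TS$ is self-adjoint and $\lambda_1 \neq \lambda_2$, the vectors $u_1, u_2$ are orthogonal. For $w = a u_1 + b u_2$ with $(a,b) \neq 0$ we then get
$$
\ang{S^*TS w, w} = \lambda_1\abs{a}^2 + \lambda_2 \abs{b}^2 \geq \lambda_2 \norm{w}^2 .
$$

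\emph{Step 2.} The linear functional $w \mapsto \ang{Sw, v}$ on the two-dimensional space $\mathrm{span}\{u_1,u_2\}$ has a nontrivial kernel. We pick a nonzero $w$ in this kernel and set $f = Sw$. Then $f \neq 0$ and $\norm{f} = \norm{w}$ because $S$ is an isometry. Also $f \perp v$, and by assumption (3) this means $f \in \ker(T-\kappa_1 I)^\perp$. Assumption (1) lets us apply Corollary~\ref{cor:largestisolated} with $\lambda = \kappa_1$ and $\lambda' = \kappa_2$, which gives $\ang{Tf,f} \leq \kappa_2\norm{f}^2$.

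\emph{Step 3.} We combine the two bounds:
$$
\lambda_2\norm{w}^2 \leq \ang{S^*TSw,w} = \ang{TSw,Sw} = \ang{Tf,f} \leq \kappa_2\norm{f}^2 = \kappa_2\norm{w}^2 .
$$
Since $w \neq 0$, this yields $\lambda_2 \leq \kappa_2$.

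There is no serious obstacle. The one point to check carefully is that assumption (3) identifies the whole $\kappa_1$-eigenspace with $\C v$. This is what lets orthogonality to the single vector $v$ place $f$ in $\ker(T-\kappa_1 I)^\perp$, which is exactly the hypothesis Corollary~\ref{cor:largestisolated} needs.
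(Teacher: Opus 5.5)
Your proof is correct and is essentially the paper's: take eigenvectors for $\lambda_1,\lambda_2$, pick a nonzero $w$ in their span with $Sw\perp v$ (the paper phrases this as $w\perp S^*v$), and compare the lower bound $\lambda_2$ on the Rayleigh quotient with the upper bound $\kappa_2$ from Corollary~\ref{cor:largestisolated}. Like the paper, you tacitly read hypothesis (2) as also saying that $\lambda_1,\lambda_2$ actually belong to $\sigma(S^*TS)$. The lemma needs this to be true, and it holds in the application to the $4\times 4$ matrix $B$.
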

\begin{proof}
We write $\tilde T=S^*TS$. Let $\tilde v_1,\tilde v_2\in K$ be eigenvectors for $\tilde T$ for $\lambda_1$ and $\lambda_2$, respectively. Take any nonzero vector $\tilde w\in \mathrm{span}\{\tilde v_1, \tilde v_2\} \cap (\C S^*v)^\perp$. Then
$$
\langle S\tilde w, v\rangle = \langle \tilde w, S^*v\rangle = 0.
$$
In particular, it follows from Corollary~\ref{cor:largestisolated} that 
$$
\frac{\langle TS\tilde w, S\tilde w\rangle}{\langle S\tilde w, S\tilde w\rangle} \leq \kappa_2.
$$
Furthermore, since $\tilde w\in \mathrm{span}\{\tilde v_1,\tilde v_2\}$, we have  
$$
\frac{\langle TS\tilde w, S\tilde w\rangle}{\langle S\tilde w, S\tilde w\rangle} = \frac{\langle \tilde T\tilde w, \tilde w\rangle}{\langle \tilde w, \tilde w\rangle} \geq \lambda_2.
$$
This finishes the proof.
\end{proof}

Before continuing, we state one more basic result about the spectrum of an anti-diagonal self-adjoint operator for later reference.

\begin{lem}\label{lem:antidiagspectrum}
    Assume $T\in B(H)$ is a self-adjoint operator. Then the spectrum $\sigma(A)$ of the operator
    $$
    A = \begin{pmatrix}
        0 & T\\
        T & 0
    \end{pmatrix}
    $$
    on $B(H\oplus H)$ is equal to $\sigma(T)\cup -\sigma(T)$.
\end{lem}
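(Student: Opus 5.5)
The plan is to show both inclusions by exhibiting a unitary that diagonalizes $A$. Let $U$ on $H\oplus H$ be given by
$$
U = \frac{1}{\sqrt{2}}\begin{pmatrix} I & I\\ I & -I\end{pmatrix}.
$$
A direct block computation shows that $U$ is a self-adjoint unitary ($U^2 = I$). Moreover,
$$
UAU^* = \begin{pmatrix} T & 0\\ 0 & -T\end{pmatrix}.
$$
To check this, first compute $AU = \frac{1}{\sqrt2}\begin{pmatrix} T & -T\\ T & T\end{pmatrix}$. Multiplying on the left by $U$ then gives $\frac12\begin{pmatrix} 2T & 0\\ 0 & -2T\end{pmatrix}$. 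Since the spectrum is invariant under unitary conjugation, it suffices to compute the spectrum of the diagonal operator $T\oplus(-T)$.

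For a block-diagonal operator $B_1\oplus B_2$, we claim $\sigma(B_1\oplus B_2)=\sigma(B_1)\cup\sigma(B_2)$. If $\lambda$ lies in neither spectrum, then $(B_1-\lambda)^{-1}\oplus(B_2-\lambda)^{-1}$ is a bounded inverse of $B_1\oplus B_2-\lambda$. Conversely, suppose $B_1\oplus B_2-\lambda$ has a bounded inverse $R$. Then $R$ commutes with the projections onto the two summands, because these commute with $B_1\oplus B_2-\lambda$. Hence $R$ is block diagonal, and its diagonal blocks invert $B_i-\lambda$.

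In our self-adjoint situation there is an alternative argument for the converse direction using item (2) of the earlier lemma on self-adjoint operators. An approximate eigenvector sequence $\xi_n$ for $T$ at $\lambda$ yields $(\xi_n,0)$ for $T\oplus(-T)$ at $\lambda$. Similarly, one for $-T$ at $\lambda$ yields $(0,\xi_n)$. Finally, $\sigma(-T)=-\sigma(T)$, since $-T-\lambda = -(T+\lambda)$.

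There is no real obstacle here; the only step requiring care is the routine block-matrix verification that $U$ is unitary and conjugates $A$ to $T\oplus(-T)$. Self-adjointness of $T$ is not even needed for the argument, though it makes $A$ self-adjoint, consistent with the surrounding context.
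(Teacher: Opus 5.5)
Your proof is correct, but it takes a different route from the paper's. The paper argues entirely with approximate eigenvectors. For $\sigma(T)\cup-\sigma(T)\subseteq\sigma(A)$ it uses $f_n\oplus f_n$ and $-f_n\oplus f_n$. For the reverse inclusion it takes approximate eigenvectors $f_n\oplus g_n$ of $A$ at $\lambda$, derives $\norm{T^2f_n-\lambda^2f_n}\to0$, and concludes from the factorization $T^2-\lambda^2I=(T-\lambda I)(T+\lambda I)$ that $\lambda$ or $-\lambda$ lies in $\sigma(T)$.

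You instead exhibit the unitary equivalence $UAU^*=T\oplus(-T)$, and your block computations check out. You then use $\sigma(B_1\oplus B_2)=\sigma(B_1)\cup\sigma(B_2)$, and your proof of that fact, via the resolvent commuting with the coordinate projections, is sound.

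Your route buys three things:
\begin{enumerate}
\item It needs no self-adjointness of $T$.
\item It avoids a small point the paper passes over silently: to get $\lambda^2\in\sigma(T^2)$ from the $f_n$, one needs $\norm{f_n}$ bounded away from $0$ along a subsequence; otherwise one must switch to the $g_n$.
\item Most substantially, it gives an honest unitary equivalence rather than mere equality of spectra, so eigenspaces and restrictions to invariant subspaces transfer as well.
\end{enumerate}
The third point is what the paper later needs in Proposition~\ref{prop:XYZ}. There the eigenspaces of $A$ at $-a$ and $a$ are identified as $\C(1\oplus1)$ and $\C(-1\oplus1)$. Under your $U$ this reduces to the corresponding eigenspace statements for $L-aI$ and $-(L-aI)$ separately, and the two lines are $U(1\oplus0)$ and $U(0\oplus1)$.

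The paper's argument, for its part, stays within the approximate-eigenvector toolkit it uses throughout and requires no explicit block-matrix algebra.
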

\begin{proof}
If $\lambda \in \sigma(T)$ and $\norm{Tf_n - \lambda f_n}\to 0$, then it is immediate that $\norm{A(f_n\oplus f_n) - \lambda (f_n\oplus f_n)}\to 0$ and $\norm{A(-f_n\oplus f_n) + \lambda (-f_n\oplus f_n)}\to 0$. This proves one inclusion. For the other direction, assume $\lambda\in \sigma(A)$, and assume $(f_n\oplus g_n)_{n \in \N}$ is a sequence of approximate eigenfunctions for $\lambda$. Then 
$$
\norm{Tf_n - \lambda g_n}\to 0 \quad \text{and} \quad \norm{Tg_n - \lambda f_n}\to 0.
$$
In particular, it follows that 
$$
\norm{T^2f_n - \lambda^2 f_n} \to 0.
$$
In other words, $\lambda^2\in \sigma(T^2)$, meaning that $T^2 - \lambda^2 I$ is not invertible. Since $T^2 - \lambda^2 I = (T-\lambda I)(T + \lambda I)$, we conclude that either $\lambda\in \sigma(T)$ or $-\lambda\in \sigma(T)$ (or both). This finishes the proof.
\end{proof}

We end this subsection by establishing some basic facts regarding direct sum decompositions. Given an operator $T\in B(H)$ and a decomposition $H = \oplus_{i=1}^n H_i$ of $H$ into a direct sum of Hilbert subspaces, we can consider the corresponding \emph{block decomposition} $(T_{ij})_{i,j=1}^n$ of $T$. In other words, $T_{ij}\in B(H_j,H_i)$ for every $1\leq i,j\leq n$ and given $x = \sum_{i=1}^n x_i$ with $x_i\in H_i$, we have
$$
Tx = \sum_{i=1}^n \sum_{j=1}^n T_{ij}(x_j).
$$
Note that this is just matrix multiplication with respect to the decomposition of $H$:
$$
Tx = \begin{pmatrix}
    T_{11} & T_{12} & \cdots & T_{1n} \\
    T_{21} & T_{22} & \cdots & T_{2n} \\
    \vdots & \vdots & \ddots & \vdots \\
    T_{n1} & T_{n2} & \cdots & T_{nn}
\end{pmatrix}
\begin{pmatrix}
    x_1\\
    x_2\\
    \vdots \\
    x_n
\end{pmatrix} = \begin{pmatrix}
    \sum_{j=1}^n T_{1j}(x_j)\\
    \sum_{j=1}^n T_{2j}(x_j)\\
    \vdots \\
    \sum_{j=1}^n T_{nj}(x_j)
\end{pmatrix}
$$
Furthermore, if $T$ is self-adjoint, then it is easy to see that for every $1\leq i,j\leq n$, $T_{ij}^* = T_{ji}$. In particular, for every $1\leq i\leq n$, $T_{ii}$ is self-adjoint.

\begin{prop}\label{prop: block decomp T}
    Using the above notation, let $T\in B(H)$ be a self-adjoint operator. Then we have the following inequalities:
    \begin{enumerate}
        \item $\forall 1\leq i\leq n$: $M(T_{ii})\leq M(T)$ and $m(T_{ii})\geq m(T)$, i.e., $\sigma(T_{ii})\sub [m(T),M(T)]$.
        \item $\forall 1\leq i,j\leq n$: $\norm{T_{ij}}\leq \norm{T}$.
    \end{enumerate}
\end{prop}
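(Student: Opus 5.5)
The plan is to realise each block as a compression of $T$ by an isometry, and then read both inequalities off the variational characterisations of $M$, $m$ and $\norm{\cdot}$ recalled in the lemma on self-adjoint operators.

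For each $i$, let $\iota_i : H_i \to H$ be the inclusion, which is an isometry, and let $P_i = \iota_i^*$ be the orthogonal projection of $H$ onto $H_i$, viewed as a map into $H_i$. From the definition of the block decomposition, $T_{ij} = P_i T \iota_j$. To see this, take $x = \iota_j(x_j)$ with all other components zero. Then $Tx = \sum_k T_{kj}(x_j)$, and the $H_i$-component of this vector is $T_{ij}(x_j)$.

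For (1), take any nonzero $x_i \in H_i$. Since $\iota_i$ is an isometry, we have
$$
\frac{\ang{T_{ii}x_i, x_i}}{\ang{x_i,x_i}} = \frac{\ang{T\iota_i x_i, \iota_i x_i}}{\ang{\iota_i x_i,\iota_i x_i}}.
$$
The right-hand side is a Rayleigh quotient of $T$ at the nonzero vector $\iota_i x_i$, so it lies in $[m(T), M(T)]$. Taking the supremum and the infimum over $x_i$ gives $M(T_{ii}) \leq M(T)$ and $m(T_{ii}) \geq m(T)$. We already noted that $T_{ii}$ is self-adjoint, so item (3) of the lemma on self-adjoint operators gives $\sigma(T_{ii}) \sub [m(T_{ii}), M(T_{ii})] \sub [m(T), M(T)]$.

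For (2), both $\iota_j$ and $P_i$ have norm at most $1$: the first is an isometry and the second is an orthogonal projection. Hence, for any $x_j \in H_j$,
$$
\norm{T_{ij}x_j} = \norm{P_i T \iota_j x_j} \leq \norm{T\iota_j x_j} \leq \norm{T}\,\norm{x_j},
$$
which gives $\norm{T_{ij}} \leq \norm{T}$.

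There is no real obstacle here. The only point needing care is the identification $T_{ij} = P_i T \iota_j$, which comes straight from the definition of the block decomposition.
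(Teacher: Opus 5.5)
Your proof is correct and follows essentially the same route as the paper. There, (1) is obtained by writing $Tf = T_{ii}f + g$ with $g \perp H_i$ for $f \in H_i$, so that $\ang{Tf,f} = \ang{T_{ii}f,f}$, and (2) comes from the Pythagorean identity $\sum_{k}\norm{T_{kj}f}^2 = \norm{Tf}^2$; your compression identity $T_{ij} = P_i T \iota_j$ together with $\norm{P_i}\leq 1$ encodes exactly these two facts, just packaged via the inclusion isometry and its adjoint.
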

\begin{proof}
We start with proving (1). Fix $1\leq i\leq n$, and let $f\in H_i$. By definition, there exists $g\in H\ominus H_i$, namely $g = \sum_{k\neq i} T_{ki}(f)$, such that 
$$
Tf = T_{ii}f + g.
$$
Hence 
$$
\ang{Tf, f} = \ang{T_{ii}f + g, f} = \ang{T_{ii}f, f} + \ang{g, f} = \ang{T_{ii}f, f},
$$
since $f\perp g$. In particular, we find
\begin{align*}
M(T_{ii}) &= \sup_{f\in H_i, f\neq 0} \frac{\langle T_{ii}f,f\rangle}{\langle f,f\rangle}\\
&= \sup_{f\in H_i, f\neq 0} \frac{\langle Tf,f\rangle}{\langle f,f\rangle}\\
&\leq \sup_{f\in H, f\neq 0} \frac{\langle Tf,f\rangle}{\langle f,f\rangle}\\
&= M(T).
\end{align*}
Similarly, upon replacing $\sup$ by $\inf$, we get $m(T_{ii})\geq m(T)$, finishing the proof of (1).

We now prove (2). Fix $1\leq i,j\leq n$, and let $f\in H_j$. By definition, $\norm{Tf} \leq \norm{T}\norm{f}$. Hence by the Pythagorean theorem, we have
\begin{align*}
\sum_{k=1}^n \norm{T_{kj}f}^2 &= \norm{\sum_{k=1}^n T_{kj}f}^2\\
&= \norm{Tf}^2\\
&\leq \norm{T}^2\norm{f}^2.
\end{align*}
In particular, $\norm{T_{ij}f} \leq \norm{T}\norm{f}$. Since this holds for every $f\in H_j$, the result follows.
\end{proof}

\subsection{Mass transport for pmp graphs}\label{subsec:transport}

In this final preliminary section, we collect several useful results that we use throughout the paper. Fix a standard probability space $(X, \mu)$ throughout. First, we state the following standard measure-theoretic fact, with a proof provided for completeness:

\begin{lem}
\label{lem: dct}
    Let $f : X \to [0, \infty)$ be a $\mu$-measurable function with $\int_X f(x)\dmu(x) < \infty$. Then for all $\eps > 0$, there is $\delta > 0$ such that, whenever $B \sub X$ is $\mu$-measurable, 
    $$\mu(B) < \delta \implies \int_B f(x)\dmu(x) < \eps.$$
\end{lem}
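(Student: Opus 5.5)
This is absolute continuity of the integral. The plan is to truncate $f$ at a large level $N$: the tail part is handled by dominated (or monotone) convergence, and the bounded part is handled trivially by $N\mu(B)$.

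Fix $\eps > 0$. For $N \in \N$, consider $f_N = \min(f, N)$. Then $f - f_N \geq 0$, and $f - f_N \to 0$ pointwise as $N \to \infty$, since $f$ is finite-valued. Moreover $f - f_N \leq f$, and $f$ is integrable. The dominated convergence theorem therefore gives
$$\int_X (f - f_N)\dmu \to 0.$$
Choose $N$ with $\int_X (f - f_N)\dmu < \eps/2$, and set $\delta = \eps/(2N)$.

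Now let $B$ be measurable with $\mu(B) < \delta$. Splitting $f = (f - f_N) + f_N$ on $B$ gives
$$\int_B f\dmu \leq \int_X (f - f_N)\dmu + \int_B f_N \dmu < \frac{\eps}{2} + N\mu(B) < \frac{\eps}{2} + \frac{\eps}{2} = \eps.$$
Here the first step uses $f - f_N \geq 0$, and the second uses $f_N \leq N$.

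There is no real obstacle in this argument. The only point needing care is the justification that the truncation error tends to zero, and dominated convergence supplies it with dominating function $f$.
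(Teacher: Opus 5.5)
Your proof is correct, but it takes a different route from the paper's. The paper argues by contradiction. Assuming some $\eps$ admits no $\delta$, it picks sets $B_n$ with $\mu(B_n) < 2^{-n}$ and $\int_{B_n} f\dmu \geq \eps$. Borel--Cantelli gives $\mu(\limsup_n B_n) = 0$, so $f 1_{B_n} \to 0$ almost everywhere. Dominated convergence with dominating function $f$ then forces $\int_{B_n} f\dmu \to 0$, a contradiction.

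You instead apply dominated convergence once, to the fixed truncation errors $f - \min(f,N)$, and then write down $\delta = \eps/(2N)$ explicitly. What your route buys is that it is direct and quantitative. Here $\delta$ depends on $f$ only through a level $N$ beyond which the tail $\int (f-N)_+\dmu$ is below $\eps/2$. So the same argument shows, word for word, that a family of functions with uniformly small tails admits a uniform $\delta$. That is one direction of the equivalence between the two formulations of uniform integrability that the paper quotes as well-known in Section~\ref{sec:examples}. The paper's argument avoids choosing a truncation level, but it gives no control on $\delta$ and relies on having a single integrable dominating function.

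One trivial point: take $N \geq 1$ so that $\delta = \eps/(2N)$ is defined. This costs nothing, since $\int_X (f - \min(f,N))\dmu$ is nonincreasing in $N$.
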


\begin{proof}
    Assume for contradiction that there is $\eps > 0$ such that, for all $\delta > 0$, there is a measurable set $B$ with $\mu(B) < \delta$ but $\int_B f(x)\dmu(x) \geq \eps$. For each $n \in \N$, let $\delta_n = \frac{1}{2^n}$, let $B_n$ be a $\mu$-measurable set such that $\mu(B_n) < \delta_n$ but $\int_{B_n} f(x)\dmu(x) \geq \eps$, and let $f_n = f \restriction B_n$, so that $f_n \leq f$. Note that $\sum_{n \in \N} \mu(B_n) < \infty$, so that, by the Borel--Cantelli lemma, $\mu(\limsup_{n \to \infty} B_n) = 0$. Note that
    $$\limsup_{n \to \infty} B_n = \{x \in X : x \in B_n \text{ for infinitely many $n$} \}.$$
    Therefore, there is a conull set $A \sub X$ such that $\lim_{n \to \infty} f_n(x) = 0$ for all $x \in A$. By the dominated convergence theorem, we thus get $\lim_{n \to \infty} \int_X f_n(x)\dmu(x) = 0$. But then there is $N \in \N$ such that, for all $n > N$, $\int_X f_n(x)\dmu(x) = \int_{B_n} f(x)\dmu(x) < \eps$, a contradiction.
\end{proof}

In the remainder of this subsection, we derive several useful corollaries from the theory of mass transport. We remark that applications of mass transport are ubiquitous in descriptive set theory (see, for instance, \cite{km2004}) and other areas; the underlying ideas trace back to the seminal work of Feldman and Moore \cite{fm1977}, who introduced the left and right counting measures and proved that their equality is equivalent to $\mu$-invariance. 

\begin{defn}
    Let $E$ be a countable Borel equivalence relation on $X$. A \textit{Borel transport} in $E$ is a Borel function $\varphi: E\to [0,\infty)$. Given a Borel transport $\varphi$ in $E$, we define the functions $\out\,\varphi, \ins\,\varphi: X\to [0,\infty)$ by
    $$
        \out\,\varphi(x) = \sum_{yEx} \varphi(x,y), \quad \text{and}\quad 
        \ins\,\varphi(x) = \sum_{yEx} \varphi(y,x)
    $$
\end{defn}

The well-known \emph{mass transport principle} demonstrates that, for any Borel transport $\varphi$, the integral over $\out\,\varphi$ matches the integral over $\ins\,\varphi$. For the formulation below, see for instance \cite[Proposition~5.3]{tserunyan2022}.

\begin{prop}
\label{prop:Anush}
    Let $E$ be a Borel pmp equivalence relation on $(X,\mu)$, and let $\varphi: E\to [0,\infty)$ be a Borel transport in $E$. Then
    $$
    \int_X \out\,\varphi \dmu = \int_X \ins\,\varphi \dmu
    $$
\end{prop}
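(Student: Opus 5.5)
We reduce the statement to the Feldman--Moore description of pmp equivalence relations together with the equality of the left and right counting measures. By Feldman--Moore, write $E = \bigcup_{n\in\N} \operatorname{graph}(g_n)$ for a countable family of Borel automorphisms $g_n$ of $X$; since $E$ is pmp, each $g_n$ preserves $\mu$. We then disjointify the graphs. Put
$$
D_n = \{x\in X : g_n(x)\neq g_m(x) \text{ for all } m<n\},
$$
and let $\theta_n = g_n\rest D_n$. Each $\theta_n$ is a partial Borel injection with graph contained in $E$, and for every $x$ the map $n\mapsto \theta_n(x)$ (over those $n$ with $x\in D_n$) enumerates $[x]_E$ bijectively. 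Hence
$$
\out\,\varphi(x) = \sum_n 1_{D_n}(x)\,\varphi(x,\theta_n(x)).
$$

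For the incoming mass we use the inverses $\theta_n^{-1}$, which are partial Borel injections from $\theta_n(D_n)$ onto $D_n$. For each $x$, the sets $\{y : \theta_n(y) = x\}$, taken over $n$, partition $[x]_E$. Indeed, for $y \mathrel{E} x$ there is exactly one $n$ with $y\in D_n$ and $\theta_n(y)=x$, since $\theta_n(y)$ enumerates $[y]_E=[x]_E$ without repetition. Therefore
$$
\ins\,\varphi(x) = \sum_n 1_{\theta_n(D_n)}(x)\,\varphi(\theta_n^{-1}(x),x).
$$

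By monotone convergence (all terms are nonnegative and Borel), it suffices to show, for each $n$,
$$
\int_{D_n} \varphi(x,\theta_n(x))\dmu(x) = \int_{\theta_n(D_n)} \varphi(\theta_n^{-1}(y),y)\dmu(y).
$$
This is a change of variables $y=\theta_n(x)$. Since $\theta_n$ is a measure-preserving Borel bijection $D_n\to\theta_n(D_n)$, we have $(\theta_n)_*(\mu\rest D_n)=\mu\rest \theta_n(D_n)$. We get this first for indicator functions from $\mu(\theta_n(A))=\mu(A)$ for $A\sub D_n$, which is exactly the pmp hypothesis as stated, and then pass to nonnegative simple functions and to monotone limits. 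Applying it to the function $x\mapsto \varphi(x,\theta_n(x))$ on $D_n$ gives the displayed identity. Summing over $n$ yields $\int \out\,\varphi = \int\ins\,\varphi$, with both sides possibly infinite.

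The main point needing care is the bookkeeping that the $\theta_n$ cover each pair of $E$ exactly once from both ends. Forward, $x\mapsto\theta_n(x)$ hits each $y\in[x]_E$ once. Backward, for fixed $x$, each $y$ with $y \mathrel{E} x$ appears in exactly one $\theta_n^{-1}$. The backward statement follows from the forward one applied at $y$, since $x\in[y]_E$ is hit exactly once by $n\mapsto\theta_n(y)$. Measurability of the $D_n$ and the $\theta_n$, and of the composed integrands, is routine: the $g_n$ are Borel and $\varphi$ is Borel on the Borel set $E$.
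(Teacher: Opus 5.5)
Your proof is correct. The disjointified Feldman--Moore pieces $\theta_n$ enumerate each $E$-class exactly once from both ends, and the change of variables on each partial injection together with monotone convergence gives the equality in $[0,\infty]$. The paper gives no proof of its own: it cites \cite[Proposition~5.3]{tserunyan2022} and traces the idea to Feldman and Moore's equality of left and right counting measures, which is exactly what your piecewise decomposition verifies, so this is the same standard route.
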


A first useful corollary is the following:

\begin{cor}\label{cor:neighbors}
    Let $\G$ be a bounded-degree Borel pmp graph on $(X,\mu)$ with maximum degree equal to $d$, and let $A\sub X$ be a measurable subset of $X$. Then
    $$
    \mu(\cN(A)) \leq d\mu(A).
    $$
\end{cor}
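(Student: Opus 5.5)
We apply the mass transport principle (Proposition~\ref{prop:Anush}) to $E = E_\G$, which is a countable Borel pmp equivalence relation because $\G$ is locally finite and pmp. Since null sets may be discarded, we may assume that $A$ is Borel. Then $\cN(A)$ is Borel as well: it is the projection of the Borel set $E(\G)\cap (X\times A)$, and this projection has countable (indeed finite) sections, so Lusin--Novikov applies.

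The transport sends unit mass out of each vertex of $\cN(A)$ and spreads it evenly over that vertex's neighbors in $A$. Concretely, for $(x,y)\in E_\G$ set
$$
\varphi(x,y) = \begin{cases} \dfrac{1}{\abs{\{z\in A : z\sim x\}}} & \text{if } x\in \cN(A),\ y\in A,\ y\sim x,\\[2mm] 0 & \text{otherwise.}\end{cases}
$$
The map $x\mapsto \abs{\{z\in A: z\sim x\}}$ is Borel, again by Lusin--Novikov applied to the Borel edge set with finite sections, so $\varphi$ is a Borel transport. Every $x\in\cN(A)$ has at least one neighbor in $A$, so $\out\,\varphi(x) = 1$ on $\cN(A)$ and $\out\,\varphi(x)=0$ elsewhere. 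Hence $\int_X \out\,\varphi\dmu = \mu(\cN(A))$.

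On the incoming side, $\ins\,\varphi(y)=0$ for $y\notin A$. For $y\in A$, each of the at most $d$ neighbors of $y$ contributes at most $1$, so $\ins\,\varphi(y)\le \deg(y)\le d$. Therefore $\int_X\ins\,\varphi\dmu \le d\mu(A)$, and Proposition~\ref{prop:Anush} gives $\mu(\cN(A))\le d\mu(A)$.

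The only potential obstacle is the measurability bookkeeping, namely checking that $\cN(A)$ and $\varphi$ are Borel; Lusin--Novikov handles both. A simpler alternative transport is $\varphi(x,y)=1_A(y)1_{\cN(A)}(x)1_{x\sim y}$. It gives $\out\,\varphi\ge 1$ on $\cN(A)$ and $\ins\,\varphi\le d\,1_A$, which avoids the normalization altogether and yields the same inequality.
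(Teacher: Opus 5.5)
Your proof is correct and is essentially the paper's argument. The paper uses exactly your ``simpler alternative'' transport $\varphi_A(x,y)=1_A(y)$ for $x\sim y$; the factor $1_{\cN(A)}(x)$ is automatic there. This gives $\ins\,\varphi_A=\deg\cdot 1_A$ and $\out\,\varphi_A(x)=\abs{\{y\sim x: y\in A\}}\geq 1_{\cN(A)}(x)$. Your normalized transport is equally valid, but the unnormalized one also yields the identity $\int_A\deg\,\dmu=\int_X\abs{\{y\sim x:y\in A\}}\,\dmu$, which the paper reuses later (e.g.\ in Lemma~\ref{lem:avdeg} and Proposition~\ref{prop:XYZ}).
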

\begin{proof}
Define the Borel transport $\varphi_A: E_{\G}\to [0,\infty)$ by $\varphi_A(x,y) = 1_A(y)$ if $x\sim y$ and $\varphi_A(x,y) = 0$ otherwise. Then we can easily compute
$$
\out\,\varphi_A(x) = \sum_{y\sim x} 1_A(y) = \abs{\{y\sim x : y\in A\}},
$$
and 
\begin{align*}
\ins\,\varphi(x) = \sum_{y\sim x} 1_A(x) = 
\begin{cases}
    \deg(x), &\text{ if } x\in A,\\
    0, &\text{ otherwise}.
\end{cases}
\end{align*}
Note that, for any $x \in X$, $\abs{\{y\sim x : y \in A\}} \geq 1$ if and only if $x \in \cN(A)$. Proposition~\ref{prop:Anush} thus implies that 
\begin{align}
\label{ineq: n(a)}
d\mu(A) \geq \int_A \deg(x)\dmu(x) = \int_X \abs{\{y\sim x : y\in A\}} \dmu(x) \geq \mu(\cN(A)),
\end{align}
finishing the proof.
\end{proof}

The next result also follows easily from Proposition~\ref{prop:Anush}.

\begin{cor}\label{cor:transports}
    Let $\G$ be a Borel pmp graph on $(X,\mu)$. For any positive measurable functions $f,f':X\to [0,\infty)$, we have the following equalities:
    \begin{enumerate}
        \item ${\displaystyle \int_X \sum_{y\sim x} f(y) \dmu(x) = \int_X \deg(x)f(x) \dmu(x)}$. 
        \item ${\displaystyle \int_X \sum_{y \sim x} f(y)f'(x) \dmu(x) = \int_X \sum_{y \sim x} f(x)f'(y) \dmu(x)}$.
    \end{enumerate}
\end{cor}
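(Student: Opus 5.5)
Both identities will follow from the mass transport principle (Proposition~\ref{prop:Anush}) applied to $E = E_\G$, which is a countable Borel pmp equivalence relation since $\G$ is locally finite and pmp. Each time we choose a nonnegative Borel transport supported on the edge set $E(\G)\sub E_\G$ whose $\out$ and $\ins$ functions are exactly the two integrands. Since $f,f'$ are only $\mu$-measurable, we first replace them by Borel functions that agree with them off a null set. This changes no integral: $\G$ is pmp, so the saturation of a null set is null. Indeed, it is a countable union of images of the null set under measure-preserving partial Borel bijections whose graphs lie in $E_\G$, by Feldman--Moore. Both sides may equal $+\infty$. This causes no difficulty, because the mass transport principle concerns nonnegative functions and holds in $[0,\infty]$.

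For (1), set $\varphi(x,y) = f(y)$ if $x\sim y$ and $\varphi(x,y)=0$ otherwise. This is Borel because $E(\G)$ is Borel and $f$ is Borel. Then $\out\,\varphi(x) = \sum_{y\sim x} f(y)$, and $\ins\,\varphi(x) = \sum_{y\sim x}\varphi(y,x) = \sum_{y\sim x} f(x) = \deg(x) f(x)$. Proposition~\ref{prop:Anush} therefore gives (1). This is the computation already carried out in Corollary~\ref{cor:neighbors} with $f = 1_A$.

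For (2), set $\varphi(x,y) = f(y)f'(x)$ if $x\sim y$ and $0$ otherwise. Then $\out\,\varphi(x) = \sum_{y\sim x} f(y)f'(x)$. Since adjacency is symmetric, $\ins\,\varphi(x) = \sum_{y\sim x}\varphi(y,x) = \sum_{y\sim x} f(x)f'(y)$. Proposition~\ref{prop:Anush} again yields the equality.

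The only point needing care is the measurability/Borel reduction above, which ensures the transports are Borel as the definition requires. The rest is a direct substitution into the mass transport principle.
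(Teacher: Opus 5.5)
Your proposal is correct and follows the paper's proof: it uses the same two transports, $\varphi(x,y)=f(y)$ and $\varphi(x,y)=f(y)f'(x)$ on edges (and $0$ elsewhere), and substitutes them into the mass transport principle. Your extra step of replacing $f,f'$ by Borel versions, justified by the fact that saturations of null sets are null, fills in a detail that the paper leaves implicit.
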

\begin{proof}
We start with (1). Define the Borel transport $\varphi_f: E_{\G}\to [0,\infty )$ by $\varphi_f(x,y) = f(y)$ if $x\sim y$ and $\varphi_f(x,y) = 0$ otherwise. Note that 
    $$
    \mathrm{out}\,\varphi_f(x) = \sum_{y\sim x} \varphi_f(x,y) = \sum_{y\sim x} f(y),
    $$
    and 
    $$
    \mathrm{in}\,\varphi_f(x) = \sum_{y\sim x} \varphi_f(y,x) = \sum_{y\sim x} f(x) = \deg(x)f(x).
    $$
Proposition~\ref{prop:Anush} now immediately implies (1). 

For (2), define the Borel transport $\varphi_{f,f'} : E_{\G} \to [0, \infty)$ by $\varphi_{f,f'}(x, y) = f(y)f'(x)$ if $x \sim y$ and $\varphi_{f,f'}(x, y) = 0$ otherwise. Note that, for any $x \in X$,
    $$
    \mathrm{out}\,\varphi_{f,f'}(x) = \sum_{y \sim x} f(y)f'(x) \quad\text{and}\quad \mathrm{in}\,\varphi_{f,f'}(x) = \sum_{y \sim x} f(x)f'(y).
    $$
Using Proposition~\ref{prop:Anush} again, (2) follows, finishing the proof.
\end{proof}

Finally, we have the following technical inequality:

\begin{cor}\label{cor:ineq}
    Let $\G$ be a Borel pmp graph on $(X,\mu)$, let $A\sub X$ be a measurable subset of $X$, and suppose $f:X\to [0,\infty)$ is a positive measurable function. Then
    $$
    \int_A \sum_{y\sim x} f(y) \dmu(x) = \int_X \abs{\{y\sim x : y\in A\}}\cdot f(x) \dmu(x) \leq \int_{\cN(A)} \deg(x) f(x) \dmu(x).
    $$
\end{cor}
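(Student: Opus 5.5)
The equality is a direct application of the mass transport principle (Proposition~\ref{prop:Anush}), and the inequality then follows by a pointwise comparison. I would define the Borel transport $\psi: E_\G \to [0,\infty)$ by
$$
\psi(x,y) = 1_A(x) f(y) \text{ if } x\sim y, \qquad \psi(x,y) = 0 \text{ otherwise}.
$$
As in Corollary~\ref{cor:neighbors}, $f$ and $A$ may be replaced by Borel versions, since we discard null sets; this makes $\psi$ Borel.

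For the equality, I compute both marginals. We have
$$
\out\,\psi(x) = 1_A(x)\sum_{y\sim x} f(y),
$$
so $\int_X \out\,\psi \dmu = \int_A \sum_{y\sim x} f(y)\dmu(x)$. We also have
$$
\ins\,\psi(x) = \sum_{y\sim x} 1_A(y) f(x) = \abs{\{y\sim x : y\in A\}}\cdot f(x).
$$
Proposition~\ref{prop:Anush} then gives the equality, with both sides allowed to be $+\infty$ in $[0,\infty]$. Alternatively, one can read it off from Corollary~\ref{cor:transports}(2) with the pair $(f, 1_A)$.

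For the inequality, I compare integrands pointwise. For every $x$,
$$
\abs{\{y\sim x : y\in A\}} \leq \deg(x),
$$
and the left side vanishes unless $x$ has a neighbor in $A$, that is, unless $x\in\cN(A)$. Hence
$$
\abs{\{y\sim x : y\in A\}}\, f(x) \leq 1_{\cN(A)}(x)\deg(x) f(x).
$$
Since $f\geq 0$, integrating over $X$ yields the claimed bound.

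No step is a genuine obstacle. The only care needed is measurability of $\psi$, handled by the Borel modification of $A$ and $f$, and the fact that the integrals may be infinite, which the mass transport principle permits.
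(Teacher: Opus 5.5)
Your proposal is correct and is essentially the paper's proof. The paper uses the same transport $1_A(x)f(y)$ on edges, the same computation of $\out$ and $\ins$, and Proposition~\ref{prop:Anush}; your explicit pointwise justification of the final inequality, which the paper leaves implicit, is a welcome addition.
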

\begin{proof}
Define the Borel transport $\varphi_{A,f}: E_{\G}\to [0,\infty)$ by 
$$
\varphi_{A,f}(x,y) = 1_A(x) f(y)
$$
We then compute
\begin{align*}
\out\,\varphi_{A,f}(x) = \sum_{y\sim x} 1_A(x) f(y) = 
\begin{cases}
    \sum_{y\sim x} f(y), &\text{ if } x\in A,\\
    0, &\text{ otherwise},
\end{cases}
\end{align*}
and
$$
\ins\,\varphi_{A,f}(x) = \sum_{y\sim x} 1_A(y) f(x) = \abs{\{y\sim x : y\in A\}}\cdot f(x).
$$
Applying Proposition~\ref{prop:Anush} once again finishes the proof.
\end{proof}

\section{Operators associated to Borel graphs}\label{sec:TLoperators}

In this section, we define the two main operators we study, namely the adjacency operator and the Laplacian operator. We record some basic properties of these operators, as well as information about their spectral theory and how it connects to properties of the associated Borel graphs. 

Throughout this section, let $(X, \mu)$ be a standard probability space, and let $\G$ be a bounded-degree Borel pmp graph on $X$ with maximum degree $d$.

\subsection{The adjacency operator}

The \textit{adjacency operator} $T_{\G}\in B(L^2(X))$ is defined by
$$
(T_{\G}f)(x) = \sum_{y \sim x} f(y)
$$
for all $f \in L^2(X), x \in X$. 

\begin{exmp}\label{exmp:adjacencymatrix}
    For a finite graph $\G$ with $n$ vertices, one can identify $L^2(X)$ with $\C^n$ equipped with its usual inner product. In this case, the adjacency operator is merely the adjacency matrix (viewed as a linear operator on $\C^n$). Most of the results in this paper can be interpreted in this setting, recovering known facts from the spectral theory for finite graphs.
\end{exmp}

We begin with the following crucial fact:

\begin{prop}\label{prop:boundedsa}
    The adjacency operator $T_{\G}$ is bounded and self-adjoint. Moreover, the operator norm of $T_{\G}$ is bounded by the maximum degree $d$ of $\G$.
\end{prop}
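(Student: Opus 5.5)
The plan is to prove boundedness with the explicit bound $\norm{T_\G f}\le d\norm{f}$, and self-adjointness via the identity $\ang{T_\G f,g} = \ang{f,T_\G g}$. Both reduce to the mass transport identities of Corollary~\ref{cor:transports}, applied to positive functions built from $\abs{f}$ and $\abs{g}$. The only delicate point is that Corollary~\ref{cor:transports} is stated for nonnegative functions while $f,g\in L^2(X)$ are complex-valued.

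\textbf{Boundedness.} For $f\in L^2(X)$ and almost every $x$, the sum $(T_\G f)(x)$ has at most $d$ terms. By Cauchy--Schwarz,
$$\abs{(T_\G f)(x)}^2 \le \deg(x)\sum_{y\sim x}\abs{f(y)}^2 \le d\sum_{y\sim x}\abs{f(y)}^2 .$$
Integrate this and apply Corollary~\ref{cor:transports}(1) to the nonnegative function $\abs{f}^2$:
$$\norm{T_\G f}^2 \le d\int_X \sum_{y\sim x}\abs{f(y)}^2\dmu(x) = d\int_X \deg(x)\abs{f(x)}^2\dmu(x)\le d^2\norm{f}^2 .$$
This shows that $T_\G f\in L^2(X)$ and $\norm{T_\G}\le d$. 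Two routine remarks complete this step. First, $T_\G$ is well defined on equivalence classes: if $f=0$ a.e., then the estimate gives $T_\G f = 0$ a.e. (equivalently, the saturation of a null set is null, since $E_\G$ is pmp). Second, $T_\G$ is linear and $T_\G f$ is measurable; measurability follows from writing the sum via a countable family of Borel involutions generating $E(\G)$, or via Lusin--Novikov.

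\textbf{Self-adjointness.} We must show $\int_X \sum_{y\sim x} f(y)\overline{g(x)}\dmu(x) = \int_X f(x)\overline{\sum_{y\sim x} g(y)}\dmu(x)$, i.e.
$$\int_X \sum_{y\sim x} f(y)\overline{g(x)}\dmu = \int_X\sum_{y\sim x} f(x)\overline{g(y)}\dmu .$$
This is Corollary~\ref{cor:transports}(2), but only for nonnegative functions. I would handle it in two steps.

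First, establish absolute integrability. Applying Corollary~\ref{cor:transports}(2) to $\abs{f}$ and $\abs{g}$, both sides with absolute values equal $\int_X (T_\G\abs{f})\abs{g}\dmu$. By Cauchy--Schwarz and the boundedness step, this is at most $d\norm{f}\norm{g}<\infty$.

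Second, decompose $f$ and $\overline g$ into real and imaginary parts, and each of those into positive and negative parts. Each side then becomes a finite linear combination of terms $\int_X \sum_{y\sim x} f_i(y)g_j(x)\dmu$ with $f_i,g_j\ge 0$. Every such term is finite by the previous estimate, and each is matched to its counterpart by Corollary~\ref{cor:transports}(2). Bilinearity then gives the identity, so $T_\G^*=T_\G$.

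I expect this extension from nonnegative to complex-valued functions to be the main obstacle. It requires the finiteness check before splitting into positive and negative parts; once that is in place, everything else is mechanical.
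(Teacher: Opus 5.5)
Your proposal is correct and follows essentially the same route as the paper. Boundedness comes from Cauchy--Schwarz together with Corollary~\ref{cor:transports}(1), giving $\norm{T_\G}\le d$. Self-adjointness comes from Corollary~\ref{cor:transports}(2), applied to nonnegative parts of $f$ and $g$ after finiteness is checked via the boundedness estimate. Your added remarks on well-definedness on equivalence classes and on measurability of $T_\G f$ are details the paper leaves implicit.
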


\begin{proof}
    For notational simplicity, we write $T = T_{\G}$ throughout. We first observe that for any function $f\in L^2(X)$ and $x \in X$, the Cauchy--Schwarz inequality gives
    $$
    \abs{Tf(x)}^2 = \bigg|\sum_{y \sim x} f(y)\bigg|^2 \leq \deg(x) \sum_{y \sim x} \abs{f(y)}^2.
    $$

    Using this together with Corollary~\ref{cor:transports}(1) and the fact that $\deg(x) \leq d$ for each $x \in X$, we compute
    \begin{align}\label{eq:Tbdd}
    \begin{split}
        \norm{Tf}^2 & = \int_X \abs{Tf(x)}^2 \dmu(x)\\
        & \leq d \int_X \sum_{y\sim x} \abs{f(y)}^2 \dmu(x)\\
        & = d \int_X \deg(x) \abs{f(x)}^2 \dmu(x)\\
        & \leq d^2 \norm{f}^2.
    \end{split}
    \end{align}
    Since this holds for any $f\in L^2(X)$, we conclude that $\norm{T}\leq d$.

    For self-adjointness, we observe that, for any $f, f' \in L^2(X)$,
    $$
    \langle Tf, f'\rangle = \int_X Tf(x)\overline{f'(x)}\dmu(x) = \int_X \sum_{y \sim x} f(y) \overline{f'(x)}\dmu(x)
    $$
    and similarly 
    $$
    \langle f,Tf'\rangle = \int_X \sum_{y \sim x} f(x)\overline{f'(y)} \dmu(x).
    $$
    Since $T$ is bounded by the first part of the proof, the integrals appearing in the above two equations are finite by the Cauchy-Schwarz inequality. By writing the functions $f,f'$ as linear combinations of positive functions, the desired result thus follows from Corollary~\ref{cor:transports}(2).
\end{proof}

\begin{cor}
    If $\G$ is $d$-regular, then $\norm{T_\G} = M(T_\G) = d$. Moreover, $d$ is an eigenvalue of $T$.
\end{cor}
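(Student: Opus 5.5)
The argument is short: show that the constant function is an eigenvector for $d$, and then bound the top of the spectrum using Proposition~\ref{prop:boundedsa}.

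\emph{Step 1: $d$ is an eigenvalue.} Take $f = 1$, which lies in $L^2(X)$ since $\mu$ is a probability measure. By $d$-regularity, for almost every $x$,
$$(T_\G 1)(x) = \sum_{y\sim x} 1 = \deg(x) = d,$$
so $T_\G 1 = d\cdot 1$. Thus $d$ is an eigenvalue of $T_\G$, and in particular $d \in \sigma(T_\G)$.

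\emph{Step 2: $d \leq M(T_\G)$.} Evaluate the Rayleigh quotient at $f = 1$:
$$\frac{\ang{T_\G 1, 1}}{\ang{1,1}} = \frac{d\,\mu(X)}{\mu(X)} = d.$$
Hence $M(T_\G) \geq d$.

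\emph{Step 3: $M(T_\G) \leq \norm{T_\G} \leq d$.} For every unit vector $f$, Cauchy--Schwarz gives $\ang{T_\G f, f} \leq \norm{T_\G f}\,\norm{f} \leq \norm{T_\G}$, so $M(T_\G) \leq \norm{T_\G}$. Proposition~\ref{prop:boundedsa} gives $\norm{T_\G} \leq d$.

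\emph{Conclusion.} Combining the steps,
$$d \leq M(T_\G) \leq \norm{T_\G} \leq d,$$
so all three quantities equal $d$. Alternatively, $\norm{T_\G} = \rho(T_\G) \geq d$ for the self-adjoint operator $T_\G$, since $d \in \sigma(T_\G)$.

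There is no real obstacle here. The only point to check is that $T_\G 1 = d$ holds almost everywhere, which is exactly the regularity hypothesis, read in the paper's convention of discarding null sets.
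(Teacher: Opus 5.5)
Your proof is correct and follows essentially the same route as the paper: the constant function $1$ is an eigenfunction for $d$, giving $\norm{T_\G}, M(T_\G) \geq d$, and Proposition~\ref{prop:boundedsa} supplies the matching upper bound. Your explicit Cauchy--Schwarz step $M(T_\G) \leq \norm{T_\G}$ fills in a step the paper leaves implicit.
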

\begin{proof}
    Let $g : X \to \C$ be the constant function defined by $g(x) = 1$ for all $x \in X$. Then we see that 
    $$\ang{g, g} = \int_X \abs{g(x)}^2\dmu(x) = 1,$$
    and
    $$
    T_{\G}g(x) = \sum_{y \sim x} g(y) = \deg(x) = d \text{\, for all $x \in X$.}
    $$
    We conclude that $g$ is an eigenfunction with eigenvalue $d$, and in particular $\norm{T_\G},M(T_\G)\geq d$. Since Proposition~\ref{prop:boundedsa} implies that $\norm{T_\G},M(T_\G)\leq d$, this concludes the proof.
\end{proof}

For the next result, recall that for positive integers $a$ and $b$, a Borel graph $\cG$ is called \textit{Borel $(a,b)$-biregular} if there exists a partition $X=A\cup B$ into independent Borel sets such that all vertices in $A$ have degree $a$ and all vertices in $B$ have degree $b$.

\begin{cor}\label{cor:bireg}
    If $\G$ is Borel $(a,b)$-biregular, then $\norm{T_\G} = M(T_\G) = \sqrt{ab}$. Moreover, $\sqrt{ab}$ is an eigenvalue of $T_\G$.
\end{cor}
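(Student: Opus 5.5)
We need both an upper bound $\norm{T_\G}\le\sqrt{ab}$ and an explicit eigenfunction with eigenvalue $\sqrt{ab}$. Since $T_\G$ is self-adjoint, $M(T_\G)\le\rho(T_\G)=\norm{T_\G}$, so the eigenfunction then gives $\sqrt{ab}\le M(T_\G)\le\norm{T_\G}\le\sqrt{ab}$ and all three quantities coincide. (We may assume $\mu(A),\mu(B)>0$.)

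For the eigenfunction, take $g=\sqrt{a}\,1_A+\sqrt{b}\,1_B$, which is bounded and hence in $L^2(X)$. Because $A$ and $B$ are independent, every neighbor of $x\in A$ lies in $B$. Hence for $x\in A$ we get $T_\G g(x)=\deg(x)\sqrt{b}=a\sqrt b=\sqrt{ab}\,\sqrt a=\sqrt{ab}\,g(x)$. Symmetrically, for $x\in B$ we get $T_\G g(x)=b\sqrt a=\sqrt{ab}\,\sqrt b=\sqrt{ab}\,g(x)$. Thus $g$ is a nonzero eigenfunction for $\sqrt{ab}$.

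For the norm bound, we refine the Cauchy--Schwarz step from Proposition~\ref{prop:boundedsa}. Decompose $L^2(X)=L^2(A)\oplus L^2(B)$. Independence makes $T_\G$ anti-diagonal with off-diagonal blocks $T_{AB}:L^2(B)\to L^2(A)$ and $T_{BA}=T_{AB}^*$. Then $\norm{T_\G f}^2=\norm{T_{AB}f_B}^2+\norm{T_{BA}f_A}^2$, so $\norm{T_\G}=\max(\norm{T_{AB}},\norm{T_{BA}})=\norm{T_{AB}}$. It therefore suffices to show $\norm{T_{AB}h}^2\le ab\norm{h}^2$ for $h$ supported on $B$. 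For $x\in A$, Cauchy--Schwarz gives $\abs{T_\G h(x)}^2\le a\sum_{y\sim x}\abs{h(y)}^2$. Integrating over $A$, we apply Corollary~\ref{cor:ineq} to the set $A$ and the function $\abs{h}^2$:
\[
\int_A\sum_{y\sim x}\abs{h(y)}^2\dmu(x)=\int_X\abs{\{y\sim x:y\in A\}}\,\abs{h(x)}^2\dmu(x).
\]
Since $h$ is supported on $B$ and every neighbor of a point of $B$ lies in $A$, the count equals $\deg(x)=b$ for $x\in B$. The right-hand side is therefore $b\norm{h}^2$, and we get $\norm{T_{AB}h}^2\le ab\norm{h}^2$.

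The main point of care is this last transport computation. A naive bound using $\max\deg=\max(a,b)$ would only give $\max(a,b)$, so the integral must be split along the bipartition so that the degree $a$ appears exactly once and $b$ exactly once. Everything else is routine.
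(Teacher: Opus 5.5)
Your proposal is correct and takes essentially the same approach as the paper. The paper uses the same eigenfunction $\sqrt{a}\,1_A+\sqrt{b}\,1_B$ and gets the upper bound from the Cauchy--Schwarz/mass-transport computation of Proposition~\ref{prop:boundedsa}, with the integral split over the two parts of the bipartition; your reduction to the single block $T_{AB}$ via the anti-diagonal structure is only a harmless repackaging of that split.
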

\begin{proof}
    By assumption, we can bipartition $X=A\cup B$ into independent Borel sets $A$ and $B$ containing the vertices of degree $a$ and $b$ respectively. By considering the integrals over $A$ and $B$ separately, a similar computation as in Proposition~\ref{prop:boundedsa} then yields the desired upper bound; furthermore, it is easy to check that the piecewise constant function taking value $\sqrt{a}$ on $A$ and $\sqrt{b}$ on $B$ is an eigenfunction for $\sqrt{ab}$.
\end{proof}

\begin{exmp}\label{ex:irrationalrotation}
    \begin{itemize}
        \item Let $\alpha$ be an irrational number, and consider the graph $\G$ on $X = [0, 1)$ such that $x \sim y$ if and only if $x + \alpha \equiv_1 y$ or $y + \alpha \equiv_1 x$. Then $\G$ is a $2$-regular Borel $\mu$-pmp graph, where $\mu$ is the usual Lebesgue measure. In particular, $\norm{T_{\G}} = M(T_{\G}) = 2$, and $\sigma(T_{\G})\sub [-2,2]$. We show in Example~\ref{ex:irrationalrotation2} using bipartiteness that $-2 \in \sigma(T_{\G})$. Later, in Example~\ref{exmp:rotationspec}, we show using local-global convergence that $\sigma(T_{\G})$ is in fact exactly the interval $[-2, 2]$.
        \item Given any $d$-regular Borel pmp graph, we can always construct a $(2,d)$-biregular graph by ``splitting the edges'' as follows. Let $d \in \N$, let $(X, \mu)$ be a standard probability space, and let $\G = (X, E)$ be a $d$-regular Borel pmp graph. Here we view $E$ as a set of unordered pairs; in particular, every edge in the original (undirected) graph $\G$ is represented by a single point in $E$. Define a measure $\nu$ on $E$ by setting
         $$
         \nu(S) = \frac{1}{2} \int_X \abs{\{x' \in X : \{x, x'\} \in S \}}\dmu(x).
         $$
         We define a graph $\cH$ on the space $Y = X \sqcup E$ as follows. For any $x \in X$ and $e \in E$, we put $x \sim e$ if and only if $e$ is incident on $x$ in $\G$. Note that the sets $X$ and $E$ are $\cH$-independent and that, for any $x, x' \in X$, there is a path between $x$ and $x'$ in $\G$ if and only if there is a path between $x$ and $x'$ in $\cH$. Define now $\eta = \frac{1}{d + 1}(\mu + \nu)$ and note that $\eta$ is a Borel probability measure on $Y$. A straightforward but tedious exercise then demonstrates that $\cH$ is $\eta$-pmp. Moreover, it follows by construction that $Y=X\cup E$ yields the desired Borel partition into the independent sets of vertices with degree $d$ and $2$ respectively. In particular, the previous corollary implies that $M(T_\cH) = \sqrt{2d}$. Since the graph is measurably bipartite, Theorem~\ref{thm:bipartiteimpliessymspec} moreover implies that $m(T_\cH) = -\sqrt{2d}$ (which in this case can also easily be checked by exhibiting a concrete eigenfunction).
    \end{itemize}
\end{exmp}

When $\G$ is not regular, it is often useful to consider the average degree.

\begin{defn}
The \emph{average degree} of $\G$, denoted $\deg_{\av}(\G)$, is defined by
$$
\deg_{\av}(\G) = \int_X \deg(x)\dmu(x).
$$
\end{defn}

In the non-regular setting, as in the finite case, we have the following inequalities relating the maximum spectral value $M(T_\G)$, the average degree, and the maximum degree:

\begin{lem}\label{lem:avdeg}
    Let $d$ be the maximum degree of $\G$. Then the following hold:
    \begin{enumerate}
        \item $m(T_\G) \leq \deg_{\av}(\G) \leq M(T_{\G})$.
        \item $\sqrt{d}\leq M(T_\G) \leq d$.
    \end{enumerate}
\end{lem}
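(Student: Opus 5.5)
For part (1) I would test the Rayleigh quotient on the constant function $1$. By Corollary~\ref{cor:transports}(1) with $f = 1$, or directly, $T_\G 1 = D_\G$. Hence $\ang{T_\G 1, 1} = \int_X \deg(x)\dmu(x) = \deg_\av(\G)$ and $\ang{1,1} = 1$. Since $m(T_\G)$ and $M(T_\G)$ are the infimum and supremum of $\ang{T_\G f, f}/\ang{f,f}$, both inequalities in (1) follow immediately.

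For part (2), the upper bound $M(T_\G) \leq \norm{T_\G} \leq d$ is Proposition~\ref{prop:boundedsa}. For the lower bound I would imitate the finite argument with a star $K_{1,d}$, using the test function equal to $\sqrt d$ at the center and $1$ at the leaves. In the measurable setting I need many such stars whose closed neighborhoods are disjoint.

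\emph{Step 1: a measurable family of separated centers.} Let $A = \{x : \deg(x) = d\}$, which has positive measure by definition of the maximum degree. Consider the distance-$\leq 2$ graph $\G^{\leq 2}$. It is a bounded-degree Borel graph, so by the Kechris--Solecki--Todor\v{c}evi\'c result it admits a Borel coloring with countably many colors. Some color class intersected with $A$ has positive measure; call it $A'$. Then distinct points of $A'$ are at $\G$-distance at least $3$. Consequently:
\begin{itemize}
\item $A' \cap \cN(A') = \emptyset$;
\item every $y \in \cN(A')$ has exactly one neighbor in $A'$.
\end{itemize}
By the transport identity in Corollary~\ref{cor:ineq}, or the computation in the proof of Corollary~\ref{cor:neighbors}, we get
$$\mu(\cN(A')) = \int_X \abs{\{y \sim x : y\in A'\}}\dmu(x) = \int_{A'}\deg = d\,\mu(A').$$

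\emph{Step 2: the test function.} Set $f = \sqrt d\, 1_{A'} + 1_{\cN(A')} \geq 0$. Then
$$\norm{f}^2 = d\mu(A') + d\mu(A') = 2d\mu(A').$$
Since $T_\G$ preserves nonnegativity, we may discard all contributions other than the edges between $A'$ and $\cN(A')$. On $A'$ we have $T_\G f \geq d \cdot 1 = d$. On $\cN(A')$ we have $T_\G f \geq \sqrt d$, coming from the unique neighbor in $A'$. Therefore
$$\ang{T_\G f, f} \geq \sqrt d\cdot d\,\mu(A') + \sqrt d\,\mu(\cN(A')) = 2d\sqrt d\,\mu(A').$$
The Rayleigh quotient is thus at least $\sqrt d$, giving $M(T_\G) \geq \sqrt d$.

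The only real obstacle is Step 1: producing a positive-measure set of degree-$d$ vertices with pairwise disjoint closed neighborhoods, together with the exact measure identity for $\cN(A')$. Both are handled by the countable Borel coloring of $\G^{\leq 2}$ and mass transport.
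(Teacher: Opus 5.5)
Your proposal is correct and follows the paper's proof essentially step for step. Part (1) tests the constant function $1$, the upper bound in (2) comes from Proposition~\ref{prop:boundedsa}, and the lower bound uses a Borel coloring of a power of $\G$ to extract a positive-measure set of separated degree-$d$ vertices, on which the star test function ($\sqrt d$ at centers, $1$ on $\cN(\cdot)$) has Rayleigh quotient at least $\sqrt d$. The only difference is that you separate centers at distance $\geq 3$ via $\G^{\leq 2}$, whereas the paper uses distance $>4$ via $\G^4$. Your weaker separation already gives $A'\cap\cN(A')=\emptyset$ and a unique $A'$-neighbor for each point of $\cN(A')$, which is all the computation needs.
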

\begin{proof}
(1) Let $g : X \to \C$ again be defined by $g(x) = 1$ for all $x \in X$. Since $\ang{g, g} = 1,$ and 
$$
\ang{T_{\G}g, g} = \int_X \bigg(\sum_{y \sim x} g(y) \bigg)\overline{g(x)}\dmu(x) = \int_X \deg(x)\dmu(x) = \deg_{\av}(\G),
$$
we immediately have that $m(T_\G) = \inf_{f\neq 0} \frac{\ang{T_{\G}f, f}}{\ang{f, f}} \leq \deg_{\av}(\G) \leq M(T_{\G}) = \sup_{f \neq 0} \frac{\ang{T_{\G}f, f}}{\ang{f, f}}$.

(2) The upper bound follows from Proposition~\ref{prop:boundedsa}. For the lower bound, consider the set $A = \{x\in X: \deg(x) = d\} \sub X$. By assumption, $\mu(A)>0$. Note that the graph $\G^4$ obtained from $\G$ by placing an edge between two distinct vertices if their distance in $\G$ is at most $4$ has bounded degree. Therefore, by \cite[Proposition~4.6]{kst1999}, $\G^4 \rest A$ admits a finite Borel coloring, and at least one of the monochromatic sets $A_0 \sub A$ obtained from this coloring has $\mu(A_0) > 0$. Then since $\G \rest A$ is $d$-regular and distinct points of $A_0$ are at distance greater than $4$ from each other, the inequalities in Equation~\ref{ineq: n(a)} when $A$ is replaced with $A_0$ become equalities, and we deduce that $\mu(\cN(A_0)) = d\mu(A_0)$. Now define the function $f\in L^2(X)$ as follows:
\begin{align*}
    f(x) = \begin{cases}
        \sqrt{d}, &\text{ if } x\in A_0,\\
        1, &\text{ if } x\in \cN(A_0),\\
        0, &\text{ otherwise}.
    \end{cases}
\end{align*}
Then we can compute 
$$
    \langle f,f\rangle = \int_X \abs{f(x)}^2\dmu(x) = d\mu(A_0) + \mu(\cN(A_0)) = 2d\mu(A_0),
$$
and, again using that distinct points of $A_0$ have distance greater than $4$,
\begin{align*}
    \langle T_\G f, f\rangle &= \int_X \sum_{y\sim x} f(y)f(x)\dmu(x)\\
    &= \int_{A_0} d\sqrt{d}\dmu(x) + \int_{\cN(A_0)} \Big(\sqrt{d} + \sum_{\substack{y\sim x,\\ y\notin A_0}} f(y)\Big)\dmu(x)\\
    &\geq 2d\sqrt{d}\mu(A_0).
\end{align*}
We conclude that 
$$
M(T_\G) \geq \frac{\langle T_\G f, f\rangle}{\langle f,f\rangle} \geq \frac{2d\sqrt{d}\mu(A_0)}{2d\mu(A_0)} = \sqrt{d},
$$
finishing the proof.
\end{proof}

Next, again analogous to the finite setting, we show that $m(T_\G)$ is always negative (unless the measure of the set of vertices incident on at least one edge is zero, in which case the adjacency operator is the zero operator). For finite graphs, this follows easily from the observation that the trace of the adjacency operator is 0. In our setting, the operators are defined on the infinite-dimensional Hilbert space $L^2(X)$; since there is no well-defined trace on all of $B(L^2(X))$, we need a different proof.

\begin{lem}\label{lem:mnegative}
    If $T_\G\neq 0$, then $m(T_\G) < 0$.
\end{lem}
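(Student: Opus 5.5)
It suffices to exhibit a single function $f\in L^2(X)$ with $\ang{T_\G f,f}<0$, since $m(T_\G)=\inf_{f\neq 0}\ang{T_\G f,f}/\ang{f,f}$. The finite-graph analogue of such a witness is the signed indicator of a single edge, $1_x-1_y$, whose Rayleigh quotient is $-1$. The measurable version I would use takes a positive-measure set of edges that are far from each other and puts opposite signs on the two endpoints of each.

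\textbf{Step 1: find the edges.} Since $T_\G\neq 0$, the set $Y=\{x:\deg(x)\geq 1\}$ has positive measure; otherwise $T_\G f=0$ a.e.\ for every $f$. As in the proof of Lemma~\ref{lem:avdeg}(2), the bounded-degree Borel graph $\G^4$ admits a finite Borel coloring by \cite[Proposition~4.6]{kst1999}. Some color class $A_0\sub Y$ therefore has $\mu(A_0)>0$, and distinct points of $A_0$ are at distance greater than $4$ in $\G$.

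\textbf{Step 2: choose partners.} Fix a Borel linear order on $X$; by Lusin--Novikov we may uniformize. Let $\theta:A_0\to X$ send $x$ to its least neighbor, and put $B_0=\theta(A_0)$. The map $\theta$ is injective: if $\theta(x)=\theta(x')$, then $d(x,x')\leq 2$, which forces $x=x'$. Its graph lies in $E_\G$, so by the pmp assumption $\mu(B_0)=\mu(A_0)$. Moreover $A_0\cap B_0=\emptyset$, since $x\sim\theta(x')$ with $x\neq x'$ would give $d(x,x')\leq 2$.

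\textbf{Step 3: compute the quadratic form.} Set $f=1_{A_0}-1_{B_0}$, so that $\ang{f,f}=2\mu(A_0)$. By definition,
$$\ang{T_\G f,f}=\int_X \sum_{y\sim x} f(y)f(x)\,\dmu(x),$$
and only $x\in A_0\cup B_0$ contribute. Take $x\in A_0$. Its only neighbor in $A_0\cup B_0$ is $\theta(x)$: another neighbor in $B_0$ would be $\theta(x')$ with $d(x,x')\leq 2$, and a neighbor in $A_0$ is excluded by separation. So the contribution at $x$ is $-1$. Now take $x=\theta(x_0)\in B_0$. Its neighbors in $A_0$ are only $x_0$, again by separation. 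If it had a neighbor $\theta(x')\in B_0$, then $d(x_0,x')\leq 3$, so it has none. Hence the contribution at $x$ is also $-1$. Therefore
$$\ang{T_\G f,f}=-\mu(A_0)-\mu(B_0)=-2\mu(A_0),$$
and $m(T_\G)\leq -1<0$.

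The only step needing care is Step 3, where we must check that no extra adjacencies appear among the chosen sets. The distance-$>4$ separation handles this with room to spare; distance $>3$ would already suffice.
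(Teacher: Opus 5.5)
Your proof is correct, and its skeleton matches the paper's. Both arguments use a finite Borel coloring of $\G^4$ to produce a positive-measure set of non-isolated vertices that are pairwise at distance greater than $4$, and both then evaluate the quadratic form on a test function supported near that set.

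The difference is the test function. The paper uses a ``star'': $f=d$ on $A$, $f=-1$ on $\cN(A)$, and $0$ elsewhere. This needs no selection of neighbors, so there is no Borel order, no uniformization, and no injectivity or disjointness checks. Negativity follows from $-d\deg(x)<0$ on $A$ together with the pointwise bound $d+\sum_{y\sim x,\,y\notin A}f(y)\geq 1$ on $\cN(A)$. That bound uses only that the maximum degree is $d$.

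Your ``dipole'' $1_{A_0}-1_{\theta(A_0)}$ costs a Borel choice of neighbor and the adjacency bookkeeping you carry out. In exchange, its Rayleigh quotient is exactly $-1$, so you obtain the sharper bound $m(T_\G)\leq -1$. This is the measurable analogue of the classical fact that a finite graph with at least one edge has least adjacency eigenvalue at most $-1$. The paper's star puts weight $d$ at the center regardless of that center's actual degree, and its estimate only certifies $m(T_\G)<0$.

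One small simplification: you do not need $\mu(B_0)=\mu(A_0)$. Each point of $A_0\cup B_0$ contributes exactly $-1$ pointwise, and $\ang{f,f}=\mu(A_0)+\mu(B_0)$, so the quotient is $-1$ in any case.
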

\begin{proof}
As in the proof of Lemma~\ref{lem:avdeg}, we can find a positive-measure Borel set $A$ consisting of vertices with degree at least $1$ such that distinct vertices of $A$ have distance greater than $4$. Consider the function $f\in L^2(X)$ defined by 
\begin{align*}
    f(x) = \begin{cases}
        d, &\text{ if } x\in A,\\
        -1, &\text{ if } x\in \cN(A),\\
        0, &\text{ otherwise}.
    \end{cases}
\end{align*}
We can then easily compute
\begin{align*}
    \langle T_\G f, f\rangle &= \int_X \sum_{y\sim x} f(y) f(x)\dmu(x)\\
    &= \int_A -d \cdot \deg(x)\dmu(x) + \int_{\cN(A)} - \Big(d + \sum_{\substack{y\sim x,\\ y\notin A}} f(y)\Big)\dmu(x).
\end{align*}
The first integrand is clearly strictly negative, and since the maximum degree of $\G$ is equal to $d$, the second integrand is strictly negative as well. We thus conclude that 
$$
m(T_\G) \leq \frac{\langle T_\G f,f\rangle}{\langle f,f\rangle} < 0,
$$
finishing the proof.
\end{proof}

Finally, suppose $A \sub X$ is measurable and $\cH = \G \rest A$. In light of the discussion on induced subgraphs in Subsection~\ref{subsec:BorelGraphs},  
the adjacency operator $T_{\mathcal{H}}$ of $\mathcal{H}$ may be viewed as having domain equal to $L^2(A)$. By considering the direct sum decomposition $L^2(X) = L^2(A) \oplus L^2(X\setminus A)$, the following is immediate from Proposition~\ref{prop: block decomp T}(1):

\begin{cor}
\label{cor: block decomp M}
    Let $A \sub X$ be $\mu$-measurable, and let $\mathcal{H} = \G \rest A$. Then 
    $$
    m(T_{\G}) \leq m(T_{\cH}) \leq M(T_{\cH}) \leq M(T_{\G}).
    $$
\end{cor}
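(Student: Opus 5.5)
The plan is to identify $T_{\cH}$ with a diagonal block of $T_\G$ and then apply Proposition~\ref{prop: block decomp T}(1). First, decompose $L^2(X) = L^2(A)\oplus L^2(X\setminus A)$. Here $L^2(A)$ is viewed as the subspace of functions vanishing off $A$. By the convention in Subsection~\ref{subsec:BorelGraphs}, we may replace $A$ by a Borel set differing from it by a null set; this changes neither the $L^2$ spaces nor the operators.

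The key step is to check that the block $(T_\G)_{11} = P_A T_\G|_{L^2(A)}$ coincides with $T_\cH$. Here $P_A$ is the orthogonal projection onto $L^2(A)$, i.e.\ multiplication by $1_A$. For $f\in L^2(A)$ and $x\in A$ we have
$$(P_AT_\G f)(x) = \sum_{y\sim x} f(y) = \sum_{y\sim x,\, y\in A} f(y) = (T_\cH f)(x),$$
since $f$ vanishes off $A$ and the neighbours of $x$ in $\cH$ are exactly its $\G$-neighbours lying in $A$.

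There is one normalization subtlety. If $\cH$ is regarded on $A$ with the renormalized measure $\mu(A)^{-1}\mu\rest A$, the inner product is scaled by the constant $\mu(A)^{-1}$. This constant cancels in the Rayleigh quotients $\ang{T_\cH f,f}/\ang{f,f}$, so $m(T_\cH)$ and $M(T_\cH)$ are unchanged. The degenerate case $\mu(A)=0$ is trivial or excluded.

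Applying Proposition~\ref{prop: block decomp T}(1) to the self-adjoint operator $T_\G$ (Proposition~\ref{prop:boundedsa}) now gives $m(T_\G)\le m(T_\cH)$ and $M(T_\cH)\le M(T_\G)$. The middle inequality $m(T_\cH)\le M(T_\cH)$ holds trivially, since the infimum of the Rayleigh quotient is at most its supremum. No step is a real obstacle; the only care needed is in the identification of the block with $T_\cH$ and in the measure normalization.
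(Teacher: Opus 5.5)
Your proposal is correct and matches the paper's argument: the paper likewise identifies $T_{\cH}$ with the $(1,1)$-block of $T_{\G}$ under the decomposition $L^2(X) = L^2(A)\oplus L^2(X\setminus A)$ and then applies Proposition~\ref{prop: block decomp T}(1). Your explicit verification of the block identification, and your remark that renormalizing the measure on $A$ leaves the Rayleigh quotients unchanged, spell out what the paper simply calls immediate.
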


\subsection{The Laplacian operator}

The \emph{Laplacian operator} $L_{\G}\in L^2(X)$ is defined by
$$
(L_{\G}f)(x) = \deg(x)f(x) - \sum_{y \sim x} f(y) = (D_\G f)(x) - (T_{\G}f)(x),
$$
for all $f\in L^2(X)$ and $x\in X$, where $D_\G:X\to \C$ is the \textit{degree function} defined by $D_\G(x) = \deg(x)$ for $x\in X$. As in Example~\ref{exmp:adjacencymatrix}, we note that in the case of a finite graph, we recover the usual Laplacian matrix. We start with the following easy observation:

\begin{lem}
    $L_{\G}$ is bounded, self-adjoint, and positive.
\end{lem}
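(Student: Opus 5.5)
Boundedness and self-adjointness come directly from the decomposition $L_\G = D_\G - T_\G$. The multiplication operator $D_\G$ by the bounded real function $\deg$ satisfies $\norm{D_\G f}\leq d\norm{f}$ and is self-adjoint, because $\deg$ is real-valued. By Proposition~\ref{prop:boundedsa}, $T_\G$ is bounded (with $\norm{T_\G}\leq d$) and self-adjoint. Hence $L_\G$ is bounded with $\norm{L_\G}\leq 2d$, and it is self-adjoint as a difference of self-adjoint operators.

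Positivity, meaning $\ang{L_\G f,f}\geq 0$ for all $f\in L^2(X)$, is the only real content. The plan is to prove the measurable analogue of the finite identity
$$
\ang{L_\G f,f} = \frac{1}{2}\int_X \sum_{y\sim x} \abs{f(x)-f(y)}^2 \dmu(x).
$$
First I expand the integrand on the right:
$$
\abs{f(x)-f(y)}^2 = \abs{f(x)}^2 + \abs{f(y)}^2 - f(y)\overline{f(x)} - f(x)\overline{f(y)}.
$$
Summing over $y\sim x$ and integrating, the first term gives $\int_X \deg(x)\abs{f(x)}^2\dmu(x) = \ang{D_\G f,f}$. By Corollary~\ref{cor:transports}(1) applied to $\abs{f}^2$, the second term gives the same quantity. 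The third term gives $\ang{T_\G f,f}$ directly. For the fourth term, Corollary~\ref{cor:transports}(2), extended from nonnegative functions to $L^2$ functions by splitting into positive parts exactly as in the proof of Proposition~\ref{prop:boundedsa}, gives $\int_X\sum_{y\sim x} f(x)\overline{f(y)}\dmu(x) = \int_X\sum_{y\sim x} f(y)\overline{f(x)}\dmu(x) = \ang{T_\G f,f}$; equivalently, this is just $\overline{\ang{T_\G f,f}}=\ang{T_\G f,f}$ by self-adjointness. Combining the four terms, the right-hand side equals $\frac12\big(2\ang{D_\G f,f}-2\ang{T_\G f,f}\big)=\ang{L_\G f,f}$. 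Since the integrand on the right is nonnegative, $\ang{L_\G f,f}\geq 0$.

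The only point needing care is integrability, which justifies splitting the integral of the expanded sum into four separate integrals. Each piece is finite: $\int_X \sum_{y\sim x}\abs{f(y)}^2\dmu(x)\leq d\norm{f}^2$ by Corollary~\ref{cor:transports}(1), and the cross terms are bounded using Cauchy--Schwarz as in \eqref{eq:Tbdd}. There is also a shortcut for positivity that avoids the cross terms: by Cauchy--Schwarz and Corollary~\ref{cor:transports}(2) applied to the nonnegative functions $\abs{f}$ and $\abs{f}$,
$$
\abs{\ang{T_\G f,f}}\leq \int_X\sum_{y\sim x}\abs{f(y)}\abs{f(x)}\dmu(x)\leq \tfrac12\int_X\sum_{y\sim x}\big(\abs{f(x)}^2+\abs{f(y)}^2\big)\dmu(x)=\ang{D_\G f,f},
$$
which gives $\ang{L_\G f,f}\geq 0$ immediately. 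I would present this shortcut as the proof and note the sum-of-squares identity as a remark.
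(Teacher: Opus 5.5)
Your proof is correct. The boundedness and self-adjointness part, and the sum-of-squares identity in your main plan, are exactly the paper's argument. The paper writes $2\langle L_\G f,f\rangle=\langle D_\G f,f\rangle+\langle D_\G f,f\rangle-\langle T_\G f,f\rangle-\langle f,T_\G f\rangle$, turns one copy of $\langle D_\G f,f\rangle$ into $\int_X\sum_{y\sim x}\abs{f(y)}^2\dmu(x)$ via Corollary~\ref{cor:transports}(1), and recognizes $\int_X\sum_{y\sim x}\abs{f(x)-f(y)}^2\dmu(x)\geq 0$.

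The shortcut you would actually present is a mild variant. Instead of an exact identity, you bound $\abs{\langle T_\G f,f\rangle}$ by $\langle D_\G f,f\rangle$ using the triangle inequality (the same estimate as in Lemma~\ref{lem:Mmax}) together with the pointwise inequality $ab\leq\frac12(a^2+b^2)$. This has two advantages:
\begin{itemize}
\item it sidesteps the complex cross terms and the integrability bookkeeping;
\item it gives slightly more, namely $D_\G+T_\G\geq 0$ as well as $D_\G-T_\G\geq 0$.
\end{itemize}
The paper's identity, in turn, computes the quadratic form exactly. So it also tells you when $\langle L_\G f,f\rangle=0$: precisely when $f(x)=f(y)$ for a.e.\ $x$ and all $y\sim x$.

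There is one small citation slip in the shortcut. The equality $\frac12\int_X\sum_{y\sim x}\bigl(\abs{f(x)}^2+\abs{f(y)}^2\bigr)\dmu(x)=\langle D_\G f,f\rangle$ comes from Corollary~\ref{cor:transports}(1) applied to $\abs{f}^2$, or equivalently (2) applied to $\abs{f}^2$ and the constant function $1$. Applying (2) to $\abs{f}$ and $\abs{f}$, as you wrote, gives only a tautology. Also, the step you call Cauchy--Schwarz there is really the AM--GM inequality.
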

\begin{proof}
Since $\G$ has bounded degree, the degree function $D_\G:X\to \C$ is bounded and measurable, i.e., $D_\G\in L^\infty(X)$, and it takes values in $\R$. In particular, $D_\G$ defines a bounded self-adjoint operator on $L^2(X)$ by multiplication. Since $L_{\G}$ is a linear combination of $D_\G$ and $T_{\G}$, and $T_{\G}$ is bounded and self-adjoint by Proposition~\ref{prop:boundedsa}, we conclude that $L_{\G}$ is also bounded and self-adjoint.

Finally, we show that $L_{\G}$ is positive. We use the well-known fact that an operator $L$ is positive if and only if $\langle Lf,f\rangle\geq 0$ for every $f\in L^2(X)$. Before proceeding to $L_{\G}$, we first note that by Corollary~\ref{cor:transports}(1),
$$
    \langle D_\G f,f\rangle = \int_X \deg(x)\abs{f(x)}^2\dmu(x) = \int_X \sum_{y\sim x} \abs{f(y)}^2\dmu(x).
$$
Since we also trivially have $\deg(x)\abs{f(x)}^2 = \sum_{y\sim x} \abs{f(x)}^2$ for any $x\in X$, we can combine this with self-adjointness of $T_{\G}$ to get
\begin{align*}
    2\langle L_{\G} f,f\rangle &= \langle D_\G f,f\rangle + \langle D_\G f,f\rangle - \langle T_{\G}f,f\rangle- \langle f,T_{\G}f\rangle\\
    &= \int_X \sum_{y\sim x} \big[ \abs{f(x)}^2 + \abs{f(y)}^2 - f(y)\overline{f(x)} - f(x)\overline{f(y)} \big]\dmu(x)\\
    &= \int_X \sum_{y\sim x} \abs{f(x) - f(y)}^2 \dmu(x)\\
    &\geq 0.
\end{align*}
This finishes the proof.
\end{proof}

\begin{lem}\label{lem:LD-T}
    Suppose $\G$ is $d$-regular. Then $\sigma(L_{\G}) = d - \sigma(T_{\G}) = \{d-\lambda : \lambda\in \sigma(T_{\G})\}$. In particular, $M(L_\G) = d - m(T_\G) \leq 2d$ and $m(L_\G) = d - M(T_\G) = 0$.
\end{lem}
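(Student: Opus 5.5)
Since $\G$ is $d$-regular, the degree function $D_\G$ is the constant function $d$ (almost everywhere), so multiplication by $D_\G$ is the operator $dI$ on $L^2(X)$. Hence $L_\G = dI - T_\G$. The plan is to read off the spectrum from this identity and then use the facts about $m(\cdot)$ and $M(\cdot)$ for self-adjoint operators recorded in Subsection~\ref{subsec:operators}.

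For the spectrum, fix $\lambda\in\C$. We have
$$L_\G - (d-\lambda)I = -(T_\G - \lambda I).$$
The left side is invertible exactly when the right side is. So $d-\lambda\in\sigma(L_\G)$ if and only if $\lambda\in\sigma(T_\G)$, which gives $\sigma(L_\G)=d-\sigma(T_\G)$. Equivalently, one can argue with approximate eigenfunctions: a sequence of unit vectors with $\norm{T_\G f_n-\lambda f_n}\to 0$ satisfies $\norm{L_\G f_n-(d-\lambda)f_n}\to 0$, and conversely.

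For the extremal values, for any $f\neq 0$ we have
$$\frac{\ang{L_\G f,f}}{\ang{f,f}} = d - \frac{\ang{T_\G f,f}}{\ang{f,f}}.$$
Taking the supremum over $f\neq 0$ gives $M(L_\G)=d-m(T_\G)$, and taking the infimum gives $m(L_\G)=d-M(T_\G)$. By the corollary computing $M(T_\G)=d$ for $d$-regular graphs, $m(L_\G)=0$. Finally, $\norm{T_\G}\le d$ by Proposition~\ref{prop:boundedsa}, so $m(T_\G)\ge -d$, and therefore $M(L_\G)\le 2d$.

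No step is a real obstacle. The only point needing care is that regularity is understood up to a null set, which is harmless because we work in $L^2(X)$.
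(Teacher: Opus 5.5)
Your proposal is correct and follows the paper's proof. Like you, the paper writes $L_\G = dI - T_\G$ and shows $\sigma(\alpha I - T) = \{\alpha - \eta : \eta \in \sigma(T)\}$ by the same invertibility manipulation. Your Rayleigh-quotient computation of $M(L_\G)$ and $m(L_\G)$, together with $M(T_\G) = d$ and $\norm{T_\G} \leq d$, fills in the ``in particular'' part that the paper calls immediate.
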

\begin{proof}
    First, we note that for any $\alpha\in \C$ and any bounded operator $T\in B(H)$ for some Hilbert space $H$,
    \begin{align*}
        \sigma(\alpha I - T) &= \{\lambda : (\alpha I - T) - \lambda I \text{ is not invertible}\}\\
        &= \{\lambda : T - (\alpha - \lambda) I \text{ is not invertible}\}\\
        &= \{\lambda  : \alpha - \lambda \in \sigma(T)\}\\
        &= \{\alpha - \eta : \eta\in \sigma(T)\}.
    \end{align*}
    The result now follows immediately from the fact that $L_{\G} = dI - T_{\G}$ when $\G$ is $d$-regular.
\end{proof}

Note that the fact that $m(L_\G)=0$ always holds. Indeed, $\sigma(L_\G)\sub [0,\infty)$ since $L_\G$ is positive, and it is easy to check that the constant functions are eigenfunctions with eigenvalue $0$.

\begin{remark}
    Note that the fact that $L_{\G}$ is positive also easily follows from Lemma~\ref{lem:LD-T} in case $\G$ is $d$-regular, as the lemma shows that its spectrum is contained in $d-\sigma(T_{\G})\sub [0,2d]$. In fact, the inclusion $\sigma(L_\G)\sub [0,2d]$, where $d$ is the maximum degree, remains valid for non-regular graphs, since $L_\G$ is positive and $\norm{L_\G} = \norm{D_\G - T_\G} \leq \norm{D_\G} + \norm{T_\G} \leq 2d$.
\end{remark}

\subsection{First basic properties}

We start with the observation that $\abs{m(T_{\G})}\leq M(T_{\G})$; hence, in particular, all spectral values are bounded in absolute value by $M(T_\G)$. This generalizes the classical Perron--Frobenius theory (see, for instance, \cite[Chapter~4]{spielman2025}).

\begin{lem}\label{lem:Mmax}
    We always have $\abs{m(T_{\G})}\leq M(T_{\G})$. In particular $\sigma(T_{\G})\sub [m(T_{\G}), M(T_{\G})] \sub [-M(T_{\G}), M(T_{\G})]$ and $\norm{T_\G} = M(T_\G)$.
\end{lem}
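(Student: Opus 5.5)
The plan is the classical Perron--Frobenius trick: compare the quadratic form of $f$ with that of $\abs{f}$. I will show that $\abs{\ang{T_\G f, f}} \leq \ang{T_\G \abs{f}, \abs{f}}$ for every $f\in L^2(X)$. Since $\abs{f}\in L^2(X)$ and $\ang{\abs{f},\abs{f}} = \ang{f,f}$, this gives
$$
\frac{\abs{\ang{T_\G f,f}}}{\ang{f,f}} \leq \frac{\ang{T_\G\abs{f},\abs{f}}}{\ang{\abs{f},\abs{f}}} \leq M(T_\G)
$$
for all $f\neq 0$. Taking the supremum over $f \neq 0$ and using item (5) of the lemma on self-adjoint operators in Subsection~\ref{subsec:operators}, we get $\rho(T_\G) = \max\{\abs{m(T_\G)},\abs{M(T_\G)}\} \leq M(T_\G)$. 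In particular $\abs{m(T_\G)}\leq M(T_\G)$, and also $M(T_\G)\geq 0$.

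The key inequality comes from writing out the form as in the proof of Proposition~\ref{prop:boundedsa}:
$$
\ang{T_\G f,f} = \int_X \sum_{y\sim x} f(y)\overline{f(x)}\dmu(x).
$$
Then by the triangle inequality
$$
\abs{\ang{T_\G f,f}} \leq \int_X \sum_{y\sim x} \abs{f(y)}\abs{f(x)}\dmu(x) = \ang{T_\G\abs{f},\abs{f}}.
$$
The only point needing care is that these integrals are finite, so that moving the absolute value inside is legitimate. This holds because $T_\G$ is bounded (Proposition~\ref{prop:boundedsa}) and by Cauchy--Schwarz applied to $T_\G\abs{f}$ and $\abs{f}$. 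There is no real obstacle here.

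Finally, I will deduce the remaining claims. The inclusion $\sigma(T_\G)\sub[m(T_\G),M(T_\G)]$ is item (3) of the same lemma. Combined with $m(T_\G)\geq -M(T_\G)$, it gives $\sigma(T_\G)\sub[-M(T_\G),M(T_\G)]$. For a self-adjoint operator the norm equals the spectral radius. So
$$
\norm{T_\G} = \rho(T_\G) = \max\{\abs{m(T_\G)}, \abs{M(T_\G)}\} = M(T_\G),
$$
where the last equality uses $M(T_\G)\geq 0$ (which also follows from Lemma~\ref{lem:avdeg}(1), since $\deg_{\av}(\G)\geq 0$).
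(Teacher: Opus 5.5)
Your proposal is correct and is essentially the paper's argument. The paper also bounds $\abs{\ang{T_\G f, f}}$ by $\ang{T_\G\abs{f},\abs{f}}\leq M(T_\G)$ via the triangle inequality; it just applies this to unit approximate eigenfunctions for $m(T_\G)$ rather than to all $f$ at once with item (5), and your explicit deduction of $M(T_\G)\geq 0$ and $\norm{T_\G}=\rho(T_\G)=M(T_\G)$ fills in what the paper leaves under ``in particular''.
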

\begin{proof}
Fix $\eps>0$, write $T = T_{\G}$, and let $(f_n)_{n\in\N}$ be a sequence of approximate eigenfunctions of norm 1 in $L^2(X)$ for $m(T)$. By assumption, there exists $N\in \N$ such that for all $n\geq N$,
\begin{align*}
    \abs{m(T)} - \eps &\leq \abs{\langle Tf_n, f_n\rangle}\\
    &= \Bigg\vert \int_X \sum_{y\sim x} f_n(y)\overline{f_n(x)}\dmu(x) \Bigg\vert \\
    &\leq \int_X \sum_{y\sim x} \abs{f_n(y)}\abs{f_n(x)}\dmu(x)\\
    &= \langle T\abs{f_n},\abs{f_n}\rangle\\
    &\leq M(T).
\end{align*}
As $\eps>0$ was arbitrary, this finishes the proof.
\end{proof}

The first corollary is the analogue of the classical fact that the maximum eigenvalue has a positive eigenvector.

\begin{cor} 
\label{cor: Tf=Mf}
If $T_\G f = M(T_\G)f$ for some $f\in L^2(X)$, then $T_\G\abs{f} = M(T_\G)\abs{f}$. In particular, if $M(T_\G)$ is an eigenvalue, then it has a positive eigenfunction.
\end{cor}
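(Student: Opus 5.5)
Write $T = T_\G$, $M = M(T)$, and suppose $Tf = Mf$ with $f \neq 0$. The plan is to use the chain of inequalities from the proof of Lemma~\ref{lem:Mmax}, applied directly to $f$ instead of to approximate eigenfunctions. First, since $M$ is real and $T$ is self-adjoint, we have $\ang{Tf,f} = M\norm{f}^2$. Second, by the same pointwise estimate as in Lemma~\ref{lem:Mmax},
$$
M\norm{f}^2 = \abs{\ang{Tf,f}} \leq \int_X \sum_{y\sim x}\abs{f(y)}\abs{f(x)}\dmu(x) = \ang{T\abs{f},\abs{f}} \leq M\norm{\abs{f}}^2 = M\norm{f}^2 .
$$
Hence $g = \abs{f}$ satisfies $\ang{Tg,g} = M\ang{g,g}$, so $g$ attains the supremum defining $M(T)$.

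The key step is to pass from ``$g$ attains the Rayleigh quotient supremum'' to ``$g$ is an eigenfunction''. Consider the positive operator $S = MI - T$, which is positive because $\ang{Th,h} \leq M\norm{h}^2$ for all $h$. Then $\ang{Sg,g} = 0$. Applying Cauchy--Schwarz for the positive semidefinite form $(h,k)\mapsto \ang{Sh,k}$ gives
$$
\abs{\ang{Sg,h}}^2 \leq \ang{Sg,g}\ang{Sh,h} = 0 \quad \text{for all } h .
$$
Therefore $Sg = 0$, that is, $T\abs{f} = M\abs{f}$. Alternatively, one can write $S = (S^{1/2})^2$ and note that $\norm{S^{1/2}g}^2 = 0$.

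This is the only genuinely non-routine step; the rest is the inequality chain above. The ``in particular'' clause then follows immediately: if $M$ is an eigenvalue with eigenfunction $f$, then $\abs{f}$ is a nonnegative eigenfunction for $M$, and it is nonzero since $\norm{\abs{f}} = \norm{f} \neq 0$.
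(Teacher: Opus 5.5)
Your proof is correct. Its first half, the inequality chain giving $\ang{T\abs{f},\abs{f}} = M\norm{f}^2$, is exactly what the paper does.

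The two arguments differ in the step from ``$\abs{f}$ attains the Rayleigh quotient'' to ``$\abs{f}$ is an eigenfunction''. The paper normalizes $\norm{f}=1$ and expands
\[
\norm{T\abs{f} - M\abs{f}}^2 = \norm{T\abs{f}}^2 - 2M\ang{T\abs{f},\abs{f}} + M^2 = \norm{T\abs{f}}^2 - M^2 \leq 0 .
\]
The last inequality uses $\norm{T_\G} = M(T_\G)$ from Lemma~\ref{lem:Mmax}, i.e.\ the Perron--Frobenius-type fact $\abs{m(T_\G)} \leq M(T_\G)$.

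Your route goes through positivity of $S = MI - T$ and Cauchy--Schwarz for the semi-inner product $(h,k)\mapsto \ang{Sh,k}$. It needs only the definition of $M(T)$, so it works for any self-adjoint operator and any maximizer of the Rayleigh quotient; the only graph-specific input is the inequality chain. The paper's version is a one-line computation, but it leans on the extra spectral-radius fact.

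Both versions extend to the approximate-eigenfunction statement in the remark after the corollary. In yours, take $h = Sg$ to get $\norm{Sg}^2 \leq \norm{S}\ang{Sg,g}$.

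One small point: writing $M\norm{f}^2 = \abs{\ang{Tf,f}}$ presupposes $M \geq 0$. This does hold, e.g.\ by Lemma~\ref{lem:avdeg}(1). Alternatively, replace that equality by $\leq$, which holds regardless of sign.
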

\begin{proof}
We write $T=T_\G$ and $M=M(T_\G)$. Assume without loss of generality that $\norm{f} = 1$. A computation analogous to the one in Lemma~\ref{lem:Mmax} shows that $\langle T\abs{f},\abs{f}\rangle = M$. In particular,
$$
0 \leq \norm{T\abs{f} - M\abs{f}}^2 = \langle T\abs{f} - M\abs{f}, T\abs{f} - M\abs{f}\rangle = \norm{T\abs{f}}^2 - M^2 \leq 0.
$$
We conclude that, indeed, $T\abs{f} = M\abs{f}$.

\end{proof}

In fact, when $M(T_\G)$ is not an eigenvalue, we note that the same proof yields the fact that there always exists a sequence $(f_n)_{n\in\N}$ of positive approximate eigenfunctions for $M(T_\G)$. A similar argument gives the following:

\begin{cor}\label{cor:minusdinspectrum}
    Let $d$ be the maximum degree of $\G$. If $-d\in \sigma(T_{\G})$ and $(f_n)_{n \in \N}$ is a sequence of functions in $L^2(X)$ of norm $1$ satisfying $\norm{T_\G f_n + df_n}\to 0$, then $\norm{T_\G \abs{f_n} - d\abs{f_n}}\to 0$. In particular, $d \in \sigma(T_{\G})$.
\end{cor}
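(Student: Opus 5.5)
We argue as in Lemma~\ref{lem:Mmax} and Corollary~\ref{cor: Tf=Mf}, replacing exact eigenfunctions by approximate ones. Write $T = T_\G$ and $g_n = \abs{f_n}$, so $\norm{g_n} = 1$. By Proposition~\ref{prop:boundedsa} we have $\norm{T}\leq d$, so $M(T)\leq d$ and $\norm{Tg_n}\leq d$.

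The first step is to show $\ang{Tg_n,g_n}\to d$. Since $\norm{Tf_n + df_n}\to 0$ and $\norm{f_n}=1$, Cauchy--Schwarz gives $\ang{Tf_n,f_n} + d = \ang{Tf_n + df_n, f_n}\to 0$, so $\ang{Tf_n,f_n}\to -d$. The pointwise estimate from Lemma~\ref{lem:Mmax} gives
$$
\abs{\ang{Tf_n,f_n}} \leq \int_X \sum_{y\sim x}\abs{f_n(y)}\abs{f_n(x)}\dmu(x) = \ang{Tg_n,g_n}\leq M(T)\leq d.
$$
The left side tends to $d$, so $\ang{Tg_n,g_n}\to d$.

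Next, expand the square:
$$
\norm{Tg_n - dg_n}^2 = \norm{Tg_n}^2 - 2d\ang{Tg_n,g_n} + d^2 \leq d^2 - 2d\ang{Tg_n,g_n} + d^2.
$$
Here we used that $\ang{Tg_n,g_n}$ is real, since $T$ is self-adjoint and $g_n$ is real-valued. By the first step the right side tends to $2d^2 - 2d^2 = 0$, so $\norm{Tg_n - dg_n}\to 0$. Because the $g_n$ are unit vectors, $d$ is an approximate eigenvalue, and Lemma~2.2(2) gives $d\in\sigma(T)$. This also shows $M(T)=d$.

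The only delicate point is that $\norm{T}\leq d$ is needed, not just $\norm{T} = M(T)$. The bound $\norm{T}\leq d$ comes from Proposition~\ref{prop:boundedsa}, and without it the expansion in the second step would not close.
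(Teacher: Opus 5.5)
Your argument is correct and is essentially the paper's proof, which says only that the computation from Lemma~\ref{lem:Mmax} and Corollary~\ref{cor: Tf=Mf} carries over. That computation gives $\abs{\ang{T_\G f_n,f_n}}\le\ang{T_\G\abs{f_n},\abs{f_n}}\le M(T_\G)\le d$ with the left side tending to $d$, and then $\norm{T_\G\abs{f_n}-d\abs{f_n}}^2\le 2d^2-2d\ang{T_\G\abs{f_n},\abs{f_n}}\to 0$ using $\norm{T_\G}\le d$, exactly as you do. Your closing remark is harmless but not a real distinction, since Lemma~\ref{lem:Mmax} and Proposition~\ref{prop:boundedsa} together give $\norm{T_\G}=M(T_\G)\le d$ anyway.
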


The next part of the classical Perron--Frobenius theory for finite graphs asserts that if $\G$ is connected, then 
the largest eigenvalue has multiplicity $1$ and its eigenvector has strictly positive entries. We observe that this goes through in our setting upon replacing connectedness by its natural analogue, ergodicity.

\begin{lem}\label{lem:strictlyposeigenvector}
    Assume $\G$ is ergodic and $M(T_\G)$ is an eigenvalue with a positive eigenfunction $f$. Then $f>0$.
\end{lem}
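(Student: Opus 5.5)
Let $Z = \{x \in X : f(x) = 0\}$. We show that $Z$ is $\G$-invariant. Ergodicity then gives $\mu(Z) \in \{0,1\}$. Since $f \neq 0$ as an eigenfunction, $\mu(Z) \neq 1$, so $\mu(Z) = 0$ and $f > 0$ almost everywhere.

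First we check that $M = M(T_\G) > 0$, so that the eigenvalue equation carries information. If $T_\G = 0$, then $\G$ has no edges (up to null sets), so every nonnull set is invariant. By ergodicity $X$ is then essentially a single point, and the statement is trivial. Otherwise $M \geq \sqrt{d} > 0$ by Lemma~\ref{lem:avdeg}(2), where $d \geq 1$ is the maximum degree.

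The main step is the invariance of $Z$. For almost every $x \in Z$ we have
$$
0 = M f(x) = (T_\G f)(x) = \sum_{y\sim x} f(y).
$$
Every term is nonnegative, so $f(y) = 0$ for all $y \sim x$. This shows $\cN(Z) \sub Z$ up to a null set. The only real obstacle is making this precise, because the identity $T_\G f = Mf$ holds only almost everywhere. Let $N$ be the null set where the identity fails, or where $f$ as a fixed Borel representative is negative. By the pmp property, applied via Corollary~\ref{cor:neighbors} or directly through the involutions generating $E_\G$, the saturation $[N]_{E_\G}$ is still null. After discarding it, the pointwise argument applies on an $E_\G$-invariant conull set. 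There $Z = B_1(Z)$, so $Z$ is $\G$-invariant, and hence $E_\G$-invariant by the lemma in Subsection~\ref{subsec:BorelGraphs}. This completes the argument.
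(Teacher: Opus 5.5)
Your proof is correct and follows the paper's argument: the zero set of $f$ is $\G$-invariant because $\sum_{y\sim x} f(y) = M f(x) = 0$ with nonnegative summands, and ergodicity together with $f \neq 0$ forces it to be null; your removal of the exceptional null set through its $E_\G$-saturation makes explicit a step the paper leaves implicit. The preliminary check that $M(T_\G) > 0$ is unnecessary, since for $x \in Z$ the identity $(T_\G f)(x) = M f(x) = 0$ holds for any value of $M$.
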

\begin{proof}
Let $X_0 = \{x\in X : f(x) = 0\}$ and assume $\mu(X_0)>0$. Since $f\geq 0$ and $M(T_\G)f(x) = T_\G f(x) = \sum_{y\sim x} f(y)$ for almost every $x\in X$, we see that for almost every $x\in X_0$ and $y\sim x$, $f(y) = 0$. In other words, $X_0 = B_1(X_0)$ modulo null sets, i.e., $X_0$ is $\G$-invariant. By ergodicity, we conclude that $X_0 = X$. This contradicts that $f\neq 0$.
\end{proof}

\begin{lem}\label{lem:ergeigenvalued}
    Assume $\G$ is ergodic and $M(T_\G)$ is an eigenvalue. Then $\ker(T_{\G} - M(T_\G)I) = \C f_0$ for some strictly positive function $f_0\in L^2(X)$.
\end{lem}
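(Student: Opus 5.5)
We will show that every eigenfunction for $M = M(T_\G)$ is a scalar multiple of one fixed strictly positive function. First, by Corollary~\ref{cor: Tf=Mf}, there is a positive eigenfunction $f_0$ for $M$. By Lemma~\ref{lem:strictlyposeigenvector}, $f_0 > 0$ almost everywhere. This gives $\C f_0 \sub \ker(T_\G - MI)$, and it remains to prove the reverse inclusion.

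Next we reduce to real-valued eigenfunctions. Let $f \in \ker(T_\G - MI)$. Since $T_\G$ has real kernel (it commutes with complex conjugation), both $\mathrm{Re} f$ and $\mathrm{Im} f$ lie in $\ker(T_\G - MI)$. It therefore suffices to show that every real eigenfunction $g$ is a real multiple of $f_0$.

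Given such a real $g$, write $g = g^+ - g^-$. By Corollary~\ref{cor: Tf=Mf}, $\abs{g}$ is also an eigenfunction. Hence $g^+ = \frac12(\abs{g} + g)$ and $g^- = \frac12(\abs{g} - g)$ are positive eigenfunctions for $M$. By Lemma~\ref{lem:strictlyposeigenvector}, each of $g^\pm$ is either zero or strictly positive almost everywhere. They have disjoint supports, so they cannot both be strictly positive a.e. Consequently $g \geq 0$ or $g \leq 0$ almost everywhere, and so every real eigenfunction has constant sign.

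Now take a real eigenfunction $g$ and consider $h_t = g - t f_0$ for $t \in \R$. Each $h_t$ is a real eigenfunction, so by the previous step either $h_t = 0$ or $h_t$ has strict constant sign. Set $t^* = \sup\{t : g - t f_0 \geq 0 \text{ a.e.}\}$.

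We must check that $t^*$ is finite. The set $\{t : g - t f_0 \geq 0\}$ is nonempty. Indeed, if $g \geq 0$, then $t = 0$ belongs to it. If $g \leq 0$ and $g \neq 0$, then $g$ is strictly negative, and we instead apply the argument to $-g$, or else take $t$ very negative; the latter can fail if $g/f_0$ is unbounded. To avoid unboundedness issues we argue more cleanly. Consider the interval $I = \{t : h_t \geq 0 \text{ a.e.}\}$ and its complement. For every $t$, either $h_t \geq 0$ or $h_t \leq 0$. Both sets are closed, because a.e. inequalities are preserved under $L^2$ limits. They cover $\R$, and since $\R$ is connected they intersect at some $t_0$. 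Then $h_{t_0} \geq 0$ and $h_{t_0} \leq 0$, so $h_{t_0} = 0$, i.e., $g = t_0 f_0$.

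It remains to rule out that one of the two sets is empty. Suppose $h_t \geq 0$ for all $t$. Then $g \geq t f_0$ for all $t$, and letting $t \to \infty$ on the set $\{f_0 > 0\}$, which is conull, gives a contradiction. The symmetric argument handles the case $h_t \leq 0$ for all $t$. This completes the proof that $\ker(T_\G - MI) = \C f_0$.

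The main subtle point is the sign dichotomy, which comes from applying Lemma~\ref{lem:strictlyposeigenvector} to $g^\pm$. Once that is in hand, the connectedness argument concludes the proof.
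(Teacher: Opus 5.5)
Your proof is correct and follows the paper's route. You establish the sign dichotomy for real eigenfunctions via $g^{\pm}$, Corollary~\ref{cor: Tf=Mf} and Lemma~\ref{lem:strictlyposeigenvector}, then compare $g$ with $t f_0$; your connectedness argument on the closed sets $\{t : g - t f_0 \geq 0 \text{ a.e.}\}$ and $\{t : g - t f_0 \leq 0 \text{ a.e.}\}$ makes explicit the step the paper leaves implicit (that some $r$ makes $f - r f_0$ change sign on positive-measure sets when $f \notin \C f_0$). For a clean write-up, drop the abandoned $t^*$ detour, and say the two closed sets are $\{h_t \geq 0\}$ and $\{h_t \leq 0\}$, not ``$I$ and its complement'' (the complement of $I$ is open).
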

\begin{proof}
For convenience, we write $T=T_\G$ and $M=M(T_\G)$. We start with the following claim:

\begin{claim}
\label{claim: Tf=df2}
    If $Tf = Mf$ for some real-valued function $f\in L^2(X)$, then $f\geq 0$ or $f\leq 0$.
\end{claim}

\begin{proof}[Proof of Claim~\ref{claim: Tf=df2}]
Write $f = f_+ - f_-$ as its positive part minus its negative part. Note that $f_{\pm} = \frac{1}{2}(\abs{f} \pm f)$, and thus by Corollary~\ref{cor: Tf=Mf}, $Tf_{\pm} = Mf_{\pm}$. However, by construction, $f_+f_- = 0$; hence by Lemma~\ref{lem:strictlyposeigenvector}, we necessarily have $f_+ = 0$ or $f_- = 0$.
\end{proof}

We can now finish the proof. Using Corollary~\ref{cor: Tf=Mf}, let $f_0$ be a positive eigenfunction for $T$. From Lemma~\ref{lem:strictlyposeigenvector}, it follows that $f_0$ is strictly positive. Assume $Tf=Mf$ for some function $f$ which is not a multiple of $f_0$. By passing to the real or complex part of $f$ if necessary, we can assume $f$ is real-valued and not a multiple of $f_0$. In particular, there exists $r\in \R$ such that both $\mu(\{x\in X : f(x) - rf_0(x) > 0\}) > 0$ and $\mu(\{x\in X : f(x) - rf_0(x) < 0\}) > 0$. However, since $f - rf_0$ is an eigenfunction of $T$ with eigenvalue $M$, this contradicts Claim~\ref{claim: Tf=df2}, finishing the proof.
\end{proof}

\begin{remark}\label{rmk:nonstronglyergodic}
    Since approximate eigenfunctions are natural for self-adjoint operators on infinite-dimensional Hilbert spaces, one might expect that, if $\G$ is ergodic, then whenever $f_n\in L^2(X)$ are functions of norm 1 satisfying $\norm{T_\G f_n - M(T_\G )f_n}\to 0$ and $f_n\geq 0$, then $\mu(\{x\in X : f_n(x) = 0\})\to 0$. However, this can never be true in general. Indeed, consider any free ergodic but not strongly ergodic measure-preserving action of a countable discrete group $\Gamma$ on $(X,\mu)$, e.g., irrational rotation as an action of $\Z$ on $S^1$. The Schreier graph of this action yields a $d$-regular Borel pmp graph on $X$. Because the action is not strongly ergodic, we can find a sequence of Borel sets $A_n\sub X$ each having measure $\frac{1}{2}$ that are approximately invariant for the action. It is then easy to check that the indicator functions $1_{A_n}$ yield a sequence of approximate eigenfunctions for the (maximum) eigenvalue $d$. On the other hand, we will show below that the natural condition of spectral gap is in fact sufficient to deduce that any approximate eigenfunction of $d$ is approximately constant.
\end{remark}

Next, we record an easy generalization of the fact that eigenvectors of different eigenvalues are perpendicular to each other. We are mainly interested in the case where $\lambda_1 = d$ and the graph has spectral gap (see Subsection~\ref{subsec: spectral gap}).

\begin{lem}\label{lem:eigenvalueperp}
    Let $T\in B(H)$ be a self-adjoint operator. If $\lambda_1\neq \lambda_2\in \sigma(T)$, $\lambda_1$ is an eigenvalue, and $\norm{Tf_n - \lambda_2f_n}\to 0$, then for any eigenfunction $g$ of $\lambda_1$, we have $\langle f_n,g\rangle\to 0$.
\end{lem}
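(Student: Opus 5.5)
The plan is to compute the inner product $\langle Tf_n, g\rangle$ in two ways and compare. First use self-adjointness of $T$ together with the eigenvalue equation $Tg = \lambda_1 g$. Since $\lambda_1$ is an eigenvalue of a self-adjoint operator, it is real by the basic properties of self-adjoint operators recalled above, so
$$
\langle Tf_n, g\rangle = \langle f_n, Tg\rangle = \langle f_n, \lambda_1 g\rangle = \lambda_1 \langle f_n, g\rangle .
$$
Second, write $Tf_n = \lambda_2 f_n + (Tf_n - \lambda_2 f_n)$. This gives
$$
\langle Tf_n, g\rangle = \lambda_2\langle f_n, g\rangle + \langle Tf_n - \lambda_2 f_n, g\rangle .
$$

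Subtracting the two expressions yields
$$
(\lambda_1 - \lambda_2)\langle f_n, g\rangle = \langle Tf_n - \lambda_2 f_n, g\rangle .
$$
By Cauchy--Schwarz, the right-hand side is bounded in absolute value by $\norm{Tf_n - \lambda_2 f_n}\,\norm{g}$, and this tends to $0$ by hypothesis. Since $\lambda_1 \neq \lambda_2$, we can divide by $\abs{\lambda_1 - \lambda_2} > 0$ and obtain
$$
\abs{\langle f_n, g\rangle} \leq \frac{\norm{g}}{\abs{\lambda_1-\lambda_2}}\,\norm{Tf_n - \lambda_2 f_n} \to 0 .
$$

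There is no real obstacle here. The only points needing care are that $\lambda_1$ is real, so no complex conjugate appears when $\lambda_1$ is pulled out of the second slot of the inner product, and that we never need $\norm{f_n} = 1$, since only the approximate eigenvalue condition is used. The rate of convergence is controlled explicitly by the gap $\abs{\lambda_1 - \lambda_2}$, which is the form used later when $\lambda_1 = d$ and spectral gap is assumed.
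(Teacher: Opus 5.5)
Your proof is correct and is the same argument as the paper's: both use self-adjointness with $Tg=\lambda_1 g$ (where $\lambda_1$ is real, which you correctly point out) to get $(\lambda_1-\lambda_2)\langle f_n,g\rangle = \langle Tf_n-\lambda_2 f_n, g\rangle \to 0$. Your explicit Cauchy--Schwarz bound just makes the paper's final limit step quantitative.
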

\begin{proof}
We have
\begin{align*}
    \lambda_1\langle f_n,g\rangle - \lambda_2\langle f_n,g\rangle &= \langle f_n,Tg\rangle - \lambda_2\langle f_n,g\rangle\\
    &= \langle Tf_n,g\rangle - \lambda_2\langle f_n,g\rangle\\
    &\to 0,
\end{align*}
from which the result follows immediately.
\end{proof}

\begin{cor}
    Assume $\G$ is $d$-regular and $\lambda\in\sigma(T_\G)$ with $\lambda\neq d$. If $\norm{T_\G f_n - \lambda f_n}\to 0$, then $\langle f_n,1\rangle \to 0$. 
\end{cor}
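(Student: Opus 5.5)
This follows directly from Lemma~\ref{lem:eigenvalueperp} applied with $T = T_\G$, $\lambda_1 = d$ and $\lambda_2 = \lambda$. Two hypotheses need checking.

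First, $d$ is an eigenvalue of $T_\G$ with eigenfunction the constant function $1$. Indeed, $d$-regularity gives $(T_\G 1)(x) = \sum_{y\sim x} 1 = \deg(x) = d$ for almost every $x$, as already observed in the corollary following Proposition~\ref{prop:boundedsa}. That corollary also records $d = M(T_\G) \in \sigma(T_\G)$.

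Second, $\lambda \in \sigma(T_\G)$ with $\lambda \neq d$ holds by hypothesis, and $(f_n)_{n\in\N}$ satisfies $\norm{T_\G f_n - \lambda f_n}\to 0$.

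Lemma~\ref{lem:eigenvalueperp} then gives $\ang{f_n, g} \to 0$ for every eigenfunction $g$ of $d$. Taking $g = 1$ yields $\ang{f_n, 1} \to 0$, which is the claim.

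If one prefers to see the computation directly, self-adjointness (Proposition~\ref{prop:boundedsa}) gives
$$(d-\lambda)\ang{f_n,1} = \ang{f_n, T_\G 1} - \lambda\ang{f_n,1} = \ang{T_\G f_n - \lambda f_n, 1}.$$
By Cauchy--Schwarz, the right-hand side is bounded in absolute value by $\norm{T_\G f_n - \lambda f_n}\,\norm{1} = \norm{T_\G f_n - \lambda f_n} \to 0$, using $\norm{1} = 1$ since $\mu$ is a probability measure. Dividing by $d - \lambda \neq 0$ finishes the argument.

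There is no real obstacle. The only points needing care are that $1$ genuinely lies in $L^2(X)$ and is an eigenfunction, which requires regularity, and that $\lambda \neq d$, so that division is permitted.
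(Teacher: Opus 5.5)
Your proposal is correct and matches the paper's intended argument. The corollary is an immediate consequence of Lemma~\ref{lem:eigenvalueperp} with $\lambda_1 = d$ and eigenfunction $g = 1$, and your direct computation just unpacks that lemma's one-line proof in this special case.
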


Let $L^2_0(X)$ denote the set of functions perpendicular to 1:
$$
L^2_0(X) = L^2(X) \ominus \C 1 = \{f \in L^2(X) : \int_X f(x)\dmu(x) = 0\}.
$$
Since we can perturb any sequence of functions that are approximately perpendicular to $1$ to a sequence of functions perpendicular to $1$, we obtain the following corollary:

\begin{cor}
    Assume $\G$ is $d$-regular. Then $\sigma(T|_{L^2_0(X)})$ is equal to either $\sigma(T)$ or $\sigma(T)\setminus\{d\}$.
\end{cor}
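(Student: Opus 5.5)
We must show: for $d$-regular $\G$, $\sigma(T|_{L^2_0(X)})$ equals either $\sigma(T)$ or $\sigma(T)\setminus\{d\}$. Write $T=T_\G$ and $T_0 = T|_{L^2_0(X)}$. Since $T1 = d1$ and $T$ is self-adjoint, $L^2_0(X)$ is invariant under $T$. So $T = d \oplus T_0$ with respect to $L^2(X) = \C 1\oplus L^2_0(X)$, and $T_0$ is self-adjoint. The plan is to prove the two inclusions
$$
\sigma(T)\setminus\{d\}\sub \sigma(T_0)\sub \sigma(T).
$$
Since $\sigma(T_0)$ is then sandwiched between $\sigma(T)\setminus\{d\}$ and $\sigma(T)$, and these two sets differ by at most the single point $d$, it must equal one of them.

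For $\sigma(T_0)\sub\sigma(T)$, take $\lambda\in\sigma(T_0)$. Since $T_0$ is self-adjoint, there are unit vectors $f_n\in L^2_0(X)$ with $\norm{T_0f_n-\lambda f_n}\to 0$. Because $T_0f_n = Tf_n$, these same $f_n$ are approximate eigenfunctions of $T$ in $L^2(X)$, so $\lambda\in\sigma(T)$.

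For the other inclusion, take $\lambda\in\sigma(T)$ with $\lambda\neq d$, and choose unit vectors $f_n$ with $\norm{Tf_n-\lambda f_n}\to 0$.
1. By the preceding corollary (Lemma~\ref{lem:eigenvalueperp} applied with eigenvector $1$ for $d$), $\ang{f_n,1}\to 0$.
2. Set $g_n = f_n - \ang{f_n,1}1\in L^2_0(X)$. Then $\norm{g_n-f_n} = \abs{\ang{f_n,1}}\to 0$, so $\norm{g_n}\to 1$.
3. Using $\norm{T}\le d$,
$$
\norm{Tg_n-\lambda g_n}\le \norm{Tf_n-\lambda f_n} + (d+\abs{\lambda})\norm{g_n-f_n}\to 0.
$$
4. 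Normalizing, $g_n/\norm{g_n}$ are unit approximate eigenfunctions of $T_0$ for $\lambda$, so $\lambda\in\sigma(T_0)$.

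The main point requiring care is this perturbation step, specifically that $\ang{f_n,1}\to 0$. That uses $\lambda\neq d$ together with the fact that $1$ is a genuine eigenvector for $d$, which holds by $d$-regularity. Everything else is bookkeeping. Which of the two alternatives occurs depends on whether $d\in\sigma(T_0)$, for instance when $d$ is a non-isolated point of $\sigma(T)$. Determining that is not needed for the statement.
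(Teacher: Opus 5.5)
Your proof is correct and follows the paper's route. The paper justifies this corollary in one line: approximate eigenfunctions for $\lambda\neq d$ are approximately orthogonal to $1$ by the preceding corollary, so they can be perturbed into genuine elements of $L^2_0(X)$, which is exactly your steps 1--4. You also spell out the invariance of $L^2_0(X)$ and the inclusion $\sigma(T|_{L^2_0(X)})\sub\sigma(T)$, which the paper leaves implicit; both inclusions also follow at once from $\sigma(T)=\{d\}\cup\sigma(T|_{L^2_0(X)})$ for the block-diagonal decomposition $T=d\oplus T|_{L^2_0(X)}$.
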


\subsection{Spectral gap}
\label{subsec: spectral gap}

To deal with the subtleties arising from approximately invariant sets (as outlined for instance in Remark~\ref{rmk:nonstronglyergodic}), we sometimes assume that the adjacency operator has spectral gap, a property that has appeared in many forms in the literature and has seen important applications across various disciplines. Just as for finite graphs, it is known that spectral gap connects to expansion properties of the graph; see for instance \cite{ln2011,HLS14}, as well as Section~\ref{sec:matchings}. We work with the following definition of spectral gap in the case when $\G$ is regular:

\begin{defn}
    Assume $\G$ is a $d$-regular Borel pmp graph. We say $\G$ has \textit{spectral gap} if there exists $\eps>0$ such that $\sigma(T_{\G})\sub [-d, d-\eps]\cup\{d\}$. We refer to the largest such $\eps$ as the \textit{spectral gap} of $\G$.
\end{defn}

\begin{remark}
    Since we restrict our attention to regular graphs, we may equivalently define the spectral gap using the Laplacian operator; in this case, the spectral gap is simply the largest $\eps$ such that $\sigma(L_\G)\sub \{0\}\cup [\eps,2d]$. Since $0$ is always an eigenvalue of $L_\G$ with the constant functions as eigenfunctions, this definition is sometimes more natural, although in our situation it is equivalent.
\end{remark}

There are many natural examples of (regular) Borel pmp graphs with spectral gap, notably Schreier graphs of group actions with spectral gap; this includes Bernoulli actions of non-amenable groups and actions of Property (T) groups. In contrast, it is well-known that Schreier graphs of actions of amenable groups never have spectral gap.

Before proceeding, we note that any finite connected graph has spectral gap. It is therefore perhaps unsurprising that certain classical techniques extend naturally to Borel graphs with spectral gap, while it is not always clear if they extend in general (see for instance Remark~\ref{rem: necessary?}).

We end this section with a few easy observations regarding Borel pmp graphs with spectral gap; we will see further uses in Sections~\ref{sec:bipartite} and~\ref{sec:matchings}. First, we note that it follows from Proposition~\ref{prop:isolated} that if $\G$ has spectral gap, then $\sigma(T|_{\ker(T-dI)^\perp}) = \sigma(T)\setminus\{d\}$. If moreover $\G$ is ergodic, then Lemma~\ref{lem:ergeigenvalued} implies the following:

\begin{cor}\label{cor:ergodicspectralgapspectrum}
    Let $\G$ be an ergodic $d$-regular Borel pmp graph with spectral gap. Then we have $\sigma(T|_{L^2_0(X)}) = \sigma(T)\setminus\{d\}$.
\end{cor}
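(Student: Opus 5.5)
The plan is to identify $L^2_0(X)$ with $\ker(T-dI)^\perp$ and then apply Proposition~\ref{prop:isolated}. Write $T = T_\G$.

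First, since $\G$ is ergodic and $d$-regular, the constant function $1$ is an eigenfunction for $d = M(T)$. By Lemma~\ref{lem:ergeigenvalued}, the eigenspace $\ker(T-dI)$ is one-dimensional, spanned by a strictly positive function. Because it contains $1$, it equals $\C 1$. Hence $\ker(T-dI)^\perp = L^2(X)\ominus \C 1 = L^2_0(X)$.

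Second, this subspace is $T$-invariant: if $f\perp 1$, then $\langle Tf,1\rangle = \langle f,T1\rangle = d\langle f,1\rangle = 0$. So $T|_{L^2_0(X)}$ is a self-adjoint operator on $L^2_0(X)$, and $T$ decomposes as $T|_{\C 1}\oplus T|_{L^2_0(X)}$. For a block-diagonal self-adjoint operator, the spectrum is the union of the spectra of the blocks, since $T-\lambda I$ is invertible if and only if each diagonal block is. This gives
$$\sigma(T) = \{d\}\cup \sigma(T|_{L^2_0(X)}).$$
Therefore $\sigma(T)\setminus\{d\}\sub \sigma(T|_{L^2_0(X)})\sub \sigma(T)$.

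It remains to show $d\notin \sigma(T|_{L^2_0(X)})$; this is the only step that uses the spectral gap. Spectral gap means $\sigma(T)\sub[-d,d-\eps]\cup\{d\}$, so $d$ is an isolated point of $\sigma(T)$. Proposition~\ref{prop:isolated}(2) then gives $d\notin\sigma(T|_{\ker(T-dI)^\perp})$, which is $\sigma(T|_{L^2_0(X)})$ by the first step. As a direct check, Corollary~\ref{cor:largestisolated} with $\lambda' = d-\eps$ gives $\langle Tf,f\rangle\le (d-\eps)\|f\|^2$ on $L^2_0(X)$. Hence $M(T|_{L^2_0(X)})\le d-\eps<d$, so $d$ is not in the spectrum of the restriction. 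Combining this with the inclusions above yields $\sigma(T|_{L^2_0(X)}) = \sigma(T)\setminus\{d\}$.

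No step is a genuine obstacle. The only point needing care is the identification $\ker(T-dI)=\C1$, which is exactly where ergodicity, through Lemma~\ref{lem:ergeigenvalued}, is used.
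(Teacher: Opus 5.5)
Your proposal is correct and takes essentially the paper's route. Proposition~\ref{prop:isolated} gives $\sigma(T|_{\ker(T-dI)^\perp}) = \sigma(T)\setminus\{d\}$ under spectral gap, and Lemma~\ref{lem:ergeigenvalued} identifies $\ker(T-dI) = \C 1$, so that $\ker(T-dI)^\perp = L^2_0(X)$; you just spell out the invariance of $L^2_0(X)$ and the direct-sum splitting of the spectrum, which the paper leaves implicit.
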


In this case, using Proposition~\ref{prop:isolated}, it also follows easily that any sequence of approximate eigenfunctions for $d$ is approximately constant (see the proof of Theorem~\ref{thm:-dspecimpliesbipartite}).

Finally, another classical result for finite connected graphs is the fact that $\G$ is regular if the largest eigenvalue is equal to the maximum degree. We prove that this goes through when $M(T_\G)$ is a genuine eigenvalue, so for instance when $\G$ has spectral gap, although we do not know whether this assumption is necessary.

\begin{prop}
    Assume $\G$ is ergodic, the maximum degree of $\G$ is $d$, $M(T_\G) = d$, and $M(T_\G)$ is an eigenvalue. Then $\G$ is $d$-regular.
\end{prop}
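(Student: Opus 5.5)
By Lemma~\ref{lem:ergeigenvalued}, the eigenspace of $M(T_\G)=d$ is spanned by a strictly positive function $f_0 \in L^2(X)$, which we normalize so that $\norm{f_0}=1$. The plan is to show that $f_0$ is constant, and then read off the degree from the eigenvalue equation: if $f_0\equiv c>0$, then $d\,c = (T_\G f_0)(x) = \deg(x)\,c$ for almost every $x$, so $\deg(x)=d$ almost everywhere.

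To show that $f_0$ is constant, we use the Cauchy--Schwarz chain from the proof of Proposition~\ref{prop:boundedsa} with $f=f_0$:
\begin{align*}
d^2 = \norm{T_\G f_0}^2 &= \int_X \Big|\sum_{y\sim x} f_0(y)\Big|^2 \dmu(x) \leq \int_X \deg(x)\sum_{y\sim x} f_0(y)^2\dmu(x)\\
&\leq d\int_X \sum_{y\sim x} f_0(y)^2 \dmu(x) = d\int_X \deg(x) f_0(x)^2\dmu(x) \leq d^2\norm{f_0}^2 = d^2,
\end{align*}
where the middle equality is Corollary~\ref{cor:transports}(1). Since the two ends agree, every inequality in the chain is an equality.

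We then extract the two consequences of these equalities.
\begin{enumerate}
\item Equality in the last step gives $\int_X (d-\deg(x)) f_0(x)^2\dmu(x) = 0$. Since $f_0>0$ almost everywhere, this already forces $\deg(x)=d$ almost everywhere, which is the claim.
\item The first step requires equality in Cauchy--Schwarz at almost every $x$, so $f_0$ is constant on the neighborhood of almost every vertex. This would also yield regularity via ergodicity, but it is not needed given the first consequence.
\end{enumerate}

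So the proof reduces to the first consequence, and the only point needing care is to justify that the integrands are nonnegative, so that a zero integral forces the integrand to vanish almost everywhere. This holds because $d-\deg(x)\geq 0$ everywhere and $f_0^2>0$ almost everywhere.

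This route avoids needing to show separately that $f_0$ is constant: regularity follows directly, and afterwards $T_\G 1 = d\cdot 1$ together with simplicity of the eigenvalue confirms that $f_0$ is constant. The step where strict positivity of $f_0$ enters, namely Lemma~\ref{lem:strictlyposeigenvector}, which relies on ergodicity, is the real input; without ergodicity the eigenfunction could vanish on a non-regular part of the graph. I expect no other obstacle.
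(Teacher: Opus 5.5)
Your proof is correct and follows essentially the paper's route. Both arguments take a strictly positive eigenfunction for $d$, which ergodicity provides, and apply the mass transport identity $\int_X \sum_{y\sim x} f(y)\,\dmu(x) = \int_X \deg(x) f(x)\,\dmu(x)$ to force $\deg = d$ almost everywhere. The only difference is that the paper integrates the eigenvalue equation directly, getting $\int_X (d-\deg(x))\,f(x)\,\dmu(x) = 0$ with no Cauchy--Schwarz step, whereas your norm chain gives the same conclusion with weight $f_0^2$ in place of $f_0$.
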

\begin{proof}
Write $T=T_\G$ and let $f$ be a function of norm 1 satisfying $Tf=df$. By Corollary~\ref{cor: Tf=Mf}, we can assume that $f$ is positive, and then Lemma~\ref{lem:strictlyposeigenvector} implies that necessarily $f>0$. Furthermore, from Corollary~\ref{cor:ineq}, we deduce that
$$
d\int_X f(x)\dmu(x) = \int_X Tf(x)\dmu(x) = \int_X \sum_{y\sim x} f(y)\dmu(x) = \int_X \deg(x) f(x)\dmu(x).
$$
Assume for contradiction that $X_{<d} = \{x\in X : \deg(x) < d\}$ satisfies $\mu(X_{<d}) > 0$. Then the right-hand side of the above equation satisfies
$$
\int_X \deg(x) f(x)\dmu(x) \leq (d-1) \int_{X_{<d}} f(x) \dmu(x) + d \int_{X\setminus X_{<d}} f(x)\dmu(x) < d\int_X f(x)\dmu(x),
$$
yielding the desired contradiction.
\end{proof}

\section{Bipartiteness}\label{sec:bipartite}

Recall that a graph $\G$ on a set $X$ is \emph{bipartite} if $X$ can be partitioned into two disjoint $\G$-independent sets, or equivalently if $\G$ has no odd cycles. For a finite graph $\G$, it is well-known that if $\G$ is bipartite, then the spectrum of its adjacency operator is symmetric about 0, and that if $\G$ is moreover connected, then the converse holds as well. In fact, the negative of the maximum eigenvalue being in the spectrum is sufficient to deduce bipartiteness.

In this section, we show that this result goes through for bounded-degree Borel pmp graphs upon replacing bipartiteness with \textit{approximate measurable bipartiteness}. 

\begin{defn}
    Let $\G$ be a Borel graph on a standard probability space $(X,\mu)$. We say $\G$ is \textit{approximately measurably bipartite} if for every $\eps > 0$, there exist disjoint $\G$-independent measurable subsets $A,B\sub X$ such that $\mu(A\cup B) > 1-\eps$.
\end{defn}

\begin{remark}
    Note that a Borel graph $\G$ is approximately measurably bipartite if and only if $\chi^\ap_\mu(\G) \leq 2$. This is consistent with the well-known classical fact that bipartiteness and 2-colorability are equivalent.
\end{remark}

For clarity, we make the following precise:

\begin{defn}
    For a bounded, self-adjoint operator $T$, we say that the spectrum of $T$ is \textit{symmetric} if, whenever $\lambda \in \sigma(T)$, also $-\lambda \in \sigma(T)$.
\end{defn}

We first show that the adjacency operator of any approximately measurably bipartite graph has symmetric spectrum. This argument requires no assumptions on $\G$ beyond that it is a bounded-degree Borel pmp graph. For the converse (see Theorem~\ref{thm:-dspecimpliesbipartite}), we additionally assume that $\G$ is ergodic and has spectral gap (although see Remark~\ref{rem: necessary?}).

\begin{thm}\label{thm:bipartiteimpliessymspec}
    Let $(X, \mu)$ be a standard probability space, and let $\G$ be a bounded-degree Borel pmp graph on $X$. If $\G$ is approximately measurably bipartite, then the spectrum of $T_{\G}$ is symmetric. 
\end{thm}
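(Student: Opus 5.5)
Fix $\lambda\in\sigma(T_\G)$ and write $T=T_\G$, with $d$ the maximum degree. By the lemma on self-adjoint operators, there are unit vectors $f_n\in L^2(X)$ with $\norm{Tf_n-\lambda f_n}\to 0$. We want unit vectors $g_n$ with $\norm{Tg_n+\lambda g_n}\to 0$, and we obtain them by flipping signs on one side of an approximate bipartition. The difficulty is that the sets $A,B$ depend on $\eps$, and the $f_n$ may concentrate mass on the small bad set $C=X\setminus(A\cup B)$. So we choose the sets \emph{after} fixing $f_n$, using absolute continuity (Lemma~\ref{lem: dct}) of the integrable functions $\abs{f_n}^2$ and $\abs{Tf_n}^2$.

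Concretely, fix $n$ and $\eps>0$. Apply Lemma~\ref{lem: dct} to $h=\abs{f_n}^2+\abs{Tf_n}^2$ to get $\delta>0$ with $\int_S h<\eps$ whenever $\mu(S)<\delta$. Choose disjoint independent sets $A,B$ with $\mu(C)<\delta/(d+1)$, and set $C'=C\cup\cN(C)$. By Corollary~\ref{cor:neighbors}, $\mu(C')\le (d+1)\mu(C)<\delta$, so $\int_{C'}h<\eps$. Define $s=1_A-1_B$ (which vanishes on $C$) and $g=s f_n$. Outside $C'$, a point $x\in A$ has all its neighbors in $B$, since $A$ is independent and $x\notin\cN(C)$. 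Hence $(Tg)(x)=-\sum_{y\sim x}f_n(y)=-(Tf_n)(x)=-s(x)(Tf_n)(x)$, and symmetrically for $x\in B$. Therefore $Tg+\lambda g=-s(Tf_n-\lambda f_n)$ on $X\setminus C'$, which gives $\norm{(Tg+\lambda g)1_{X\setminus C'}}\le\norm{Tf_n-\lambda f_n}$.

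On $C'$ we bound crudely. First, $\norm{\lambda g\,1_{C'}}^2\le\abs{\lambda}^2\eps$. Next, since $\norm{T}\le d$, we have $\norm{(Tg)1_{C'}}\le\norm{Tg}\le d$. That is not small, so we refine: on $C'$, $(Tg)(x)$ involves only values of $f_n$ at neighbors of $C'$, which lie in $C''=C'\cup\cN(C')$. The cleaner route is to shrink $\mu(C)$ further, to below $\delta/(d+1)^2$, so that $\mu(C'')<\delta$. Then Cauchy--Schwarz and Corollary~\ref{cor:ineq} (with $f=\abs{f_n}^2$ and the set $C'$) give $\norm{(Tg)1_{C'}}^2\le d\int_{C''}\deg\cdot\abs{f_n}^2\le d^2\eps$. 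We also need $\norm{g}\ge 1-\sqrt{\eps}$, which holds because $\abs{g}=\abs{f_n}$ off $C$. Combining these, $\norm{Tg+\lambda g}\le\norm{Tf_n-\lambda f_n}+O(\sqrt{\eps})$ with constants depending only on $d$ and $\lambda$.

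Finally, take $\eps=1/n$ and normalize each $g=g_n$. This yields unit vectors with $\norm{Tg_n+\lambda g_n}\to 0$, so $-\lambda\in\sigma(T)$. The main obstacle is the error control near $C$. The key point is ordering the choices as $f_n$, then $\delta$, then $(A,B)$, so that Lemma~\ref{lem: dct} applies to a fixed function, and the neighborhood estimates of Corollaries~\ref{cor:neighbors} and~\ref{cor:ineq} keep the enlarged bad set small.
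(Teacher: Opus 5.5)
Your proof is correct and takes essentially the same route as the paper. Both arguments fix an approximate eigenfunction first, use Lemma~\ref{lem: dct} to get $\delta$, choose the approximate bipartition with bad set of measure $O(\delta/d^2)$, flip the sign on one side, and control the error on the enlarged bad set via Corollaries~\ref{cor:neighbors} and~\ref{cor:ineq}. The differences are cosmetic: the paper keeps $g=f$ on the bad set, so $\norm{g}=1$ and only $A_1\cap\cN(A_3)$ must be excised, whereas you zero $g$ on $C$ and renormalize; also, the $\abs{Tf_n}^2$ term in your $h$ is never actually used.
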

\begin{proof}
We write $T = T_{\G}$ for convenience, and we denote by $d$ the maximum degree of $\G$. Assume $\G$ is approximately measurably bipartite. Let $\lambda \in \sigma(T)$, and fix $1 > \eps > 0$. It is enough to show that there is $g \in L^2(X)$ such that
$$
\norm{Tg + \lambda g}^2 < \eps.
$$
Let $\eps_0 = \frac{\eps}{1+2d^2+2\lambda^2}$. Since $\lambda \in \sigma(T)$, there is a unit vector $f \in L^2(X)$ such that $\norm{Tf - \lambda f}^2 < \eps_0$. By Lemma~\ref{lem: dct}, there is $\delta > 0$ such that for any $\mu$-measurable $B$,
\begin{equation}\label{eq:fbdd}
\mu(B) < \delta \implies \int_B \abs{f(x)}^2 \dmu(x) < \eps_0.
\end{equation}
Since $\G$ is approximately measurably bipartite, there are disjoint measurable $\G$-independent sets $A_1, A_2 \sub X$ such that $\mu(A_1 \cup A_2) > 1 - \frac{\delta}{d(d+1)}$. Let $A_3 = X \setminus (A_1 \cup A_2)$, so that $\mu(A_3) < \frac{\delta}{d(d+1)}$.

By construction, there are $f_1 \in L^2(A_1), f_2 \in L^2(A_2), f_3 \in L^2(A_3)$ such that $f = f_1 + f_2 + f_3$. Note that $\norm{f_1}, \norm{f_2}, \norm{f_3} \leq 1$. Define $g = f_1 - f_2 + f_3$. In particular, $\abs{g} = \abs{f}$, $g=f$ on $A_1\cup A_3$, and $g=-f$ on $A_2$. We claim that $g$ is our desired function.

First, define $B = A_1\cap \cN(A_3)$. By Corollary~\ref{cor:neighbors}, we have $\mu(B)\leq d\mu(A_3)$. Furthermore, for every $x\in A_1\setminus B$, we have by construction that all its neighbors are in $A_2$; hence
$$
Tg(x) = \sum_{y\sim x} g(y) = \sum_{y\sim x} -f_2(y) = - \sum_{y\sim x} f(y) = - Tf(x),
$$
and in particular
$$
Tg(x) + \lambda g(x) = -Tf(x) + \lambda f(x).
$$
Similarly, for every $x\in A_2$, we get
$$
Tg(x) = \sum_{y\sim x,y\in A_1} g(y) + \sum_{y\sim x,y\in A_3} g(y) = \sum_{y\sim x,y\in A_1} f_1(y) + \sum_{y\sim x,y\in A_3} f_3(y) = \sum_{y\sim x} f(y) = Tf(x)
$$
and thus
$$
Tg(x) + \lambda g(x) = Tf(x) - \lambda f(x).
$$

Combining the above, we can now compute
\begin{align}\label{eq:Tglg1}
\begin{split}
    \norm{Tg+\lambda g}^2 &= \int_X \abs{Tg(x) + \lambda g(x)}^2\dmu(x)\\
    &= \int_{(A_1\setminus B)\cup A_2} \abs{Tg(x) + \lambda g(x)}^2\dmu(x) + \int_{B\cup A_3} \abs{Tg(x) + \lambda g(x)}^2\dmu(x)\\
    &= \int_{(A_1\setminus B)\cup A_2} \abs{Tf(x) - \lambda f(x)}^2\dmu(x) + \int_{B\cup A_3} \abs{Tg(x) + \lambda g(x)}^2\dmu(x).
\end{split}
\end{align}
By assumption, the first summand is bounded by $\eps_0$. For the second summand, we first note that the triangle inequality implies that $\abs{a+b}^2 \leq 2\abs{a}^2 + 2\abs{b}^2$, and thus similarly to \eqref{eq:Tbdd}, we get
\begin{align}\label{eq:Tglg2}
\begin{split}
    \int_{B\cup A_3} \abs{Tg(x) + \lambda g(x)}^2\dmu(x) &\leq 2\int_{B\cup A_3} \abs{Tg(x)}^2\dmu(x) + 2 \int_{B\cup A_3} \abs{\lambda g(x)}^2\dmu(x)\\
    &\leq 2d\int_{B\cup A_3} \sum_{y\sim x}\abs{g(y)}^2\dmu(x) + 2\lambda^2 \int_{B\cup A_3} \abs{g(x)}^2\dmu(x)\\
    &= 2d\int_{B\cup A_3} \sum_{y\sim x}\abs{f(y)}^2\dmu(x) + 2\lambda^2 \int_{B\cup A_3} \abs{f(x)}^2\dmu(x).
\end{split}
\end{align}
Since $\mu(B\cup A_3)\leq (d+1)\mu(A_3) <\delta$, line~\eqref{eq:fbdd} tells us that the second summand is bounded above by $2\lambda^2\eps_0$. In order to deal with the first summand, we observe that Corollary~\ref{cor:neighbors} implies that $\mu(\cN(B\cup A_3))\leq d\mu(B\cup A_3) \leq d(d+1)\mu(A_3) < \delta$, and hence using Corollary~\ref{cor:ineq} and line~\eqref{eq:fbdd}, we get
\begin{align}\label{eq:Tglg3}
\begin{split}
    \int_{B\cup A_3} \sum_{y\sim x}\abs{f(y)}^2\dmu(x) &\leq \int_{\cN(B\cup A_3)} \deg(x)\abs{f(x)}^2\dmu(x)\\
    &\leq d \int_{\cN(B\cup A_3)} \abs{f(x)}^2\dmu(x)\\
    &< d\eps_0.
\end{split}
\end{align}
Combining \eqref{eq:Tglg1}, \eqref{eq:Tglg2}, and \eqref{eq:Tglg3}, we get
$$
\norm{Tg+\lambda g}^2 < \eps_0 + 2d^2\eps_0 + 2\lambda^2 \eps_0 = \eps.
$$
This finishes the proof of the theorem.
\end{proof}

The next theorem shows in particular that the converse holds if $\G$ is ergodic and $d$-regular and has spectral gap.

\begin{thm}\label{thm:-dspecimpliesbipartite}
    Let $(X, \mu)$ be a standard probability space, and let $\G$ be an ergodic $d$-regular Borel pmp graph on $X$ with spectral gap. If $-d\in \sigma(T_{\G})$, then $\G$ is approximately measurably bipartite. 
\end{thm}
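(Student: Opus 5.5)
Fix $\eps>0$ and write $T=T_\G$. Since $-d\in\sigma(T)$, pick unit vectors $f_n$ with $\norm{Tf_n+df_n}\to 0$; replacing $f_n$ by its real or imaginary part (suitably normalized, one of them has norm at least $1/\sqrt2$ and satisfies the same approximate equation because $T$ is real) we may assume $f_n$ is real-valued. The plan has three steps. First, show that $\abs{f_n}$ is approximately constant. Second, show that $f_n$ approximately flips sign across every edge. Third, read off an approximate bipartition from the signs of $f_n$.

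\textbf{Step 1.} By Corollary~\ref{cor:minusdinspectrum}, $\norm{T\abs{f_n}-d\abs{f_n}}\to0$. Decompose $\abs{f_n}=c_n1+h_n$ with $h_n\in L^2_0(X)$ and $c_n=\ang{\abs{f_n},1}\ge 0$. By Corollary~\ref{cor:ergodicspectralgapspectrum} and the spectral gap, $\sigma(T|_{L^2_0(X)})\sub[-d,d-\eta]$ for some $\eta>0$. Hence $T-dI$ is invertible on the invariant subspace $L^2_0(X)$, with $\norm{(T-dI)h}\ge\eta\norm{h}$ there (this is Proposition~\ref{prop:isolated}(2)). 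Since $(T-dI)\abs{f_n}=(T-dI)h_n$, we get $\norm{h_n}\to0$. As $\norm{\abs{f_n}}=1$, this gives $c_n\to1$. So $\abs{f_n}\to 1$ in $L^2$.

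\textbf{Step 2.} Compute $\norm{Tf+df}^2$ through the quadratic form. Expanding as in the positivity proof for $L_\G$ and using $d$-regularity,
\[
\ang{(dI+T)f,f}=\tfrac12\int_X\sum_{y\sim x}\abs{f(x)+f(y)}^2\dmu(x).
\]
The left side is at most $\norm{(T+dI)f_n}\to0$. Hence $\int_X\sum_{y\sim x}\abs{f_n(x)+f_n(y)}^2\dmu\to0$.

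\textbf{Step 3.} Pass to a subsequence and fix $n$ large. Let $A=\{f_n>1/2\}$ and $B=\{f_n<-1/2\}$. By Step 1 and Chebyshev, $\mu(X\setminus(A\cup B))\le\mu(\{\abs{\abs{f_n}-1}\ge1/2\})\le 4\norm{\abs{f_n}-1}^2$, which is small. Any edge inside $A$ (or inside $B$) has $\abs{f_n(x)+f_n(y)}>1$. So the set $A'$ of vertices of $A$ having a neighbor in $A$ satisfies $\mu(A')\le\int_X\sum_{y\sim x}\abs{f_n(x)+f_n(y)}^2\dmu$, and similarly for the corresponding set $B'\sub B$. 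Removing $A'$ and $B'$ leaves disjoint independent measurable sets $A\setminus A'$ and $B\setminus B'$ whose union has measure greater than $1-\eps$ for large $n$.

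The main obstacle is Step 1. This is exactly where spectral gap and ergodicity enter, as Remark~\ref{rmk:nonstronglyergodic} explains. The key fact to check carefully is that $L^2_0(X)$ is $T$-invariant and that the gap yields a uniform lower bound for $T-dI$ on it. Ergodicity ensures the $d$-eigenspace is exactly $\C1$ by Lemma~\ref{lem:ergeigenvalued}, so $L^2_0(X)=\ker(T-dI)^\perp$ and Proposition~\ref{prop:isolated} applies.
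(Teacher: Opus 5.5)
Your proof is correct. Step~1 matches the paper's Claim~\ref{claim: constant}: spectral gap plus ergodicity force the $L^2_0$-component of $\abs{f_n}$ to vanish. From there you finish differently.

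The paper uses Step~1 to replace $f_n$ by $\tilde f_n = f_n - \mathrm{sgn}(f_n)g_n^{(1)}$, whose modulus is exactly constant. It normalizes to a $\{\pm 1\}$-valued approximate eigenfunction $f_N$ and then argues pointwise. Off the Chebyshev-small set $Y=\{\abs{T_\G f_N+df_N}\geq\sqrt{\eps}\}$, a single same-sign neighbor would force $\abs{T_\G f_N(x)+df_N(x)}\geq 2$. Hence the two sign classes off $Y$ are independent.

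You keep $f_n$ unmodified. You use the signless-Laplacian identity $\ang{(dI+T_\G)f,f}=\frac{1}{2}\int_X\sum_{y\sim x}\abs{f(x)+f(y)}^2\dmu(x)$ to get $L^2$-control of same-sign edges. You then threshold at $\pm\frac{1}{2}$, using Chebyshev on $\abs{f_n}-1$, and delete the endpoints of monochromatic edges.

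What your route buys: it avoids the sign-correction construction and its implicit convention $\mathrm{sgn}(0)=1$. It also separates the roles of the hypotheses. Approximate sign-flipping across edges follows from $-d\in\sigma(T_\G)$ and regularity alone. Spectral gap and ergodicity are needed only to make $\abs{f_n}$ close to the constant $1$, so that the thresholded sets cover most of $X$.

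What the paper's route buys: once the values are exactly $\pm 1$, the final combinatorial step is a one-line pointwise check.
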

\begin{proof} 
Assume that $-d\in \sigma(T_{\G})$ and fix $1>\eps>0$. We show that there exist disjoint $\G$-independent Borel subsets $A,B\sub X$ such that $\mu(A\cup B) > 1-\eps$. Let $(f_n)_{n \in \N}$ be a sequence of functions in $L^2_0(X)$ of norm 1 such that
$$
\norm{Tf_n + df_n} \to 0.
$$
Note that we can assume that all the $f_n$ are real-valued. Indeed, if $f_n = f_n^{(1)} + i f_n^{(2)}$ is the decomposition of $f_n$ into its real and imaginary parts, then by definition of $T$, the decomposition of $Tf_n$ into its real and imaginary parts is given by $Tf_n = Tf_n^{(1)} + i Tf_n^{(2)}$.
\begin{claim} 
\label{claim: constant}
Without loss of generality, we can assume $\abs{f_n}$ is a constant function for each $n \in \N$.
\end{claim}

\begin{proof}[Proof of Claim~\ref{claim: constant}] By Corollary~\ref{cor:minusdinspectrum}, we have $\norm{T\abs{f_n} - d\abs{f_n}}\to 0$, i.e., $(g_n)_{n \in \N} = (\abs{f_n})_{n \in \N}$ is a sequence of approximate eigenfunctions for $d$. For each $n \in \N$, we decompose $g_n = g_n^{(0)} + g_n^{(1)}$ where $g_n^{(0)} = \int_X g_n(x)\dmu(x) \in \C1$ and $g_n^{(1)}\in L^2_0(X)$. Combining the assumptions with Lemma~\ref{lem:ergeigenvalued} and Proposition~\ref{prop:isolated}, we obtain $c>0$ such that 
$$
\norm{Tg_n^{(1)} - dg_n^{(1)}} \geq c \norm{g_n^{(1)}}
$$
for all $n$. However, since $Tg_n^{(0)} = dg_n^{(0)}$, we also have $\norm{Tg_n^{(1)} - dg_n^{(1)}} = \norm{Tg_n - dg_n}\to 0$, and so we conclude that $\norm{g_n^{(1)}}\to 0$. 

Consider now the function $\tilde f_n$ defined by
$$
\tilde f_n(x) = f_n(x) - \mathrm{sgn}(f_n(x)) g_n^{(1)}(x).
$$
Then we have
\begin{align*}
    \abs{\tilde f_n(x)} = \begin{cases}
        \abs{f_n(x) - g_n^{(1)}(x)} = \abs{\,\abs{f_n(x)} - g_n^{(1)}(x)} = \abs{g_n^{(0)}(x)}, &\text{ if } f_n(x)\geq 0, \text{ and}\\
        \abs{f_n(x) + g_n^{(1)}(x)} = \abs{ - \abs{f_n(x)} + g_n^{(1)}(x)} = \abs{g_n^{(0)}(x)}, &\text{ if } f_n(x) < 0.
    \end{cases}
\end{align*}
In particular, $\abs{\tilde f_n}$ is constant. Moreover, 
\begin{align*}
    \norm{T\tilde f_n + d\tilde f_n} &\leq \norm{Tf_n + df_n} + \norm{Tg_n^{(1)}} + d\norm{g_n^{(1)}}\\
    &\leq \norm{Tf_n + df_n} + 2d\norm{g_n^{(1)}}\\
    &\to 0.
\end{align*}
This finishes the proof of the claim.
\end{proof}

Using the claim (and normalizing if necessary), we can thus find a sequence $(f_n)_{n \in \N}$ of functions in $L^2_0(X)$ such that $\abs{f_n} = 1$ for each $n \in \N$ and $\norm{Tf_n + df_n} \to 0$. Fix $N\in \N$ such that $\norm{Tf_N + df_N}\leq \eps$. Then the set $Y = \{x\in X : \abs{Tf_N(x) + df_N(x)} \geq \sqrt{\eps} \}$ necessarily has $\mu(Y)\leq \eps$. 

Without loss of generality (by passing to the real part of $f_N$ if necessary), we have $f_N(x)\in \{\pm 1\}$ for every $x\in X$. Fix $x\in X\setminus Y$ and assume $f_N(x) = 1$. We claim that every $y\in X$ adjacent to $x$ satisfies $f_N(y) = -1$. Indeed, since $x\notin Y$, we have 
$$
\abs{Tf_N(x) + df_N(x)} = \sum_{y\sim x} f_N(y) + d \leq \sqrt{\eps}.
$$
If there exists $y_0\sim x$ with $f_N(y_0) = 1$, then 
$$
\sum_{y\sim x} f_N(y) + d = \sum_{\substack{y\sim x\\ y\neq y_0}} f_N(y) + 1 + d \geq 2 > \sqrt{\eps}.
$$
Similarly, if $x\in X\setminus Y$ satisfies $f_N(x) = -1$, then every $y\in X$ adjacent to $x$ satisfies $f_N(y) = 1$. In particular, if we define $A = \{x\in X\setminus Y : f_N(x) = 1\}$ and $B = \{x\in X\setminus Y : f_N(x) = -1\}$, then both $A$ and $B$ are $\G$-independent, and $\mu(A\cup B) = \mu(X\setminus Y) \geq 1-\eps$. This finishes the proof.
\end{proof}

We note that Theorems~\ref{thm:bipartiteimpliessymspec} and \ref{thm:-dspecimpliesbipartite} yield Theorem~\ref{thm:bipartite_intro} from the introduction. The following is also immediate:

\begin{cor}\label{cor:bipartite}
    Let $(X, \mu)$ be a standard probability space, and let $\G$ be an ergodic $d$-regular Borel pmp graph on $X$ with spectral gap. Then the following are equivalent:
    \begin{enumerate}
        \item $\chi_\mu^\ap(\G) \leq 2$, i.e., $\G$ is approximately measurably bipartite.
        \item $\sigma(T_{\G})$ is symmetric.
        \item $-d\in \sigma(T_\G)$.
    \end{enumerate}
\end{cor}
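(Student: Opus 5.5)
The plan is to prove the cycle of implications (1) $\Rightarrow$ (2) $\Rightarrow$ (3) $\Rightarrow$ (1), with each arrow coming from results already in place.

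\emph{(1) $\Rightarrow$ (2).} This is exactly Theorem~\ref{thm:bipartiteimpliessymspec}. It only needs $\G$ to be a bounded-degree Borel pmp graph and approximately measurably bipartite, so none of the extra hypotheses are used. We also recall the earlier remark that approximate measurable bipartiteness is the same as $\chi_\mu^\ap(\G)\leq 2$, which identifies the two formulations in (1).

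\emph{(2) $\Rightarrow$ (3).} Since $\G$ is $d$-regular, the constant function $1$ is an eigenfunction with eigenvalue $d$, so $d\in\sigma(T_\G)$; this is the corollary following Proposition~\ref{prop:boundedsa}. If $\sigma(T_\G)$ is symmetric, then $-d\in\sigma(T_\G)$ follows directly from the definition of a symmetric spectrum.

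\emph{(3) $\Rightarrow$ (1).} This is precisely Theorem~\ref{thm:-dspecimpliesbipartite}, and it is the only place where ergodicity and spectral gap are needed.

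No step here is a real obstacle. All the substantive work sits in the two theorems already proved, and the remaining implication is a one-line observation.
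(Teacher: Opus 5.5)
Your proposal is correct and matches the paper's argument, which calls the corollary immediate. It chains Theorem~\ref{thm:bipartiteimpliessymspec} for (1)$\Rightarrow$(2), the fact that $d\in\sigma(T_\G)$ for $d$-regular graphs together with symmetry for (2)$\Rightarrow$(3), and Theorem~\ref{thm:-dspecimpliesbipartite} for (3)$\Rightarrow$(1).
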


\begin{exmp}\label{ex:irrationalrotation2}
    Let $\alpha$ be an irrational number, and consider again the $2$-regular Borel pmp graph $\G$ on $X = [0, 1)$ such that $x \sim y$ if and only if $x + \alpha \equiv_1 y$ or $y + \alpha \equiv_1 x$. 
    
    It is well-known that the approximate $\mu$-measurable chromatic number of $\G$ is $2$; to see this, let $\eps > 0$, and let $C = [0, \gamma)$, where $\gamma < \min\{\alpha, 1 - \alpha, \eps \}$ is positive. Put $A = X \setminus C$, and define a measurable $2$-coloring on $\G \rest A$ by $c(x) = 0$ if the minimum number of rotations of $x$ by $\alpha$ to get into $C$ is even, $c(x) = 1$ otherwise.

    From the results in this section, it now follows immediately that $\sigma(T_{\G})$ is symmetric. In particular, $m(T_{\G}) = -M(T_{\G}) = -2\in \sigma(T_{\G})$.
\end{exmp}

\begin{remark}
\label{rem: necessary?}
    We note that ergodicity is a necessary assumption for the implication $(2)\Rightarrow (1)$. Indeed, it is easy to construct finite non-ergodic (i.e., disconnected) graphs with symmetric spectrum that are not $2$-colorable. For instance, one can take the graph on $9$ vertices that is the disjoint union of a triangle, a square, and a single edge connecting two vertices. On the other hand, we do not know whether the $(2)\Rightarrow (1)$ implication in Corollary~\ref{cor:bipartite} remains true without the assumptions of regularity and/or spectral gap. Indeed, for regularity, we simply observe that there exist finite non-regular bipartite graphs. As for spectral gap, we note that the graph $\G$ arising from irrational rotation in Example~\ref{ex:irrationalrotation2} satisfies all conditions in Corollary~\ref{cor:bipartite}, but does not have spectral gap. Indeed, since $\Z$ is amenable, $\Z\act S^1$ is not strongly ergodic. In particular, there exists a sequence of almost invariant sets, and hence a sequence $(f_n)_{n \in \N}$ of $\{0,1\}$-valued approximate eigenfunctions for $2\in \sigma(T_\G)$ with $\mu(\{x\in X : f_n(x) = 1\}) = \frac{1}{2}$ for each $n \in \N$. Compare also with \cite[Theorem~3.8]{ck2013}, which implies that whenever $\G$ is hyperfinite, $\chi_\mu^\ap(\G) = \chi^*(\G)$, where $\chi^*(G) = \min\{\chi(\G \rest A)  : A\sub X \text{ Borel}, \mu(A) = 1\}$. 
\end{remark}

\section{Upper bounds on the chromatic number}\label{sec:upperbounds}

In this section, we give novel upper bounds on the approximate measurable chromatic number of a bounded-degree Borel pmp graph. Before we proceed to the main results, we prove two lemmas. First, recall that, if $Y$ is a Polish space, then the space $[Y]^{< \infty}$ of finite subsets of $Y$ is a standard Borel space. A Borel map $L : X \to [Y]^{< \infty}$ is a \emph{Borel ($Y$-)list assignment}. A \emph{Borel $L$-coloring} of $\G$ is a Borel proper $Y$-coloring $c$ of $\G$ with the property that, for each $x \in X$, $c(x) \in L(x)$. The proof of the following proposition requires just a small modification to the proof of \cite[Proposition~4.6]{kst1999}; see also \cite[Proposition~3.8]{cmt2016}:

\begin{prop}
\label{prop: deg plus one list coloring}
    Let $\G$ be a locally finite Borel graph on a standard Borel space $X$. Let $Y$ be a Polish space, and suppose $L : X \to [Y]^{< \infty}$ is a Borel list assignment such that $\deg(x) < \abs{L(x)}$ for each $x \in X$. Then $\G$ admits a Borel $L$-coloring.
\end{prop}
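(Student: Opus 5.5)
The plan is to follow the proof of \cite[Proposition~4.6]{kst1999}: first decompose $X$ into countably many Borel $\G$-independent sets, then color greedily one piece at a time, always choosing from the list. Two modifications are needed. The colors now come from a Polish space $Y$ rather than $\N$, and the available colors vary with $x$.

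\textbf{Step 1: a Borel countable coloring.} Since $\G$ is a locally finite Borel graph, there is a Borel proper coloring $c_0 : X \to \N$. This is the standard fact from \cite{kst1999}, obtained by fixing a countable basis $(U_n)$ of a Polish topology compatible with the Borel structure and letting $c_0(x)$ be the least $n$ with $x \in U_n$ and $\cN(\{x\}) \cap U_n = \emptyset$. Local finiteness guarantees such an $n$ exists, and Borelness follows from Lusin--Novikov, since $\cN$ of a Borel set is Borel when $\G$ is locally finite. Set $X_n = c_0^{-1}(n)$; each $X_n$ is Borel and $\G$-independent.

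\textbf{Step 2: a Borel choice function on finite sets.} Fix a Borel linear order on $Y$. For instance, take a Borel injection $Y \to [0,1]$, which exists because $Y$ is standard Borel, and pull back the usual order. Then for any finite nonempty $S \sub Y$, the least element $\min S$ depends Borel-measurably on $S \in [Y]^{<\infty}$.

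\textbf{Step 3: recursive coloring.} Define $c$ on $X_0 \cup \cdots \cup X_n$ by recursion on $n$. For $x \in X_n$, let
\[
F_n(x) = \{c(y) : y \sim x,\ y \in X_0\cup\cdots\cup X_{n-1}\},
\]
and put $c(x) = \min\big(L(x)\setminus F_n(x)\big)$. Because $|F_n(x)| \le \deg(x) < |L(x)|$, this set difference is nonempty. Since $X_n$ is independent, the choices made at distinct points of $X_n$ cannot conflict. Each vertex differs in color from every earlier-colored neighbor, and every pair of adjacent vertices lies in distinct pieces, so $c$ is a proper $L$-coloring.

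\textbf{Main obstacle: Borelness.} The substantive point is checking that $c$ is Borel. Inductively, assume $c \rest (X_0\cup\cdots\cup X_{n-1})$ is Borel. By Lusin--Novikov, we can write the neighbor relation as a union of countably many Borel partial functions $g_k$ with Borel domains. Then the map $x \mapsto F_n(x) \in [Y]^{<\infty}$ is Borel: the condition $F_n(x) \sub U$ (or $F_n(x) \cap U \neq \emptyset$) for open $U$ is a countable Boolean combination of conditions of the form $g_k(x) \in X_{<n}$ and $c(g_k(x)) \in U$. The map $(S,T) \mapsto S \setminus T$ on $[Y]^{<\infty}$ is Borel. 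Composing with $L$ and with the Borel min from Step 2 shows that $c \rest X_n$ is Borel, and a countable union of Borel pieces is Borel.
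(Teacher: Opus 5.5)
Your proposal is correct and is essentially the paper's proof: decompose $X$ into countably many Borel $\G$-independent sets via the Kechris--Solecki--Todor\v{c}evi\'{c} argument, fix a Borel linear order on $Y$, and color greedily piece by piece with the least list color not used on earlier-colored neighbors. Your Lusin--Novikov check that the resulting coloring is Borel spells out a step the paper leaves implicit.
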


\begin{proof}
    Since $\G$ is locally finite, there are $\G$-independent Borel sets $A_0, A_1, \dots$ such that $X = \bigsqcup_{n \in \N} A_n$ by \cite[Proposition~4.5]{kst1999}. Let $<_Y$ be a Borel linear ordering on $Y$, and for each $x \in A_0$, define $c(x)$ to be the $<_Y$-least element of $L(x)$. 
    
    Now fix $n \in \N$, and assume that the partial Borel $L$-coloring $c$ has been extended to the domain $A_0 \cup \cdots \cup A_n$. For each $x \in A_{n + 1}$, let 
    $$L_{n + 1}(x) = L(x) \setminus \{c(y) : y \in \cN(x) \cap (A_0 \cup \cdots \cup A_n) \}.$$ 
    Then $\abs{L_{n + 1}(x)} \geq 1$ for each $x \in A_{n + 1}$; indeed, if $x \in A_{n + 1}$, then 
    $$\abs{\{c(y) : y \in \cN(x) \cap (A_0 \cup \cdots \cup A_n) \}} \leq \deg(x),$$
    and so
    \begin{align*}
        \abs{L_{n + 1}(x)} & = \abs{L(x)} - \abs{\{c(y) : y \in \cN(x) \cap (A_0 \cup \cdots \cup A_n) \}}  \\
        & \geq \deg(x) + 1 - \deg(x) \\
        & = 1.
    \end{align*}
    So extend $c$ to $A_{n + 1}$ by defining $c(x)$ to be the $<_Y$-least element of $L_{n + 1}(x)$ for each $x \in A_{n + 1}$. The map $c$ that results from this inductive procedure is a Borel $L$-coloring of $\G$.
\end{proof}

We also prove the following technical lemma, which in fact can be strengthened (see Remark~\ref{rem: anton}):

\begin{lem}
\label{lem: list coloring from r}
    Let $\G$ be a locally finite Borel graph on a standard probability space $(X, \mu)$. Let $M$ be a positive integer, let $r \in (0, 1)$, and let $(A_n)_{n \in \N}$ be a sequence of pairwise disjoint Borel subsets of $X$. For each $n \in \N$, define $B_n = \bigcup_{k \leq n} A_k$, $\G_n = \G \rest (X \setminus B_{n - 1})$, and set $A_{-1} = B_{-1} = \emptyset$. Assume the following conditions hold for each $n \in \N$:
    \begin{enumerate}
        \item $\mu(X \setminus B_n) \leq r\mu(X \setminus B_{n -1})$, and
        \item $\deg_{\G_n}(x) \leq M$ for all $x \in A_n$.
    \end{enumerate}
    Then $\chi_{\mu}^{\ap}(\G) \leq M + 1$.
\end{lem}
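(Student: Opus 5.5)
Fix $\eps>0$. By condition (1), $\mu(X\setminus B_n)\leq r^{n+1}$, so we can choose $N$ with $\mu(X\setminus B_N)<\eps$. We then let $A = B_N = A_0\cup\cdots\cup A_N$, which is Borel with $\mu(A)>1-\eps$. It remains to produce a Borel (hence $\mu$-measurable) $(M+1)$-coloring of $\G\rest A$ with colors $\{0,1,\dots,M\}$. We build it by backtracking: first color $A_N$, then $A_{N-1}$, and so on down to $A_0$, each step via Proposition~\ref{prop: deg plus one list coloring}.

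The key observation is that for $x\in A_n$, condition (2) bounds the number of neighbors of $x$ in $X\setminus B_{n-1} = A_n\cup A_{n+1}\cup\cdots$ by $M$. Within $A$, this is the set $A_n\cup\cdots\cup A_N$. So when we reach $A_n$, every vertex $x\in A_n$ has at most $M$ neighbors among the vertices already colored (those in $A_{n+1}\cup\cdots\cup A_N$) together with the vertices of $A_n$ itself. Precisely, if $d_{>}(x)$ denotes the number of neighbors of $x$ in $A_{n+1}\cup\cdots\cup A_N$ and $d_n(x)=\deg_{\G\rest A_n}(x)$, then $d_{>}(x)+d_n(x)\leq M$.

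The induction runs downward in $n$. Suppose $c$ is a Borel proper coloring of $\G\rest(A_{n+1}\cup\cdots\cup A_N)$ into $\{0,\dots,M\}$ (empty when $n=N$). Define the Borel list assignment on $A_n$ by
$$L(x)=\{0,\dots,M\}\setminus\{c(y): y\sim x,\ y\in A_{n+1}\cup\cdots\cup A_N\}.$$
Then $\abs{L(x)}\geq M+1-d_{>}(x)\geq d_n(x)+1>\deg_{\G\rest A_n}(x)$. Proposition~\ref{prop: deg plus one list coloring}, applied to the locally finite Borel graph $\G\rest A_n$ with $Y=\{0,\dots,M\}$, gives a Borel $L$-coloring of $A_n$. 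Combining it with $c$ yields a proper coloring of $\G\rest(A_n\cup\cdots\cup A_N)$: edges inside $A_n$ are properly colored by the proposition, and edges from $A_n$ to later sets are properly colored by the choice of lists. After $N+1$ steps we obtain the desired coloring of $\G\rest A$, so $\chi_\mu^{\ap}(\G)\leq M+1$.

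The main technical point is checking that $L$ is a Borel map into $[Y]^{<\infty}$. This follows because $\G$ is locally finite: by Lusin–Novikov, the finite set of neighbors of $x$ can be enumerated Borel-uniformly, so $L$ is defined Borel-ly from $c$. The descending order of the induction is essential; coloring forward would not work, since condition (2) only controls neighbors in the later sets.
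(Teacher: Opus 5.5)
Your proof is correct and follows the paper's argument essentially verbatim: you use (1) to pick $N$ with $\mu(X\setminus B_N)<\eps$, then color $A_N, A_{N-1},\dots,A_0$ backwards, at each step applying Proposition~\ref{prop: deg plus one list coloring} to $\G\rest A_n$ with the colors of already-colored neighbors removed from the lists, and using (2) to get $\abs{L(x)}>\deg_{\G\rest A_n}(x)$. One small slip: $X\setminus B_{n-1}$ only contains $A_n\cup A_{n+1}\cup\cdots$ (equality holds only up to a null set), but that inclusion is all your degree count actually uses.
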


\begin{proof}
    Let $\eps > 0$. By (1), for each $n \in \N$,
    $$\mu(X \setminus B_n) \leq r^{n + 1}.$$
    Since $r \in (0, 1)$, there is $N \in \N$ such that, for all $n \geq N$,
    $$\mu(X \setminus B_n) < \eps,$$
    so that $\mu(B_n) > 1 - \eps$.

    For each $x \in X$, let $L_N(x) = \{0, \dots, M\}$. Note that, for each $x \in A_N$, $\deg_{\G_N}(x) \leq M$, and so by Proposition~\ref{prop: deg plus one list coloring}, there is a Borel $L_N$-coloring $c_N$ of $\G \rest A_N$.

    Now fix $k \in \N$ with $k < N$ and assume that a Borel $(M + 1)$-coloring $c_{N - k}$ has been defined on $\G \rest (A_{N - k} \cup \cdots \cup A_N)$. Now, for each $x \in A_{N - (k + 1)}$, define 
    $$L_{N - (k + 1)}(x) = L_N(x) \setminus \{c_{N - k}(y) : y \in \cN(x) \cap (A_{N - k} \cup \cdots \cup A_N) \};$$
    then by (2),
    \begin{align*}
        \abs{L_{N - (k + 1)}(x)} & \geq \abs{L_N(x)} - \abs{\cN(x) \cap (A_{N - k} \cup \cdots \cup A_N)} \\
        & = (M + 1) - \abs{\cN(x) \cap (A_{N - k} \cup \cdots \cup A_N)} \\
        & \geq (\deg_{\G_{N - (k + 1)}}(x) + 1) - \abs{\cN(x) \cap (A_{N - k} \cup \cdots \cup A_N)} \\
        & = (\deg_{\G_{N - (k + 1)}}(x) - \abs{\cN(x) \cap (A_{N - k} \cup \cdots \cup A_N)}) + 1 \\
        & \geq \abs{\cN(x) \cap A_{N - (k + 1)}} + 1 \\
        & = \deg_{\G \rest (A_{N - (k + 1)})}(x) + 1.
    \end{align*}
    There is then a Borel $L_{N - (k + 1)}$-coloring $\tilde c_{N - (k + 1)}$ of $\G \rest (A_{N - (k + 1)})$ by Proposition~\ref{prop: deg plus one list coloring}. 
    
    Note that $c_{N - (k + 1)} = \tilde c_{N - (k + 1)}\cup c_{N - k}$ is an $(M + 1)$-coloring of $\G \rest (A_{N - (k + 1)} \cup \cdots \cup A_N)$; indeed, by pairwise disjointness of the sequence $(A_n)_{n \in \N}$, $\tilde c_{N - (k + 1)} \cup c_{N - k}$ is well-defined. Let $x, y \in A_{N - (k + 1)} \cup A_{N - k} \cup \cdots \cup A_N$ be $\G$-adjacent. If $x, y \in A_{N - (k + 1)}$, then because $\tilde c_{N - (k + 1)}$ is a coloring, $\tilde c_{N - (k + 1)}(x) \neq \tilde c_{N - (k + 1)}(y)$. Similarly, if $x, y \in A_{N - k} \cup \cdots \cup A_N$, then $c_{N - k}(x) \neq c_{N - k}(y)$. If $x \in A_{N - (k + 1)}$ and $y \in A_{N - k} \cup \cdots \cup A_N$, then since $c_{N - k}(y) \notin L_{N - (k + 1)}(x)$, $\tilde c_{N - (k + 1)}(x) \neq c_{N - k}(y)$.

    Therefore, the map $c = c_0 = \tilde c_0 \cup \cdots \cup \tilde c_N$ is a Borel $(M + 1)$-coloring of $\G \rest B_N$.
\end{proof}

\begin{remark}
\label{rem: anton}
    In personal communication, Anton Bernshteyn pointed out that Lemma~\ref{lem: list coloring from r} can be strengthened as follows. Let $\G$ be a locally finite Borel graph on a standard probability space $(X, \mu)$, and let $M$ be a positive integer. Assume that, for each Borel set $A \sub X$ such that $\mu(A) > 0$, the minimum degree of the induced subgraph $\G \rest A$ is at most $M$. Then $\chi_{\mu}^{\ap}(\G) \leq M + 1$. 
    
    The proof is similar: First, recursively construct sequences $(\G_n)_{n \in \N}$ of subgraphs of $\G$ and $(A_n)_{n \in \N}$ of subsets of $X$ by setting $\G_0 = \G, A_0 = \{x \in X : \deg_{\G_0}(x) \leq M \}$, and, having defined $\G_n, A_n$, setting
    \begin{align*}
        \G_{n + 1} & = \G \rest (X \setminus (A_0 \cup \cdots \cup A_n)), \\
        A_{n + 1} & = \{x \in V(\G_{n + 1}) : \deg_{\G_{n + 1}}(x) \leq M \}.
    \end{align*}
    Write $A_{\infty} = \bigcup_{n \in \N} A_n$ and $\cH = \G \rest (X \setminus A_{\infty})$. We claim that, for each $x \in X \setminus A_{\infty}$, we have $\deg_{\cH}(x) > M$; otherwise, there is $x \in X \setminus A_{\infty}$ with $\cH$-degree at most $M$. Since $\G$ is locally finite, we have $\deg_{\G}(x) = k$ for some nonnegative integer $k$. Then since $\deg_{\cH}(x) \leq M$, there is $\ell \geq k - M$ such that $\ell$ neighbors $y_0, \dots, y_{\ell - 1}$ of $x$ belong to $A_{\infty}$. For each $i < \ell$, there is $n_i$ such that $y_i \in A_{n_i}$. But then if $N = \max\{n_i : i < \ell \}$, then $\deg_{\G_{N + 1}}(x) \leq M$, so that $x \in A_{N + 1}$, a contradiction.
    So $\mu(A_{\infty}) = 1$; otherwise, $\G \rest (X \setminus A_{\infty})$ has minimum degree greater than $M$, which contradicts the initial assumption on $\G$. It follows that, for any $\eps > 0$, there is $N \in \N$ such that $\mu(A_0 \cup \cdots \cup A_N) > 1 - \eps$, and then the backwards list-coloring argument from the proof of Lemma~\ref{lem: list coloring from r} can be repeated.
\end{remark}

\subsection{Wilf's theorem}

A classical theorem of Wilf gives a simple upper bound on the chromatic number of a finite graph in terms of the largest eigenvalue of its adjacency matrix.

\begin{thm}
    \cite{wilf1967} Let $\G$ be a finite graph. Then
    $$\chi(\G) \leq \floor{M(T_{\G})} + 1.$$
\end{thm}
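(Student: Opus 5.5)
We argue by induction on the number of vertices $n$ of $\G$. The key observation is the finite analogue of Lemma~\ref{lem:avdeg}(1): applying the Rayleigh quotient to the constant vector $g = 1$ shows that the average degree of $\G$ (in the finite setting, $\frac{1}{n}\sum_x \deg(x) = \ang{T_\G g, g}/\ang{g,g}$ with the normalized counting measure) is at most $M(T_\G)$. Consequently $\G$ has a vertex $v$ with $\deg(v) \leq M(T_\G)$, and since degrees are integers, $\deg(v) \leq \floor{M(T_\G)}$.

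Next we remove $v$ and consider the induced subgraph $\cH = \G \rest (X \setminus \{v\})$. By Corollary~\ref{cor: block decomp M}, or directly by the interlacing property coming from Proposition~\ref{prop: block decomp T}(1) applied to the decomposition $\C^n = \C^{n-1} \oplus \C$, we have $M(T_\cH) \leq M(T_\G)$. Hence $\floor{M(T_\cH)} \leq \floor{M(T_\G)}$. By the induction hypothesis, $\cH$ admits a proper coloring with at most $\floor{M(T_\cH)} + 1 \leq \floor{M(T_\G)} + 1$ colors. The base case, a single vertex or the empty graph, is trivial.

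Finally we extend this coloring to $v$. The vertex $v$ has at most $\floor{M(T_\G)}$ neighbors, so they use at most $\floor{M(T_\G)}$ of the $\floor{M(T_\G)}+1$ available colors, and at least one color remains free for $v$. This completes the induction.

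Equivalently, one could phrase this as a degeneracy argument. Repeatedly peeling off a minimum-degree vertex produces an ordering in which each vertex has at most $\floor{M(T_\G)}$ neighbors among the later ones, and greedy coloring in reverse order finishes the proof. This is precisely the finite shadow of Lemma~\ref{lem: list coloring from r} with $M = \floor{M(T_\G)}$.

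There is no serious obstacle. The only points requiring care are:
\begin{itemize}
\item the monotonicity $M(T_\cH) \leq M(T_\G)$ for induced subgraphs, which is already available from the block decomposition;
\item using integrality of degrees to pass from $\deg(v) \leq M(T_\G)$ to $\deg(v) \leq \floor{M(T_\G)}$.
\end{itemize}
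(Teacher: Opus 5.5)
Your proof is correct: the constant-vector bound on the average degree, integrality of degrees, monotonicity of $M$ under induced subgraphs, and greedy extension (equivalently, a degeneracy ordering colored greedily in reverse) give Wilf's bound. The paper only cites this finite statement, but its proof of the measurable analogue, Theorem~\ref{thm: wilf pmp}, is the measurable version of your argument: it combines Lemma~\ref{lem:avdeg}, Corollary~\ref{cor: block decomp M}, iterated peeling of low-degree vertices, and the backward list-coloring of Lemma~\ref{lem: list coloring from r}.
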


We show that this theorem carries over to the pmp setting when the chromatic number is replaced by the approximate measurable chromatic number; this establishes Theorem~\ref{thm:wilf_intro} from the introduction.

\begin{thm}
\label{thm: wilf pmp}
    Let $\G$ be a bounded-degree Borel pmp graph on a standard probability space $(X, \mu)$. Then
    $$\chi_{\mu}^{\ap}(\G) \leq \floor{M(T_{\G})} + 1.$$
\end{thm}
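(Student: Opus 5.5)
We will deduce the theorem from Lemma~\ref{lem: list coloring from r} with $M = \floor{M(T_{\G})}$ (if $M = 0$ then $T_\G = 0$, $\G$ has no edges up to null sets, and the bound is trivial, so assume $M \geq 1$). The idea, as in Wilf's classical argument, is that every induced subgraph has average degree at most its top spectral value, which is at most $M(T_\G)$. Hence a definite fraction of the remaining vertices has small degree, and we peel these off recursively.

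\textbf{Peeling step.} Fix a Borel set $C \sub X$ with $\mu(C) > 0$ and let $\cH = \G \rest C$, equipped with the renormalized measure $\mu_C = \mu(\cdot \cap C)/\mu(C)$; $\cH$ is pmp for $\mu_C$. Lemma~\ref{lem:avdeg}(1) applied to $\cH$ gives $\deg_{\av}(\cH) \leq M(T_\cH)$, and Corollary~\ref{cor: block decomp M} gives $M(T_\cH) \leq M(T_\G)$. (Renormalizing the measure scales $L^2$ norms by a constant and does not affect Rayleigh quotients, so this comparison is legitimate.) Thus
$$\int_C \deg_\cH \, d\mu_C \leq M(T_\G) < M + 1.$$
Set $S = \{x \in C : \deg_\cH(x) \leq M\}$. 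By Markov's inequality for the integer-valued degree, $\mu_C(C \setminus S) \leq \deg_{\av}(\cH)/(M+1) \leq M(T_\G)/(M+1)$. Define $r = M(T_\G)/(M+1)$; then $r \in (0,1)$ and $\mu(C \setminus S) \leq r\,\mu(C)$.

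\textbf{Recursion.} Set $A_0 = \{x : \deg_\G(x) \leq M\}$. Having defined $B_{n-1}$, let $C = X \setminus B_{n-1}$ and $A_n = \{x \in C : \deg_{\G_n}(x) \leq M\}$, where $\G_n = \G \rest C$. If $\mu(C) = 0$, we stop; the leftover null set is irrelevant, or we simply take $A_n = C$. These sets are Borel, since degree in a Borel induced subgraph of a locally finite Borel graph is a Borel function by Lusin--Novikov. By construction, condition (2) of Lemma~\ref{lem: list coloring from r} holds, and the peeling step gives condition (1) with the fixed $r < 1$. Lemma~\ref{lem: list coloring from r} then yields $\chi_\mu^\ap(\G) \leq M + 1 = \floor{M(T_\G)} + 1$.

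\textbf{Main obstacle.} The key point is to obtain a \emph{uniform} contraction ratio $r < 1$, which is exactly why one needs the strict inequality $M(T_\G) < \floor{M(T_\G)} + 1$ combined with Markov's inequality. The only other subtlety is verifying that Lemma~\ref{lem:avdeg} transfers to induced subgraphs with the normalized measure, i.e.\ that $\deg_{\av}$ computed relative to $\mu_C$ is bounded by $M(T_\G)$. Concretely, with $g = 1_C$,
$$\int_C \deg_\cH \, d\mu = \ang{T_\G 1_C, 1_C} \leq M(T_\G)\,\mu(C),$$
which gives the bound directly and can serve as a self-contained justification.
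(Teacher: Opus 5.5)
Your proposal is correct and follows essentially the same route as the paper: recursively peel off the vertices of degree at most $\floor{M(T_\G)}$, use the fact that the average degree of every induced subgraph is at most $M(T_\G)<\floor{M(T_\G)}+1$ to get a uniform contraction ratio $r<1$, and then apply Lemma~\ref{lem: list coloring from r}. The differences are cosmetic. You obtain the contraction directly from Markov's inequality with $r = M(T_\G)/(M+1)$, which is why you need the separate $M=0$ case, whereas the paper argues by contradiction with $r = (\lambda+s)/(\lambda+1)$. Your identity $\ang{T_\G 1_C, 1_C} = \int_C \deg_{\G\rest C}\,\dmu$ packages Lemma~\ref{lem:avdeg}(1) and Corollary~\ref{cor: block decomp M} into a single line.
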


\begin{proof}
    Let $\eps > 0$, and write $\lambda = \floor{M(T_{\G})}$. Put
    $$r = \frac{\lambda + s}{\lambda + 1},$$
    where $s \in (0, 1)$ is such that $\lambda + s > M(T_{\G})$. Let $\G_0 = \G$, $A_0 = \{x \in X : \deg_{\G_0}(x) \leq \lambda \}$, and $A_{-1} = B_{-1} = \emptyset$. 
    We recursively define $B_n = \bigcup_{k \leq n} A_k$, $\G_{n+1} = \G \rest (X \setminus B_{n})$, and $A_{n + 1} = \{x \in X \setminus B_n : \deg_{\G_{n + 1}}(x) \leq \lambda \}$.
    We claim that the following conditions hold:
    \begin{enumerate}
        \item $A_n \cap A_m = \emptyset$ for all $m \neq n$,
        \item $\mu(X \setminus B_n) \leq r\mu(X \setminus B_{n - 1})$, and
        \item $\deg_{\G_n}(x) \leq \lambda$ for all $x \in A_n$.
    \end{enumerate}
    Conditions (1) and (3) are immediate from the construction, so we only need to prove (2).

    \begin{claim}
    \label{claim: ineq1}
        $\mu(X \setminus B_{n + 1}) \leq r\mu(X \setminus B_n)$ for all $n\in \N$.
    \end{claim}

    \begin{proof}[Proof of Claim~\ref{claim: ineq1}.]
        Suppose not. Then there exists $n\in \N$ such that $\mu(X \setminus B_{n + 1}) > r\mu(X \setminus B_n)$. So 
        \begin{align*}
            \deg_{\av}(\G_{n + 1}) & = \frac{1}{\mu(X \setminus B_n)} \int_{X \setminus B_n} \deg_{\G_{n + 1}}(x)\dmu(x) \\
            & \geq \frac{1}{\mu(X \setminus B_n)} \int_{X \setminus B_{n + 1}} \deg_{\G_{n + 1}}(x)\dmu(x) \\
            & \geq \frac{1}{\mu(X \setminus B_n)}\mu(X \setminus B_{n + 1})(\lambda + 1) \\
            & > \frac{1}{\mu(X \setminus B_n)}\Big(\mu(X \setminus B_n)\frac{\lambda + s}{\lambda + 1}\Big)(\lambda + 1) \\
            & = \lambda + s > M(T_{\G}),
        \end{align*}
        contradicting the facts from Lemma~\ref{lem:avdeg} and Corollary~\ref{cor: block decomp M} that $\deg_{\av}(\G_{n + 1}) \leq M(T_{\G_{n + 1}}) \leq M(T_{\G})$. This proves the claim.
    \end{proof}

    From the properties (1)--(3), we deduce by Lemma~\ref{lem: list coloring from r} that $\chi_{\mu}^{\ap}(\G) \leq \lambda + 1 = \floor{M(T_{\G})} + 1$, as desired.
 \end{proof}
 
We note that by Corollary \ref{cor:bireg} this bound can improve on the bound from Brooks's theorem quadratically (although that particular case of biregular graphs is not really of any interest as these are trivially 2-colorable). We also point out that Lemma~\ref{lem:avdeg}(2) implies that quadratic improvement is the best possible.

\subsection{Graphs generated by functions}

Let $X$ be a set, and let $f_1, \dots, f_n : X \to X$ be functions. The \emph{graph generated by $f_1, \dots, f_n$}, denoted $\G_{f_1, \dots, f_n}$, is the graph on $X$ such that $x, y \in X$ are adjacent if $x \neq y$ and there is some $i \leq n$ such that either $f_i(x) = y$ or $f_i(y) = x$. A well-known classical greedy algorithm argument demonstrates that $\chi(\G_{f_1, \dots, f_n}) \leq 2n + 1$; this is best possible (see Remark~\ref{rem: paley}). 

The \emph{directed graph generated by $f_1, \dots, f_n$}, denoted $\overrightarrow{\G}_{f_1, \dots, f_n}$, is the directed graph on $X$ such that there is a directed edge from $x$ to $y$ if $x \neq y$ and $f_i(x) = y$ for some $i \leq n$. For each $x \in X$, the \emph{out-degree} of $x$, denoted $\deg^+(x)$, is the number of out-edges incident on $x$, and the \emph{in-degree} of $x$, denoted $\deg^-(x)$, is the number of in-edges incident on $x$.

Next, using properties of the in-degree and out-degree in the directed graph $\overrightarrow{\G}_{f_1, \dots, f_n}$, we prove that $2n + 1$ is an upper bound on the approximate measurable chromatics number for the underlying undirected graph $\G_{f_1, \dots, f_n}$ in the case when it is pmp, thereby establishing Theorem~\ref{thm:functions_intro} from the introduction. The second part of the proof is similar to the proof of Theorem~\ref{thm: wilf pmp}, but we provide the details for completeness.

\begin{thm}\label{thm:nfunctions}
    Let $(X, \mu)$ be a standard probability space, and let $f_1, \dots, f_n : X \to X$ be bounded-to-one Borel functions such that the graph $\G_{f_1, \dots, f_n}$ is pmp. Then $\chi_{\mu}^{\ap}(\G_{f_1, \dots, f_n}) \leq 2n + 1$.
\end{thm}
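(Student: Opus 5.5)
The plan is to mimic the proof of Theorem~\ref{thm: wilf pmp}: peel off low-degree vertices layer by layer and then apply Lemma~\ref{lem: list coloring from r} with $M = 2n$. The spectral input (average degree at most $M(T_\G)$) is replaced by a mass transport computation in the directed graph $\overrightarrow{\G}_{f_1, \dots, f_n}$. We write $\G = \G_{f_1,\dots,f_n}$; since the $f_i$ are bounded-to-one, $\G$ has bounded degree. The key observation is that in every induced subgraph the \emph{average} undirected degree is at most $2n$, while the out-degree is pointwise at most $n$.

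\textbf{Averaging step.} Let $Y \sub X$ be Borel with $\mu(Y) > 0$, and consider the directed graph $\overrightarrow{\G} \rest Y$. Define the Borel transport $\varphi(x,y) = 1$ if $x,y \in Y$ and there is a directed edge $x \to y$, and $\varphi(x,y)=0$ otherwise. Then $\out\,\varphi(x) = \deg^+_{Y}(x) \leq n$ and $\ins\,\varphi(x) = \deg^-_{Y}(x)$. Proposition~\ref{prop:Anush} applies because $\G$ is pmp and $E_\G$ contains the support of $\varphi$; it gives $\int_Y \deg^-_Y = \int_Y \deg^+_Y \leq n\mu(Y)$.

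Every undirected edge of $\G\rest Y$ arises from at least one directed edge, so $\deg_{\G \rest Y}(x) \leq \deg^+_Y(x) + \deg^-_Y(x)$. Therefore
$$\int_Y \deg_{\G\rest Y} \dmu \leq 2n\mu(Y).$$

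\textbf{Peeling step.} Set $\lambda = 2n$. Define $A_n, B_n, \G_n$ exactly as in the proof of Theorem~\ref{thm: wilf pmp}, so that $A_{n+1}$ consists of the vertices of $X\setminus B_n$ whose $\G_{n+1}$-degree is at most $2n$. Conditions (1) and (3) of that proof hold by construction.

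For condition (2), we have to be careful, because the averaging bound $\deg_\av \le 2n$ is not strict; choosing $r = \frac{2n+s}{2n+1}$ with $s \in (0,1)$ handles this. If $\mu(X\setminus B_{k+1}) > r\,\mu(X\setminus B_k)$, then the same chain of inequalities as in Claim~\ref{claim: ineq1}, applied with $Y = X \setminus B_k$, yields
$$\frac{1}{\mu(Y)}\int_Y \deg_{\G\rest Y} \dmu > 2n+s > 2n,$$
contradicting the averaging step. The degenerate case $\mu(X\setminus B_k)=0$ is trivial. Lemma~\ref{lem: list coloring from r} with $M=2n$ then gives $\chi^\ap_\mu(\G)\le 2n+1$.

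\textbf{Main obstacle.} The main point requiring care is the averaging step, specifically the measurability and pmp hypotheses needed for the mass transport principle. We need $\varphi$ to be Borel, which holds because the edge relations $f_i(x)=y$ are Borel. We also need the induced subgraph to be handled inside the original pmp relation $E_\G$ rather than renormalized, which is harmless. Finally, $x \ne y$ should be imposed so that the fixed points of the $f_i$ do not contribute. The same argument works verbatim for any Borel digraph with out-degree at most $n$, as in the remark after Theorem~\ref{thm:functions_intro}.
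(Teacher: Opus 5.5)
Your proof is correct and follows the same route as the paper. Both use mass transport in $\overrightarrow{\G}_{f_1,\dots,f_n}\rest Y$ to get $\int_Y \deg^-_Y = \int_Y \deg^+_Y \leq n\mu(Y)$ for every Borel $Y$, then peel off layers and finish with Lemma~\ref{lem: list coloring from r} applied with $M = 2n$. The only difference is which vertices you peel off: the paper removes vertices of in-degree at most $n$ in $\cD_m$, which gives the clean contraction ratio $\frac{n}{n+1}$ and then uses the pointwise bound $\deg^+ \leq n$ to get $\deg_{\G_m} \leq 2n$; you remove vertices of total degree at most $2n$ using the averaged bound, which is why you need the slack $s$.
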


\begin{proof}
    For convenience, we write $\G = \G_{f_1, \dots, f_n}$ and $\cD = \overrightarrow{\G}_{f_1, \dots, f_n}$. Define a Borel map $\varphi : E_{\G} \to [0, \infty)$ by $\varphi(x, y) = 1$ if $x \neq y$ and there is $i \leq n$ such that $f_i(x) = y$ and $\varphi(x, y) = 0$ otherwise. For each $x \in X$, we have $\out\,\varphi(x) = \deg^+(x)$ and $\ins\,\varphi(x) = \deg^-(x)$. Therefore, by Proposition~\ref{prop:Anush}, 
    $$
        \deg^+_{\av}(\cD) = \int_X \deg^+(x) \dmu(x) = \int_X \deg^-(x) \dmu(x) = \deg_{\av}^-(\cD).
    $$
    Note that $\deg^+(x) \leq n$ for all $x \in X$, so that $\deg_{\av}^+(\cD) \leq n$. Therefore also $\deg_{\av}^-(\cD) \leq n$. Note further that, if $A \sub X$ is Borel, then $\deg_{\av}^+(\cD \rest A) = \deg_{\av}^-(\cD \rest A)$, as witnessed by the Borel transport $\varphi_A : E_{\G \rest A} \to [0, \infty)$ given by $\varphi_A(x, y) = 1$ if $x, y \in A$, $x \neq y$, and there is $i \leq n$ such that $f_i(x) = y$ and $\varphi_A(x, y) = 0$ otherwise; furthermore, $\deg_{\av}^+(\cD \rest A) \leq n$, and so $\deg_{\av}^-(\cD \rest A) \leq n$.

    Let now $\cG_0 = \cG$, $\cD_0 = \cD$, $A_0 = \{x \in X : \deg_{\cD_0}^-(x) \leq n\}$, and $A_{-1} = B_{-1} = \emptyset$. 
    We recursively define $B_m = \bigcup_{k \leq m} A_k$, $\G_{m+1} = \G \rest (X \setminus B_{m})$, $\cD_{m+1} = \cD \rest (X \setminus B_{m})$, and $A_{m + 1} = \{x \in X \setminus B_m : \deg_{\cD_{m + 1}}^-(x) \leq n \}$. We claim that the following conditions hold:
    \begin{enumerate}
        \item $A_m \cap A_{m'} = \emptyset$ for all $m' \neq m$,
        \item $\mu(X \setminus B_m) \leq \frac{n}{n+1}\mu(X \setminus B_{m - 1})$, and 
        \item $\deg_{\cD_m}^-(x) \leq n$ for all $x \in A_m$.
    \end{enumerate}
    Conditions (1) and (3) are immediate from the construction, so we only need to prove (2).

    \begin{claim}
    \label{claim: ineq2}
        $\mu(X \setminus B_{m + 1}) \leq \frac{n}{n+1}\mu(X \setminus B_m)$ for all $m\in \N$.
    \end{claim}

    \begin{proof}[Proof of Claim~\ref{claim: ineq2}.]
        If not, then there exists $m\in \N$ such that $\mu(X \setminus B_{m + 1}) > \frac{n}{n+1}\mu(X \setminus B_m)$. So
        \begin{align*}
            \deg_{\av}^-(\cD_{m + 1}) & = \frac{1}{\mu(X \setminus B_m)} \int_{X \setminus B_m} \deg_{\cD_{m + 1}}^-(x) \dmu(x) \\
            & \geq \frac{1}{\mu(X \setminus B_m)}\int_{X \setminus B_{m + 1}} \deg_{\cD_{m + 1}}^-(x)\dmu(x) \\
            & \geq \frac{1}{\mu(X \setminus B_m)}\mu(X \setminus B_{m + 1})(n + 1) \\
            & > \frac{1}{\mu(X \setminus B_m)}\Big(\mu(X \setminus B_m)\frac{n}{n + 1}\Big)(n + 1) \\
            & = n,
        \end{align*}
        contradicting that $\deg_{\av}^-(\cD_{m + 1}) \leq n$. This proves the claim.
    \end{proof}

    Finally, from (3) above, it follows that for all $x\in A_m$, $\deg_{\G_m}(x) = \deg_{\cD_m}^+(x) + \deg_{\cD_m}^-(x)\leq 2n$. Combining this with (1) and (2), we thus deduce from Lemma~\ref{lem: list coloring from r} that $\chi_{\mu}^{\ap}(\G) \leq 2n + 1$, as desired.
\end{proof}

\begin{remark}
    \label{rem: digraph vs functions}
    The statement of Theorem~\ref{thm:nfunctions} is in fact equivalent to the following statement: Let $(X, \mu)$ be a standard probability space, and let $\cD$ be a bounded-degree Borel directed graph on $X$ such that the underlying undirected graph $\G$ of $\cD$ is pmp. If $\deg_{\cD}^+(x) \leq n$ for each $x \in X$, then $\chi_{\mu}^{\ap}(\G) \leq 2n + 1$. This follows immediately from the observation that any such $\cD$ is generated by $n$ bounded-to-one Borel functions; see \cite[Section~5.3]{km2020}.
\end{remark}

\begin{remark}
\label{rem: paley}
    It is well-known that the bound in Theorem~\ref{thm:nfunctions} is best possible. For instance, consider the \emph{Paley tournament} $\G$ on $\{0, \dots, 6 \}$ generated by the functions 
    \begin{align*}
        f_1(x) & = x + 1 \mod{7}, \\
        f_2(x) & = x + 2 \mod{7}, \\
        f_3(x) & = x + 4 \mod{7}.
    \end{align*}
    Then it is easy to check that $\chi(\G) = 2 \cdot 3 + 1 = 7$.
\end{remark}

\section{Lower bounds on the chromatic number}\label{sec:lowerbounds}

In this section, we give several new spectral lower bounds on the approximate measurable chromatic number of a bounded-degree Borel pmp graph. These bounds are connected to the following well-known classical theorem of Hoffman:

\begin{thm}
    \cite{hoffman1970} Let $\G$ be a finite graph. Then
    $$\chi(\G) \geq \ceil{1 - \frac{M(T_{\G})}{m(T_{\G})}}.$$
\end{thm}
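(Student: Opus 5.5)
We would prove this via a compression of the adjacency matrix to a $k\times k$ matrix built from a Perron eigenvector, in the spirit of the interlacing argument of Lemma~\ref{lem:interlacing}. If $\G$ has no edges, then $T_\G=0$ and the right-hand side is not meaningful (or is read as $1$), so assume $\G$ has an edge. Then $m(T_\G)<0$ by Lemma~\ref{lem:mnegative} (whose proof applies verbatim to finite graphs with counting measure) and $M(T_\G)>0$. Write $M=M(T_\G)$, $m=m(T_\G)$ and $k=\chi(\G)$, and fix a proper coloring with color classes $A_1,\dots,A_k$. These are independent sets, so in the block decomposition of $T_\G$ with respect to $\C^n=\bigoplus_i \C^{A_i}$ the diagonal blocks vanish.

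\textbf{Step 1: set up the compression.} By Corollary~\ref{cor: Tf=Mf}, $M$ has an eigenvector $v\geq 0$, $v\neq 0$. Set $v_i=v\cdot 1_{A_i}$, and discard the indices with $v_i=0$; this leaves some $k'\leq k$ indices. Let $S:\C^{k'}\to\C^n$ send $e_i$ to $v_i/\norm{v_i}$. The $v_i$ have disjoint supports, so $S$ is an isometry. Put $C=S^*T_\G S$, the $k'\times k'$ matrix with entries
\[
C_{ij}=\frac{\ang{T_\G v_j,v_i}}{\norm{v_i}\norm{v_j}}.
\]

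\textbf{Step 2: properties of $C$.} We record three facts.
\begin{enumerate}
\item $C$ is real symmetric. Its diagonal entries vanish because each $A_i$ is independent, so $\ang{T_\G v_i,v_i}=0$; hence $\operatorname{tr} C=0$.
\item Every eigenvalue of $C$ lies in $[m,M]$. For a unit vector $w$, $\ang{Cw,w}=\ang{T_\G Sw,Sw}$ with $\norm{Sw}=1$; this is the same argument as Proposition~\ref{prop: block decomp T}(1).
\item The vector $w=(\norm{v_i})_i$ is an eigenvector of $C$ for the eigenvalue $M$. Indeed, since $\sum_j v_j=v$,
\[
(Cw)_i=\sum_j \frac{\ang{T_\G v_j,v_i}}{\norm{v_i}}=\frac{\ang{T_\G v,v_i}}{\norm{v_i}}=M\norm{v_i}.
\]
\end{enumerate}

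\textbf{Step 3: conclude.} Let $\mu_1=M,\mu_2,\dots,\mu_{k'}$ be the eigenvalues of $C$. Then
\[
0=\operatorname{tr}C=M+\sum_{i\geq 2}\mu_i\geq M+(k'-1)m .
\]
Dividing by $m<0$ gives $k'-1\geq -M/m$, so $k\geq k'\geq 1-M/m$. Since $k$ is an integer, $k\geq\ceil{1-M/m}$.

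The only delicate point is the eigenvector computation in Step 2, together with checking that dropping the zero blocks is harmless; dropping them only decreases $k'$, so the bound for $k$ still follows. Everything else is bookkeeping. This finite proof is a useful template for the measurable version: there one uses approximate eigenfunctions, and a bad set $B$ of small measure contributes error terms that must be controlled, as in Theorem~\ref{thm:bipartiteimpliessymspec}.
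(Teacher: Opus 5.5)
Your proof is correct, but it takes a different route from the paper. The paper only cites this finite theorem. Its own argument is the proof of the measurable generalization, Theorem~\ref{thm: hoffman bounded}, which contains the finite case because $\chi_\mu^{\ap}=\chi$ for the uniform measure on a finite vertex set. That proof goes through Lemma~\ref{lem: block decomp ineq}, the inequality $(k-1)m(T)+M(T)\leq\sum_i M(T_{ii})$ for an arbitrary $k$-block decomposition of a self-adjoint $T$, proved by induction on $k$. For color classes the diagonal blocks vanish, which gives $(k-1)m+M\leq 0$ directly.

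The two arguments are closely related. In the paper's two-block step, the vector $g=\tfrac{N_2}{N_1}f_1-\tfrac{N_1}{N_2}f_2$ is (up to sign) the unit vector orthogonal to $f$ in $\mathrm{span}\{f_1,f_2\}$. So $\ang{Tf,f}+\ang{Tg,g}$ is the trace of the $2\times 2$ compression of $T$ to that span, which is exactly your $C$ for $k=2$.

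What each approach buys:
\begin{itemize}
\item Your version handles all $k$ blocks at once via the trace of the $k'\times k'$ compression. This is shorter and avoids the induction.
\item The paper's version works only with Rayleigh quotients of near-maximizers and never invokes an eigenvector for $M$. This matters on $L^2(X)$, where $M(T_\G)$ need not be an eigenvalue.
\end{itemize}

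Your argument does not actually need the Perron eigenvector, nor its positivity, which you never use. Since $Sw=v$, you have $\ang{Cw,w}=\ang{Tv,v}$ and $\norm{w}=\norm{v}$. So the top eigenvalue of $C$ is at least $\ang{Tv,v}/\norm{v}^2$ for any $v$, and a near-maximizer of the Rayleigh quotient suffices. Because the compression is always finite-dimensional, the trace argument also survives in the pmp setting. You would compress only onto the colored part $A_1\sqcup\cdots\sqcup A_k$ and let the measure of the bad set $B$ tend to $0$. This is the same role played by $M(S^{\eps})\to M(T)$ in the paper's proof.
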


Many of the proofs in this section closely resemble the corresponding classical proofs; see \cite[Chapter~19]{spielman2025}. However, especially the proof of the main result, Theorem~\ref{thm: hoffman bounded}, involves new ideas due to its approximate nature.

Recall that the \emph{$\mu$-independence number} $i_{\mu}(\G)$ of $\G$ is given by 
$$i_\mu(\G) = \sup\{\mu(A) : A \text{ is a $\mu$-measurable, $\G$-independent set} \}.$$ 
Since monochromatic sets in a coloring have to be independent, it is easy to see that, for any $\alpha \in (0, 1]$, if $i_{\mu}(\G) \leq \alpha$, then $\chi_{\mu}^{\ap}(\G) \geq \frac{1}{\alpha}$ (see also \cite{ck2013}).

We start with regular graphs, since the proofs in this case are considerably simpler. Our first main result is the following (compare with \cite[Proposition~4.16]{ck2013}):

\begin{thm}
\label{thm: hoffman regular}
    Let $\G$ be a $d$-regular Borel pmp graph on a standard probability space $(X, \mu)$. Then for any $\mu$-measurable, $\G$-independent set $S \sub X$, 
    $$\mu(S) \leq \frac{-m(T_\G)}{d - m(T_\G)} = 1 - \frac{d}{M(L_\G)}.$$
    In particular, $i_{\mu}(\G) \leq \frac{-m(T_\G)}{d - m(T_\G)}$.
\end{thm}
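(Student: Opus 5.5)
We test the quadratic form of $L_\G$ on the function $g = 1_S - \sigma 1$, where $\sigma = \mu(S)$. Since $\G$ is $d$-regular, Lemma~\ref{lem:LD-T} gives $L_\G = dI - T_\G$ and $M(L_\G) = d - m(T_\G)$. The equality of the two expressions in the statement is then pure algebra:
$$1 - \frac{d}{d - m(T_\G)} = \frac{-m(T_\G)}{d - m(T_\G)}.$$
It therefore suffices to prove $\mu(S) \le 1 - d/M(L_\G)$. We may assume $0 < \sigma < 1$, since $\sigma = 0$ is trivial. The case $\sigma = 1$ cannot occur: it would force $S = X$ up to null sets, so $\G$ would have no edges, contradicting $d \ge 1$. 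We also note that if $d = 0$ the bound reads $\mu(S)\le 1$, or rather the ratio is degenerate, so we assume $d\ge 1$.

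The key steps, in order, are as follows.

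\begin{enumerate}
\item Compute $\langle g, g\rangle = \sigma - \sigma^2 = \sigma(1-\sigma)$.

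\item Compute $\langle L_\G g, g\rangle$. Constants lie in $\ker L_\G$, and $L_\G$ is self-adjoint, so
$$\langle L_\G g, g\rangle = \langle L_\G 1_S, 1_S\rangle.$$

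\item Evaluate $\langle L_\G 1_S, 1_S\rangle$ using independence of $S$. For $x \in S$ we have $(T_\G 1_S)(x) = 0$, hence
$$\langle T_\G 1_S, 1_S\rangle = \int_S \sum_{y\sim x} 1_S(y)\dmu(x) = 0,$$
and $\langle D_\G 1_S, 1_S\rangle = d\sigma$. Thus $\langle L_\G g, g\rangle = d\sigma$. An alternative route is the Dirichlet-form identity from the positivity proof: exactly the edges leaving $S$ contribute, and Corollary~\ref{cor:transports} converts the count into $d\sigma$. I will use the direct computation.

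\item Combine: $d\sigma \le M(L_\G)\,\sigma(1-\sigma)$. Dividing by $\sigma > 0$ gives $d \le M(L_\G)(1-\sigma)$, i.e. $\sigma \le 1 - d/M(L_\G)$.
\end{enumerate}

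For the division in the last step we need $M(L_\G) > 0$. This follows from $M(L_\G) = d - m(T_\G) \ge d > 0$, using Lemma~\ref{lem:mnegative}.

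The final assertion $i_\mu(\G) \le \frac{-m(T_\G)}{d-m(T_\G)}$ follows by taking the supremum over all measurable independent $S$.

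There is no real obstacle here. The only points needing care are (i) measure-zero and degenerate cases, and (ii) the justification that $\langle T_\G 1_S, 1_S\rangle = 0$ holds almost everywhere when $S$ is merely measurable. For (ii) we replace $S$ by a Borel set differing by a null set, as discussed in Subsection~\ref{subsec:BorelGraphs}; the pmp property guarantees that the null set's neighbors also form a null set (Corollary~\ref{cor:neighbors}).
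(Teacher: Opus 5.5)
Your proposal is correct and follows the paper's proof essentially verbatim. Both test the Rayleigh quotient of $L_\G$ on $1_S-\mu(S)1$, use $L_\G 1=0$ and self-adjointness to reduce to $\langle L_\G 1_S,1_S\rangle = d\mu(S)$, and compare this with $\langle f,f\rangle=\mu(S)(1-\mu(S))$; your extra handling of the degenerate cases ($d=0$, $\mu(S)\in\{0,1\}$, positivity of $M(L_\G)$) is a harmless addition that the paper leaves implicit.
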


\begin{proof}
    We write $L=L_\G, T=T_\G$ for convenience. Let $S \sub X$ be $\mu$-measurable and $\G$-independent,
    and let $1_S$ be the characteristic function for $S$; note that $1_S$ is $\mu$-measurable. Set
    $$f = 1_S - \mu(S) \cdot 1.$$
    Notice that $L1 = 0$. Then since $L$ is self-adjoint,
    \begin{align*}
        \ang{Lf, f} & = \ang{L1_S, 1_S} \\
        & = \int_X \bigg(\deg(x) \cdot 1_S(x) - 1_S(x) \cdot \sum_{y \sim x} 1_S(y)\bigg) \cdot 1_S(x)\dmu(x) \\
        & = \int_S (\deg(x) - \deg_{\G \rest S}(x))\dmu(x) \\
        & = \int_S \deg(x)\dmu(x) \\
        & = d\mu(S)
    \end{align*}
    since $S$ is $\G$-independent. A simple calculation shows also that
    $$\ang{f, f} = \mu(S)(1 - \mu(S)).$$
    Hence,
    $$M(L) \geq \frac{\langle Lf,f\rangle}{\langle f,f\rangle} =  \frac{d\mu(S)}{\mu(S)(1 - \mu(S))} = \frac{d}{1 - \mu(S)}.$$
    We conclude that
    $\mu(S) \leq 1 - \frac{d}{M(L)} = \frac{M(L) - d}{M(L)} = \frac{-m(T)}{d - m(T)}.$
\end{proof}

\begin{cor}\label{cor:hoffman}
    Let $\G$ be a $d$-regular Borel pmp graph on a standard probability space $(X, \mu)$. Then
    $$\chi_{\mu}^{\ap}(\G) \geq \ceil{1 - \frac{d}{m(T)}}.$$
\end{cor}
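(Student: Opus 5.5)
The plan is to combine Theorem~\ref{thm: hoffman regular} with the elementary relation between the independence number and the approximate measurable chromatic number recorded just before it. Write $T = T_\G$ and $m = m(T)$. By Lemma~\ref{lem:mnegative}, either $T = 0$ or $m < 0$. The degenerate case $T = 0$ only occurs when $d = 0$ in the regular setting: then $\G$ has no edges, $\chi_\mu^\ap(\G) = 1$, and the right-hand side should be read as $1$. So I will assume $d \geq 1$ and $m<0$.

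First, Theorem~\ref{thm: hoffman regular} gives $i_\mu(\G) \leq \alpha := \frac{-m}{d-m} \in (0,1)$.

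Next, suppose $\chi_\mu^\ap(\G) = k$, and fix $\eps>0$. Choose a Borel set $A$ with $\mu(A) > 1-\eps$ and a measurable $k$-coloring of $\G\rest A$. Its $k$ color classes are $\G$-independent measurable subsets of $X$, since independence in $\G \rest A$ is the same as independence in $\G$ for subsets of $A$. Each class therefore has measure at most $\alpha$, which gives $1-\eps < k\alpha$. Letting $\eps\to 0$ yields $k\alpha \geq 1$, that is,
$$
k \geq \frac{1}{\alpha} = \frac{d-m}{-m} = 1 - \frac{d}{m}.
$$
Since $k$ is an integer, $k \geq \ceil{1 - \frac{d}{m}}$.

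No step here is a real obstacle. The only points needing care are the limiting argument, where the inequality $1-\eps<k\alpha$ holds for every $\eps>0$ and so gives the non-strict $k\alpha\ge 1$, and the algebraic identity $\frac{1}{\alpha} = 1 - \frac{d}{m}$, which holds because $m<0$.
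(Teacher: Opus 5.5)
Your proof is correct and follows the route the paper intends. Theorem~\ref{thm: hoffman regular} gives $i_\mu(\G)\le \frac{-m(T)}{d-m(T)}$, and the remark before it (colour classes are independent, so $i_\mu(\G)\le\alpha$ forces $\chi_\mu^{\ap}(\G)\ge 1/\alpha$) yields $\chi_\mu^{\ap}(\G)\ge 1-\frac{d}{m(T)}$, with the ceiling coming from integrality. Your separate handling of the edgeless case $d=0$, where $m(T)=0$ and the bound is undefined, is a sensible clarification that the paper leaves implicit.
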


Before showing that Corollary~\ref{cor:hoffman} in fact remains true for non-regular graphs (even though Theorem~\ref{thm: hoffman regular} in general does not), we note that for non-regular graphs, the exact same proof as for Theorem~\ref{thm: hoffman regular} gives the following slightly weaker statement: 

\begin{cor}
    Let $\G$ be a bounded-degree Borel pmp graph on a standard probability space $(X, \mu)$. Then for any $\mu$-measurable, $\G$-independent set $S \sub X$, 
    $$\mu(S) \leq 1 - \frac{(1/\mu(S))\int_S \deg(x)\dmu(x)}{M(L)}.$$
\end{cor}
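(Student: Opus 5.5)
The plan is to repeat the argument of Theorem~\ref{thm: hoffman regular} verbatim, keeping track of the one place where $d$-regularity was used. We write $L = L_\G$. We fix a $\mu$-measurable $\G$-independent set $S$; if $\mu(S) \in \{0,1\}$ the statement is degenerate and can be treated separately, so we assume $0<\mu(S)<1$. As before, we set $f = 1_S - \mu(S)\cdot 1$.

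First, since the constant function $1$ satisfies $L1 = D_\G 1 - T_\G 1 = \deg - \deg = 0$ pointwise, and $L$ is self-adjoint, we get $\ang{Lf,f} = \ang{L1_S,1_S}$. This step needs no regularity. Expanding as in the regular case gives
$$\ang{L1_S,1_S} = \int_S \bigl(\deg(x) - \deg_{\G\rest S}(x)\bigr)\dmu(x) = \int_S \deg(x)\dmu(x),$$
using only independence of $S$. This integral is exactly where the regular proof substituted $d\mu(S)$, and we now simply keep it in this form.

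Second, $\ang{f,f} = \mu(S)(1-\mu(S))$ is a computation independent of the graph. From the variational characterization of $M(L)$ we then obtain
$$M(L) \geq \frac{\int_S \deg\,\dmu}{\mu(S)(1-\mu(S))}.$$
Rearranging gives $1-\mu(S) \geq \frac{(1/\mu(S))\int_S \deg\,\dmu}{M(L)}$, which is the claim. The division is legitimate because $M(L)>0$ whenever $\G$ has edges: for instance $M(L) \geq \ang{L f,f}/\ang{f,f}$ for any nonconstant test function, or one can note that $L\neq 0$ and $L\ge 0$. If $\G$ has no edges, the right-hand side should be interpreted suitably, or the degenerate case excluded.

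There is essentially no obstacle here. The only points needing care are the degenerate cases $\mu(S)\in\{0,1\}$ and $M(L)=0$, and checking that the identity $L1=0$ holds for non-regular graphs, which it does since $L = D_\G - T_\G$.
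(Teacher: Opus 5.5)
Your proposal is correct and matches the paper's argument: the paper simply notes that the proof of Theorem~\ref{thm: hoffman regular} goes through verbatim, with $d\mu(S)$ replaced by $\int_S \deg(x)\dmu(x)$ in the computation of $\ang{Lf,f}$. One small remark: positivity of $M(L)$ does not follow from an arbitrary nonconstant test function, since a nonconstant $\G$-invariant function has $\ang{Lf,f}=0$; your alternative justification via $L\neq 0$ and $L\geq 0$ does work, as does your displayed lower bound when $\int_S \deg\,\dmu > 0$.
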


In particular, if $\delta$ is the minimum degree of $\G$, then since $(1/\mu(S)) \int_S \deg(x)\dmu(x) \geq \delta$ for any set $S \sub X$, we deduce the following:

\begin{cor}
    Let $\G$ be a bounded-degree Borel pmp graph on a standard probability space $(X, \mu)$ with minimum degree $\delta$. Then
    $$
    i_\mu(\G) \leq 1 - \frac{\delta}{M(L)},
    $$
    and thus also
    $$
    \chi_{\mu}^{\ap}(\G) \geq \frac{M(L)}{M(L) - \delta}.
    $$
\end{cor}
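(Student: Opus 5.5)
The statement follows by combining the preceding corollary with the elementary relation between the independence number and the approximate measurable chromatic number. I will proceed in two steps.

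For the bound on $i_\mu(\G)$, fix a $\mu$-measurable $\G$-independent set $S$ with $\mu(S)>0$; if $\mu(S)=0$ there is nothing to prove. By the preceding corollary,
$$\mu(S) \leq 1 - \frac{(1/\mu(S))\int_S \deg(x)\dmu(x)}{M(L)}.$$
Since $\deg(x)\geq \delta$ for almost every $x$, the average $(1/\mu(S))\int_S \deg\dmu$ is at least $\delta$. Because $M(L)>0$, the right-hand side is therefore at most $1-\delta/M(L)$. Taking the supremum over $S$ gives $i_\mu(\G)\leq 1-\frac{\delta}{M(L)}$.

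Two remarks make this step airtight. First, $M(L)>0$: if $L=0$, then the positivity computation $2\langle Lf,f\rangle=\int_X\sum_{y\sim x}\abs{f(x)-f(y)}^2\dmu(x)$ forces the graph to have no edges. In that case $\delta=0$ and the claim reads $\chi^{\ap}_\mu\geq 1$, which is trivial. Second, $M(L)\geq \delta$: taking $f=1_S$ for a positive-measure independent set $S$ gives $\langle L1_S,1_S\rangle\geq\delta\mu(S)$. Such sets exist by the Borel colorings of \cite{kst1999}. Hence the quantity $M(L)-\delta$ is nonnegative. If it equals $0$, then the first bound gives $i_\mu(\G)\leq 0$, which contradicts the existence of positive-measure independent sets. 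So when $\delta>0$ we have $M(L)>\delta$.

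For the chromatic bound, suppose $\chi^{\ap}_\mu(\G)=k$. Fix $\eps>0$ and take a Borel set $A$ with $\mu(A)>1-\eps$ and a measurable $k$-coloring of $\G\rest A$. Its $k$ color classes are $\G$-independent subsets of $X$, so each has measure at most $\alpha:=1-\delta/M(L)$. Hence $1-\eps<k\alpha$, and letting $\eps\to 0$ gives $k\geq 1/\alpha=\frac{M(L)}{M(L)-\delta}$. This is exactly the observation recorded before Theorem~\ref{thm: hoffman regular}.

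No step is a real obstacle. The only care needed is the degenerate bookkeeping: the sign of $M(L)$ and the possibility $M(L)=\delta$, both dealt with above.
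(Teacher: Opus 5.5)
Your proposal is correct and is essentially the paper's argument: the paper derives this corollary from the preceding one using $(1/\mu(S))\int_S \deg(x)\dmu(x)\geq\delta$, combined with the remark before Theorem~\ref{thm: hoffman regular} that $i_\mu(\G)\leq\alpha$ implies $\chi_\mu^{\ap}(\G)\geq 1/\alpha$. Your extra bookkeeping is correct and is left implicit in the paper; it shows that $M(L)>\delta$ when $\delta>0$, so that $\alpha>0$ and the bound is meaningful.
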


We next turn to the main result of this section, Theorem~\ref{thm: hoffman bounded} below, which shows that Corollary~\ref{cor:hoffman} remains true in the general context of bounded-degree (as opposed to merely regular) graphs. We start with the following lemma:

\begin{lem}
\label{lem: block decomp ineq}
    Let $T$ be a bounded and self-adjoint operator on $L^2(X)$. Let $k \in \N$, and let $(A_i)_{1 \leq i \leq k}$ be a $\mu$-measurable partition of $X$. Then
    \begin{align*}
        (k - 1)m(T) + M(T) \leq \sum_{1 \leq i \leq k} M(T_{ii}),
    \end{align*}
    where $(T_{ij})_{i,j=1}^k$ is the block decomposition of $T$ corresponding to the partition $X = \sqcup_{i=1}^k A_i$.
\end{lem}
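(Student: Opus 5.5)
The plan is to adapt the classical argument of \cite[Theorem~19.5.1]{spielman2025}, replacing the Perron eigenvector by an approximate one. Fix $\eps>0$ and choose a unit vector $f\in L^2(X)$ with $\ang{Tf,f}\geq M(T)-\eps$; no eigenvector or approximate eigenfunction condition is needed, only the Rayleigh quotient. Write $f=\sum_{i=1}^k f_i$ with $f_i = f\cdot 1_{A_i}\in L^2(A_i)$. Let $\omega = e^{2\pi i/k}$ and, for $j=0,\dots,k-1$, define
$$
g_j = \sum_{i=1}^k \omega^{ij} f_i .
$$
Then $\norm{g_j}^2=\sum_i\norm{f_i}^2=\norm{f}^2=1$ for every $j$, because the pieces $f_i$ are orthogonal.

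Next I would expand the quadratic forms using the block decomposition. We have
$$
\ang{Tg_j,g_j}=\sum_{i,l}\omega^{(i-l)j}\ang{T_{li}f_i,f_l}.
$$
Summing over $j=0,\dots,k-1$ kills every term with $i\neq l$, since $\sum_j\omega^{(i-l)j}=0$ for $i\not\equiv l \pmod k$. This gives
$$
\sum_{j=0}^{k-1}\ang{Tg_j,g_j}=k\sum_{i=1}^k\ang{T_{ii}f_i,f_i}.
$$
The $j=0$ term is $\ang{Tf,f}\geq M(T)-\eps$. Each of the remaining $k-1$ terms is at least $m(T)\norm{g_j}^2=m(T)$. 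On the right, $\ang{T_{ii}f_i,f_i}\leq M(T_{ii})\norm{f_i}^2$.

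This is the main obstacle. The classical inequality $\sum_i \norm{f_i}^2 M(T_{ii})\leq \sum_i M(T_{ii})$ only helps if we compare with $\sum_i M(T_{ii})$ correctly, and the factor $k$ on the right is wrong for that. Averaging over a single $j$ gives the wrong weights, so I would instead use the normalization trick from Spielman: use the vectors $g_j$ without re-weighting, but bound the right side by $\sum_i M(T_{ii})\cdot k\norm{f_i}^2$. I would then rescale so that every piece has the same norm, replacing $f_i$ by $f_i/\norm{f_i}$ times a common constant. That rescaling destroys the link to $\ang{Tf,f}$, though, so it is not the right choice.

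The correct route is the one in Spielman's proof. Consider the operator $S:L^2(X)\to\bigoplus_{i}L^2(A_i)$ and the block-diagonal operator $D=\mathrm{diag}(T_{11},\dots,T_{kk})$. We need $\sum_i M(T_{ii})\geq M(T)+(k-1)m(T)$. Apply the above with the unitaries $U_j=\sum_i\omega^{ij}1_{A_i}$, which are multiplication operators. The identity
$$
\sum_j U_j^*TU_j = kD
$$
holds as operators. Hence $kD-(k-1)m(T)I \geq T+\sum_{j\geq1}(U_j^*TU_j-m(T)I)\geq T$ in the operator order. This gives $M(D)\geq (M(T)+(k-1)m(T))/k$, which is still only the max, not the sum. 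To get the sum, I would apply the operator identity to the specific vector $h=\sum_i \norm{f_i}^{-1}f_i\cdot c$, or more simply use the statement on the $k$-dimensional compression: choose $f$ nearly optimal, and form the $k\times k$ matrix $B_{li}=\ang{T_{li}\hat f_i,\hat f_l}$ with $\hat f_i=f_i/\norm{f_i}$. Its largest eigenvalue is at least $\ang{Tf,f}\geq M(T)-\eps$ and its smallest is at least $m(T)$, since $B$ is a compression of $T$ by an isometry. Therefore
$$
\operatorname{tr}B\geq M(T)-\eps+(k-1)m(T).
$$
On the other hand, $\operatorname{tr}B=\sum_i\ang{T_{ii}\hat f_i,\hat f_i}\leq\sum_i M(T_{ii})$. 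Letting $\eps\to0$ finishes the proof. Pieces with $f_i=0$ are handled by choosing any unit $\hat f_i\in L^2(A_i)$, which keeps the map $\C^k\to L^2(X)$ an isometry.
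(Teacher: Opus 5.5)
Your final argument, the $k\times k$ compression, is correct, but it takes a different route from the paper's proof.

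\textbf{What the paper does.} It proves only the case $k=2$ by hand. Given a near-optimal unit vector $f=f_1+f_2$, it introduces the companion vector $g=\frac{N_2}{N_1}f_1-\frac{N_1}{N_2}f_2$ and checks that $\ang{Tf,f}+\ang{Tg,g}=\ang{T_{11}\hat f_1,\hat f_1}+\ang{T_{22}\hat f_2,\hat f_2}$. It then bounds $\ang{Tg,g}\geq m(T)$, and treats the case $f_2=0$ separately. Finally it inducts on $k$: it splits off $A_k$ and uses $m(S)\geq m(T)$ for the compression $S$ of $T$ to $A_1\sqcup\cdots\sqcup A_{k-1}$.

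\textbf{How the two routes relate.} Note that $g=N_2\hat f_1-N_1\hat f_2$ is exactly the unit vector orthogonal to $f$ in $\mathrm{span}\{\hat f_1,\hat f_2\}$. So the paper's $k=2$ step is the $2\times 2$ instance of your trace identity for $B=V^*TV$, where $V:\C^k\to L^2(X)$ is the isometry $e_i\mapsto\hat f_i$. Your version treats all $k$ pieces at once. It removes the induction and the appeal to Proposition~\ref{prop: block decomp T}(1) for intermediate blocks, and it handles vanishing pieces uniformly. You can even avoid eigenvalues of $B$. Complete $u=(\norm{f_1},\dots,\norm{f_k})$ to an orthonormal basis $u,u_2,\dots,u_k$ of $\C^k$; then $\operatorname{tr}B=\ang{Tf,f}+\sum_{j\geq 2}\ang{TVu_j,Vu_j}\geq\ang{Tf,f}+(k-1)m(T)$, which is the direct $k$-fold generalization of the paper's companion vector.

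\textbf{Points for the write-up.}
\begin{enumerate}
\item State explicitly that $Vu=f$ with $u$ a unit vector. That fact, not the isometry property alone, is what gives $\lambda_{\max}(B)\geq\ang{Tf,f}$.
\item Delete the earlier roots-of-unity and $\sum_j U_j^*TU_j=kD$ attempts. As you noticed, they only yield $k\max_i M(T_{ii})$ on the right and play no role in the final proof.
\item Choosing a unit $\hat f_i\in L^2(A_i)$ when $f_i=0$ requires $\mu(A_i)>0$. This is the same implicit assumption the paper makes, since $M(T_{ii})$ is not even meaningful otherwise. It is not a gap, but it is worth stating.
\end{enumerate}
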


\begin{proof}
    The proof is by induction. If $k = 1$, then the result is clear. We separate the $k = 2$ case into a distinct claim.

    \begin{claim}
    \label{claim: k = 2}
        Suppose $X = A_1 \sqcup A_2$ is a $\mu$-measurable partition of $X$. Then $m(T) + M(T) \leq M(T_{11}) + M(T_{22})$.
    \end{claim}

    \begin{proof}[Proof of Claim~\ref{claim: k = 2}.]
        Let $\eps > 0$. Then there is a unit vector $f \in L^2(X)$ such that $M(T) - \eps < \ang{Tf, f}$. Since $L^2(X) = L^2(A_1) \oplus L^2(A_2)$, there are $f_1 \in L^2(A_1), f_2 \in L^2(A_2)$ such that $f = f_1 + f_2$.

        Suppose first that $N_1 = \norm{f_1}$ and $N_2 = \norm{f_2}$ are both nonzero. Set $g = \frac{N_2}{N_1}f_1 - \frac{N_1}{N_2}f_2$. Then
        \begin{align*}
            \ang{g, g} & = \bigang{\frac{N_2}{N_1}f_1 - \frac{N_1}{N_2}f_2, \frac{N_2}{N_1}f_1 - \frac{N_1}{N_2}f_2} \\
            & = \Big(\frac{N_2}{N_1}\Big)^2\ang{f_1, f_1} - \ang{f_1, f_2} - \ang{f_2, f_1} + \Big(\frac{N_1}{N_2}\Big)^2\ang{f_2, f_2} \\
            & = N_2^2 + N_1^2 = \norm{f}^2  = 1
        \end{align*}
        since $f$ is a unit vector. Therefore,
        \begin{align*}
            M(T) + m(T) & \leq (\ang{Tf, f} + \eps) + \ang{Tg, g} \\
            & = \ang{T_{11}f_1, f_1} + \ang{T_{12}f_2, f_1} + \ang{T_{21}f_1, f_2} + \ang{T_{22}f_2, f_2} +  \\
            & \hspace{1cm} \Big(\frac{N_2}{N_1}\Big)^2\ang{T_{11}f_1, f_1} - \ang{T_{12}f_2, f_1} - \ang{T_{21}f_1, f_2} + \Big(\frac{N_1}{N_2}\Big)^2\ang{T_{22}f_2, f_2} + \eps \\
            & = (1 + \Big(\frac{N_2}{N_1}\Big)^2)\ang{T_{11}f_1, f_1} + (1 + \Big(\frac{N_1}{N_2}\Big)^2)\ang{T_{22}f_2, f_2} + \eps \\
            & = \frac{\ang{T_{11}f_1, f_1}}{\ang{f_1, f_1}} + \frac{\ang{T_{22}f_2, f_2}}{\ang{f_2, f_2}} + \eps \\
            & \leq M(T_{11}) + M(T_{22}) + \eps.
        \end{align*}
        Since this inequality holds for all $\eps > 0$, we indeed have $M(T) + m(T) \leq M(T_{11}) + M(T_{22})$ in this first case.

        Next, assume that $\norm{f_2} = 0$. (If $\norm{f_1} = 0$ instead, then a similar argument applies.) Then $\ang{Tf, f} = \ang{Tf_1, f_1} = \ang{T_{11}f_1, f_1}$. So,
        $$M(T) - \eps \leq \ang{Tf, f} = \ang{T_{11}f_1, f_1} \leq M(T_{11}),$$
        since $\norm{f_1} = \norm{f} = 1$. Then since also
        $$m(T) \leq m(T_{22}) \leq M(T_{22})$$
        by Proposition~\ref{prop: block decomp T}, we again have
        $$M(T) + m(T) \leq M(T_{11}) + \eps + M(T_{22})$$
        for all $\eps > 0$. This concludes the proof of the claim.
    \end{proof}

    Now let $k \geq 2$, and assume the inequality in the statement of the lemma holds for $k - 1$. Let $T$ be as in the statement of the lemma. Let moreover $S$ be the operator appearing in the $(1, 1)$-entry in the block decomposition of $T$ over the measurable partition of $X$ into the two sets $A_1 \sqcup \cdots \sqcup A_{k - 1}$ and $A_k$, and note that $T_{kk}$ is the $(2, 2)$-entry in this decomposition. By Claim~\ref{claim: k = 2},
    $$M(T) + m(T) \leq M(S) + M(T_{kk}).$$
    By the inductive hypothesis,
    $$(k - 2)m(S) + M(S) \leq \sum_{1 \leq i \leq k - 1} M(T_{ii}).$$
    So,
    \begin{align*}
        M(T) + (k - 1)m(T) & \leq M(T) + m(T) + (k - 2)m(S) \\
        & \leq M(S) + M(T_{kk}) + (k - 2)m(S) \\
        & \leq \sum_{1 \leq i \leq k} M(T_{ii}),
    \end{align*}
    where the first inequality follows from Proposition~\ref{prop: block decomp T}. This finishes the proof of the Lemma.
\end{proof}

We now turn to the main theorem of this section (Theorem~\ref{thm:hoffman_intro} from the introduction):

\begin{thm}
\label{thm: hoffman bounded}
    Let $\G$ be a bounded-degree Borel pmp graph on a standard probability space $(X, \mu)$. Then $\chi_{\mu}^{\ap}(\G) \geq \ceil{1 - \frac{M(T_{\G})}{m(T_{\G})}}$.
\end{thm}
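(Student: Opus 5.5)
Suppose $\chi_\mu^\ap(\G) = k$, write $T = T_\G$, and let $d$ be the maximum degree. We will show $(k-1)m(T) + M(T) \le 0$, which rearranges (since $m(T)<0$ by Lemma~\ref{lem:mnegative}) to $k \geq 1 - M(T)/m(T)$, and hence $k \ge \ceil{1 - M(T)/m(T)}$ because $k$ is an integer.

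Fix $\eps>0$. By definition of $\chi_\mu^\ap$, there are pairwise disjoint measurable $\G$-independent sets $A_1,\dots,A_k$ such that $B = X\setminus(A_1\cup\dots\cup A_k)$ has $\mu(B)<\eps$. Apply Lemma~\ref{lem: block decomp ineq} to the partition $X = A_1 \sqcup \cdots \sqcup A_k \sqcup B$ into $k+1$ pieces. This gives
$$
k\,m(T) + M(T) \le \sum_{i=1}^k M(T_{ii}) + M(T_{BB}).
$$
Since each $A_i$ is independent, the diagonal block $T_{ii}$ is the adjacency operator of an edgeless graph, so $T_{ii}=0$ and $M(T_{ii}) = 0$. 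Therefore
$$
k\,m(T) + M(T) \le M(T_{BB}).
$$
This is off by one copy of $m(T)$ from the target. The extra $m(T)$ must be absorbed by showing that $M(T_{BB})$ is approximately $\le m(T)$ in the appropriate sense, which fails as stated. So instead we merge $B$ into one of the colour classes.

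Apply Lemma~\ref{lem: block decomp ineq} to the $k$-piece partition $A_1\cup B, A_2,\dots,A_k$. This yields
$$
(k-1)m(T) + M(T) \le M(T_{A_1\cup B}),
$$
where $T_{A_1\cup B}$ is the adjacency operator of $\G\rest(A_1\cup B)$. The key step is then to show $M(T_{\G\rest(A_1\cup B)}) \to 0$ as $\mu(B)\to 0$. The only edges of $\G\rest(A_1\cup B)$ are those incident to $B$.

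To bound this operator, write its block form with respect to $A_1\sqcup B$:
$$
\begin{pmatrix} 0 & C \\ C^* & T_{BB} \end{pmatrix}.
$$
For a unit $f = f_1 + f_2$ with $f_1\in L^2(A_1)$ and $f_2 \in L^2(B)$,
$$
\ang{Tf,f} = 2\,\mathrm{Re}\ang{Cf_2,f_1} + \ang{T_{BB}f_2,f_2} \le 2\norm{C}\norm{f_2} + d\norm{f_2}^2.
$$
This does not obviously go to zero, since $f$ could concentrate on $B$. Uniform smallness in $\mu(B)$ is false in general: a single edge inside $B$ already gives $M \ge 1$.

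So we must avoid relying on a pointwise operator bound and instead use a near-optimizer. The plan is to fix, before choosing the colouring, a unit $f$ with $\ang{Tf,f} > M(T)-\eps$. Then, by Lemma~\ref{lem: dct} applied to $\abs{f}^2$, choose $\mu(B)$ so small that $f$ has mass $<\eps$ on $B\cup\cN(B)\cup\cN(\cN(B))$, which is possible by Corollary~\ref{cor:neighbors}. Redoing the proof of Lemma~\ref{lem: block decomp ineq} with this specific $f$ (its proof only uses one near-maximizer $f$ and sign-flipped companions $g$ with the same absolute value blockwise), all terms involving $B$ are controlled by $\int_{B}\abs{f}^2$ and $\int_{\cN(B)}\deg\abs{f}^2$, exactly as in the estimates \eqref{eq:Tglg2} and \eqref{eq:Tglg3}. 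The result is
$$
M(T) + (k-1)m(T) \le \sum_i \sup_{\text{rescaled pieces of } f}\ang{T_{ii}\cdot,\cdot} + O(\eps) = O(\eps),
$$
and letting $\eps \to 0$ gives the claim.

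The main obstacle is this last step. We need to reorganize the inductive proof of Lemma~\ref{lem: block decomp ineq} so that it works with a fixed near-maximizer $f$, while tracking the normalizations $N_i = \norm{f_i}$. Pieces with tiny $N_i$ must be handled separately, as in the degenerate case of Claim~\ref{claim: k = 2}, so that the $B$-contributions are bounded by $\eps$ uniformly.
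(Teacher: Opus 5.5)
Your reduction is right, and you have the two ingredients that are actually needed: fix a near-maximizer $f$ of $T$ before choosing the coloring, and use Lemma~\ref{lem: dct} to make $\int_B\abs{f}^2$ small. But you leave the decisive step undone (you call it ``the main obstacle''), and the mechanism you propose for it misreads Lemma~\ref{lem: block decomp ineq}.

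\begin{itemize}
\item The lemma's proof does not run on a single near-maximizer. Its inductive step invokes the hypothesis for the compression $S$, and hence uses a near-maximizer of $S$, which is unrelated to your $f$.
\item The companion $g=\frac{N_2}{N_1}f_1-\frac{N_1}{N_2}f_2$ in Claim~\ref{claim: k = 2} rescales the two blocks. It does not keep $\abs{f}$ blockwise; that sign-flip construction comes from the bipartiteness proof.
\item Most importantly, while $B$ remains one of the pieces, its contribution is the normalized Rayleigh quotient $\ang{T_{BB}f_B,f_B}/\norm{f_B}^2$. This is not made small by $f$ having little mass on $B\cup\cN(B)\cup\cN(\cN(B))$, and estimates like \eqref{eq:Tglg2}--\eqref{eq:Tglg3} do not touch it. The piece $B$ has to be discarded, not ``handled separately''.
\end{itemize}

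The missing idea is that the lemma needs no reorganizing. Apply Lemma~\ref{lem: block decomp ineq} not to $T$ but to the compression $S$ of $T$ to $L^2(A_1\sqcup\cdots\sqcup A_k)$, with the $k$-piece partition $A_1,\dots,A_k$.
\begin{itemize}
\item Every diagonal block vanishes, so $(k-1)m(S)+M(S)\le 0$.
\item By Proposition~\ref{prop: block decomp T}(1), $m(S)\ge m(T)$.
\item It remains to show $M(S)$ is close to $M(T)$, and this is where your fixed $f$ enters. Write $f=f_1+f_2$ with $f_2=f1_B$. Proposition~\ref{prop: block decomp T}(2) gives $\abs{\ang{Tf,f}-\ang{Sf_1,f_1}}\le 3\norm{T}\norm{f_2}$. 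Since $\norm{f_2}^2=\int_B\abs{f}^2$ is small once $\mu(B)$ is small, $M(S)\ge M(T)-o(1)$.
\end{itemize}
Combining these gives $(k-1)m(T)+M(T)\le o(1)$, with no neighborhoods of $B$ needed. This is precisely the paper's proof.

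Your plan can also be rescued directly. The argument of Claim~\ref{claim: k = 2}, iterated through the induction, works for every unit $f$ whose blocks are all nonzero, not just for near-maximizers. It yields $\ang{Tf,f}+(k-1)m(T)\le\sum_i \ang{T_{ii}f_i,f_i}/\norm{f_i}^2$, which you can apply to the normalization of $f1_{X\setminus B}$. But there too $B$ is thrown away rather than kept as a piece.
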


\begin{proof}
    Let $T = T_{\G}$, and let $k = \chi_{\mu}^{\ap}(\G)$. Then for any $\eps > 0$, there are $\mu$-measurable, pairwise disjoint, $\G$-independent sets $A^{\eps}_1, \dots, A^{\eps}_k \sub X$ such that $\mu(\bigcup_{1 \leq i \leq k} A^{\eps}_i) > 1 - \eps$. Let $B^{\eps} = X \setminus (\bigcup_{1 \leq i \leq k} A^{\eps}_i)$. Then $T$ admits a block decomposition $(T^{\eps}_{ij})_{1 \leq i, j \leq k + 1}$ corresponding to this partition; we also write $T^{\eps}_B$ for the $(k + 1, k + 1)$-entry, rather than $T^{\eps}_{k+1,k+1}$. Additionally, we consider the block decomposition of $T$ corresponding to the partition of $X$ into the two sets $A^{\eps}_1 \sqcup \cdots \sqcup A^{\eps}_k$ and $B^{\eps}$; we write $S^{\eps}$ for the $(1, 1)$-entry of this decomposition, $R^{\eps}_{12}$ for the $(1, 2)$-entry, and $R^{\eps}_{21}$ for the $(2, 1)$-entry; note that the $(2, 2)$-entry is $T^{\eps}_B$.

    For each $n \in \N$, let $\eps_n = \frac{1}{n}$. We claim that $\lim_{n \to \infty} M(S^{\eps_n}) = M(T)$. To see this, fix $\eps > 0$, and let
    $$\eps_0 = \frac{\eps}{3\norm{T} + 1}.$$
    Then there is a unit vector $f \in L^2(X)$ such that $\ang{Tf, f} > M(T) - \eps_0$. By Lemma~\ref{lem: dct}, there is $\delta > 0$ such that, for any measurable $B \sub X$, $\mu(B) < \delta$ implies $\int_B \abs{f(x)}^2\dmu(x) < \eps_0$. Let $N \in \N$ be such that $\frac{1}{N} < \delta$. Let $n \geq N$, and let $f_1 \in L^2(A^{\eps_n}_1 \sqcup \cdots \sqcup A^{\eps_n}_k)$, $f_2 \in L^2(B^{\eps_n})$ be such that $f = f_1 + f_2$. Then by combining this with Proposition~\ref{prop: block decomp T}(2), we get
    \begin{align*}
        \abs{\ang{Tf, f} - \ang{S^{\eps_n}f_1, f_1}} & \leq \abs{\ang{R^{\eps_n}_{12}f_2, f_1}} + \abs{\ang{R^{\eps_n}_{21}f_1, f_2}} + \abs{\ang{T^{\eps_n}_Bf_2, f_2}} \\
        & \leq \norm{R^{\eps_n}_{12}}\norm{f_2}\norm{f_1} + \norm{R^{\eps_n}_{21}}\norm{f_1}\norm{f_2} + \norm{T^{\eps_n}_B}\norm{f_2}^2 \\
        & \leq 3\norm{T}\norm{f_2} \leq 3\norm{T}\eps_0.
    \end{align*}
    So, $M(S^{\eps_n}) \geq \ang{Tf, f} - 3\norm{T}\eps_0 \geq M(T) - (3\norm{T} + 1)\eps_0 = M(T) - \eps$. Since also $M(S^{\eps_n}) \leq M(T)$ by Proposition~\ref{prop: block decomp T}(1), this shows that, indeed, $\lim_{n \to \infty} M(S^{\eps_n}) = M(T)$.

    Finally, we claim 
    $$(k - 1)m(T) + M(T) \leq 0.$$
    Fix $\eps > 0$, and let $\eps' > 0$ be such that $M(T) - M(S^{\eps'}) < \eps$. By Lemma~\ref{lem: block decomp ineq},
    $$(k - 1)m(S^{\eps'}) + M(S^{\eps'}) \leq \sum_{1 \leq i \leq k} M(T^{\eps'}_{ii}) = 0$$
    since $A_i$ is $\G$-independent for each $i \leq k$. So
    $$(k - 1)m(T) + M(T) \leq (k - 1)m(S^{\eps'}) + (M(S^{\eps'}) + \eps) = \eps.$$
    Therefore, using that $m(T)<0$ (see Lemma~\ref{lem:mnegative}),
    $$
    \chi_{\mu}^{\ap}(\G) = k \geq 1 - \frac{M(T)}{m(T)}.
    $$
    This finishes the proof of the theorem.
\end{proof}

\section{Matchings}\label{sec:matchings}
In this section, we prove that any bounded-degree Borel pmp graph whose spectral values meet a certain condition satisfies a property that we call the \emph{strict measurable Tutte condition}. Our main result, Theorem~\ref{thm: brouwer haemers}, is inspired by the following classical theorem of Brouwer and Haemers:

\begin{thm}
    \cite[Theorem~2.3]{BH05} Let $\G$ be a finite graph, and let $0 = \lambda_1 \leq \lambda_2 \leq \cdots \leq \lambda_n$ be the eigenvalues of the Laplacian $L_{\G}$. If $n$ is even and $2\lambda_2 \geq \lambda_n$, then $\G$ admits a perfect matching.
\end{thm}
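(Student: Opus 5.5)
The approach is the classical one: reduce to Tutte's theorem and get a contradiction from an eigenvalue-interlacing estimate for two vertex sets with no edges between them. We take $\G$ connected, as in \cite{BH05}; this hypothesis is needed, since for an edgeless graph on an even number $n\geq 2$ of vertices we have $\lambda_2=\lambda_n=0$, so $2\lambda_2\geq\lambda_n$ holds trivially, yet there is no perfect matching. Connectedness gives $\lambda_2>0$.

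Suppose $\G$ has no perfect matching. By Tutte's theorem there is $S\subseteq V(\G)$ with $s=\abs{S}$ such that $\G - S$ has $q>s$ odd components $C_1,\dots,C_q$. Since $n$ is even, $q\equiv s \pmod 2$, so $q\geq s+2$. Because $\G$ is connected, $\G-S$ has at least two components. We may therefore split the components of $\G - S$ (odd and even) into two nonempty groups, with unions $X$ and $Y$. There are no edges between $X$ and $Y$, and $V(\G)=X\sqcup Y\sqcup S$.

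The key step is the following inequality, valid for any disjoint nonempty $X,Y$ with no edges between them:
$$
\frac{\abs{X}\abs{Y}}{(n-\abs{X})(n-\abs{Y})}\leq\Big(\frac{\lambda_n-\lambda_2}{\lambda_n+\lambda_2}\Big)^2 .
$$
To prove it, test the Laplacian quadratic form on vectors $a1_X-b1_Y+c1_S$ orthogonal to $1$. The form $\ang{L_\G f,f}=\sum_{x\sim y}\abs{f(x)-f(y)}^2$ only involves edges $X$–$S$ and $Y$–$S$, because there are no $X$–$Y$ edges. The Rayleigh quotient therefore lies in $[\lambda_2,\lambda_n]$ for every such vector. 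Comparing two well-chosen test vectors eliminates the unknown edge counts $e(X,S)$ and $e(Y,S)$, and yields the displayed bound. This is the same interlacing idea recorded in Lemma~\ref{lem:interlacing}, applied to the $3\times 3$ quotient matrix of the partition $X\sqcup Y\sqcup S$. Under $2\lambda_2\geq\lambda_n$ the right-hand side is at most $1/9$. Hence
$$
9\abs{X}\abs{Y}\leq(n-\abs{X})(n-\abs{Y}).
$$

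It remains to choose the grouping so that this fails. Each odd component has at least one vertex, and there are at least $s+2$ of them, so the components of $\G - S$ together contain at least $s+2$ vertices. If some single component $C$ is large, take $X=C$ and let $Y$ be the union of all the others. Otherwise, split the components greedily so that $\abs{X}$ and $\abs{Y}$ are comparable. In both cases one must check that $\abs{X}\abs{Y}$ exceeds $\frac19(\abs{Y}+s)(\abs{X}+s)$, using $\abs{X}+\abs{Y}\geq s+2$ together with the fact that each side is built from many odd components.

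I expect this final counting/case analysis to be the main obstacle. The delicate regimes are:
\begin{itemize}
\item $s$ is large relative to the number of components, where each component may be a single vertex;
\item one component dominates.
\end{itemize}
If the two-set bound is too weak in these regimes, I would fall back on a per-component edge count. For each odd component $C_i$, the cut estimate gives $e(C_i,S)\geq\lambda_2\abs{C_i}(n-\abs{C_i})/n$. Summing over $i$ and comparing with $e(\cdot,S)\leq\sum_{v\in S}\deg(v)$, where the degrees can in turn be controlled by $\lambda_n$, should give the contradiction using $q\geq s+2$.
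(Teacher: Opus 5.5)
Your strategy is the classical one from \cite{BH05}, and it is also the skeleton of the paper's measurable analogue, Theorem~\ref{thm: brouwer haemers}: take a Tutte set, apply Haemers' two-set inequality (the finite version of Proposition~\ref{prop:XYZ}), and note that $2\lambda_2\geq\lambda_n$ caps the right-hand side at $1/9$. You are right that connectedness (equivalently, at least one edge) must be added, since edgeless graphs on an even number $n\geq 2$ of vertices satisfy the statement's hypotheses but have no perfect matching. Your sketch of the two-set inequality goes through. The cleanest form of your ``two test vectors'' idea takes $f=1_X-\frac{\abs{X}}{n}1$, $g=1_Y-\frac{\abs{Y}}{n}1$ and $a=\frac{\lambda_2+\lambda_n}{2}$. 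Since $\ang{L_\G 1_X,1_Y}=0$, we get $\ang{(L_\G-aI)f,g}=a\abs{X}\abs{Y}/n$. On the other hand, $L_\G-aI$ restricted to $1^\perp$ has norm at most $\frac{\lambda_n-\lambda_2}{2}$, and $\norm{f}^2\norm{g}^2=\abs{X}(n-\abs{X})\abs{Y}(n-\abs{Y})/n^2$.

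The gap is the final step, which you leave undone. Your fallback would not close it. Summing $e(C_i,S)\geq\lambda_2\abs{C_i}(n-\abs{C_i})/n$ against $\sum_{v\in S}\deg(v)\leq s\lambda_n\leq 2s\lambda_2$ only yields $\sum_i\abs{C_i}(n-\abs{C_i})/n\leq 2s$. When the odd components are singletons and $s\geq 2$, the left side is $(s+2)(n-1)/n<2s$, so there is no contradiction.

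In fact no case distinction is needed. Let $t=n-s=\abs{X}+\abs{Y}$, and let $c$ be the largest component size of $\G-S$. Add the components one at a time to whichever of $X$, $Y$ is currently smaller. This preserves $\abs{\abs{X}-\abs{Y}}\leq c$ and makes both sides nonempty. Since $\G-S$ has at least $s+2$ components, the components other than the largest contain at least $s+1$ vertices, so $c\leq t-s-1$. Hence $4\abs{X}\abs{Y}=t^2-(\abs{X}-\abs{Y})^2\geq t^2-(t-s-1)^2=(s+1)(2t-s-1)$. It follows that $8\abs{X}\abs{Y}-s(t+s)\geq(3s+4)(t-s)-2>0$, because $t-s\geq 2$. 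This is exactly $9\abs{X}\abs{Y}>(\abs{X}+s)(\abs{Y}+s)=(n-\abs{Y})(n-\abs{X})$, which contradicts the bound $1/9$. This one estimate covers both regimes you worried about (a dominant component, and many singletons), and with it the proof is complete.
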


For the remainder of this section, we fix an ergodic $d$-regular Borel pmp graph $\G$ with spectral gap. We denote by $L=L_{\G}$ the Laplacian operator, and define
$$
m_L = \inf_{0\neq f\in L^2_0(X)} \frac{\langle Lf,f\rangle}{\langle f,f\rangle} \quad \text{and}\quad M_L = \sup_{0\neq f\in L^2_0(X)} \frac{\langle Lf,f\rangle}{\langle f,f\rangle}.
$$
From the basic results in Sections~\ref{subsec:operators} and \ref{subsec: spectral gap}, we note that $0<m_L\leq M_L = M(L)\leq 2d$, $\sigma(L)\sub \{0\}\cup [m_L,M_L]$, and the eigenspace of $0$ is equal to $\C1$. Note that $m_L>0$ is equivalent to $\G$ having spectral gap; in particular, the condition $2m_L\geq M_L$, appearing in Theorem~\ref{thm: brouwer haemers} below, automatically implies spectral gap.

We start with proving the following spectral inequality for subsets of $X$, which is the direct generalization to Borel pmp graphs of \cite[Lemma~6.1]{Hae95}:

\begin{prop}\label{prop:XYZ}
   
    Let $Y$ and $Z$ be disjoint subsets of $X$ of positive measure such that there are no edges between $Y$ and $Z$. Then
    $$
    \frac{\mu(Y)\mu(Z)}{(1-\mu(Y))(1-\mu(Z))} \leq \left(\frac{M_L - m_L}{M_L + m_L}\right)^2.
    $$
\end{prop}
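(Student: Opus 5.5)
The plan is to adapt Haemers' interlacing-free argument: test the shifted operator $L - cI$ against two mean-zero functions built from $Y$ and $Z$. Write $y = \mu(Y)$ and $z = \mu(Z)$, and set
$$
u = 1_Y - y\cdot 1, \qquad v = 1_Z - z\cdot 1 .
$$
Both functions lie in $L^2_0(X)$. Since $L$ is self-adjoint and $L1 = 0$, the operator $L$ maps $L^2_0(X)$ into itself. Let $L_0$ denote the restriction of $L$ to $L^2_0(X)$. It is a bounded self-adjoint operator on $L^2_0(X)$, and by the definitions of $m_L$ and $M_L$ we have $m(L_0) = m_L$ and $M(L_0) = M_L$.

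Next, put $c = \frac{M_L + m_L}{2}$ and $r = \frac{M_L - m_L}{2}$. Then $\sigma(L_0 - cI) \subseteq [-r, r]$. By item (5) of the lemma on self-adjoint spectra in Subsection~\ref{subsec:operators}, this gives $\norm{L_0 - cI} \le r$. Hence, by Cauchy--Schwarz,
$$
\abs{\ang{(L - cI)u, v}} \le r\,\norm{u}\,\norm{v} .
$$
Here $c > 0$, because $m_L > 0$ under the standing spectral gap assumption.

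Then I compute the quantities involved, all routinely.
\begin{itemize}
\item The norms are $\norm{u}^2 = y(1-y)$ and $\norm{v}^2 = z(1-z)$.
\item The inner product is $\ang{u, v} = \mu(Y\cap Z) - yz = -yz$, since $Y$ and $Z$ are disjoint.
\item Using $L1 = 0$ and self-adjointness, $\ang{Lu, v} = \ang{L1_Y, 1_Z} = \int_Z \big(\deg(x)1_Y(x) - \abs{\{y'\sim x : y' \in Y\}}\big)\dmu(x)$.
\end{itemize}
The last integrand vanishes identically on $Z$. The degree term is zero because $Z \cap Y = \emptyset$, and the neighbor count is zero because there are no edges between $Y$ and $Z$. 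So $\ang{Lu, v} = 0$, and therefore $\ang{(L - cI)u, v} = c\,yz$.

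Combining the two displays gives
$$
c\,yz \le r\sqrt{y(1-y)\,z(1-z)} .
$$
Squaring and dividing by $c^2(1-y)(1-z)$ yields the claimed inequality. The division is legitimate: $1 - y > 0$ and $1 - z > 0$, since each of $Y$ and $Z$ is disjoint from the other, which has positive measure.

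The only point needing care is justifying $\norm{L_0 - cI} \le r$ on the subspace $L^2_0(X)$, rather than on all of $L^2(X)$, where the eigenvalue $0$ would spoil the bound. This requires the invariance of $L^2_0(X)$ under $L$, which is immediate from self-adjointness. It also requires identifying $m(L_0)$ and $M(L_0)$ with $m_L$ and $M_L$, which holds by definition. Everything else is direct computation.
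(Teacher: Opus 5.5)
Your proof is correct, and it takes a genuinely different and much shorter route than the paper.

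\textbf{The paper's route.} The paper follows Haemers' interlacing argument, in these steps:
\begin{enumerate}
\item It forms the anti-diagonal operator on $L^2(X)\oplus L^2(X)$ whose off-diagonal entries are $L-aI$.
\item Using Lemma~\ref{lem:antidiagspectrum}, together with spectral gap and ergodicity, it identifies the spectrum of this operator and shows that the eigenspaces of $\pm a$ are one-dimensional.
\item It computes the $4\times 4$ quotient matrix $B$ for the decomposition $Z,\ X\setminus Z,\ X\setminus Y,\ Y$, and finds that its eigenvalues are $\pm a,\pm\lambda$.
\item It applies the interlacing Lemma~\ref{lem:interlacing}, via the isometry $S$, to conclude $\lambda\le \frac{M_L-m_L}{2}$.
\end{enumerate}

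\textbf{Your route.} You bound the single bilinear form $\langle (L-cI)(1_Y-y),\,1_Z-z\rangle = c\,yz$. You use the norm of $L-cI$ on $L^2_0(X)$, which is at most $r$ because its numerical range lies in $[-r,r]$. This avoids the doubled operator, the quotient-matrix eigenvalue computation, and interlacing entirely.

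\textbf{What each buys.} Your argument never uses ergodicity, regularity, or the multiplicity and isolatedness of the eigenvalue $0$. It only needs $M_L+m_L>0$, so it proves the inequality for any bounded-degree Borel pmp graph with that property. In the paper's approach, ergodicity is also what excludes the degenerate case $\mu(Y)+\mu(Z)=1$. In that case $\lambda=a$ and hypothesis (2) of Lemma~\ref{lem:interlacing} fails, whereas your inequality simply forces $m_L\le 0$ there. The paper's route, in exchange, stays closer to the classical finite proof and to the quotient-matrix interlacing framework.

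\textbf{Minor expository points.}
\begin{itemize}
\item The final step divides by $c^2yz(1-y)(1-z)$, not just $c^2(1-y)(1-z)$. So you also use $y,z>0$, which is given.
\item Item (5) of the lemma concerns the spectral radius. To get $\|L_0-cI\|\le r$, you also need the standard fact that norm equals spectral radius (equivalently, numerical radius) for self-adjoint operators.
\item Invariance of $L^2_0(X)$ under $L$ is true but not actually needed. The compression of $L-cI$ to $L^2_0(X)$ would suffice.
\end{itemize}
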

\begin{proof}
We write $M = M_L$ and $m = m_L$ for convenience, and we define $a = \frac{M+m}{2}$. Consider the operator
$$
A = \begin{pmatrix}
    0 & L - a I\\
    L - a I & 0
\end{pmatrix}
$$
on $L^2(X)\oplus L^2(X)$. From Lemma~\ref{lem:antidiagspectrum}, we get
$$
\sigma(A) = \sigma(L - a I) \cup - \sigma(L - a I) \sub \{-a\} \cup \left[-\frac{M-m}{2}, \frac{M-m}{2}\right] \cup \{a\}.
$$
Similar to Proposition~\ref{prop:isolated} and Corollary~\ref{cor:ergodicspectralgapspectrum}, we see that the eigenspaces of $-a$ and $a$ equal $\C (1\oplus 1)$ and $\C (-1\oplus 1)$, respectively, and
$$
\sigma\left(A|_{L^2(X)\ominus (\C (1\oplus 1)\oplus \C (-1\oplus 1))}\right) \sub \left[-\frac{M-m}{2}, \frac{M-m}{2}\right].
$$
Consider the decomposition 
\begin{equation}\label{eq:XdecompYZ}
    L^2(X) \oplus L^2(X) = L^2(Z) \oplus L^2(X\setminus Z) \oplus L^2(X\setminus Y) \oplus L^2(Y).
\end{equation}
Since there are no edges between $Y$ and $Z$, the corresponding block decomposition of $A$ has the form
$$
A = \begin{pmatrix}
    0 & 0 & \ast & 0 \\
    0 & 0 & \ast & \ast \\
    \ast & \ast & 0 & 0 \\
    0 & \ast & 0 & 0 
\end{pmatrix}.
$$
For notational convenience, we also write $X_1 = Z, X_2 = X\setminus Z, X_3 = X\setminus Y, X_4 = Y$, so that the decomposition~\eqref{eq:XdecompYZ} becomes $L^2(X) \oplus L^2(X) = \oplus_{i=1}^4 L^2(X_i)$. Additionally, we write $v_i = 1_{X_i}$, i.e., the vector that is the constant function $1$ on $L^2(X_i)$ and $0$ elsewhere. Using this notation, we then construct the $4\times 4$ matrix $B$ with entries
$$
b_{ij} = \frac{1}{\mu(X_i)} \langle v_i, Av_j\rangle.
$$
We compute
\begin{align*}
    b_{2,4} &= \frac{1}{\mu(X_2)} \langle v_2, A v_4\rangle\\
    &= \frac{1}{\mu(X\setminus Z)} \int_X 1_{X\setminus Z}(x) (\deg(x) 1_Y(x) - \sum_{y\sim x} 1_Y(y) - a 1_Y(x)) \dmu(x)\\
    &= \frac{1}{1-\mu(Z)} \left(\int_Y \deg(x) \dmu(x) - \int_{X\setminus Z} \sum_{y\sim x} 1_Y(y) \dmu(x) - a \mu(Y)\right) \\
    &= -a \frac{\mu(Y)}{1-\mu(Z)}.
\end{align*}
In the last line, we use the fact that there are no edges between $Y$ and $Z$, together with the fact that $\int_Y \deg(x) \dmu(x) = \int_X \sum_{y\sim x} 1_Y(y) \dmu(x)$; this follows from line~\eqref{ineq: n(a)}, with $A$ replaced by $Y$. Since $b_{21} = b_{22} = 0$ and the second row of $B$ has to sum to
$$
\frac{1}{1-\mu(Z)} \langle 1_{X\setminus Z}, (L-aI) 1\rangle = -a,
$$
we immediately get that $b_{23} = -a + a \frac{\mu(Y)}{1-\mu(Z)}$. Using similar computations, it is then easy to check that $B$ is given by
$$
B = \begin{pmatrix}
    0 & 0 & -a & 0 \\
    0 & 0 & -a + a \frac{\mu(Y)}{(1-\mu(Z))} & -a \frac{\mu(Y)}{(1-\mu(Z))} \\
    - a \frac{\mu(Z)}{(1-\mu(Y))} & -a + a \frac{\mu(Z)}{(1-\mu(Y))} & 0 & 0 \\
    0 & -a & 0 & 0 
\end{pmatrix}.
$$
A linear algebra exercise then tells us that the eigenvalues of $B$ are given by $-a < - \lambda < \lambda < a$ for some $\lambda > 0$ (namely $\lambda = \frac{a\sqrt{\mu(Y)\mu(Z)}}{\sqrt{(1-\mu(Y))(1-\mu(Z))}}$). 

\begin{claim} 
\label{claim: M-m}
$\lambda \leq \frac{M-m}{2}$.
\end{claim}

\begin{proof}[Proof of Claim~\ref{claim: M-m}] Denote by 
$$
K = \begin{pmatrix}
    \sqrt{\mu(X_1)} &0&0&0\\
    0& \sqrt{\mu(X_2)} &0&0\\
    0&0& \sqrt{\mu(X_3)} &0\\
    0&0&0& \sqrt{\mu(X_4)}
\end{pmatrix}
$$
and let $\tilde B = KBK^{-1}$. Then it is clear that $\tilde B$ and $B$ have the same eigenvalues. Moreover, denoting by
$$
S: \C^4 \to \oplus_{i=1}^4 L^2(X_i): (t_1, t_2, t_3, t_4) \mapsto \oplus_{i=1}^4 \frac{1}{\sqrt{\mu(X_i)}} t_i 1_{X_i},
$$
it is easy to check that the adjoint of $S$ is given by 
$$
S^*(f) = \left(\frac{1}{\sqrt{\mu(X_1)}} \int_{X_1} f(x)\dmu(x), \ldots, \frac{1}{\sqrt{\mu(X_4)}} \int_{X_4} f(x)\dmu(x)\right),
$$
that $S^*S=1$, i.e., $S$ is an isometry, and that $\tilde B = S^*AS$. Since one can easily check that the conditions from Lemma~\ref{lem:interlacing} are satisfied, we indeed conclude that $\lambda\leq \frac{M-m}{2}$.
\end{proof}

Finally, from the claim, it follows immediately that
$$
a^2 \frac{\mu(Y)\mu(Z)}{(1-\mu(Y))(1-\mu(Z))} = \frac{\det(B)}{a^2} = \lambda^2 \leq \left(\frac{M-m}{2}\right)^2.
$$
Since $a = \frac{M+m}{2}$, dividing both sides by $a^2$ gives the desired result.
\end{proof}

\subsection{Tutte's condition} We now outline a condition in the pmp setting that is inspired by the classical Tutte's condition for the existence of a perfect matching. 

Let $A \sub X$ be a measurable subset, and let
$$
\cO_A = \{x\in X\setminus A : x \text{ is contained in a finite odd component of $\cG \rest (X\setminus A)$}\}.
$$
Note that
$$
\cO_A = \bigcup_{k\geq 0} \cO_A^{(2k+1)},
$$
where $\cO_A^{(2k+1)}$ denotes the points in $X\setminus A$ contained in odd components of size $2k+1$. We note here that $\cO_A^{(1)}$ is $\G$-independent by construction.

Define a measure $\nu_{A,k}$ on $\cO_A^{(2k+1)}$ by $\nu_{A,k} = \frac{1}{2k+1}\mu$, and let
$$
\nu_A = \oplus \nu_{A,k},
$$
i.e., for $Y\sub \cO_A$, 
$$
\nu_A(Y) = \sum_{k\geq 0} \nu_{A,k}(Y\cap \cO_A^{(2k+1)}) = \sum_{k\geq 0} \frac{1}{2k+1} \mu(Y\cap \cO_A^{(2k+1)}).
$$
Intuitively, $\nu_A$ ``counts'' the number of finite odd components that arise after $A$ is removed from the vertex set. We now state our suggestion for a measurable (expanding) version of Tutte's condition.

\begin{defn}
\label{defn: tutte}
    We say $\G$ satisfies the \textit{strict measurable Tutte condition} if there exists $c<1$ such that for every measurable subset $A\sub X$, the set 
    $$
    \cO_A \coloneqq \{x\in X\setminus A : x \text{ is contained in a finite odd component of $\cG \rest (X\setminus A)$}\}
    $$
    satisfies $\nu_A(\cO_A) \leq c\mu(A)$.
\end{defn} 

We point out that, for finite graphs, Tutte's condition is the same as the above with $c=1$.

In Theorem~\ref{thm: brouwer haemers} below, we show that the spectral condition analogous to the one in the classical Brouwer-Haemers theorem implies the strict measurable Tutte condition. Before we embark on its proof, we first prove two useful lemmas.

\begin{lem}\label{lem:OAmu}
    Assume $\mu$ is nonatomic. Then for any measurable $A\sub X$ and $0\leq t\leq \mu(\cO_A)$, there exists a subset $B\sub \cO_A$ with measure $\mu(B) = t$ that is a union of components of $\G \rest \cO_A$.
\end{lem}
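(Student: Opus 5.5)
The plan is to build $B$ out of full components by splitting $\cO_A$ according to component size, and to handle each size class with a Borel transversal plus nonatomicity. For each $k\geq 0$ the set $\cO_A^{(2k+1)}$ is a union of components of $\G\rest\cO_A$; indeed, components of $\G\rest\cO_A$ are exactly the finite odd components of $\G\rest(X\setminus A)$. (After discarding a null set, we may assume $A$ is Borel, so all the sets involved are Borel.) Writing $t_k=\mu(\cO_A^{(2k+1)})$, we have $\sum_k t_k=\mu(\cO_A)$. Choose the unique $K$ and the remainder $s$ with
$$\sum_{k<K}t_k\leq t<\sum_{k\leq K}t_k \quad\text{and}\quad s=t-\sum_{k<K}t_k\in[0,t_K).$$
The case $t=\mu(\cO_A)$ is handled by taking $B=\cO_A$. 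It then suffices to find a union $B_K$ of components inside $\cO_A^{(2K+1)}$ with $\mu(B_K)=s$, and to set $B=\bigcup_{k<K}\cO_A^{(2k+1)}\cup B_K$.

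To construct $B_K$, use the finite equivalence relation $F$ on $\cO_A^{(2K+1)}$ whose classes are its components, each of size $2K+1$. Since $F$ is a finite Borel equivalence relation, it has a Borel transversal $S$; for instance, fix a Borel linear order on $X$ and take the least element of each class. The saturation $[S']_F$ of a Borel set $S'\sub S$ is Borel and is a union of components. Next, because the class of any $x$ is $\{\theta_i(x)\}$ for finitely many partial Borel bijections $\theta_i$ with graphs contained in $E_\G$ (by Lusin--Novikov), and these preserve $\mu$ since $\G$ is pmp, we get
$$\mu([S']_F)=(2K+1)\,\mu(S').$$
This follows from Proposition~\ref{prop:Anush} applied to the transport $\varphi(x,y)=1_{S'}(y)$ when $x\mathrel{F}y$. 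In particular $\mu(S)=t_K/(2K+1)$.

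Finally, because $\mu$ is nonatomic, $\mu\rest S$ is a nonatomic finite measure. By Sierpiński's theorem (the intermediate value property of nonatomic measures), there is a Borel $S'\sub S$ with $\mu(S')=s/(2K+1)$. Set $B_K=[S']_F$, which gives $\mu(B_K)=s$ and completes the construction.

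I expect the main delicate point to be the transversal and measure computation, namely verifying $\mu([S']_F)=(2K+1)\mu(S')$ via mass transport. A secondary point is ensuring that every set involved is genuinely measurable given that $A$ is only measurable, which is handled by the Borel-modulo-null convention the paper adopts.
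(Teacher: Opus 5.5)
Your proposal is correct and follows essentially the same route as the paper's proof: split $\cO_A$ by component size, take the whole classes $\cO_A^{(2k+1)}$ for $k<K$, and in the $K$-th class use nonatomicity to pick a Borel subset of a Borel transversal with the right measure, then saturate it. Your mass-transport verification of $\mu([S']_F)=(2K+1)\mu(S')$ supplies a step the paper uses implicitly, and your check that $\mu(S)=t_K/(2K+1)>s/(2K+1)$ likewise fills in a detail the paper leaves unstated.
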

\begin{proof}
Consider the Borel graph $\cH = \cG \rest \cO_A$. Since every component of $\cH$ is finite, we deduce that $E_{\cH}$ is a finite Borel equivalence relation; so in particular, there exists a Borel transversal $\cT\sub \cO_A$ meeting every component of $\cH$ exactly once.

If $t=\mu(\cO_A)$, we can obviously take $B=\cO_A$ to get the result, so assume $t < \mu(\cO_A)$. Since $\cO_A$ is equal to the disjoint union $\cO_A = \bigcup_{k\geq 0} \cO_A^{(2k+1)}$, there exists a unique $K\in \N$ such that
$$
\mu\big(\cup_{0\leq k\leq K-1} \cO_A^{(2k+1)}\big) \leq t < \mu\big(\cup_{0\leq k\leq K} \cO_A^{(2k+1)}\big).
$$
Since $\cT\cap \cO_A^{(2K+1)}$ is Borel and $\mu$ is nonatomic, we can find a Borel subset $\cT_0\sub \cT \cap \cO_A^{(2K + 1)}$ of $\mu$-measure 
$$
\frac{1}{2K+1} \big(t - \mu\big(\cup_{0\leq k\leq K-1} \cO_A^{(2k+1)}\big)\big).
$$
Denoting by $T_0$ the $E_{\cH}$-saturation of $\cT_0$, we find that $B = \cup_{0\leq k\leq K-1} \cO_A^{(2k+1)} \cup T_0$ satisfies the requirements, finishing the proof.
\end{proof}

\begin{lem}\label{lem:2mMindependence}
    Assume $2m_L \geq M_L$ and $\mu$ is nonatomic. Then the measurable independence number of $\G$ satisfies $i_\mu(\G)<\frac{1}{2}$. 
\end{lem}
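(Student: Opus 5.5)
We will prove that $i_\mu(\G)<\frac12$ by applying the Hoffman-type bound of Theorem~\ref{thm: hoffman regular} to the standing ergodic $d$-regular graph with spectral gap. That theorem gives, for every measurable $\G$-independent set $S$,
$$
\mu(S)\leq 1-\frac{d}{M(L)} = 1-\frac{d}{M_L},
$$
using that $M_L=M(L)$. Since $M_L\leq 2d$, this alone yields only $i_\mu(\G)\leq \frac12$. So the plan is to show that $M_L<2d$ strictly under the hypothesis $2m_L\geq M_L$. By Lemma~\ref{lem:LD-T}, this is equivalent to $m(T_\G)>-d$, i.e.\ $-d\notin\sigma(T_\G)$.

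Suppose instead that $M_L=2d$, so $-d\in\sigma(T_\G)$. By Theorem~\ref{thm:-dspecimpliesbipartite}, which applies since $\G$ is ergodic, $d$-regular and has spectral gap, $\G$ is approximately measurably bipartite. Theorem~\ref{thm:bipartiteimpliessymspec} then says $\sigma(T_\G)$ is symmetric about $0$.

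Now transfer the spectral gap. By Corollary~\ref{cor:ergodicspectralgapspectrum}, $\sigma(T_\G)\setminus\{d\}=\sigma(T|_{L^2_0})\subseteq[-d,d-m_L]$, and $d-m_L\in\sigma(T_\G)$ is a genuine spectral value since $m_L=m(L|_{L^2_0})$ belongs to the spectrum of the restriction. By symmetry, $-(d-m_L)=m_L-d\in\sigma(T_\G)$, which corresponds to the Laplacian value $2d-m_L\in\sigma(L)$.

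From $2m_L\geq M_L=2d$ we get $m_L\geq d$, so $2d-m_L\leq d\leq m_L$. Since $\sigma(L)\cap(0,m_L)=\emptyset$, this forces $m_L=d$ (note $2d-m_L>0$, as $m_L\leq M_L=2d$ and $m_L=2d$ would make $L|_{L^2_0}=2dI$, which we rule out below). Hence $\sigma(T_\G)\subseteq\{d\}\cup\{-d\}\cup[-d,0]$, and $m_L=d$ means $T|_{L^2_0}\leq 0$. Combined with symmetry, this gives $\sigma(T_\G)\subseteq\{-d,0,d\}$.

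This is the main obstacle: turning the degenerate spectral situation into a contradiction, and here nonatomicity of $\mu$ must enter. I would argue that $-d$ is then an isolated point, hence an eigenvalue by Proposition~\ref{prop:isolated}. Its eigenfunction, via Corollary~\ref{cor:minusdinspectrum} and Lemma~\ref{lem:ergeigenvalued}, has constant modulus and values $\pm1$ with $T f=-df$. This gives a genuine measurable bipartition $X=P\sqcup Q$ with $\mu(P)=\mu(Q)=\frac12$.

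Then $T^2$ has spectrum in $\{0,d^2\}$, so $T^3=d^2T$, i.e.\ $T(T^2-d^2)=0$. I would test this against indicator functions of small sets. Since $\mu$ is nonatomic, take a positive-measure set $A$ whose points are pairwise at distance greater than $4$, as in Lemma~\ref{lem:avdeg}, with $d\geq1$. Evaluating $\langle (T^3-d^2T)1_A,1_A\rangle$ should yield a nonzero contribution, from walks of length $3$ with a forced backtrack, giving roughly $d^2\mu(A)$ terms versus cancellation. The precise count is the step I expect to need care; I would alternatively compare $\langle T^2 1_A,1_A\rangle=d\mu(A)$ with $\|T1_A\|^2$ and the projection structure of $T^2/d^2$ onto $\mathrm{span}\{1,f\}$.

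The projection route seems cleanest. If $T^2=d^2P_{\mathrm{span}\{1,f\}}$, then
$$
d\mu(A)=\langle T^21_A,1_A\rangle=d^2\bigl(\mu(A)^2+\langle 1_A,f\rangle^2\bigr)\leq 2d^2\mu(A)^2,
$$
which fails for $\mu(A)$ small. That contradiction gives $M_L<2d$, and hence $i_\mu(\G)\leq 1-\frac{d}{M_L}<\frac12$.
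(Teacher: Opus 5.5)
Your argument matches the paper up through the genuine bipartition $X=P\sqcup Q$ coming from an eigenfunction $f$ of $-d$ with $\abs{f}=1$. (The detour through $m_L=d$ is unnecessary: $m_L\geq d$ already gives $\sigma(T_\G|_{L^2_0(X)})\subseteq[-d,0]$. Your promise to rule out $m_L=2d$ ``below'' is never kept, but nothing depends on it.)

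The gap is the step you state only conditionally, ``If $T^2=d^2P_{\mathrm{span}\{1,f\}}$''. From $\sigma(T_\G)\subseteq\{-d,0,d\}$, functional calculus gives only $T_\G^2=d^2P_E$ with $E=\C1\oplus\ker(T_\G+dI)$. Your bound $\norm{P_E1_A}^2\leq 2\mu(A)^2$ requires $\ker(T_\G+dI)=\C f$. Without it you only have the trivial bound $\norm{P_E1_A}^2\leq\mu(A)$, and then $d\mu(A)\leq d^2\norm{P_E1_A}^2$ gives no contradiction. This is exactly where the paper uses ergodicity a second time: two non-proportional $\pm1$-valued eigenfunctions for $-d$ would split $X$ into two positive-measure pieces with no edges between them. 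You can close the gap in one line. Every neighbour of a point of $P$ lies in $Q$ and vice versa, so $T_\G(fg)=-f\,T_\G g$. Hence $g\mapsto fg$ maps $\ker(T_\G+dI)$ into $\ker(T_\G-dI)=\C1$ by Lemma~\ref{lem:ergeigenvalued}, and since $f^2=1$ this gives $\ker(T_\G+dI)=\C f$.

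Two smaller points. First, drop the $T^3$ idea. Since $\G$ is genuinely bipartite and your $A$ is independent and spread out, $\ang{T_\G^31_A,1_A}=\ang{T_\G1_A,1_A}=0$, so that test only yields $0=0$. Second, once the eigenspace is identified, your endgame is a valid alternative to the paper's. The paper writes $L^2(X)=\C1\oplus\C f\oplus H$ with $T_\G|_H=0$ and uses nonatomicity to choose $Q_0\subseteq Q$ with $\mu(Q_0)=\alpha\mu(Q)$ for irrational $\alpha$. Then $g=\mu(Q)1_{Q_0}-\mu(Q_0)1_Q\in H$, and $T_\G g=0$ forces every $x\in P$ to have exactly $d\alpha$ neighbours in $Q_0$, which is impossible. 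You instead use nonatomicity to take $A$ small and play $\norm{T_\G1_A}^2\geq d\mu(A)$ against $d^2\norm{P_E1_A}^2\leq 2d^2\mu(A)^2$. Both are integrality-plus-nonatomicity arguments. Yours would go through as soon as $\ker(T_\G+dI)$ is finite-dimensional, by absolute continuity of $\int_A\abs{e}^2$ for each basis vector $e$. It also does not need the distance-$4$ set: $T_\G1_A$ is nonnegative integer-valued with integral $d\mu(A)$, so $\norm{T_\G1_A}^2\geq d\mu(A)$ for every measurable $A$.
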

\begin{proof}
We will show that under the assumptions of the lemma, we necessarily have $m(T_{\G}) > -d$. Once we know this, the conclusion follows immediately from Theorem~\ref{thm: hoffman regular}. 

Thus, we assume for contradiction that $m(T_{\G}) = -d$. It follows that $M_L = M(L_{\G}) = d - m(T_{\G}) = 2d$. Since $2m_L \geq M_L$, we conclude that $m_L \geq d$, and in particular, $\sigma(T_{\G}) \sub [-d,0]\cup \{d\}$. From Corollary~\ref{cor:bipartite}, we then deduce that $\{-d,d\}\sub \sigma(T_{\G}) \sub \{-d, 0, d\}$. In particular, Proposition~\ref{prop:isolated} implies that both $d$ and $-d$ are eigenvalues. Corollary~\ref{cor:ergodicspectralgapspectrum} moreover tells us that the eigenspace of $d$ is equal to $\C1$. Let $f$ be an eigenfunction of $-d$, and assume without loss of generality that $f$ is real-valued. Then the same argument as in the proof of Claim~\ref{claim: constant} implies that the absolute value of $f$ is constant; furthermore, as in the proof of Theorem~\ref{thm:-dspecimpliesbipartite}, if we let $A = \{x\in X : f(x) < 0\}$, then $A$ and $B = X\setminus A$ yield a measurable bipartition of $X$. 

We claim that any other eigenfunction $g$ of $-d$ is a multiple of $f$, i.e., the eigenspace of $-d$ is $1$-dimensional. Indeed, assume not, and let $A'$ and $B'$ be the parts of the measurable bipartition associated to $g$. By assumption, both $A_0 = A'\cap A$ and $B_0 = A'\cap B$ have positive measure, as do $A_1 = B'\cap A = A\setminus A_0$ and $B_1 = B'\cap B = B\setminus B_0$. However, by construction, there are no edges between $A_0$ and $B_0$ (as they are subsets of $A'$), and similarly there are no edges between $A_0$ and $A_1$, between $B_0$ and $B_1$, and between $A_1$ and $B_1$. In particular, $A_0\cup B_1$ and $A_1\cup B_0$ have no edges between them, and thus are both invariant sets of positive measure, contradicting ergodicity.

We conclude that $L^2(X) = \C1 \oplus \C f \oplus H$, where $f$ is the eigenfunction of $-d$ yielding the bipartition $X=A\cup B$, and $T_{\G} \rest H = 0$. Taking a subset $B_0\sub B$ with $\mu(B_0) = \alpha\mu(B)$, for some irrational $0 < \alpha < 1$, we can now consider the function 
$$
g = \mu(B)1_{B_0} - \mu(B_0)1_B\in H.
$$
Then for each $x\in A$, we have 
$$
(T_{\G}g)(x) = \sum_{y\sim x} g(y) = \sum_{y\sim x} \big(\mu(B)1_{B_0}(y) - \mu(B_0)1_B(y)\big) = \sum_{y\sim x} \mu(B)1_{B_0}(y) - d\mu(B_0),
$$
since all neighbors of $x\in A$ lie in $B$ and $\deg(x)=d$. As $T_{\G}\rest H = 0$, we conclude that for every $x\in A$,
$$
\sum_{y\sim x} \mu(B)1_{B_0}(y) = d\mu(B_0),
$$
and hence
$$
\sum_{y\sim x} 1_{B_0}(y) = d\alpha.
$$
Since $\alpha$ is irrational, this yields the desired contradiction.
\end{proof}

We are now ready for the main theorem of this section (Theorem~\ref{thm:matching_intro} from the introduction).

\begin{thm}
\label{thm: brouwer haemers}
Let $\G$ be an ergodic $d$-regular Borel pmp graph on a standard probability space $(X, \mu)$. If $2m_L \geq M_L$, then the strict measurable Tutte condition holds for $\G$.
\end{thm}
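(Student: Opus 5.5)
The plan follows Brouwer--Haemers: combine Proposition~\ref{prop:XYZ}, applied to sets built from odd components, with a mass-transport count of the edges between $\cO_A$ and $A$. Under $2m_L\geq M_L$ we have
$$\frac{M_L-m_L}{M_L+m_L}\leq \frac{1}{3},$$
so Proposition~\ref{prop:XYZ} says that any two disjoint positive-measure sets $Y,Z$ with no edges between them satisfy
$$\mu(Y)\mu(Z)\leq \tfrac19(1-\mu(Y))(1-\mu(Z)).$$
First I dispose of the atomic case. An ergodic pmp measure with an atom is uniform on a single finite orbit, and there the classical Tutte/Brouwer--Haemers argument applies, with strictness coming from the finiteness of the possible sets $A$. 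So I assume $\mu$ is nonatomic, which makes Lemma~\ref{lem:OAmu} and Lemma~\ref{lem:2mMindependence} available. Fix a measurable $A$ and write $\cO=\cO_A$.

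\emph{Step 1 (edge count).} I use the transport $\varphi(x,y)=1_{\cO}(x)1_A(y)1_{x\sim y}$ together with Proposition~\ref{prop:Anush} and $d$-regularity. This gives
$$\int_{\cO}\abs{\{y\sim x: y\in A\}}\dmu(x)\leq d\mu(A).$$
Regrouping by components, let $e(C)$ denote the number of edges from a component $C$ to $A$. The inequality then reads $\int e(C)\,d\nu_A\leq d\mu(A)$. A component of size $s$ satisfies $e(C)=ds-2|E(C)|\geq ds-s(s-1)$, so components with $s\leq d$ send many edges into $A$. In particular singletons send exactly $d$ edges, which yields $\mu(\cO^{(1)})\leq\mu(A)$. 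Components with $s\geq 3$ contribute at most $\mu/3$ to $\nu_A$, so it suffices to control $\mu(\cO\setminus\cO^{(1)})$ and the singleton part separately.

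\emph{Step 2 (spectral separation).} Using Lemma~\ref{lem:OAmu}, I choose a union $Y$ of components of $\G\rest\cO$ with any prescribed measure $t\leq\mu(\cO)$. I then set $Z=X\setminus(A\cup Y)$. Since $Y$ is a union of components of $\G\rest(X\setminus A)$, there are no edges between $Y$ and $Z$. Proposition~\ref{prop:XYZ} gives
$$t(1-\mu(A)-t)\leq \tfrac19 (1-t)(\mu(A)+t).$$
Optimizing over $t$ (for instance $t=\min\{\mu(\cO),\tfrac12(1-\mu(A))\}$) shows that either $\mu(\cO)\leq C_0\mu(A)$ with an explicit constant, or $\mu(A)$ is bounded below by an absolute constant. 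In the second case I use the trivial bound $\nu_A(\cO)\leq \mu(X\setminus A)$, which is at most a fixed fraction of $\mu(A)$ once $\mu(A)$ is large enough. Feeding the first case into Step 1 should give $\nu_A(\cO)\leq \mu(\cO^{(1)})+\tfrac13\mu(\cO\setminus\cO^{(1)})$, and I then need a bound of the form $\leq c\,\mu(A)$.

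\emph{Step 3 (main obstacle: uniformity of $c<1$ near singletons).} The dangerous configuration is $\mu(\cO^{(1)})\approx\mu(A)$. There, almost all edges leaving $A$ land in the independent set $\cO^{(1)}$, so $A\cup\cO^{(1)}$ is almost a bipartite, almost invariant piece. I expect to rule this out quantitatively in two ways. The first is to apply Proposition~\ref{prop:XYZ} with $Y=\cO^{(1)}$ and $Z=X\setminus(A\cup\cO^{(1)})$. The second is to use Lemma~\ref{lem:2mMindependence}, which gives $i_\mu(\G)<\tfrac12$, combined with the Hoffman-type bound of Theorem~\ref{thm: hoffman regular}, which gives $\mu(\cO^{(1)})\leq\frac{-m(T_\G)}{d-m(T_\G)}<\tfrac12$ uniformly. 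Together with spectral gap (via $m_L>0$), these should force a fixed proportion of the edges of $A$ to avoid $\cO^{(1)}$, and hence $\mu(\cO^{(1)})\leq c_1\mu(A)$ with $c_1<1$ independent of $A$. Obtaining this uniform constant, rather than mere strict inequality for each individual $A$, is the step I expect to require the most care. Combining it with Step 2 produces the desired $c<1$.
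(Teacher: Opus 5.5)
For nonatomic $\mu$, your outline is the paper's argument. For $s=\mu(A)<\tfrac12$, you split $X\setminus A$ into a union $Y$ of odd components (Lemma~\ref{lem:OAmu}) and $Z=X\setminus(A\cup Y)$ and apply Proposition~\ref{prop:XYZ}; this is exactly Cases~1 and~2 of the paper. Your choice $t=\min\{\mu(\cO_A),\tfrac12(1-s)\}$ forces $s\ge\tfrac12$ in the first alternative, and in the second it gives $8t(1-s-t)\le s$, hence $\mu(\cO_A)\le\tfrac s2$. For $s\ge\tfrac12$, the paper likewise uses $i_\mu(\G)<\tfrac12$ from Lemma~\ref{lem:2mMindependence}.

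The step you flag as the main obstacle needs no edge counting and no spectral input beyond $i_\mu(\G)<\tfrac12$. Your first suggestion there, Proposition~\ref{prop:XYZ} with $Y=\cO_A^{(1)}$, gives nothing, because in the dangerous regime $Z=X\setminus(A\cup\cO_A^{(1)})$ has tiny or zero measure. The step is just arithmetic. Set $a=\mu(\cO_A^{(1)})$; then $a\le i_\mu(\G)$ since $\cO_A^{(1)}$ is independent, and $\mu(\cO_A)\le 1-s$. For $s\ge\tfrac12$,
\[
\nu_A(\cO_A)\le a+\tfrac13(1-s-a)=\tfrac23a+\tfrac13(1-s)\le \tfrac{4i_\mu(\G)+1}{3}\,s ,
\]
using $1\le 2s$ and $1-s\le s$. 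So $c=\max\{\tfrac12,\tfrac{4i_\mu(\G)+1}{3}\}<1$ works for every $A$. This is the computation of Claim~\ref{claim: c}, run directly instead of by contradiction. The step is genuinely needed, since your fallback bound $\nu_A(\cO_A)\le 1-\mu(A)$ only gives $c=1$ at $\mu(A)=\tfrac12$. Step~1 is never used.

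Your atomic case is wrong as stated and cannot be repaired: finiteness of the family of sets $A$ does not yield strictness. If $\mu$ has an atom, ergodicity and pmp make $\G$ a finite connected $d$-regular graph on $n$ points with uniform measure, and there the strict condition always fails.
\begin{itemize}
\item For $n$ odd, take $A=\emptyset$; then $\nu_\emptyset(\cO_\emptyset)=1/n>0=\mu(A)$.
\item For $n$ even, take $A$ to be a non-cut vertex. For example, $K_4$ has $m_L=M_L=4$, so $2m_L\ge M_L$. Removing one vertex leaves a triangle, and $\nu_A(\cO_A)=\tfrac13\cdot\tfrac34=\mu(A)$.
\end{itemize}
Tutte and Brouwer--Haemers only give the non-strict inequality, and Brouwer--Haemers only for $n$ even. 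The paper's one-line reduction to Brouwer--Haemers has the same defect. The theorem should therefore be read with $\mu$ nonatomic, which is the only case your argument, and the paper's, actually covers.
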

\begin{proof}
We again write $M = M_L$ and $m = m_L$. If $\mu$ is atomic, then ergodicity and the fact that $\G$ is pmp imply that we can reduce to the case of a finite connected graph, and thus \cite[Theorem~2.3]{BH05} applies. Hence, we can assume that $\mu$ is nonatomic.

Assume that $2m\geq M$ and that the strict measurable Tutte condition fails, i.e., for all $c<1$, there exists a measurable $A\sub X$ such that $\nu_A(\cO_A) > c\mu(A)$. We start with the following claim:

\begin{claim}
\label{claim: c}
There exists some $\frac{1}{2} < c < 1$ that admits an associated subset $A\sub X$ with $\nu_A(\cO_A) > c\mu(A)$ satisfying $\mu(A)<\frac{1}{2}$.
\end{claim}

\begin{proof}[Proof of Claim~\ref{claim: c}] Assume for contradiction that for every $c>\frac{1}{2}$, the associated set $A_c\sub X$ with $\nu_{A_c}(\cO_{A_c}) > c\mu(A_c)$ satisfies $\mu(A_c)\geq \frac{1}{2}$. Write $a_c = \mu(\cO_{A_c}^{(1)}) = \nu_{A_c}(\cO_{A_c}^{(1)})$ and $b_c = \nu_{A_c}(\cO_{A_c}\setminus \cO_{A_c}^{(1)}) \leq \frac{1}{3} \mu(\cO_{A_c}\setminus \cO_{A_c}^{(1)})$. The assumptions imply that
\begin{align*}
    \begin{cases}
        &a_c + b_c = \nu_{A_c}(\cO_{A_c}) > c\mu(A_c) \geq \frac{c}{2},\\
        &a_c + 3b_c \leq \mu(\cO_{A_c}) \leq 1 - \mu(A_c) \leq \frac{1}{2},
    \end{cases}
\end{align*}
An easy computation then yields 
$$
a_c > \frac{3c}{4} - \frac{1}{4}.
$$
Since $\cO_{A_c}^{(1)}$ is a $\G$-independent set, we thus have $i_\mu(\G) > \frac{3c}{4} - \frac{1}{4}$. Therefore,
$$
i_\mu(\G) \geq \sup_{\frac{1}{2} < c < 1} \Big(\frac{3c}{4} - \frac{1}{4}\Big) = \frac{1}{2}.
$$
Since $\mu$ is nonatomic and $2m\geq M$, Lemma~\ref{lem:2mMindependence} yields the desired contradiction.
\end{proof}

Using the claim, we now fix $\frac{1}{2} < c < 1$ and a corresponding subset $A$ satisfying $\nu_A(\cO_A) > c\mu(A)$ and $\mu(A) < \frac{1}{2}$. Write $s = \mu(A)$.

\textbf{Case 1.} $s \geq \frac{1}{1+2c}$. Then in particular $cs \geq \frac{1}{2}(1-s)$ and thus $\mu(\cO_A)\geq \nu_A(\cO_A) > c s \geq \frac{1}{2}(1-s)$. In particular, we see that
$$
\mu(X\setminus (A\cup \cO_A)) = \mu(X\setminus A) - \mu(\cO_A) <  \frac{1}{2}(1-s).
$$
Using Lemma~\ref{lem:OAmu}, we can thus define
$$
Y = [X\setminus (A\cup \cO_A)] \cup Y_0,
$$
where $Y_0$ is a union of components of $\cO_A$ and $\mu(Y_0) = \frac{1}{2}(1 - s) - \mu(X \setminus (A \cup \cO_A))$, so that $\mu(Y) = \frac{1}{2}(1-s)$. Let $Z=\cO_A\setminus Y_0$. Then $A\cup Y\cup Z = X$, $\mu(Z) = \mu(Y) = \frac{1}{2}(1-s)$, and by construction, there are no edges between $Y$ and $Z$. Proposition~\ref{prop:XYZ} thus implies that 
$$
\left(\frac{M-m}{M+m}\right)^2 \geq \frac{\mu(Y)\mu(Z)}{(1 - \mu(Y))(1 - \mu(Z))} = \frac{\mu(Y)\mu(Z)}{s + \mu(Y)\mu(Z)} = \frac{(1-s)^2}{(1+s)^2}.
$$
Since $s < \frac{1}{2}$, we get
$$
\frac{M-m}{M+m} \geq \frac{1-s}{1+s} > \frac{\frac{1}{2}}{\frac{3}{2}} = \frac{1}{3}.
$$
Hence $2m<M$, contradicting the assumption from the theorem.

\textbf{Case 2.} $s < \frac{1}{1+2c}$.  Since in this case $cs < \frac{1}{2}(1-s)$ and $\mu(\cO_A)\geq \nu_A(\cO_A) > c s$, by using Lemma~\ref{lem:OAmu} to add components of $\cO_A$ to $X\setminus (A\cup \cO_A)$ if necessary, we can find subsets $Y,Z\sub X$ without edges between them such that $X = A\cup Y\cup Z$ and $\min\{\mu(Y),\mu(Z)\} > c s$. In particular, $\mu(Y)\mu(Z) > cs(1-s-cs)$. Using that the function $x\mapsto \frac{x}{s+x}$ is strictly increasing, Proposition~\ref{prop:XYZ} thus implies
$$
\left(\frac{M-m}{M+m}\right)^2 \geq \frac{\mu(Y)\mu(Z)}{s + \mu(Y)\mu(Z)} > \frac{cs(1-s-cs)}{s+cs(1-s-cs)} = \frac{c}{1+c}\left(1 - \frac{s}{1-cs}\right).
$$
Since the function $x\mapsto \frac{x}{1-cx}$ is increasing as well, and $s < \frac{1}{1+2c}$, we deduce that
$$
\left(\frac{M-m}{M+m}\right)^2 \geq \frac{c}{1+c}\left(1- \frac{\frac{1}{1+2c}}{1-c\cdot \frac{1}{1+2c}}\right) = \left(\frac{c}{1+c}\right)^2.
$$
We conclude that
$$
\frac{M-m}{M+m} \geq \frac{c}{1+c} > \frac{1}{3},
$$
where we use that $c > \frac{1}{2}$. Therefore, also in this second case, we get $2m<M$, contradicting the original assumption. This finishes the proof.
\end{proof}

Finally, similarly to the classical fact that Tutte's condition implies Hall's condition for bipartite graphs, we observe next that for bipartite (i.e., odd-cycle-free) Borel pmp graphs, the strict measurable Tutte condition implies strict expansion for independent sets. Here, using the terminology of \cite{km2020}, we say that a Borel graph $\G$ on a standard probability space $(X, \mu)$ is \emph{strictly expanding for independent sets} if there exists $c > 1$ such that, for each measurable $\G$-independent set $A \sub X$, $\mu(\cN(A)) \geq c\mu(A)$.

\begin{lem}\label{lem:matchingbipartite}
    Let $\G$ be an ergodic $d$-regular Borel pmp graph on a standard probability space $(X, \mu)$ and assume $\G$ is odd-cycle-free. Then the strict measurable Tutte condition implies strict expansion for independent sets.
\end{lem}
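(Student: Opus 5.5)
Given a measurable $\G$-independent set $A$, the natural plan is to apply the strict measurable Tutte condition to the set $\cN(A)$ rather than to $A$ itself. This mirrors the classical derivation of Hall's condition from Tutte's condition. Remove $\cN(A)$ from the vertex set. Since $A$ is independent, every point of $A$ has all its neighbors in $\cN(A)$, so each $x\in A$ becomes an isolated vertex of $\G\rest(X\setminus \cN(A))$. (By independence and $d\geq 1$, we have $A\cap\cN(A)=\emptyset$; if $d=0$ the statement is vacuous or degenerate and is handled separately.) Hence $A\sub \cO_{\cN(A)}^{(1)}$, and therefore
\[
\mu(A)\leq \nu_{\cN(A)}(\cO_{\cN(A)}^{(1)}) \leq \nu_{\cN(A)}(\cO_{\cN(A)}) \leq c\,\mu(\cN(A))
\]
for the constant $c<1$ provided by the Tutte condition. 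This gives $\mu(\cN(A))\geq c^{-1}\mu(A)$ with $c^{-1}>1$, which is exactly strict expansion for independent sets.

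The steps, in order, are as follows. First, verify that $A\cap \cN(A)=\emptyset$ and that points of $A$ are isolated in the induced subgraph on $X\setminus\cN(A)$. Both are immediate from the definitions of $\cN$ and independence. Second, check that $\cN(A)$ is measurable, so that the Tutte condition applies to it. For a Borel $A$ and a locally finite Borel graph, $\cN(A)$ is Borel, being the projection of a Borel set with countable sections. For a merely measurable $A$, first replace $A$ by a Borel set differing by a null set; the pmp property together with Corollary~\ref{cor:neighbors} shows that $\cN$ of a null set is null, so nothing changes. Third, read off the inequality above. Note that $\nu_{\cN(A)}$ restricted to $\cO^{(1)}$ is just $\mu$, so no weighting issue arises.

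The main point requiring care is that the argument seems not to use the odd-cycle-free, regularity, or ergodicity hypotheses at all. I would double-check this against the definitions. If the intended notion of expansion is for all measurable sets in a bipartition, or if the Tutte constant must be uniform, then bipartiteness might enter. Otherwise these hypotheses are simply inherited from the section's standing assumptions, and the proof above suffices as written. I expect the measurability bookkeeping in the second step to be the only mildly technical part.
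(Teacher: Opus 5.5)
Your proof is correct and is essentially the paper's argument: the paper also applies the Tutte condition to $\cN(A)$ and uses that each point of the independent set $A$ is isolated in $\G\rest(X\setminus\cN(A))$, merely phrasing it as a contrapositive, and like yours it never uses odd-cycle-freeness, regularity, or ergodicity. The only omission is that writing $c^{-1}$ requires $c>0$, which you may assume by replacing $c$ with $\max\{c,\tfrac12\}$.
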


\begin{proof}
    We prove the contrapositive. Suppose $\G$ is not strictly expanding for independent sets, and let $c<1$.

    By assumption, there is a measurable $\G$-independent set $A \sub X$ such that $\mu(\cN(A)) < (1/c)\mu(A)$. Then since $A$ is $\G$-independent, each element of $A$ is an isolated vertex in $\G \rest (X \setminus \cN(A))$. In particular, there is a subset of $X \setminus \cN(A)$ of measure at least $\mu(A)$ that is a disjoint union of $\G \rest (X \setminus \cN(A))$-components of cardinality $1$. Therefore,
    $$\nu_{\cN(A)}(\cO_{\cN(A)}) \geq \mu(A) > c\mu(\cN(A)).$$

    Since $c<1$ was arbitrary, this shows that the strict measurable Tutte condition is not satisfied. This finishes the proof.
\end{proof}

In particular, by combining Theorem~\ref{thm: brouwer haemers}, Lemma~\ref{lem:matchingbipartite}, and \cite[Remark~2.6]{ln2011}, we deduce that the assumption $2m_L \geq M_L$ implies the existence of a measurable perfect matching for \emph{bipartite} ergodic regular Borel pmp graphs.\footnote{We note that the definition of strict expansion used in \cite{ln2011} differs from the definition we use here, but for the proof of \cite[Remark~2.6]{ln2011}, the assumption of strict expansion for independent sets is sufficient.} However, we note that by combining \cite[Proposition~11.1]{HLS14} with \cite[Remark~2.6]{ln2011}, one can show immediately that any such graph that has spectral gap already admits a measurable perfect matching.

\begin{remark}
    If one could prove that the strict measurable Tutte condition implies the existence of a measurable perfect matching, the conclusion of Theorem~\ref{thm: brouwer haemers} would read, ``\dots then $\G$ admits a measurable perfect matching'', which would be an interesting new result for general pmp graphs. However, while we are not aware of any counterexamples (in the case of strict expansion; otherwise, see \cite{kun2024}), we also do not know of any positive results proving measurable versions of Tutte's theorem. This is in contrast with the Baire-measurable setting (see \cite{kl2023}).
\end{remark}

\section{Examples from local-global convergence}\label{sec:examples}

In general, it is difficult to compute the spectrum of the adjacency operator of a given bounded-degree Borel pmp graph $\G$. However, when $\G$ arises as a suitable limit of finite graphs, it often becomes easier to compute (properties of) the spectrum of $\G$ from those of the approximating finite graphs. A well-studied notion of convergence for which this is possible is \textit{local-global convergence}. This notion was introduced by Bollob\'{a}s and Riordan \cite{BR11} under the name partition metric, while the identification of a bounded-degree Borel pmp graph as limit object was carried out later by Hatami, Lov\'{a}sz, and Szegedy \cite{HLS14}, who also obtained most currently known results. More precisely, the limit object of a sequence of finite graphs $\cG_n$ with uniformly bounded degree that converges in the local-global sense is a weak equivalence class of bounded-degree Borel pmp graphs (see \cite{kechris2010, CGdlS21}). While the limit is thus not determined up to isomorphism, all limits of the sequence $(\G_n)_{n \in \N}$ share various features, and in particular their adjacency operators have the same spectrum.

We may identify the space $G_d(X,\mu)$ of Borel pmp graphs of degree bounded by $d$ on a standard probability space $(X,\mu)$ with a subspace of $A(\bF_{2d-1},X,\mu)$, the standard Borel space of all Borel measure-preserving actions of the free group of order $2d-1$ on $(X,\mu)$ (see for instance \cite{kechris2010}). Indeed, every Borel pmp graph of with maximum degree at most $d$ admits a Borel $(2d-1)$-edge coloring (see \cite[Proposition~4.6]{kst1999}), and is thus generated by an action of $\bF_{2d-1}$. We can now consider the topology on $A(\bF_{2d-1},X,\mu)$ induced by local-global convergence. One can show that this topology is induced by a pseudo-metric whose zero classes are weak equivalence classes, and that the metric quotient is compact \cite{AE11}. Since having maximum degree at most $d$ is preserved under local-global convergence, $G_d(X,\mu)\sub A(\bF_{2d-1},X,\mu)$ is closed, and hence its metric quotient is also compact.

Next, we give the formal definition of local-global convergence. For simplicity, we state it for sequences of finite graphs; for more details, we refer to \cite{HLS14}.
In the following, we assume that all our graphs have maximum degree at most $d$, where $d$ is a positive integer that we fix for the remainder of this section. We denote by $\cR\G_{r,k}$ the set of all (isomorphism classes of) rooted graphs of radius at most $r$ (and degree at most $d$), equipped with a vertex $k$-labeling. We write elements of $\cR\G_{r, k}$ in the form $(H, o, c)$, to represent the isomorphism class of the graph $H$ with root $o$ and $k$-labeling $c$. Also, we denote by $M(\cR\G_{r,k})$ the set of Borel probability measures on $\cR\G_{r,k}$, and we will usually equip it with the total variation distance given by $d_{\text{var}}(\mu,\nu) = \sup_{A\sub \cR\G_{r,k}} \abs{\mu(A) - \nu(A)}$. The next two definitions follow \cite{HLS14}.

\begin{defn}[Local statistics]
Let $G$ be a finite graph and $r$, $k$ be non-negative integers. For any vertex $k$-labeling $c:V(G)\to\{0,\dots,k-1\}$, we define the \textit{local statistics of $c$}, $P_{G,r}[c]$, to be the probability distribution on $\cR\G_{r,k}$ obtained by taking the $r$-neighborhood of a randomly selected vertex of $G$ equipped with the restriction of $c$. We denote the collection of local statistics by
\[
Q_{G,r,k} = \{P_{G,r}[c] : c \text{ is a vertex $k$-labeling}\} \sub M(\cR\G_{r,k}).
\]
When $\G$ is a bounded-degree Borel pmp graph on a standard probability space $(X,\mu)$ and $c:V(\G)\to \{0, \ldots, k-1\}$ is a measurable vertex $k$-labeling, we similarly define $P_{\G,r}[c]$ and $Q_{\cG,r,k}$, now choosing a vertex at random according to $\mu$.
\end{defn}

\begin{defn}[Local-global convergence]
We say a sequence of finite graphs $(G_n)_{n \in \N}$ {\em locally-globally converges} to some (not necessarily finite) Borel graph $\G$ on $(X,\mu)$ if for all non-negative integers $r$ and $k$, $Q_{G_n,r,k}$ converges to $Q_{\cG,r,k}$ in the Hausdorff metric as $n\to \infty$.
\end{defn}

Given a locally-globally convergent sequence of (finite) graphs, one may expect the spectrum of a limit graph to be exactly the set of limit points of the sequence of spectra, but care must be taken to ensure that a positive proportion of the mass of the corresponding eigenvectors does not concentrate on any null set in the limit. The following example, first observed by Riley Thornton, demonstrates this.

\begin{exmp}\label{eg:locglob}
    Let $G$ and $H$ be any two finite graphs such that there exists $\lambda\in\sigma(H)$, $\lambda\not\in \sigma(G)$, each equipped with the normalized counting measure. Let $G_n=G\sqcup H$ equipped with the measure $\mu_n$ which assigns $\frac{1}{n}$ of the mass uniformly to the vertices of $H$ and the remaining $1-\frac{1}{n}$ of the mass uniformly to the vertices of $G$. Then $(G_n,\mu_n)$ converges locally-globally to $G$, yet $\lambda$ is the in spectrum of each $G_n$ but not $G$.
\end{exmp}

To address this issue, we assume uniform integrability for the approximating eigenfunctions.

\begin{defn}
    Let $(X_n,\mu_n)$ be a sequence of (potentially finite) standard probability spaces. A sequence of functions $f_n: X_n\to \C$ is called \textit{uniformly integrable} if it satisfies
    $$
    \forall \eps > 0 \;\exists \delta > 0 \;\forall n\geq 1\;\forall A\sub X_n: \mu_n(A) < \delta \Rightarrow \norm{f_n \upharpoonright A}_2^2 < \eps.
    $$
    If $\cG$ is a bounded-degree Borel pmp graph on $(X,\mu)$, we say $\lambda\in \sigma(T_\G)$ admits \textit{uniformly integrable eigenfunctions} if we can find a uniformly integrable sequence of approximate unit eigenfunctions for $\lambda$ (where $X_n = X$ for each $n$). We say $\cG$ admits uniformly integrable eigenfunctions if every $\lambda\in \sigma(T_\G)$ does.
\end{defn}

We remark that, by Lemma~\ref{lem: dct}, any finite sequence of square-integrable functions is uniformly integrable.
Furthermore, it is well-known that uniform integrability of $(f_n)_n$ as defined above is equivalent to the fact that for every $\eps>0$, there exists $M>0$ such that 
$$
\sup_{n\in\N}\, \int_{\{x:\abs{f_n(x)} > M\}} \abs{f_n(x)}^2\dmu(x) < \eps.
$$
We note that this notion is usually called uniform square integrability, but since we will only be working with this one, we drop the word ``square'' for convenience.

In the following, we always work with probability measures. In particular, when writing $\ell^2(X)$ for some finite set $X$, we implicitly assume that we are using the normalized counting measure. The following result is the main theorem of this section, which in many cases characterizes the spectrum of a local-global limit. Note that while we state it for limit graphs on nonatomic standard probability spaces this is not much of a restriction, as the atomic (finite) parts are usually very easy to deal with separately.

\begin{thm}
\label{thm: locglob spectrum}
    Let $(G_m)_{m \in \N}$ be a sequence of finite graphs with uniformly bounded degree that locally-globally converges to a bounded-degree Borel pmp graph $\G$ on a nonatomic standard probability space $(X,\mu)$. Consider the following statement:  
    \begin{equation}
    \label{eq:uie}\tag{$\ast$}
    \parbox{\dimexpr\linewidth-4em}{
    \strut
    There exist a sequence $(m_n)$ and uniformly integrable unit vectors $f_{m_n}\in \ell^2(V(G_{m_n}))$  satisfying
    $\lim_{n\to\infty}\norm{T_{G_{m_n}}(f_{m_n})-\lambda f_{m_n}}_2=0$.
    \strut
    }
    \end{equation}
    Then the following hold:
    \begin{enumerate}
        \item If \eqref{eq:uie} holds for $\lambda\in\C$, then $\lambda\in \sigma(T_\G)$.
        \item If $\lambda\in\sigma(T_\G)$ admits uniformly integrable eigenfunctions, then \eqref{eq:uie} holds for $\lambda$.
    \end{enumerate}
    In particular, if $\G$ admits uniformly integrable eigenfunctions, then $\lambda\in \sigma(T_\G)$ if and only if \eqref{eq:uie} holds for $\lambda$.
\end{thm}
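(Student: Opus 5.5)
The plan is to discretize approximate eigenfunctions into finitely many values, so that $\norm{f}^2$ and $\norm{Tf-\lambda f}^2$ become expectations of bounded functions of labeled $1$-balls, and then transfer these local statistics between $G_m$ and $\G$ using local-global convergence. Both directions use the same quantization step.

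\textbf{Quantization.} Let $f$ be a unit vector on a graph $H$ of maximum degree $d$ with $\norm{T_H f-\lambda f}_2<\eta$, and let $K>0$ satisfy $\int_{\{\abs{f}>K\}}\abs{f}^2<\eta$.
\begin{enumerate}
\item Replace $f$ by $f1_{\{\abs{f}\le K\}}$. Since $\norm{T_H}\le d$ by Proposition~\ref{prop:boundedsa}, this changes $\norm{f}$ by at most $\sqrt\eta$ and $\norm{T_Hf-\lambda f}$ by at most $(d+\abs{\lambda})\sqrt\eta$.
\item Round the truncated function to the nearest point of a finite grid $\Lambda\subset\{z\in\C:\abs{z}\le K\}$ of mesh $\eta/K$, giving $g=\varphi\circ c$ with $c$ a $k$-labeling, $k=\abs{\Lambda}$, and $\varphi$ the bijection from labels to grid points. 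Again $\norm{g}_2$ and $\norm{T_Hg-\lambda g}_2$ change by $O((d+\abs{\lambda})\eta)$.
\item Observe that $\abs{g(o)}^2$ and $\abs{\sum_{y\sim o}g(y)-\lambda g(o)}^2$ depend only on the labeled $1$-ball $(B,o,c)\in\cR\G_{1,k}$. They are given by bounded functions $F_1,F_2$ on $\cR\G_{1,k}$ with $\norm{F_i}_\infty\le C(K,d,\lambda)$. Hence $\norm{g}^2=\int F_1\,dP_{H,1}[c]$ and $\norm{T_Hg-\lambda g}^2=\int F_2\,dP_{H,1}[c]$.
\end{enumerate}

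\textbf{Part (1).} Given \eqref{eq:uie} and $\eta>0$, uniform integrability gives one truncation level $K$ that works for all $f_{m_n}$ simultaneously. Pick $n$ large so that $\norm{T_{G_{m_n}}f_{m_n}-\lambda f_{m_n}}<\eta$, and quantize to a labeling $c$ of $G_{m_n}$. Since $Q_{G_{m_n},1,k}\to Q_{\G,1,k}$ in the Hausdorff metric for $d_{\text{var}}$, for $n$ large there is a measurable labeling $c'$ of $\G$ with $d_{\text{var}}(P_{\G,1}[c'],P_{G_{m_n},1}[c])<\eta/C$. Then $h=\varphi\circ c'\in L^2(X)$ satisfies $\norm{h}^2\approx 1$ and $\norm{T_\G h-\lambda h}^2=O(\eta)$. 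Letting $\eta\to0$ shows $\lambda$ is an approximate eigenvalue, so $\lambda\in\sigma(T_\G)$.

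\textbf{Part (2).} Reverse the transfer. Take a uniformly integrable sequence $(f_n)$ of approximate unit eigenfunctions for $\lambda$ on $\G$, and quantize $f_n$ with tolerance $\eta_n\to0$ and truncation level $K_n$. Local-global convergence then gives, for every sufficiently large $m$, a labeling $c_m$ of $G_m$ whose $1$-ball statistics are $\eta_n/C_n$-close to those of the labeling $c_n$ of $\G$. Choose $m_n$ increasing, set $g_{m_n}=\varphi\circ c_{m_n}$, and normalize. These are approximate eigenvectors with error tending to $0$.

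\textbf{Main obstacle.} The hard step is uniform integrability of $(g_{m_n})$. Each $g_{m_n}$ is bounded, but by $K_n$, which may grow with $n$. To handle this, I would use that for each threshold $M$ in the grid, the tail mass $\int_{\{\abs{g}>M\}}\abs{g}^2$ is the expectation of a bounded function of the root label alone, i.e.\ a $0$-ball statistic. Thus it is matched on $G_{m_n}$ up to $\eta_n$.
\begin{enumerate}
\item Uniform integrability of $(f_n)$ gives, for each $\eps$, an $M$ with $\sup_n\int_{\{\abs{f_n}>M\}}\abs{f_n}^2<\eps$.
\item Quantization only decreases these tails up to $O(\eta_n)$, after rounding $M$ to a nearby grid value.
\item Hence $\sup_n\int_{\{\abs{g_{m_n}}>M\}}\abs{g_{m_n}}^2<\eps+\sup_n O(\eta_n)$. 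The finitely many $n$ with $\eta_n\ge\eps$ are handled by Lemma~\ref{lem: dct}, using the $M$-tail characterization of uniform integrability.
\end{enumerate}
The bookkeeping of constants (grid mesh depending on $K_n$, and $n\mapsto m_n$ chosen after $\eta_n$) is routine but must be done in that order.
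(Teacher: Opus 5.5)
Your proposal is correct and follows essentially the same route as the paper: you quantize approximate eigenfunctions into finite labelings, express $\norm{g}^2$ and $\norm{Tg-\lambda g}^2$ as integrals of bounded functions against $1$-ball statistics, and transfer these in both directions via Hausdorff convergence of the sets $Q_{\cdot,1,k}$. The only real variation is in checking uniform integrability for part (2). The paper uses the $\delta$-formulation together with nonatomicity of $\mu$ to match each $A\sub V(G_m)$ with some $B\sub X$ having the same label distribution. You instead use the equivalent tail formulation, in which the tail mass is directly a $0$-ball statistic; this is slightly cleaner and does not need nonatomicity.
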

\begin{proof}
We start with (2). Assume $\lambda\in\sigma(T_\G)$ admits uniformly integrable eigenfunctions, i.e., there exists a uniformly integrable sequence of unit vectors $\xi_n\in L^2(X)$ so that 
$$
\lim_{n\to\infty}\norm{T_\G(\xi_n)-\lambda\xi_n}_2 = 0.
$$
As the functions with finite ranges are dense in $L^2(X)$, without loss of generality we can assume each $\xi_n$ has finite range. In particular, we can interpret each $\xi_n$ as a $k$-labeling of the vertices of $\G$ for some $k$ (dependent on $n$). Fixing $n\geq 1$, local-global convergence gives us unit vectors $\xi_{n,m}\in \ell^2(V(G_m))$ for large enough $m$ such that $\text{range}(\xi_{n,m})\sub\text{range}(\xi_{n})$ and
\begin{equation}\label{eq:dvarconv}
\lim_{m\to\infty}d_{\text{var}}(P_{G_m,1}[\xi_{n,m}],P_{\G,1}[\xi_n]) = 0.
\end{equation}
Note that both $P_{G_m,1}[\xi_{n,m}]$ and $P_{\G,1}[\xi_n]$ are probability measures on $\cR\G_{1,k}$, which is a finite set. For every element $(H,o,c)\in \cR\G_{1,k}$, we define
$$
\alpha_{(H,o,c)} = \sum_{\substack{y\in V(H),\\y\neq o}} c(y).
$$
Given any (possibly finite) bounded-degree Borel pmp graph $\G$ on a standard probability space $(X,\mu)$, equipped with a measurable function $\xi:X\to \text{range}(\xi_n)$, we then see that
$$
T_\G(\xi)(x) = \sum_{y\sim x} \xi(y) = \alpha_{(H_x,x,\xi \upharpoonright H_x)},
$$
where $H_x$ is the $1$-neighborhood of $x$ and we view $(H_x, x, \xi \upharpoonright H_x)$ as an element of $\cR\G_{1, k}$. In particular, denoting by $X_{(H,o,c)}$ the set of vertices $x$ of $\G$ such that $(H_x, x, \xi \rest H_x) = (H, o, c)$, we see that 
\begin{align}\label{eq:Hoc}
\begin{split}
    \norm{T_\G(\xi) - \lambda \xi}_2^2 &= \int_X \abs{T_\G(\xi)(x) - \lambda\xi(x)}^2\dmu(x) \\
    &= \int_X\abs{\alpha_{(H_x, x, \xi \upharpoonright H_x)} - \lambda \xi(x)}^2\dmu(x)\\
    &= \sum_{(H,o,c)\in \cR\G_{1,k}} \int_{X_{(H,o,c)}}\abs{\alpha_{(H,o,c)} - \lambda c(o)}^2\dmu(x)\\
    &= \sum_{(H,o,c)\in \cR\G_{1,k}} \mu\big(X_{(H,o,c)}\big) \cdot\abs{\alpha_{(H,o,c)} - \lambda c(o)}^2
\end{split}
\end{align}
Finally, we note that by definition
$$
\mu\big(X_{(H,o,c)}\big) = P_{\G,1}[\xi]((H,o,c)).
$$
From \eqref{eq:dvarconv}, we thus conclude that for every $n$,
\begin{equation}\label{eq:xinmconverges}
\lim_{m\to\infty}\norm{T_{G_m}(\xi_{n,m})-\lambda\xi_{n,m}}_2 = \norm{T_\G(\xi_n)-\lambda\xi_n}_2.
\end{equation}

For notational convenience, we will write $\mu_m$ for the normalized counting measure on $V(G_m)$. To ensure uniform integrability later on, we notice that, for each $t \in \text{range}(\xi_n)$, if we take $A$ to be the singleton set of (the isomorphism class of) the graph of radius $0$ whose single vertex has label $t$, then it follows from~\eqref{eq:dvarconv} that
$$\lim_{m \to \infty} \vert P_{G_m, 1}[\xi_{n, m}](A) - P_{\G, 1}[\xi_n](A) \vert = 0,$$
and so
\begin{equation}\label{eq:kis0}
    \lim_{m\to\infty} \mu_m(\{x\in V(G_m): \xi_{n,m}(x) = t\}) = \mu(\{x\in X: \xi_n(x) = t\}).
\end{equation}
Note that since the $\xi_n$ are uniformly integrable, we can find $\frac{1}{N} > \tilde\delta_N > 0$ for every $N\geq 1$, such that
\begin{equation}\label{eq:xinui}
\forall B\sub X: \mu(B) < \tilde\delta_N \Rightarrow \norm{\xi_n \upharpoonright B}_2^2 < \frac{1}{2N}.
\end{equation}
Let $K_n = \sup\{\abs{t}: t\in \text{range}(\xi_n)\} \geq 1$ and choose $M_n$ so that for all $m\geq M_n$, 
\begin{equation}\label{eq:sumt}
    \sum_{t\in \text{range}(\xi_n)} \abs{\mu_m(\{x\in V(G_m): \xi_{n,m}(x) = t\}) - \mu(\{x\in X: \xi_n(x) = t\})} < \frac{\tilde\delta_n}{2K_n^2}.
\end{equation}
In particular, whenever $A\sub V(G_m)$ for some $m\geq M_n$, we can, using the fact that $\mu$ is nonatomic, find a measurable subset $B\sub X$ such that 
$$
\sum_{t\in \text{range}(\xi_n)} \abs{\mu_m(\{x\in A: \xi_{n,m}(x) = t\}) - \mu(\{x\in B: \xi_n(x) = t\})} < \frac{\tilde\delta_n}{2K_n^2}.
$$

In particular, $\abs{\mu_m(A) - \mu(B)} < \frac{\tilde\delta_n}{2K_n^2} \leq \frac{\tilde\delta_n}{2}$ and 
\begin{align*}
\left| \norm{\xi_{n,m} \upharpoonright A}_2^2 - \norm{\xi_n \upharpoonright B}_2^2 \right| &= \left| \frac{1}{\abs{V(G_m)}} \sum_{x\in A} \abs{\xi_{n,m}(x)}^2 - \int_B \abs{\xi_n(x)}^2 \dmu(x)\right|\\
&= \left| \sum_{t\in \text{range}(\xi_n)} t^2 \big( \mu_m(\{x\in A: \xi_{n,m}(x) = t\})  - \mu(\{x\in B: \xi_n(x) = t\}) \big)\right|\\
&< \frac{\tilde\delta_n}{2}.
\end{align*}

For a given $n\geq 1$, we now define $f_{m_n} = \xi_{n,m_n}$ where $m_n\geq M_n$ is chosen large enough so that, using \eqref{eq:xinmconverges},
$$
\norm{T_{G_{m_n}}(\xi_{n,m_n})-\lambda\xi_{n,m_n}}_2< \norm{T_\G(\xi_n)-\lambda\xi_n}_2 + \frac{1}{n}.
$$
First, since $\norm{T_\G(\xi_n)-\lambda\xi_n}_2 \to 0$, we immediately get
$$
\lim_{n\to\infty}\norm{T_{G_{m_n}}(f_{m_n})-\lambda f_{m_n}}_2 = 0,
$$
as desired. We claim that furthermore the sequence $(f_{m_n})_n$ is uniformly integrable. For this, fix $\eps>0$, and choose $N\in \N$ such that $\frac{1}{N}\leq \eps$. 
Since any finite sequence of square integrable functions is uniformly integrable, we can choose $\delta' > 0$ such that for all $m_n < M_N$:
\begin{equation}\label{eq:smallones}
\forall A\sub V(G_{m_n}): \mu_{m_n}(A) < \delta' \Rightarrow \norm{f_{m_n} \upharpoonright A}_2^2 < \eps.
\end{equation}
Let now $\delta = \min\{\delta', \frac{\tilde\delta_N}{2}\}$, where $\tilde\delta_N < \frac{1}{N}$ is from \eqref{eq:xinui}, and let $A\sub V(G_{m_n})$ for some $m_n$ be a subset of measure $\mu_{m_n}(A) < \delta$. If $m_n < M_n$, \eqref{eq:smallones} immediately gives $\norm{f_{m_n} \upharpoonright A}_2^2 < \eps$. Assume $m_n \geq M_N$. Then by the above, we can find a subset $B\sub X$ of measure 
\begin{equation}\label{eq:measureB}
    \mu(B) < \mu_{m_n}(A) + \frac{\tilde\delta_N}{2} < \tilde\delta_N
\end{equation}
such that
\begin{equation}\label{eq:differencefmnxin}
    \big| \norm{f_{m_n} \upharpoonright A}_2^2 - \norm{\xi_n \upharpoonright B}_2^2 \big| < \frac{\tilde\delta_N}{2} < \frac{1}{2N}.
\end{equation}
Combining \eqref{eq:xinui} and \eqref{eq:measureB}, we get $\norm{\xi_n \upharpoonright B}_2^2 < \frac{1}{2N}$. This together with \eqref{eq:differencefmnxin} then gives
$$
\norm{f_{m_n} \upharpoonright A}_2^2 < \frac{1}{N} \leq \eps.
$$
We conclude that the sequence $(f_{m_n})_n$ is indeed uniformly integrable. Finally, we note that the $f_{m_n}$ we constructed are not necessarily unit vectors. Hence, in order to normalize them without changing the established desired properties, we are left with showing that their norms $\norm{f_{m_n}}_2$ are uniformly bounded away from zero. This follows easily from \eqref{eq:sumt}, which implies that
\begin{align*}
    \big| \norm{f_{m_n}}_2^2 - \norm{\xi_n}_2^2\big| &= \left| \sum_{t\in \text{range}(\xi_n)} t^2 \big(\mu_{m_n}(\{x\in V(G_{m_n}): f_{m_n}(x) = t\}) - \mu(\{x\in X: \xi_n(x) = t\})\big)\right|\\
    &< \frac{\tilde\delta_n}{2} < \frac{1}{2n}.
\end{align*}
Since $\norm{\xi_n}_2 = 1$ for all $n$, we thus immediately conclude that $\norm{f_{m_n}}_2^2 \geq 1 - \frac{1}{2n} \geq \frac{1}{2}$ for all $n\geq 1$. This finishes the proof of (2).

We now prove (1). Assume we have a sequence as in \eqref{eq:uie} for $\lambda$. We will again reduce to considering labelings. Fix $\eps>0$. 
By uniform integrability, we can find a number $M\geq 1$, depending only on $\eps$, such that for every $n\in \N$, there exists a function $\tilde g_n$ on $V(G_{m_n})$ satisfying $\sup_{x\in V(G_{m_n})} \abs{\tilde g_n(x)} \leq M$ and $\norm{f_{m_n} - \tilde g_n}_2 < \frac{\eps}{2}$. 
Fixing a finite $\frac{\eps}{2}$-net $K$ inside the ball of radius $M$ in $\C$, we can now further approximate each $\tilde g_n$ by a function $g_n$ satisfying $\text{range}(g_n)\sub K$ and $\norm{\tilde g_n - g_n}_2 < \frac{\eps}{2}$. In particular, $\norm{f_{m_n} - g_n}_2 < \eps$. Let $k = \abs{K}$. Then we can view every $g_n$ as a $k$-labeling of $G_{m_n}$. 

Let 
$$\eps' = \frac{\eps}{2\cdot M^2\cdot \abs{\cR\G_{1,k}} \cdot (\max_{(H,o,c)\in \cR\G_{1,k}}\{\abs{\alpha_{(H,o,c)} - \lambda c(o)}^2\} +1)}.
$$
By local-global convergence, $Q_{G_{m_n},1,k}$ converges in Hausdorff distance to $Q_{\cG,1,k}$. In particular, there exist $N\in \N$ and, for all $n\geq N$, elements $P_{\G,1}[\xi_n] \in Q_{\cG,1,k}$ associated to some measurable $k$-labelings $\xi_n:X\to K$ such that for all $n\geq N$:
\begin{equation}\label{eq:xin}
    d_{\text{var}}(P_{G_{m_n},1}[g_n], P_{\G,1}[\xi_n]) < \eps'.
\end{equation}
A similar computation as \eqref{eq:Hoc} then gives for all $n\geq N$:
\begin{align*}
    \big| \norm{T_{G_{m_n}}(&g_n) - \lambda g_n}_2^2 - \norm{T_\G(\xi_n) - \lambda \xi_n}_2^2 \big| \\
    &= \left|\sum_{(H,o,c)\in \cR\G_{1,k}} \left( P_{G_{m_n},1}[g_n]((H,o,c)) - P_{\G,1}[\xi_n]((H,o,c)) \right) \abs{\alpha_{(H,o,c)} - \lambda c(o)}^2 \right|\\
    &\leq \abs{\cR\G_{1,k}}\cdot \max_{(H,o,c)\in \cR\G_{1,k}}\{\abs{\alpha_{(H,o,c)} - \lambda c(o)}^2\} \cdot d_{\text{var}}(P_{G_{m_n},1}[g_n], P_{\G,1}[\xi_n])\\
    &< \frac{\eps}{2}.
\end{align*}
In particular, since $\norm{T_{G_{m_n}}(g_n) - \lambda g_n}_2 \to 0$ as $n\to\infty$, we can choose $n$ large enough so that
$$
\norm{T_\G(\xi_n) - \lambda \xi_n}_2^2 < \eps.
$$
In order to conclude that $\lambda\in\sigma(T_\G)$, the only remaining thing to check is that for the sequence $(\xi_n)_n$ we constructed, the norms $\norm{\xi_n}_2$ are uniformly bounded away from zero. The argument is similar to the second half of the proof of (2). Fix $n\in \N$, and note that \eqref{eq:xin} for graphs of radius 0 implies that for all $t\in K$:
$$
\mu_{m_n}(\{x\in V(G_{m_n}): g_n(x) = t\}) - \mu(\{x\in X: \xi_n(x) = t\}) < \eps'.
$$
In particular,
\begin{align*}
    \big|\norm{g_n}_2^2 - \norm{\xi_n}_2^2\big| &= \big|\sum_{t\in K} t^2\cdot \big(\mu_{m_n}(\{x\in V(G_{m_n}): g_n(x) = t\}) - \mu(\{x\in X: \xi_n(x) = t\})\big)\big|\\
    &< \abs{K}\cdot M^2 \cdot \eps'\\
    &< \eps.
\end{align*}
Here we used the obvious fact that $\abs{K} = k \leq \abs{\cR\G_{1,k}}$. Since $\norm{f_{m_n}}_2 = 1$ for all $n$ and $\norm{f_{m_n} - g_n}_2 < \eps$, we conclude that for $\eps < \frac{1}{10}$,
$$
\norm{\xi_n}_2^2 \geq \norm{g_n}_2^2 - \eps \geq (1-\eps)^2 - \eps \geq \frac{1}{2}.
$$
This finishes the proof of (1), and thus of the theorem.
\end{proof}

Theorem~\ref{thm: locglob spectrum} allows one to exactly compute the spectrum of many bounded-degree Borel pmp graphs of interest. For instance, an easy concrete example where we can use it is irrational rotation:

\begin{exmp}
\label{exmp:rotationspec}
    Let $\theta$ be an irrational number, and let $\G$ be the graph on $[0, 1)$ obtained by placing an edge between vertices $x$ and $y$ if and only if $x = y \pm \theta \mod{1}$. Then $\G$ arises as a local-global limit of the sequence $(G_n)_{n \geq 3}$, where for each $n \geq 3$, $G_n$ is the cyclic graph on $n$ vertices (see for instance \cite[Section~12]{HLS14}). It is easy to see that the spectrum of $G_n$ is 
    $$\{ 2\cos\Big(\frac{2\pi k}{n}\Big) : k \in \N, 0 \leq k \leq n - 1\},$$
    the set of $x$-coordinates of the $n$th roots of unity. Moreover, these eigenvalues are witnessed by the eigenvectors
    $$
    v_k = \begin{pmatrix}
        1\\
        \omega^k\\
        \omega^{2k}\\
        \vdots\\
        \omega^{(n-1)k}
    \end{pmatrix}
    $$
    for $0\leq k\leq n-1$, where $\omega = e^{2\pi i/n}$. Therefore, by Theorem~\ref{thm: locglob spectrum}, the spectrum of $\G$ is the entire interval $[-2, 2]$. 
\end{exmp}

Another example concerns \textit{spectral gap}, which is known to be preserved under local-global equivalence (see \cite[Section~11]{HLS14}). Assume now that $(G_n)_n$ is a sequence of finite graphs that converges locally-globally to a bounded-degree Borel pmp graph $\G$ admitting uniformly integrable eigenfunctions. Then if each $G_n$ has spectral gap at least $\eps>0$, then also $\cG$ has spectral gap at least $\eps$. This follows easily from Theorem~\ref{thm: locglob spectrum}, together with the fact that for a self-adjoint matrix $T$ and a positive $\eps_0$, $\norm{Tf - \lambda f}_2 < \eps_0 \norm{f}_2$ implies that $\lambda$ is at most $\eps_0$ away from an eigenvalue of $T$. The other direction is false in general. Indeed, similar to Example~\ref{eg:locglob}, one can add a graph without spectral gap with increasingly small mass to a sequence of graphs with spectral gap bounded away from $0$ to obtain a local-global limit that has spectral gap.

On a related note, we remark that the \emph{cost} of the limit graph can also be computed given the costs of the approximating graphs; see \cite{at2020,CGdlS21}. Finally, we point out that the spectra of ($d$-regular) \emph{random graphs} have been the subject of extensive study. While several questions concerning the local-global convergence of sequences of such graphs remain open, many (partial) results are known; see, for instance, \cite{HLS14, BS18, BS19, BS22}.

\bibliographystyle{amsalpha}
\bibliography{references}

\end{document}